\appto\appendix{\addtocontents{toc}{\protect\setcounter{tocdepth}{1}}}
\DeclareMathOperator{\Var}{Var}
\definecolor{ao(english)}{rgb}{0.0, 0.5, 0.0}
\theoremstyle{plain}
\newtheorem{theorem}{Theorem}
\newtheorem*{theorem-non}{Theorem}
\newtheorem{lemma}[theorem]{Lemma}
\newtheorem{proposition}[theorem]{Proposition}
\newtheorem{conjecture}[theorem]{Conjecture}
\theoremstyle{definition}
\newtheorem{remark}[theorem]{Remark}
\newtheorem{problem}[theorem]{Problem}
\newtheorem{question}[theorem]{Question}
\newtheorem{corollary}[theorem]{Corollary}
\newcommand{\ux}{{\mathfrak{a}}}
\newcommand{\uy}{{\mathfrak{b}}}
\DeclareMathOperator{\Tr}{Tr}
\newcommand{\ev}{\mathbb{E}}
\title{
Eigenvectors of the square grid plus GUE}
\author{Andr\'as M\'esz\'aros \and B\'alint Vir\'ag}
\begin{document}

\maketitle

\begin{abstract}
Eigenvectors of the GUE-perturbed discrete torus with uniform boundary conditions retain some product structure for small perturbations but converge to  discrete Gaussian waves for large perturbations. We determine where this phase transition happens. 
\end{abstract} 
\begin{center}
    \includegraphics[width=6in]{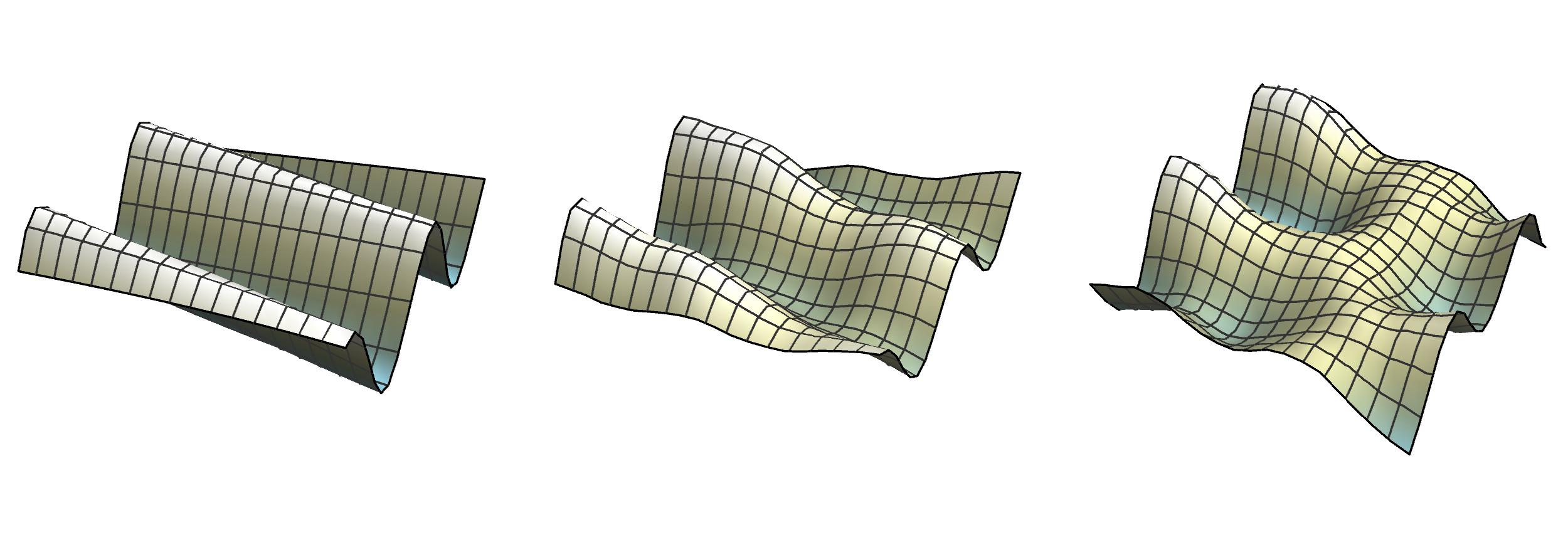}
\end{center}

\tableofcontents
\section{Introduction}


High eigenfunctions of the Laplacian in irregular planar domains look locally like Gaussian  waves. This phenomenon, is a manifestation of quantum ergodicity: interactions of eigenfunctions result in  averaging phenomena akin to the ergodic theorem. 

Such interactions are not expected to take place in regular domains. For example, in a discrete $n\times (n+1)$ box  most eigenfunctions are products $f(x)g(y)$ of eigenfunctions of the two coordinate paths,  and the box eigenvalue is the sum of the corresponding two path eigenvalues. 

Perturbations of the Laplacian can cause Gaussian waves to appear even in the regular setting. An example is the Anderson model on the discrete torus, where even for low levels of noise Gaussian waves are expected to appear. 

The goal of this paper is to understand this phenomenon for the discrete grid perturbed by a GUE random matrix.  

\begin{question} How much noise do we need to add to a discrete torus so that the eigenfunctions become Gaussian waves in the limit?
\end{question}

We answer to this question for the discrete torus with uniform boundary conditions perturbed by a GUE matrix. Let us introduce the main concepts.

\medskip

{\bf The Gaussian wave.} The adjacency matrix $A_{\mathbb Z^2}$ of the planar square grid $\mathbb Z^2$ has pure continuous spectrum with support $[-4,4]$. At any  $E\in(-4,4)$  the spectrum has infinite multiplicity. While there are no $\ell^2$ eigenfunctions, any function of the form $\exp(i(\alpha x+\beta y))$ where $2\cos(\alpha)+2\cos(\beta)=E$  satisfies the eigenvalue equation.  The complex Gaussian wave $Z_E$ is the canonical random linear  combination  of such wave functions. It is defined as the complex Gaussian process on $\mathbb Z^2$ with covariance  matrix given by the density of the matrix-valued spectral measure of $A_{\mathbb Z^2}$ at $E$. See Section \ref{subsecwave} for an explicit formula. 

{\bf The Gaussian unitary ensemble.} The GUE is a random Hermitian matrix with independent entries. It can be realized as \begin{equation}\label{GUEdef}W_n=M+M^*\end{equation} where $M$ is a matrix of independent centered complex Gaussian entries of variance $1/(2n)$.

{\bf The discrete torus with uniform boundary conditions.} The graph of the discrete $n$-cycle has the adjacency matrix $A_{n-cycle}$ with $(A_{n-cycle})_{i,j}=1$ when $|i-j|=1$ mod $n$ and $0$ otherwise. Introduce a version $A_{n,c}$ with boundary condition $c$ by replacing $(\overline{A_n})_{1,n}$ and $ ( A_n)_{n,1}$ by the value  $e^{2\pi ic}$. The discrete torus with  boundary conditions $c,d$ is the Cartesian product of the corresponding weighted graphs. Its adjacency matrix is thus given by the tensor product expression  $A_{n,c,d}=A_{n,c}\otimes I  + I\otimes A_{n,d}$. When  $c,d$ are independent uniform $[0,1]$ random variables, we will denote this (slightly) random adjacency matrix by $A_n$. 

\begin{theorem}\label{t:main}
Let $\gamma>0$, $\delta\in (0,1)$, $E\in (-4,4)\setminus \{0\}$.  Consider eigenvalues of 
$$
A_n+n^{-\gamma}W_{n^2},
$$
(the adjacency matrix of the discrete torus with uniform boundary conditions plus an independent scaled GUE) in the interval  $(E\pm n^{-\delta})$. Let $u_1,u_2,\dots,u_m$ be the corresponding eigenfunctions
with independent uniform random phases and $\ell^2$-norm $n$. Let $o_n\in \{1,\ldots,n\}^2$ be a sequence so that for both coordinates $j$ we have $(o_n)_j,n-(o_n)_j\to \infty$. Let $n\to\infty$ and consider the random measures 
\[\frac{1}{m} \sum_{i=1}^m \delta_{u_i(\cdot+o_n)}.\]
\begin{itemize}
    \item If $\gamma<1$, then for all $\delta$ sufficiently close to $0$, the measures converge in probability  to the law of the complex Gaussian wave $Z_{E}$.
    \item If $\gamma>1$, then for all $\delta<1$, then the law of $Z_E$ is not a limit point of these measures. 
\end{itemize}
\end{theorem}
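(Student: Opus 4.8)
\emph{Step 1: reduce to a deformed GUE in a translation-invariant gauge.} Write $\sigma=n^{-\gamma}$. The plan is to first use the conjugation invariance of the GUE: in the eigenbasis of $A_n$ the matrix $A_n+\sigma W_{n^2}$ becomes $D_n+\sigma\widetilde W$, where $D_n=\mathrm{diag}(\lambda_k)$ with $\lambda_k=2\cos(2\pi(k_1+c)/n)+2\cos(2\pi(k_2+d)/n)$, $k\in(\mathbb Z/n)^2$, and $\widetilde W$ is again GUE (independent of $c,d$). Independently, $A_n$ is conjugate via a \emph{diagonal} unitary $U$ with $|U_{oo}|=1$ to the magnetic torus $\widetilde A_n$ in which the flux is spread over all edges; $\widetilde A_n$ is translation-covariant, commutes with lattice shifts, and has eigenvectors the characters $v_k(o)=n^{-1}e^{2\pi i k\cdot o/n}$ with eigenvalue $\lambda_k$. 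Since $U$ has unit-modulus diagonal, and near $o_n$ (which stays in the bulk, so a bounded window around $o_n$ never crosses the twist seam) it acts by the slowly varying phase $e^{2\pi i(co_1+do_2)/n}\to 1$, one gets $u_i(o_n+z)=e^{i\Theta_i'}(1+o(1))\,\widetilde u_i(o_n+z)$, where $\widetilde u_i$ is the eigenvector of $\widetilde A_n+\sigma\widetilde W'$ at the same energy and $\Theta_i'$ is still uniform (this is where the random phases in the statement are used, and they match the circular symmetry of $Z_E$). Using that $\widetilde A_n+\sigma\widetilde W'$ is translation-covariant in law (the GUE is permutation-invariant), any statistic of the moduli of the $\widetilde u_i$ near $o_n$ — or any phase-invariant statistic — has expectation equal to its spatial average over all $o$. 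Writing $\widetilde u_i(o)=\sum_k\widehat\psi^{(i)}_k e^{2\pi i k\cdot o/n}$ with $\sum_k|\widehat\psi^{(i)}_k|^2=1$, everything is reduced to the eigenvectors $\widehat\psi^{(i)}$ of the deformed GUE $D_n+\sigma\widetilde W$ near $E$.

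\emph{Step 2: the mechanism.} The eigenvector of $D_n+\sigma\widetilde W$ at $\mu\approx E$ has, in the $D_n$-eigenbasis, the self-consistent Lorentzian profile $\mathbb E|\widehat\psi_k|^2\asymp n^{-2}\Gamma\big((\lambda_k-\mu)^2+\Gamma^2\big)^{-1}$ of half-width $\Gamma\asymp\sigma^2\,\mathrm{Im}\,m(E)\asymp n^{-2\gamma}$, where $m$ is the Stieltjes transform of $\mu_{D_n}\boxplus(\text{semicircle of radius }\sigma)$ and $\mathrm{Im}\,m(E)\to\pi\rho_{\mathbb Z^2}(E)>0$ in terms of the density of states $\rho_{\mathbb Z^2}$ of $A_{\mathbb Z^2}$ (finite precisely because $E\neq 0$). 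Since the mean spacing of the $\lambda_k$ near $E$ is $\asymp n^{-2}$, the number of characters effectively carrying an eigenvector is $\asymp \Gamma/n^{-2}\asymp n^{2-2\gamma}$, which diverges iff $\gamma<1$. This is the transition: for $\gamma<1$ the eigenvector is a rich random superposition of $n^{2-2\gamma}$ characters, while for $\gamma>1$ it is essentially a single character, i.e. a constant-modulus plane wave, not a Gaussian wave.

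\emph{Step 3: the case $\gamma<1$.} The analytic input I would establish is a local law/equidistribution for $\{\lambda_k\}$ at scales $\gg n^{-2}$: the counting function is $\rho_{\mathbb Z^2}(E')|I|n^2(1+o(1))$ on intervals $I\ni E'$ of length $\gg n^{-2}$, and the $k$'s with $\lambda_k\in(E\pm n^{-\delta})$ equidistribute on the curve $\mathcal C_E=\{2\cos\alpha+2\cos\beta=E\}$ with respect to $\mathrm d\ell/|\nabla(2\cos\alpha+2\cos\beta)|$ (co-area formula), which is exactly the spectral density of $A_{\mathbb Z^2}$ at $E$; here $E\neq0$ makes $\mathcal C_E$ a smooth compact curve with non-vanishing gradient, and $\delta$ close to $0$ guarantees only $\mathcal C_E$ is seen. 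Granting this, two facts combine. First, by the relaxation of the eigenvector moment flow for Wigner-type perturbations — applicable since the ``time'' $\sigma^2=n^{-2\gamma}$ vastly exceeds the local spacing $n^{-2}$ — the joint law of any fixed finite family of linear functionals $\langle q,\widehat\psi^{(i)}\rangle$ of an eigenvector near $E$ is asymptotically jointly complex Gaussian with covariance read off from the deterministic isotropic resolvent $\langle q,\mathrm{Im}\,M(E+i\Gamma)q'\rangle$ (normalized by the number of eigenvalues in the Lorentzian window); applying this with $q$ the character vectors $(e^{2\pi i k\cdot z/n})_k$ and invoking the equidistribution identifies the limiting covariance of $(u_i(o_n+z))_z$ with $\widehat\mu_E(z-z')$, the covariance of $Z_E$. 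Second, eigenvectors attached to eigenvalues many spacings apart are asymptotically independent and $m\asymp n^{2-\delta}\to\infty$, so $\tfrac1m\sum_i\Phi(u_i(o_n+\cdot))$ concentrates at its mean; together these give $\tfrac1m\sum_i\delta_{u_i(\cdot+o_n)}\to\mathrm{Law}(Z_E)$ in probability.

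\emph{Step 4: the case $\gamma>1$, and the main obstacle.} I would use the statistic $F(u)=|u(0)|^4$. Since $\tfrac1m\sum_i|u_i(o_n)|^2\to1$ (translation-covariance of moduli), convergence to $\mathrm{Law}(Z_E)$ would force $\tfrac1m\sum_i|u_i(o_n)|^4\to\mathbb E|Z_E(0)|^4=2$ (truncate $F$ to handle unboundedness), so it suffices to show the left side stays $\le 2-c$. By translation-covariance, $\mathbb E[\tfrac1m\sum_i|u_i(o_n)|^4]=\mathbb E[\tfrac1m\sum_i\tfrac1{n^2}\sum_o|u_i(o)|^4]$. The analytic inputs here are an upper bound $\rho_2\lesssim n^4$ on the two-point intensity of $\{\lambda_k\}$ near $E$ and a counting upper bound $\#\{k:\lambda_k\in I\}\lesssim n^2|I|+n^{\varepsilon}$. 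The first shows only $o(m)$ of the relevant $k$ lie within $n^{-1-\gamma+\varepsilon'}$ of another $\lambda_l$; for every other (``good'') $k^*$ one checks, using the Lorentzian profile together with $|\mu_i-\lambda_{k^*}|\lesssim n^{-1-\gamma+o(1)}$ from perturbation theory, that $\sum_{k\neq k^*}|\widehat\psi^{(i)}_k|^2=o(1)$ and $\sum_{k\neq k^*}|\widehat\psi^{(i)}_k|=o(1)$, so $u_i$ is $(1-o(1))$-concentrated on the single character $v_{k^*}$ and hence $\tfrac1{n^2}\sum_o|u_i(o)|^4=1+o(1)$ (expand $|u_i|^4$ in characters). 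The second input bounds every spectral clump by $n^{\varepsilon}$, so each ``bad'' $u_i$ has $\|u_i\|_\infty\lesssim n^{\varepsilon/2}$ and $\tfrac1{n^2}\sum_o|u_i(o)|^4\lesssim n^{\varepsilon}$; as the bad ones are an $n^{-\theta}$-fraction for some $\theta>0$, their contribution is $o(1)$. Thus $\tfrac1m\sum_i\tfrac1{n^2}\sum_o|u_i(o)|^4=1+o(1)<2$, and $\mathrm{Law}(Z_E)$ is not a limit point. The hard part, in both regimes, is exactly the local/fluctuation analysis of the random-flux torus spectrum $\{2\cos(2\pi(k_1+c)/n)+2\cos(2\pi(k_2+d)/n)\}_k$ — equidistribution and counting at scales $\gg n^{-2}$ for $\gamma<1$, and a two-point-intensity bound plus counting upper bound (a Berry--Tabor--type anti-clumping) for $\gamma>1$, uniformly on compacts of $(-4,4)\setminus\{0\}$. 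These are Diophantine/analytic estimates about sums of cosines with independent uniform shifts, and controlling them near the van Hove energies (hence the exclusion of $0$) is the delicate point; the random-matrix side — isotropic local law and eigenvector Gaussianity for this structured deformed GUE against character test vectors — is then an adaptation of now-standard Wigner-matrix technology.
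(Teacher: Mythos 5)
For the convergence part ($\gamma<1$) your plan is essentially the route taken in the paper: diagonalize $A_n$ and use unitary invariance of the GUE to reduce to a deformed GUE, apply the eigenvector-moment-flow Gaussianity (Benigni) to linear functionals against character test vectors, and identify the limiting covariance with $\varrho_{\ux,\uy}(E)/\varrho_{0,0}(E)$ through regularity of the torus spectrum under the uniform boundary conditions. One caveat: you posit a \emph{pointwise} local law for $\{\lambda_k\}$ (counting asymptotics on every interval of length $\gg n^{-2}$ in the window). What is actually provable by the lattice-point/variance method is an averaged ($L^2$ in $\lambda$, in expectation over $(c,d)$) statement at scale $n^{-2+\varepsilon}$; this is exactly why the paper's general eigenvector theorem is formulated so that the regularity hypotheses only need to hold at most points of the window (in $\mathbb{P}_\lambda$), not at all of them. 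Your Step 3 should be weakened accordingly; with that modification it is the same argument.

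The genuine gap is in Step 4. From the resolvent-flow/close-pair technology one gets that, for most eigenvectors, most of the $\ell^2$ mass sits on $O(1)$ of the original characters (and even this only up to an error $c/\sqrt h$ in expectation, not $1-o(1)$ on a single character). But $\ell^2$ concentration does \emph{not} control the statistic $\tfrac1{n^2}\sum_o|u_i(o)|^4$: a residual $w=\sum_{k\neq k^*}\hat\psi_k v_k$ with $\|w\|_2^2=o(n^2)$ can still have $\tfrac1{n^2}\sum_o|w(o)|^4$ of size $n^{2}\cdot o(1)$ if its Fourier mass is spread over many near-resonant characters, so your bound $\tfrac1{n^2}\sum_o|u_i(o)|^4\le 1+o(1)$ requires the much stronger claim $\sum_{k\neq k^*}|\hat\psi^{(i)}_k|=o(1)$ (equivalently a sup-norm bound on the residual). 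That claim is not "checked" by the Lorentzian profile: it needs first-order perturbation theory with quantitative level repulsion at scales \emph{below} $n^{-2}$ (your two-point intensity bound at distance $n^{-1-\gamma+\varepsilon}$), plus control of $\sum_k|\lambda_k-\mu|^{-1}$ near the perturbed eigenvalue, none of which follow from the close-pair estimate at scale $n^{-2}$ (which allows a constant fraction of eigenvalues to lie in $n^{-2}$-close pairs) or from the resolvent-flow concentration. This is precisely the difficulty the paper flags ("small contributions from other eigenvectors could have large local influence") and circumvents differently: instead of a fourth moment, it uses the largest \emph{local Fourier coefficient} as the distinguishing observable, and exploits the independent uniform phases of the expansion coefficients so that, with probability at least $1/2$, the dominant character's contribution of order $\ell$ cannot be cancelled by the uncontrolled remainder, while for $Z_E$ all local Fourier coefficients are $O(\ell^{1/2+\varepsilon})$. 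Unless you can supply the $\ell^1$/sup-norm control of the residual (or replace your observable by one that is robust to $\ell^2$-small but locally large perturbations), the $\gamma>1$ argument as written does not go through.
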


\begin{figure}
        \centering
    \includegraphics[width=5in]{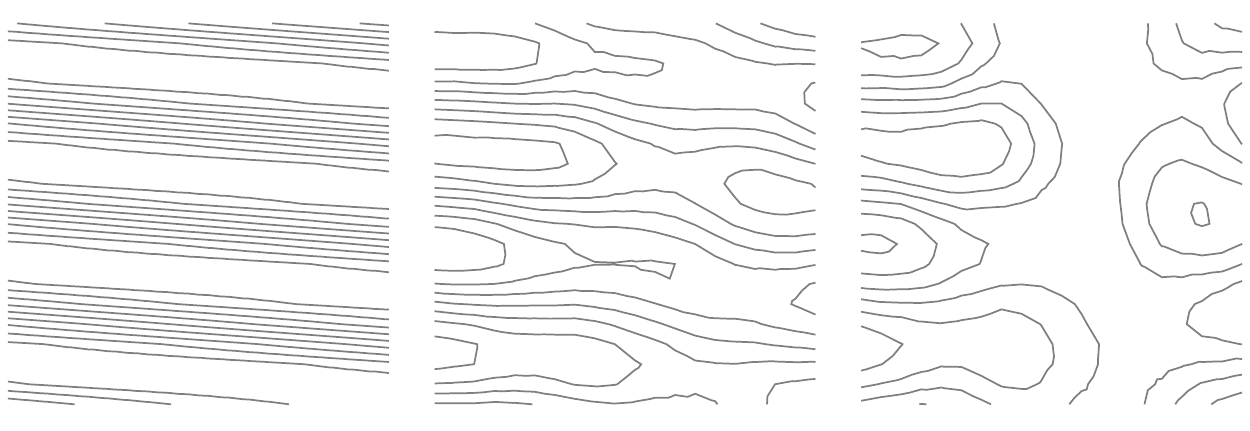}
        \caption{Level sets of the real parts of eigenfunctions on page 1 as the noise increases}
        \label{f:eigenfunctions}
\end{figure}

\begin{remark}In terms of local observables, convergence in probability  to the law of $Z_{E}$ means that for any bounded continuous functions $f:\mathbb{C}^{\mathbb{Z}^2}\to\mathbb{R}$, 
\[\frac{1}{m} \sum_{i=1}^m f(u_i(\cdot+o_n))\]
converge to $\mathbb{E} f(Z_{E})$ in probability.
\end{remark}

We do not consider $E=0$ since the density of the spectral measure of $\mathbb Z^2$ blows up there, see also Problem~\ref{atE=0}.

For random regular graphs, the Gaussian limiting behaviour of the eigenvectors was established by \cite{backhausz2019almost}. Note that for random regular graphs the randomness comes from the random choice of the underlying graph, but in our results the underlying graph is fixed and the randomness is introduced by the random boundary and by adding the noise. See also \cite{bauerschmidt2017local,bauerschmidt2017bulk,bauerschmidt2019local,bauerschmidt2020edge,huang2021spectrum,huang2023edge} for more results on the spectrum of random regular graphs. \cite{anantharaman2015quantum} proved a version of the Quantum Ergodicity theorem for deterministic $d$-regular expander graphs with large essential girth. \cite{bourgain2014toral}  shows the emergence of Gaussian waves  for high-dimensional eigenspaces in the unperturbed continuum torus. For a recent continuous version of Gaussian random waves appearing as high energy eigenfunctions see \cite{abert2018eigenfunctions,ingremeau2021local}.

Our  tool for proving convergence to Gaussian waves is Theorem~\ref{lemmaquenched1A} stated in Section~\ref{sectech}. A central ingredient of the proof is the theorem of \cite{benigni2020eigenvectors} on eigenvector limits, see Theorem~\ref{benignithm}. The novelty of Theorem~\ref{lemmaquenched1A} compared to  Benigni's result is that it works in the locally convergent matrix setting and the regularity of the empirical measure of the eigenvalues is only required at most points of a given window instead of at all the points of the window. Moreover,  Theorem~\ref{lemmaquenched1A} is easier to apply as it does not use the notion of free convolution.

Benigni's result relies on the theory of the eigenvector moment flow developed by  \cite{bourgade2017eigenvector} and \cite{bourgade2018random}, and on a few results on local laws, see \cite{landon2017convergence} and \cite{lee2016bulk}. Similar results were proved by \cite{marcinek2020high}.
For further results on Quantum Unique Ergodicity for Wigner matrices, see \cite{benigni2022fluctuations,benigni2022optimal,cipolloni2021eigenstate,cipolloni2022normal}.

\label{generallocalweak}

To apply Theorem~\ref{lemmaquenched1A} for adjacency matrix $A$ of the discrete torus with uniform boundary conditions, we need a good control over the spectral measure of $A_{{\rm{c}},{\rm{d}}}=A_{n,{\rm{c}},{\rm{d}}}$ down to the optimal scale $n^{-2+\varepsilon}$ for most choices of the boundary conditions. The following variance estimate allows us to compare the spectral measure of $A_{{\rm{c}},{\rm{d}}}$ to the spectral measure of the adjacency matrix of the square grid.

Given $(o_1,o_2),(o_1+\ux,o_2+\uy)\in \{1,2,\dots,n\}^2$, let $\mu_{{\rm{c}},{\rm{d}},\ux,\uy}=\mu_{n,{\rm{c}},{\rm{d}},\ux,\uy}$ be the unique complex-valued measure such that
\[f(A_{{\rm{c}},{\rm{d}}})\left((o_1+\ux,o_2+\uy),(o_1,o_2)\right)=\int f(x) \mu_{{\rm{c}},{\rm{d}},\ux,\uy}(dx)\]
for all continuous functions $f$. It turns out that this measure   depends on $\ux,\uy$ but not on~$(o_1,o_2)$.

\begin{theorem}\label{torusthm1}
For every $\frac{1}{4}>\epsilon>0$ there is $c$ so that the following holds for all $n$, $n^2\ge \eta\ge n^{-2}$ and $|\ux|+|\uy|\le n^{1/2-2\varepsilon}$. Let $C$ and $D$ be independent uniform random elements of $[0,1]$, then
\begin{equation}
\label{e:torus}
\left|\frac{\pi}{2n^ 2 \eta} -\int_{-\infty}^\infty \Var\left(\int \frac\eta{(x-\lambda)^2+\eta^2} \mu_{C,D,\ux,\uy}(dx)\right)d\lambda\right| \le  c\frac{1+|\ux|+|\uy|}{\eta^\epsilon \sqrt{n}}.
\end{equation}
\end{theorem}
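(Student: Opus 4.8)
The plan is to diagonalize everything in the Fourier basis of the two cycles and reduce the spectral measure $\mu_{C,D,\ux,\uy}$ to an explicit trigonometric sum, then extract the leading behaviour of its variance via a second-moment computation over the random phases $C,D$. Recall that $A_{n,c}$ is a circulant-type matrix twisted by the phase $e^{2\pi i c}$; its eigenvalues are $2\cos\bigl(\tfrac{2\pi(k+c)}{n}\bigr)$, $k=0,\dots,n-1$, with the corresponding (Fourier) eigenvectors. Hence $A_{C,D}$ has eigenvalues $\lambda_{k,\ell}=2\cos\bigl(\tfrac{2\pi(k+C)}{n}\bigr)+2\cos\bigl(\tfrac{2\pi(\ell+D)}{n}\bigr)$, and the matrix entry defining $\mu_{C,D,\ux,\uy}$ becomes
\[
f(A_{C,D})\bigl((o+(\ux,\uy)),o\bigr)=\frac{1}{n^2}\sum_{k,\ell}e^{2\pi i\left(\frac{(k+C)\ux}{n}+\frac{(\ell+D)\uy}{n}\right)}f(\lambda_{k,\ell}),
\]
so $\mu_{C,D,\ux,\uy}=\frac{1}{n^2}\sum_{k,\ell}e^{2\pi i(\cdots)}\delta_{\lambda_{k,\ell}}$. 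The Poisson-kernel integral $\int\frac{\eta}{(x-\lambda)^2+\eta^2}\mu(dx)$ is then a weighted empirical average of a smooth bump of width $\eta$ placed at each lattice point $\lambda_{k,\ell}$ in the spectrum.

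Next I would compute the variance over $(C,D)$. Writing $G_\eta(\lambda-x)=\frac{\eta}{(x-\lambda)^2+\eta^2}$ and expanding the square, $\Var$ becomes a double sum over pairs $(k,\ell),(k',\ell')$ of $\ev_{C,D}$ of products of phases and $G_\eta$ evaluated at $\lambda_{k,\ell}$ and $\lambda_{k',\ell'}$, minus the product of means. Integrating in $\lambda$ over $\Rset$ one uses the convolution identity $\int G_\eta(\lambda-x)G_\eta(\lambda-y)\,d\lambda=\pi\,G_{2\eta}(x-y)$ to collapse the $\lambda$-integral, leaving a double sum involving $G_{2\eta}(\lambda_{k,\ell}-\lambda_{k',\ell'})$ weighted by $\ev_{C,D}$ of the phase product. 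Because the phases are $e^{2\pi i(C\ux/n)}$ times deterministic unit complex numbers, the $C,D$-expectation of a phase product from indices $(k,\ell)$ and $(k',\ell')$ (one conjugated) is $1$ on the diagonal $k=k',\ell=\ell'$ and otherwise has modulus controlled by how $\ux,\uy$ interact with the randomness — here one must be slightly careful: the $C$-dependence enters both the phase and $\lambda_{k,\ell}$, so the expectation does not factor trivially and one should think of fixing the phase difference and averaging a smooth function of $C,D$.

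The heart of the estimate is then: the diagonal $k=k',\ell=\ell'$ contributes $\frac{1}{n^4}\sum_{k,\ell}G_{2\eta}(0)=\frac{1}{n^4}\cdot\frac{n^2}{2\eta}=\frac{1}{2n^2\eta}$, and after the $\lambda$-integral picked up a factor $\pi$ this is exactly the main term $\frac{\pi}{2n^2\eta}$ in \eqref{e:torus}; all the real work is showing the off-diagonal terms and the subtraction of $(\ev\,\cdot)^2$ together are bounded by $c\,(1+|\ux|+|\uy|)\eta^{-\epsilon}n^{-1/2}$. For this I would (i) integrate by parts / use the smoothness of $C\mapsto\lambda_{k,\ell}$ to turn the oscillatory $C$-average over the phase $e^{2\pi i C(\ux-\text{shift})/n}$ into decay, gaining a factor like $n/(|\ux|+1)$ from each non-resonant frequency but losing it back through the sum — so the genuine gain is a square-root cancellation; (ii) approximate the lattice sum over $(k,\ell)$ by the integral against the density of the spectral measure of $A_{\Zset^2}$, where the co-area formula gives the $[-4,4]$ support and the logarithmic blow-up at $E=0$ (irrelevant here since we integrate all of $\lambda$); (iii) control the error in this Riemann-sum approximation down to scale $\eta\ge n^{-2}$, which is where the $\eta^{-\epsilon}$ loss and the restriction $|\ux|+|\uy|\le n^{1/2-2\epsilon}$ come from — the sum must resolve bumps of width $2\eta$ at $\sim n^2$ lattice points, and the phase oscillation of wavelength $\sim n/(|\ux|+|\uy|)$ must not outrun that resolution.

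The main obstacle, as usual in these local-law-type estimates, is step (iii): obtaining a power-saving error term uniformly down to $\eta$ as small as $n^{-2}$ while simultaneously tracking the $|\ux|+|\uy|$ dependence. The difficulty is that at scale $\eta\approx n^{-2}$ the Poisson bumps barely overlap, so the variance is genuinely of order $n^{-2}\eta^{-1}$ (the diagonal dominates) and one needs to show the \emph{fluctuations} of the bump-counting are truly lower order — this is where the independence and uniformity of the two boundary phases $C,D$ must be exploited fully, effectively giving two independent sources of equidistribution of the one-dimensional eigenvalue lattices $\{2\cos(2\pi(k+C)/n)\}$, and then a Fubini/tensorization argument reduces the two-dimensional estimate to a one-dimensional one about a single twisted cycle, which can be handled by classical exponential-sum bounds (van der Corput / stationary phase for $\sum_k e^{2\pi i(C k/n)}G_{2\eta}(2\cos(2\pi(k+C)/n)-\text{const})$).
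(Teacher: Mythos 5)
Your setup is sound and matches the paper's skeleton: diagonalizing in the Fourier basis, writing $\mu_{C,D,\ux,\uy}$ as an explicit weighted sum of Dirac masses, exploiting the tensor structure and independence of $C,D$ to reduce to one twisted cycle, and observing that the full diagonal $(k,\ell)=(k',\ell')$ produces exactly $\frac{\pi}{2n^2\eta}$ after the $\lambda$-integration. But the theorem \emph{is} the error bound, and your mechanism for it fails. In $\mathbb{E}|X|^2$ the random phases cancel identically: each term carries $e^{2\pi i\ux C}\,\overline{e^{2\pi i\ux C}}=1$, so the only $C$-dependence left is through the eigenvalue difference $\lambda_{k,\ell}-\lambda_{k',\ell'}=-4\sin\bigl(\pi\tfrac{k-k'}{n}\bigr)\sin\bigl(2\pi(\tfrac{k+k'}{2n}+C)\bigr)+\cdots$. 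There is no oscillatory integral in $C$ to which van der Corput or stationary phase applies, and no ``square-root cancellation'' of exponential-sum type to be had; asserting it does not supply a source of cancellation. What the averaging over $C$ actually does is smooth each off-diagonal difference into an arcsine law $\mathfrak{s}_{h_{k-k'}}$, so the quantity to control is the discrepancy between the discrete average $\frac1n\sum_d e^{-2\pi i\ux d/n}\mathfrak{s}_{h_d}$ (minus its diagonal atom $\delta_0/n$) and its continuum analogue $(\mathbb{E}\mu)*(\mathbb{E}\mu)'$, after convolution with $\kappa_\eta$. That is the content of the paper's Proposition~\ref{p:regularity}, and it is where all three features of the bound arise: the $n^{-1/2}$ comes from a Riemann-sum estimate in $L^p$, $p<2$, in which the inverse-square-root endpoint singularities of the arcsine densities force $\|\bar f\|_p\sim\sqrt n$ (so the $1/n$ Riemann-sum factor only yields $n^{-1/2}$, not $n^{-1}$); the $(1+|\ux|)$ comes from the total variation of $\cos(\pi\ux t)f_{h_{nt/2}}(x)$ in $t$; and the $\eta^{-\epsilon}$ comes from Young's inequality against $\kappa_\eta$ with $\|\kappa_\eta\|_q\sim\eta^{1/q-1}$, $1/q=1-\epsilon$. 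None of these steps is carried out or even correctly identified in your step (iii), which is the whole difficulty.

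A second, smaller omission: your dichotomy ``diagonal versus off-diagonal'' hides an intermediate regime. The partial diagonal ($k=k'$, $\ell\neq\ell'$, and symmetrically) contributes a term of order $\frac1n(\varrho_{0,0}*\kappa_\eta*\kappa_\eta)(0)$, which is not covered by the main-term computation and is not generically negligible without an argument: it is of size $\log(1/\eta)/n$ and must be checked against $\eta^{-\epsilon}n^{-1/2}$ using the logarithmic-singularity bound $\varrho_{0,0}(x)\le\mathbbm{1}(|x|\le 4)\log(100/|x|)$ (Lemma~\ref{varrhosing} in the paper). Your proposal never isolates this term, so even granting a correct treatment of the genuinely off-diagonal part, the decomposition as written is incomplete.
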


We will see later that $n^2 \mu_{0,0,0,0}([E-\eta,E+\eta)$ is given by the number of lattice points in the region
\[\{(x,y)\in [-n/2,n/2)^2\,:\,E-\eta\le 2\cos(2\pi x/n)+2\cos(2\pi y/n)\le E+\eta\}.\]
Thus, we arrive to a problem very similar to the \textbf{Gauss circle problem}, where we are interested in the number of lattice points inside a circle of radius $r$. It is conjectured that the number of lattice points is equal to $\pi r^2+F(r)$, where $|F(r)|=O(r^{1/2+\varepsilon})$. The current best bound is $|F(r)|\le O(r^{131/208})$ proved by \cite{huxley2002integer}. A closely related problem is to count the number of lattice points in a thin annulus. These problems become more manageable if we introduce some randomness to them. For example, we can choose the radius randomly, see the results of \cite{bleher1993distribution} for the circle and \cite{bleher1995variance} for the annulus. Another option is to shift the lattice with a uniform random vector in $[0,1]^2$, see the results of \cite{cheng1994number}. It turns out that for the discrete torus, our choice of random boundary also corresponds to shifting the lattice. Thus, Theorem~\ref{torusthm1} fits into the framework of these problems.


To prove the non-convergence part of Theorem~\ref{t:main}, we use Theorem~\ref{lemmaconc01}  on the concentration of perturbed eigenvectors, which is stated in Section~\ref{sectech}.

The proof of this theorem relies on the GUE resolvent flow, which was used by \cite{von2018phase} to obtain similar concentration results, although they measured the concentration in a different way. Their proof uses the spectral averaging principle and other related estimates (\cite{minami1996local,kotani1984lyaponov,combes2009generalized}). Thus, they need to assume that the initial eigenvalues are random satisfying certain conditions. However, it turns out that this random choice is not essential, our Theorem~\ref{lemmaconc01} is for deterministic initial conditions. For $n^{-1}\ll t\ll 1$, similar results were proved by \cite{von2019non}.

\subsection{Open problems}

We believe that the random matrix nature of the perturbations in Theorem \ref{t:main} is not relevant. 

\begin{problem}
Is the critical value of $\gamma$ still $1$ if we replace $W$ by a diagonal matrix with independent standard Gaussian entries?
\end{problem}

\begin{problem}\label{prob:standard}
 Does Theorem \ref{t:main} hold on the standard torus, without the uniform boundary conditions?
\end{problem}

\begin{problem}\label{atE=0}
What happens at $E=0$? Although $\lim_{x\to 0} \varrho_{0,0}=\infty$, it appears that
\[\lim_{x\to 0} \frac{\varrho_{\ux,\uy}}{\varrho_{0,0}}\]
exists for all $\ux,\uy\in \mathbb{Z}$. Using this limit as the right hand side of \eqref{eqnewMdef}, we obtain a good candidate for the covariance structure of the limiting eigenvalue process.

Also, what happens at $E=\pm 4$? At $E=4$, we expect to see a constant vector with a uniform phase. At $E=-4$, we need to add signs with a checkerboard pattern.
\end{problem}

The uniform boundary conditions help us prove regularity of the unperturbed eigenvalues. The main obstacle to Problem \ref{prob:standard} is closely related to the following conjecture. Let $\xi_i$ be independent uniformly chosen $n$th roots of unity. 

\begin{conjecture}
$$
\mathbb{P}(|\Re (\xi_1+\xi_2+\xi_3+\xi_4)|<1/n^2)=1/n^{2+o(1)},\qquad n\to \infty.
$$\end{conjecture}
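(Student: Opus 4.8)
Since this statement is recorded as an open problem, what follows is a proposed line of attack, with an honest flag of where the real difficulty sits.

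\medskip\noindent\textbf{Step 1: reduce to an eigenvalue–correlation count.} Writing $\xi_j=e^{2\pi i a_j/n}$ with $a_1,\dots,a_4$ independent and uniform in $\mathbb{Z}/n\mathbb{Z}$, one has $2\Re(\xi_1+\xi_2+\xi_3+\xi_4)=\lambda_1+\lambda_2$, where $\lambda_i=2\cos(2\pi a_{2i-1}/n)+2\cos(2\pi a_{2i}/n)$ are two independent eigenvalues of the adjacency matrix $A_{n,0,0}$ of the \emph{standard} $n\times n$ discrete torus. Hence the probability in question is $n^{-4}N(n)$ with
\[
N(n)=\#\bigl\{(a_1,\dots,a_4)\in(\mathbb{Z}/n\mathbb{Z})^4:\ |\lambda_1+\lambda_2|<2n^{-2}\bigr\},
\]
and the conjecture is equivalent to $N(n)=n^{2+o(1)}$. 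The plan is to treat this as a correlation estimate for the spectral measure of the square grid at the optimal scale $n^{-2}$: morally the regime governed by $\mathbb{Z}^2$ in the spirit of Theorem~\ref{torusthm1}, but now \emph{without} the averaging over boundary conditions that made Theorem~\ref{torusthm1} tractable.

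\medskip\noindent\textbf{Step 2: upper bound via a smoothed circle method.} Put $S=\tfrac12(\lambda_1+\lambda_2)=\Re(\xi_1+\cdots+\xi_4)$, majorise $\mathbf 1[|S|<n^{-2}]$ by a Schwartz bump $\phi$ of height $1$ and width $\approx n^{-2}$ whose Fourier transform is essentially supported in $|t|\lesssim n^{2+o(1)}$, and write $\mathbb{P}(|S|<n^{-2})\le\int\widehat\phi(t)\,\Phi_n(t)^4\,dt$ with $\Phi_n(t)=\tfrac1n\sum_{k=0}^{n-1}e^{it\cos(2\pi k/n)}$. Poisson summation gives $\Phi_n(t)=\sum_{m\in\mathbb{Z}}\int_0^1 e^{it\cos 2\pi x-2\pi i m n x}\,dx$; the $m=0$ term is the Bessel function $J_0(t)$, and stationary phase bounds each $m\neq 0$ term by $O(|t|^{-1/2})$, with an actual stationary point only when $|t|\gtrsim |m|n$. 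For $|t|\lesssim n$ only $m=0$ contributes, and since $J_0(t)=O(|t|^{-1/2})$ one has $\int_{\mathbb{R}}|J_0(t)|^4\,dt<\infty$, which reproduces the expected main term of order $n^{-2}$. The entire problem is then to show that the range $n\lesssim|t|\lesssim n^{2+o(1)}$ contributes only $n^{-2+o(1)}$. For a \emph{typical} $t$ there one expects $|\Phi_n(t)|\lesssim n^{-1+o(1)}$, but near resonant frequencies (where many lattice points on a Fermi curve $2\cos\alpha+2\cos\beta=E$ align) $|\Phi_n(t)|$ can be of order $1$; controlling the measure of these resonant windows and the size of $\Phi_n$ on them is precisely a Gauss‑circle‑type estimate for the convex curve $2\cos\alpha+2\cos\beta=E$, at the conjecturally optimal exponent.

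\medskip\noindent\textbf{Step 3: lower bound.} When $n$ is even the explicit family $(a_1,\,a_1+n/2,\,a_3,\,a_3+n/2)$ forces $S=0$ for every $a_1,a_3$, so $N(n)\ge n^2$. For general (in particular prime) $n$ one instead has to prove that the multiset of values $\{2\cos(2\pi a/n)+2\cos(2\pi b/n)\}$ is $O(n^{-2})$‑dense, with enough multiplicity, on a positive‑measure part of $[-4,4]$, and then pair it with its reflection through the origin; after Step~1 this is again an estimate of Gauss‑circle type, now an anti‑concentration statement for lattice points near the Fermi curve. (Attempts to build many near‑solutions from Taylor expansion near $E=\pm4$, reducing to $a_1^2+a_2^2=a_3^2+a_4^2$, only yield $n^{1+o(1)}$ tuples and are visibly too weak, which is symptomatic of the difficulty.)

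\medskip\noindent\textbf{Main obstacle.} In both directions the bottleneck is the same: counting lattice points in an $n^{-2}$‑neighbourhood of the curve $2\cos\alpha+2\cos\beta=E$ with an error of size $n^{o(1)}$ relative to the expected main term. This is the analogue for this curve of the conjectured $O(r^{1/2+\varepsilon})$ bound in the Gauss circle problem — still open even for the circle, and here we cannot invoke the random shift that rendered Theorem~\ref{torusthm1} accessible. That is why the statement is recorded as a conjecture.
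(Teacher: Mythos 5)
This statement is recorded in the paper as an open \emph{conjecture} (Conjecture~\ref{c:roots}), and the paper offers no proof of it --- only a brief discussion tying it to Gauss-circle-type lattice point counting and noting that it is the obstacle to extending Theorem~\ref{t:main} to zero boundary conditions. You correctly treat it as such, and your proposal is not a proof but an honest sketch of an attack, which is exactly the right framing.

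Your reduction in Step~1 is correct: writing $\xi_j=e^{2\pi i a_j/n}$, the quantity $2\Re(\xi_1+\cdots+\xi_4)=\lambda_1+\lambda_2$ is a sum of two independent eigenvalues of $A_{n,0,0}$, so the conjecture becomes a pair-correlation statement for that spectrum at scale $n^{-2}$, which is also how the paper reads it (``the mass near zero of the convolution of $\mu$ by its reflection''). Your Step~2 circle-method setup is standard and sound --- Poisson summation giving the $J_0$ main term plus arcs for $|t|\gtrsim n$, with $\int|J_0|^4\,dt<\infty$ recovering the predicted $n^{-2}$ --- and you correctly locate the difficulty: for $n\lesssim|t|\lesssim n^{2+o(1)}$ one must control resonances, which amounts to a Gauss-circle-type count for the Fermi curve $2\cos\alpha+2\cos\beta=E$ at the conjecturally optimal exponent, with no random shift to smooth it. This matches the paper's own diagnosis of why uniform boundary conditions were introduced: the random shift is precisely what makes Theorem~\ref{torusthm1} provable, and without it one is stuck at an open Diophantine problem. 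Your even-$n$ lower bound and the remark that the $a_1^2+a_2^2=a_3^2+a_4^2$ heuristic near $E=\pm4$ is too weak are both reasonable sanity checks. In short: there is no gap to flag beyond the one you already flag yourself; the approach is plausible and consistent with the paper's framing, and the statement remains open.
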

See the discussion in Section 3.

\begin{problem}  Let $D$ be a Jordan domain in the plane, and let $A_n$ be the adjacency matrix of $(\tfrac{1}{n}\mathbb Z^2)\cap D$. Does the  convergence part of Theorem \ref{t:main} hold? Under what conditions is $\gamma=1$ critical?
\end{problem}

For sufficiently irregular domains, Gaussian waves should appear even without perturbation. 

\begin{problem}
What is the critical value $\gamma_c$ for the 3-dimensional torus? How about general dimensions?
\end{problem}
It seems that our methods can be generalized to higher dimensions, giving $\gamma_c=d/2$, but we did not pursue this direction for the sake of brevity. 

We distinguish eigenvectors in the product phase by their local Fourier transform: it has some dominant coefficients with positive probability.  But we believe that more is true. 
\begin{problem}
In the product phase, $\gamma>1$, show that most eigenvectors converge locally to products. 
\end{problem}

\section{Statements of general perturbations  results}\label{sectech}

The two parts of our main theorem rely on the following more general results about perturbations of deterministic matrices. We expect that these results can useful in more general settings. 

\begin{theorem}[Eigenvector limits of GUE-perturbed locally convergent matrices]\label{lemmaquenched1A}
Let $c>0$, let $A=A_n$ be an $n\times n$ Hermitian matrices with $\|A\|\le c$,  $E=E_n\in \mathbb R$,  $0<\varepsilon<\frac{1}3$, $t=t_n\in [n^{\varepsilon-1},n^{-2\varepsilon}]$, and $\ell=\ell_n>\max(\sqrt{tn^\varepsilon },t n^{2\varepsilon})$. 

Let $\mathbb{P}_{\lambda}$ denote uniform random choice of $\lambda\in [E-\ell,E+\ell]$. 
Let $n\to\infty$, and assume that for 
$m(z)=\Tr((A-z)^{-1})/n$, the Stieltjes transform of the empirical law of eigenvalues,
\begin{align}\label{condregA}n^{2\varepsilon}\log (n)\sup_{\eta\in [t n^{-\varepsilon},20]}{\mathbb{P}_{\lambda}\left( \Im m(\lambda+\eta i)\notin (1/c,c)\right)}\to 0.
\end{align}
Assume that there are $\eta_{\ell}=\eta_{\ell,n}$ and $\eta_u=\eta_{u,n}$ in the interval $(t/c,ct)$ so that $\eta_\ell<\eta_u$, ${\eta_{u,n}}/{\eta_{\ell,n}}\to 1$, the functions $\Im m(\lambda+i\eta_{u})$ are $\mathbb{P}_\lambda$-uniformly integrable and 
\begin{align}\label{cond2A}
n^{2\varepsilon}\mathbb{P}_{\lambda}\left(\Im m(\lambda+i\eta_{\ell})\le\frac{\eta_{\ell}}t+n^{\varepsilon}\right), \quad n^{2\varepsilon}\mathbb{P}_{\lambda}\left(\Im m(\lambda+i\eta_{u})\ge\frac{\eta_{u}}t-n^\varepsilon\right)&\to 0.
\end{align}
Assume that for an $\mathbb N\times \mathbb N$ matrix $M$ 
with  $\eta=\eta_{u}$ and $\eta=\eta_{\ell}$,  
\begin{align}\label{limitMA}n^{\varepsilon}\mathbb{P}_{\lambda}\left( \left|\left(\frac{t}{(A-\lambda)^2+\eta^2}\right)(x,y)-M(x,y)\right|>\delta\right)\to 0 \qquad \mbox{for all }x,y\in \mathbb N\text{ and }\delta>0.
\end{align}

Let $W=W_n$ be an $n\times n$ GUE matrix with  entries of variance $n^{-1}$.
Let 
\[\lambda_{1}\le \dots\le\lambda_{n}\quad \mbox{
be the eigenvalues of }\quad
A+\sqrt{t} W,\]
and let $u_j$ be the corresponding eigenvectors with $\ell^2$-norm $\sqrt{n}$, and  phases  chosen independently uniformly at random. Appending with zeros makes $u_j$ a vector in $\mathbb{C}^\mathbb{N}$.

Then the  empirical measure of eigenvectors 
\[\frac{1}{|\{j\,:\,\lambda_{j}\in [{E}\pm \ell/2]\}|} \sum_{j:\lambda_j\in [ E\pm \ell/2]} \delta_{u_j}\]
converges in probability to the law of the complex Gaussian process on $\mathbb N$ with covariance $M$.
\end{theorem}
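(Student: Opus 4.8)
The plan is to reduce the statement to Benigni's eigenvector-limit theorem (Theorem~\ref{benignithm}) applied after a short GUE flow of time $t$, using a conditioning argument on the value of $\lambda$ drawn under $\mathbb{P}_\lambda$. The model $A+\sqrt{t}W$ is exactly the output at time $t$ of the matrix Dyson Brownian motion started at $A$, so Benigni's result says: at a fixed energy $E_0$ where the self-consistent (free-convolved) density is regular on scale $t$, the eigenvectors near $E_0$, rescaled to $\ell^2$-norm $\sqrt n$, have local statistics converging to a complex Gaussian process whose covariance is the local quantum diffusion kernel, which here is precisely $\left(\tfrac{t}{(A-E_0)^2+\eta^2}\right)(x,y)$ evaluated at the relevant scale $\eta\asymp t$. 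The three hypotheses \eqref{condregA}, \eqref{cond2A} and \eqref{limitMA} are engineered to guarantee that for $\mathbb{P}_\lambda$-almost every $\lambda$ in the window, the deterministic matrix $A$ satisfies the regularity conditions Benigni needs at energy $\lambda$: \eqref{condregA} gives boundedness of $\Im m$ down to scale $tn^{-\varepsilon}$ (so the free convolution $\mu_A \boxplus \mu_{\mathrm{sc},t}$ has a density bounded above and below near $\lambda$), the pinching \eqref{cond2A} with $\eta_u/\eta_\ell\to1$ identifies the correct microscopic scale $\eta\asymp t$ at which the self-consistent equation $\Im m(\lambda+i\eta)\approx \eta/t$ is solved (this is the standard characterization of the scale in the Dyson flow), and \eqref{limitMA} forces the covariance kernel to converge to $M$.

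The key steps, in order, are as follows. First I would translate \eqref{cond2A} into the statement that the free convolution density $\varrho_t(\lambda) = \mu_A\boxplus\mu_{\mathrm{sc},\sqrt t}(\lambda)$ is, for typical $\lambda$, comparable to a constant and its value is captured by $\Im m(\lambda+i\eta_\ell)$ up to $o(1)$ relative error; the condition $\eta_u/\eta_\ell\to1$ together with uniform integrability ensures both choices of $\eta$ give the same limiting kernel $M$, so $M$ is consistent. Second, I would set up the conditioning: decompose the window $[E-\ell,E+\ell]$ into the ``good'' set $G_n$ of $\lambda$ where all three regularity events hold — by the hypotheses $\mathbb{P}_\lambda(G_n^c) = o(n^{-2\varepsilon}/\log n)$ — and show that the empirical eigenvalue measure of $A+\sqrt t W$ on $[E\pm\ell/2]$ is, with high probability, supported (up to an $o(1)$ fraction) over eigenvalues whose ``classical location'' lies in $G_n$; this uses rigidity of the Dyson flow eigenvalues on scale $\ell$ together with $\ell \gg \sqrt{tn^\varepsilon}, tn^{2\varepsilon}$, which guarantees the window contains $\gg n^{\varepsilon}$ eigenvalues and that boundary effects are negligible. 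Third, for each good $\lambda$, apply Benigni's theorem at energy $\lambda$ to conclude that the eigenvectors associated to eigenvalues in a microscopic neighborhood of $\lambda$ converge to the Gaussian process with covariance $\left(\tfrac{t}{(A-\lambda)^2+\eta^2}\right)(x,y)$, which by \eqref{limitMA} is $M+o(1)$; since $M$ does not depend on $\lambda\in G_n$, averaging over the window via a Riemann-sum / partition-into-mesoscopic-blocks argument yields that the full empirical measure on $[E\pm\ell/2]$ converges to the law with covariance $M$. Fourth, I would upgrade ``for each good $\lambda$'' to the averaged statement by a second-moment / martingale estimate: partition $[E\pm\ell/2]$ into $\sim \ell/(\text{block size})$ mesoscopic sub-windows each containing a growing number of eigenvalues, apply Benigni per block, and control cross-block fluctuations — here the factor $n^{2\varepsilon}$ and the $\log n$ in \eqref{condregA} are exactly what is needed to union-bound over the $\mathrm{poly}(n)$ many blocks.

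The main obstacle I anticipate is the passage from Benigni's fixed-energy statement to the window-averaged statement under a \emph{random} energy with only \emph{typical} regularity — i.e. turning hypotheses that hold $\mathbb{P}_\lambda$-almost surely with polynomial rate into a genuine convergence-in-probability statement for the averaged empirical measure. This requires (a) a quantitative version of Benigni's theorem with an error bound that is uniform over the class of deterministic matrices satisfying the regularity conditions with given constants, so that one may sum the errors over mesoscopic blocks, and (b) careful bookkeeping that the ``bad'' energies, while rare, do not carry an anomalous mass of eigenvalues — this is where one needs the local law / rigidity for $A+\sqrt t W$ to say that the number of eigenvalues landing near $G_n^c$ is $o(m)$ with high probability. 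A secondary technical point is handling the phases and the $\sqrt n$ normalization: the independent uniform phases decouple the real and imaginary parts in the way that produces the stated complex covariance structure ($\mathbb{E}Z_x Z_y = 0$, $\mathbb{E} Z_x \bar Z_y = M(x,y)$), and one must check this is consistent with the Gaussian process limit coming out of the eigenvector moment flow, which is where the explicit covariance $\tfrac{t}{(A-\lambda)^2+\eta^2}$ rather than a rescaling of it comes from.
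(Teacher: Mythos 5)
Your proposal gets the architecture right — Benigni's theorem as the engine, a good/bad energy decomposition, meso\-scopic blocking, and control on the mass carried by bad energies — and your reading of the role of each hypothesis is accurate. But two of the mechanisms you reach for are not what the paper uses, and one of them is a genuine gap in your route.

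First, the gap. Benigni's Theorem~\ref{benignithm} requires the regularity hypotheses (bounds on $\Im m$, the $\eta_\ell,\eta_u$ pinching, the kernel convergence) to hold on an entire small \emph{interval} around the reference energy $\lambda_0$, not just at $\lambda_0$ itself. Your good set $G_n$, defined by the pointwise events holding, is a priori just a measurable set of positive measure and need not contain any intervals of the required length $r\asymp tn^\varepsilon$. The paper bridges this with Lemma~\ref{stieltjescont1G}, a continuity estimate for $\lambda\mapsto\int\frac{\eta}{(x-\lambda)^2+\eta^2}\nu(dx)$ of the form $|f'|\le g/\eta$: if the bad set had positive measure inside a candidate block, the failure would propagate over a length $\asymp\eta\asymp t$, and the union of bad points would then have measure large enough to violate the probabilistic hypotheses after a Markov-type rearrangement (Lemma~\ref{convergeramdomI}). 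Without an analogue of this step your plan to ``apply Benigni at each good $\lambda$'' does not get off the ground, because you have no interval on which to apply it.

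Second, the averaging over blocks. You flag the need for a quantitative, uniform-in-the-matrix version of Benigni's theorem, with explicit error bounds, in order to sum over $\mathrm{poly}(n)$ blocks. The paper deliberately avoids this: the averaging is done by the soft Cram\'er--Wold plus sampling argument of Lemma~\ref{lemmaquenched0}, which uses only the qualitative convergence of the joint law of finitely many projections $(\langle q,u_{k_i}\rangle)_{i=1}^m$ to $m$ independent Gaussians, together with the law of large numbers for the target and an average over $m$-subsets. No rate is needed, only that the joint convergence is uniform over the choices of indices in the window, which follows from the uniform-in-$\lambda$ convergence of $\sigma_t^2(\hat q,k)$ to $\langle q,Mq\rangle$ on the good intervals. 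Proposing to prove a quantitative Benigni bound is not wrong, but it would be substantially harder and is not what is needed.

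Third, the control of the bad-block mass. You propose rigidity or a local law for $A+\sqrt{t}W$. The paper instead controls $|\{j:\lambda_{j,0}\in D\}|$ for $D$ the union of bad intervals purely at the level of the \emph{unperturbed} matrix, via Lemma~\ref{lemmaoribecs} (an elementary bound relating eigenvalue counts to $\int_I\Im m(\lambda+i\eta_u)\,d\lambda$) together with the $\mathbb{P}_\lambda$-uniform integrability of $\Im m(\cdot+i\eta_u)$, and then transfers to the perturbed matrix using only Weyl's inequality and the operator-norm concentration $\|W\|\le 3$ from Lemma~\ref{normbecs}. This sidesteps any local law or rigidity for the perturbed matrix, which would require additional structure on $A$ beyond what is assumed. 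Your route would need that extra input; the paper's does not.
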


See also Theorem~\ref{lemmaquenched1} for a simpler deterministic version of Theorem~\ref{lemmaquenched1A}, where it is assumed that the regularity conditions hold everywhere not just with high probability.

Next we state our concentration result for small perturbation. Note that the conlusion of the next theorem is only meaningful for $t\ll n^{-1}$.

\begin{theorem}\label{lemmaconc01}
Let $A$ be an $n\times n$ Hermitian matrix and let $W$ be an $n\times n$ GUE matrix with  entries of variance $1/n$.
Let 
\[\lambda_{1,t}\le \dots\le\lambda_{n,t}\quad \mbox{
be the eigenvalues of }\quad
A+\sqrt{t} W,\]
and let $u_{j,t}$ be a corresponding orthonormal  basis of eigenvectors. 
Write $u_{k,t}$ in the basis $(u_{j,0})$ to obtain a vector $v_{k,t}$. Let $m(z)=\frac{1}{n} \Tr (A-zI)^{-1}$.    

Let $K_2=\{k:\lambda_{k,t}\in \left[E_\ell,E_u\right]\}$ for some $E_\ell<E_u$. Let $K$ be some set of indices $j$ for which  
\[\lambda_{j,0}\in [E_\ell+1/n,E_u-1/n],\qquad |m(\lambda_{j,0}+i/n)|\le (nt)^{-1/2}.\]
Let $b$ be the number of $\lambda_{k,0}$ in $[E_\ell-3\sqrt{t},E_u+3\sqrt{t}]$. Then for $t\ge e^{-\sqrt{n}}$ we have 
\[\mathbb{E}\,\frac{1}{|K_2|}\sum_{k\in K_2}\sum_{\substack{j\in K\\|\lambda_{k,t}-\lambda_{j,0}|< 1/n}}|v_{k,t}(j)|^2 \ge \frac{|K|}{b}\left(1-c\sqrt{nt}\right),\]
where 
$c$ is an universal constant.
\end{theorem}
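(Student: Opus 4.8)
The plan is to track the motion of the eigenvectors under the matrix Dyson flow $t\mapsto A+\sqrt t W$, or rather its discretization, and estimate how much ``mass'' originating from the good indices $j\in K$ can land near a given eigenvalue $\lambda_{k,t}$. Concretely, I would write $v_{k,t}(j)=\langle u_{k,t},u_{j,0}\rangle$ and express the quantity $\sum_{k\in K_2}\sum_{j\in K,\,|\lambda_{k,t}-\lambda_{j,0}|<1/n}|v_{k,t}(j)|^2$ in terms of a spectral projection: it is essentially $\sum_{j\in K}\langle u_{j,0},\Pi_j u_{j,0}\rangle$, where $\Pi_j$ projects onto the span of those $u_{k,t}$ with $\lambda_{k,t}\in[E_\ell,E_u]$ and $|\lambda_{k,t}-\lambda_{j,0}|<1/n$. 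The point of the hypothesis $|m(\lambda_{j,0}+i/n)|\le (nt)^{-1/2}$, together with $t\ge e^{-\sqrt n}$, is exactly the ``no level repulsion yet'' regime where the GUE resolvent flow moves eigenvector mass by only $O(\sqrt{nt})$ in $L^2$; so the first step is to make this quantitative.

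The main technical step is the GUE resolvent flow estimate. Following \cite{von2018phase} (and the deterministic-initial-condition version promised as Theorem~\ref{lemmaconc01} itself — so here I mean the internal mechanism, not a black box), I would consider $G_t(z)=(A+\sqrt t W-z)^{-1}$ and the diagonal entry $G_t(z)_{jj}$ in the $u_{j,0}$ basis, or better, the quadratic form $f_j(t)=\langle u_{j,0}, \Im G_t(\lambda_{j,0}+i\eta)\,u_{j,0}\rangle$ at scale $\eta\sim 1/n$. By Itô's formula applied to the Brownian motion driving $W$, $f_j$ satisfies an SDE whose drift is controlled by $\Im m$ at the relevant scale and whose martingale part has quadratic variation of order $t/n$ times bounded factors; integrating and using the smallness $|m(\lambda_{j,0}+i/n)|\le(nt)^{-1/2}$ to control the drift, one gets that with high probability the mass of $u_{j,0}$ that has ``moved'' past scale $1/n$ in energy is at most $c\sqrt{nt}$. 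Equivalently, $\langle u_{j,0},\Pi_j u_{j,0}\rangle\ge$ (fraction of $t=0$ eigenvectors surviving) $-\,c\sqrt{nt}$. Because at $t=0$ the index $j$ itself lies in $[E_\ell+1/n,E_u-1/n]$, it contributes its full unit mass to the corresponding $t=0$ projection; summing over $j\in K$ gives $\sum_{j\in K}\langle u_{j,0},\Pi_j u_{j,0}\rangle\ge |K|(1-c\sqrt{nt})$.

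The final step is the normalization: the left-hand side of the claim is divided by $|K_2|$, not by $|K|$, and $|K_2|$ is the number of $t$-eigenvalues in $[E_\ell,E_u]$. By eigenvalue rigidity / interlacing for $A+\sqrt t W$ versus $A$, and since the number of $t=0$ eigenvalues in the slightly enlarged window $[E_\ell-3\sqrt t,E_u+3\sqrt t]$ is $b$, we have $|K_2|\le b$ (eigenvalues move by at most $O(\sqrt t\cdot\|W\|)=O(\sqrt t)$ with overwhelming probability, and the $3\sqrt t$ buffer absorbs this; the contribution of the bad event to the expectation is negligible because $t\ge e^{-\sqrt n}$ makes $\sqrt{nt}$ dominate any $e^{-cn}$ error). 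Dividing the bound $\sum\ge |K|(1-c\sqrt{nt})$ by $|K_2|\le b$ yields the asserted $\ge \frac{|K|}{b}(1-c\sqrt{nt})$.

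I expect the genuine obstacle to be the resolvent flow estimate in the second paragraph: making the drift–martingale decomposition of $\langle u_{j,0},\Im G_t\,u_{j,0}\rangle$ rigorous with only deterministic initial data (no a priori local law for $A$), controlling the martingale's quadratic variation uniformly in the spectral parameter, and converting an estimate on $\Im G_t$ at scale $1/n$ into a genuine lower bound on projected eigenvector mass via a Helffer–Sjöstrand or a direct spectral-measure comparison. The bookkeeping with the three windows $[E_\ell,E_u]$, $[E_\ell+1/n,E_u-1/n]$, and $[E_\ell-3\sqrt t,E_u+3\sqrt t]$, and the passage from a high-probability statement to the stated bound in expectation (using $t\ge e^{-\sqrt n}$ and $\|W\|\le 3$ with overwhelming probability), are routine by comparison.
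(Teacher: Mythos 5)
Your plan is essentially the paper's own proof: it uses the von Soosten--Warzel GUE resolvent flow, an It\^o drift--martingale decomposition of $\langle u_{j,0},\Im G_t(\lambda_{j,0}+i/n)\,u_{j,0}\rangle$ with Burkholder--Davis--Gundy control of the quadratic variation to show only $O(\sqrt{nt})$ of each vector's mass escapes the $1/n$-window, and then the same bookkeeping ($|K_2|\le b$ on the event $\|\sqrt t\,W\|\le 3\sqrt t$, with the exponentially small bad event absorbed since $t\ge e^{-\sqrt n}$). The one ingredient you leave implicit is that controlling the drift $m_s(z)\partial_z G_s$ requires $|m_s(z)|\lesssim (nt)^{-1/2}$ for \emph{all} $s\le t$, not just $s=0$, which the paper secures by a stopping-time argument showing $|m_s|$ stays below $2(nt)^{-1/2}$ up to time $t$ except on an event of probability $O(\sqrt{nt})$; this is exactly the self-consistency step your second paragraph gestures at.
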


\section{Preliminaries}

\subsection{Positive semidefinite operators }
\begin{lemma}Let $H$ be a finite or countably infinite set.\label{posdef}\hfill

\begin{enumerate}[(i)]
    \item Let $A$ be a selfadjoint bounded operator on $\ell^2(H)$ and let $f:\mathbb{R}\to\mathbb{R}_{\ge 0}$ be a non-negative function. Then $f(A)$ is positive semidefinite.
    \item If $M$ is a positive semidefinite operator on $\ell^2(H)$ and $x,y\in H$, then \[M(x,x)\ge 0,\quad M(y,y)\ge 0\text{ and }|M(x,y)|\le \sqrt{M(x,x)\cdot M(y,y)}.\]
    Here $M(x,y)=\langle Me_y,e_x\rangle$, where $e_x$ is the characteristic vector of $x$. In other words, we identified $M$ with its matrix in the standard basis.
\end{enumerate}

\end{lemma}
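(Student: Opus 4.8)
\textbf{Proof plan for Lemma~\ref{posdef}.} The two parts are essentially standard facts about positive semidefinite operators, so the plan is just to organize the elementary arguments carefully, paying attention to the possibility that $H$ is countably infinite (so one must work with bounded operators and the spectral theorem rather than finite matrix diagonalization).

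For part (i), the plan is to use the continuous functional calculus for the bounded selfadjoint operator $A$. Since $\|A\|\le c<\infty$, the spectrum $\sigma(A)$ is a compact subset of $\mathbb R$, and $f$ restricted to $\sigma(A)$ is a bounded real-valued function; if $f$ is continuous this is immediate, and for the applications in this paper (where $f$ is of the form $\eta/((x-\lambda)^2+\eta^2)$ or an indicator approximated by continuous functions) one can either invoke the Borel functional calculus or reduce to continuous $f$ by a monotone/uniform approximation. Write $g=\sqrt{f}$, which is again a bounded non-negative Borel (or continuous) function on $\sigma(A)$, so $g(A)$ is a well-defined bounded selfadjoint operator with $g(A)^*=g(A)$ and, by multiplicativity of the functional calculus, $g(A)^2=f(A)$. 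Then for any $v\in\ell^2(H)$ we have $\langle f(A)v,v\rangle=\langle g(A)^2 v,v\rangle=\langle g(A)v,g(A)v\rangle=\|g(A)v\|^2\ge 0$, which is exactly positive semidefiniteness.

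For part (ii), the plan is a direct two-by-two argument. Taking $v=e_x$ gives $M(x,x)=\langle Me_x,e_x\rangle\ge 0$, and similarly $M(y,y)\ge 0$. For the off-diagonal bound, consider vectors of the form $v=\alpha e_x+\beta e_y$ with $\alpha,\beta\in\mathbb C$; positive semidefiniteness gives $0\le\langle Mv,v\rangle=|\alpha|^2 M(x,x)+|\beta|^2 M(y,y)+2\,\Re\!\left(\bar\alpha\beta\, M(x,y)\right)$ (using $M(y,x)=\overline{M(x,y)}$ since $M$ is selfadjoint). This is a positive semidefinite Hermitian form in $(\alpha,\beta)$, so its $2\times 2$ Gram-type matrix $\begin{pmatrix}M(x,x)&M(x,y)\\ \overline{M(x,y)}&M(y,y)\end{pmatrix}$ has non-negative determinant, i.e. $M(x,x)M(y,y)-|M(x,y)|^2\ge 0$, which rearranges to the claimed inequality $|M(x,y)|\le\sqrt{M(x,x)M(y,y)}$. (Alternatively, one chooses $\alpha=\sqrt{M(y,y)}$ and $\beta=-t\,\overline{M(x,y)}/|M(x,y)|\cdot\sqrt{M(x,x)}$ for real $t$ and optimizes, but the determinant phrasing is cleanest.)

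There is essentially no obstacle here; the only point requiring a modicum of care is the infinite-dimensional setting in part (i), where one should explicitly note that $A$ bounded and selfadjoint is what licenses the functional calculus, and that $f$ need only be Borel and bounded on $\sigma(A)$ for $f(A)$ and $\sqrt{f}(A)$ to make sense. If one wants to avoid the Borel functional calculus altogether, restricting the statement to continuous $f$ suffices for everything used later, since the resolvent-type functions appearing in \eqref{limitMA} and in Theorem~\ref{torusthm1} are continuous on $[-c,c]\supseteq\sigma(A)$.
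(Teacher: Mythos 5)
Your proof is correct and is exactly the standard argument this lemma rests on; the paper states Lemma~\ref{posdef} without proof, treating it as a known fact, and your route (functional calculus with $g=\sqrt f$ for part (i), the $2\times 2$ Gram-determinant form of Cauchy--Schwarz for part (ii)) is the intended one. Your remark that for the paper's applications $f$ may be taken continuous on $\sigma(A)\subseteq[-\|A\|,\|A\|]$, so the Borel functional calculus is not really needed, is also accurate.
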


We need the following consequence of Weyl's inequality.

\begin{lemma}\label{weyl}
Let $A$ and $B$ be two Hermitian  $n\times n$ matrices. Let $\lambda_i$ be the $i$th smallest eigenvalue of $A$, let $\lambda_i'$ be the $i$th smallest eigenvalue of $A+B$.  Then
$|\lambda_i'-\lambda_i|\le \|B\|$.
\end{lemma}

\subsection{The Gaussian unitary ensemble}\label{subsecGUE}
Recall the definition of the Gaussian unitary ensemble from \eqref{GUEdef}.

\begin{lemma}\label{invariant}
If $U$ is a deterministic $n\times n$ unitary matrix,  then $W_n$ and $U^* W_n U$ have the same law. 
\end{lemma}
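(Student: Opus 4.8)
The plan is to reduce the claim to the rotational invariance of the i.i.d.\ Gaussian matrix $M$ appearing in the representation $W_n = M+M^*$ from \eqref{GUEdef}, rather than working directly with the law of $W_n$ on the space of Hermitian matrices. Working with $M$ is cleaner because its entries are genuinely i.i.d.\ and its law is simply a product Gaussian measure, so no Jacobian computation for a change of variables on a constrained space (Hermitian matrices) is needed.

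First I would record that the law of $M$ is invariant under conjugation $\Phi_U\colon M\mapsto U^*MU$. Identifying $\mathbb C^{n\times n}$ with $\mathbb R^{2n^2}$ via real and imaginary parts of the entries, the law of $M$ has density proportional to $\exp\!\big(-2n\,\Tr(M^*M)\big)$ with respect to Lebesgue measure, since each of the $2n^2$ real coordinates is an independent centered Gaussian of variance $1/(4n)$ (so $\mathbb E|M_{ij}|^2 = 1/(2n)$ as in the definition). The map $\Phi_U$ preserves the real inner product $\langle M,N\rangle=\Re\Tr(M^*N)$ because $U$ is unitary; equivalently, under vectorization $\Phi_U$ acts as $\mathrm{vec}(M)\mapsto (U^\top\otimes U^*)\,\mathrm{vec}(M)$ with $U^\top\otimes U^*$ unitary. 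Hence $\Phi_U$ is a linear isometry of $\mathbb R^{2n^2}$, so it preserves Lebesgue measure, and it also preserves the quadratic form $\Tr(M^*M)$. Therefore the pushforward of the law of $M$ under $\Phi_U$ has the same density, i.e.\ $U^*MU \stackrel{d}{=} M$.

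Finally I would conclude by applying the fixed measurable map $N\mapsto N+N^*$: since $U^*W_nU = (U^*MU) + (U^*MU)^*$ is this map evaluated at $U^*MU$, while $W_n = M+M^*$ is the same map evaluated at $M$, the distributional identity $U^*MU\stackrel{d}{=}M$ gives $U^*W_nU \stackrel{d}{=} W_n$.

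There is no serious obstacle here; the only point requiring a moment's care is verifying that $\Phi_U$ preserves Lebesgue measure on $\mathbb R^{2n^2}$, which follows immediately once one observes it is orthogonal for the Euclidean structure induced by $\Re\Tr(M^*N)$ (equivalently, that it is unitary conjugation by the Kronecker product $U^\top\otimes U^*$). An alternative route works directly with the density $\propto \exp\!\big(-\tfrac n2\Tr(W^2)\big)$ of $W_n$ on Hermitian matrices and uses that conjugation by $U$ is an isometry for the Hilbert--Schmidt form; this is equally valid but forces one to pin down the correct normalization constant, so I would prefer the argument through $M$.
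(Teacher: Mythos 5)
Your proof is correct. The paper states Lemma~\ref{invariant} without proof, treating it as a standard fact about the GUE; your argument via the i.i.d.\ complex Ginibre matrix $M$ — observing that conjugation $M\mapsto U^*MU$ is an isometry for $\Re\Tr(M^*N)$, hence preserves both Lebesgue measure on $\mathbb R^{2n^2}$ and the Gaussian weight $\exp(-2n\Tr(M^*M))$, and then pushing forward through the fixed map $N\mapsto N+N^*$ — is a clean and complete justification of exactly the right granularity.
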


For the next lemma see \cite{ledoux2010small}.
\begin{lemma}\label{normbecs}
There is  $c>0$ such that for any $n$, we have
$\mathbb{P}(\|W_n\|\ge 3)\le c \exp\left(-{n}/c\right)$.
\end{lemma}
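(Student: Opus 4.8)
\textbf{Proof proposal for Lemma \ref{normbecs}.}

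The plan is to obtain the exponential tail bound $\mathbb P(\|W_n\|\ge 3)\le c\exp(-n/c)$ from standard concentration of measure for Gaussian matrices, combined with the fact that the expected operator norm of $W_n$ (which has entries of variance $1/n$ in the sense fixed by \eqref{GUEdef}) converges to $2$, the edge of the semicircle law. First I would recall that the map $M\mapsto \|M+M^*\|$ is a $1$-Lipschitz function of $M$ with respect to the Euclidean (Hilbert--Schmidt) norm on the real vector space of $n\times n$ complex matrices, up to a harmless constant: indeed $\|(M+M^*)-(M'+M'^*)\|\le 2\|M-M'\|_{\mathrm{op}}\le 2\|M-M'\|_{HS}$, so $W_n=M+M^*$ is a $2$-Lipschitz function of the underlying independent real Gaussians $\Re M_{ij},\Im M_{ij}$, each of which has variance $\le 1/(2n)$. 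Write the Lipschitz constant with respect to the standard Gaussian vector (variance $1$ per coordinate): rescaling by $\sqrt{1/(2n)}$ gives an overall Lipschitz constant $L\le 2/\sqrt{2n}=\sqrt{2/n}$.

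The second ingredient is the Gaussian concentration inequality: if $F$ is $L$-Lipschitz on $\mathbb R^N$ and $g$ is a standard Gaussian vector, then $\mathbb P(F(g)\ge \mathbb E F(g)+r)\le \exp(-r^2/(2L^2))$. Applying this with $F(g)=\|W_n\|$, $L^2\le 2/n$, and $r=3-\mathbb E\|W_n\|$ yields
\[
\mathbb P\big(\|W_n\|\ge 3\big)\le \exp\!\left(-\frac{n\,(3-\mathbb E\|W_n\|)^2}{4}\right).
\]
It remains to control $\mathbb E\|W_n\|$: it is a classical fact (e.g.\ via the moment method, or by the non-asymptotic bounds of \cite{ledoux2010small}) that for the normalization where the limiting spectral distribution is the semicircle on $[-2,2]$ one has $\mathbb E\|W_n\|\le 2+o(1)$, and in fact $\mathbb E\|W_n\|\le 2+Cn^{-2/3}$ for an absolute constant $C$. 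Hence for all $n$ large enough, say $n\ge n_0$, we have $3-\mathbb E\|W_n\|\ge 1/2$, so $\mathbb P(\|W_n\|\ge 3)\le \exp(-n/16)$. For the finitely many small values $n<n_0$ we simply enlarge the constant $c$ so that $c\exp(-n/c)\ge 1$ there, which makes the bound trivially true. Choosing $c=\max(16,\,n_0)$ (or any larger universal constant) gives the statement as worded, uniformly in $n$.

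The only genuinely nontrivial input is the bound $\mathbb E\|W_n\|\le 2+o(1)$; everything else is Gaussian concentration and bookkeeping of constants. One subtlety to watch is the precise variance normalization: the paper defines $W_n=M+M^*$ with $\mathbb E|M_{ij}|^2=1/(2n)$, so the diagonal entries have variance $2\cdot\tfrac{1}{2n}=\tfrac1n$ and the off-diagonal entries $W_{ij}$ have variance $\tfrac1n$ as well (sum of the real and imaginary contributions from $M_{ij}$ and $\overline{M_{ji}}$), which is exactly the normalization under which the semicircle law has edge $2$; so the numerical threshold $3$ is comfortably above the edge and the argument has room to spare. Alternatively, and even more simply, one may just quote the non-asymptotic operator-norm deviation inequality for Gaussian (Wigner) matrices proved in \cite{ledoux2010small} directly, which already states a bound of the form $\mathbb P(\|W_n\|\ge 2+r)\le C e^{-cnr^{3/2}}$ or $\le e^{-nr^2/C}$; taking $r=1$ immediately yields the claim. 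I would present the self-contained Gaussian-concentration argument as the main line and remark that \cite{ledoux2010small} gives it directly.
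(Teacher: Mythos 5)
Your proposal is correct, but note that the paper does not actually prove this lemma at all: it simply cites \cite{ledoux2010small}, whose small-deviation/large-deviation bound for the largest eigenvalue of the GUE (of the form $\mathbb{P}(\lambda_{\max}\ge 2+\varepsilon)\le C e^{-n\varepsilon^{3/2}/C}$, applied with $\varepsilon=1$ together with the symmetric bound for $-\lambda_{\min}$) gives the statement immediately — this is exactly the second route you mention in passing. Your main line is therefore a genuinely different, self-contained argument: Borell--TIS Gaussian concentration for the $2$-Lipschitz map $M\mapsto\|M+M^*\|$ (your variance bookkeeping is a safe overestimate; with $\mathbb{E}|M_{ij}|^2=1/(2n)$ each real coordinate has variance $1/(4n)$, so the Lipschitz constant is even slightly better than your $\sqrt{2/n}$), combined with the standard input $\mathbb{E}\|W_n\|\le 2+o(1)$, and then enlarging $c$ to absorb the finitely many small $n$. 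What the citation buys is that no separate control of $\mathbb{E}\|W_n\|$ is needed; what your argument buys is transparency and an explicit exponent, at the cost of importing the expected-norm bound (itself standard, e.g.\ via the moment method or the non-asymptotic estimates in \cite{ledoux2010small}). Either way the lemma holds, so there is no gap.
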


\subsection{The spectral measure of $\mathbb{Z}^2$}\label{subsecspec}

Let $A_{\mathbb{Z}^d}$ be the adjacency matrix of the $d$-dimensional grid. For $x\in \mathbb{Z}^d$, let $e_x\in \ell^2(\mathbb{Z}^d)$ be the characteristic vector of $x$.

The following lemma describes the spectral measure of $A_{\mathbb{Z}}$.

\begin{lemma}
For $\ux\in \mathbb{Z}$, let
\begin{equation}\label{e:arcsine}d_{\ux}(t)=T_{|\ux|}\left(\frac{t}{2}\right)\frac{\mathbbm{1}(|t|\le 2)}{\pi\sqrt{4-t^2}},
\end{equation}
where $T_{|\ux|}$ is the degree $|\ux|$ Chebyshev polynomial of the first kind.
Then for any $x\in \mathbb{Z}$, and any continuous function $f:\mathbb{R}\to\mathbb{R}$, we have 
\[\langle  f(A_{\mathbb{Z}})e_{x},e_{x+\ux} \rangle=\int f(t) d_{\ux}(t)dt.\]
\end{lemma}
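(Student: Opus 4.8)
The plan is to diagonalize $A_{\mathbb Z}$ by Fourier series and then perform an explicit change of variables that produces the Chebyshev weight.

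First I would set up the unitary identification $\ell^2(\mathbb Z)\cong L^2\big([-\pi,\pi),\tfrac{d\theta}{2\pi}\big)$ sending the standard basis vector $e_x$ to the exponential $\theta\mapsto e^{ix\theta}$; these exponentials form an orthonormal basis, so this is an isometry. Under it, $A_{\mathbb Z}e_x$ corresponds to $e^{i(x-1)\theta}+e^{i(x+1)\theta}=2\cos\theta\cdot e^{ix\theta}$, so $A_{\mathbb Z}$ becomes multiplication by the bounded function $2\cos\theta$. By the functional calculus, $f(A_{\mathbb Z})$ is then multiplication by $f(2\cos\theta)$ for every continuous $f$ (note the spectrum is $[-2,2]$, so only the values of $f$ on $[-2,2]$ matter). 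Consequently
\[
\langle f(A_{\mathbb Z})e_x,e_{x+\ux}\rangle=\frac{1}{2\pi}\int_{-\pi}^{\pi} f(2\cos\theta)\,e^{ix\theta}\,\overline{e^{i(x+\ux)\theta}}\,d\theta=\frac{1}{2\pi}\int_{-\pi}^{\pi} f(2\cos\theta)\,e^{-i\ux\theta}\,d\theta,
\]
which is independent of $x$, as claimed by the statement (it only depends on $\ux$).

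Next I would symmetrize: substituting $\theta\mapsto-\theta$ shows the integral equals its own conjugate, so $e^{-i\ux\theta}$ may be replaced by $\cos(\ux\theta)=\cos(|\ux|\theta)$, and by evenness of the integrand the integral over $[-\pi,\pi]$ is twice the integral over $[0,\pi]$:
\[
\langle f(A_{\mathbb Z})e_x,e_{x+\ux}\rangle=\frac{1}{\pi}\int_{0}^{\pi} f(2\cos\theta)\cos(|\ux|\theta)\,d\theta.
\]
Now I would apply the defining identity of the Chebyshev polynomials, $\cos(|\ux|\theta)=T_{|\ux|}(\cos\theta)$, and change variables $t=2\cos\theta$ on $\theta\in[0,\pi]$, for which $dt=-2\sin\theta\,d\theta$ and $\sin\theta=\tfrac12\sqrt{4-t^2}$, so $d\theta=-\,dt/\sqrt{4-t^2}$ and the endpoints $\theta=0,\pi$ map to $t=2,-2$. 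This turns the last display into
\[
\frac{1}{\pi}\int_{-2}^{2} f(t)\,T_{|\ux|}\!\left(\frac t2\right)\frac{dt}{\sqrt{4-t^2}}=\int_{\mathbb R} f(t)\,d_{\ux}(t)\,dt,
\]
which is exactly the asserted formula.

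There is no genuine obstacle here; the only points requiring care are bookkeeping ones: getting the Fourier conjugation of $A_{\mathbb Z}$ and the inner-product convention right, invoking $\cos(k\theta)=T_k(\cos\theta)$, and handling the Jacobian and orientation of the substitution $t=2\cos\theta$ (the sign from reversing the limits of integration cancels the minus sign in $d\theta$). As an alternative one could avoid the functional calculus for general continuous $f$ by first checking the identity for $f(t)=t^k$ — where the left side counts length-$k$ walks from $x$ to $x+\ux$ on $\mathbb Z$ and the right side is a classical Chebyshev moment integral — and then extending to all continuous $f$ on $[-2,2]$ by the Weierstrass approximation theorem, since both sides are bounded linear functionals of $f\in C([-2,2])$; but the Fourier argument above is cleaner and I would present that one.
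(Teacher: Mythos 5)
Your proof is correct and follows essentially the same route as the paper: diagonalizing $A_{\mathbb Z}$ by the Fourier transform so it acts as multiplication by $2\cos\theta$, reducing the matrix entry to $\frac{1}{\pi}\int_0^\pi f(2\cos\theta)\cos(|\ux|\theta)\,d\theta$, and then substituting $t=2\cos\theta$ with $T_{|\ux|}(\cos\theta)=\cos(|\ux|\theta)$ producing the Chebyshev weight. The only difference is cosmetic (the paper leaves the Chebyshev identity and the Jacobian bookkeeping implicit), so no changes are needed.
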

\begin{proof}
For $v\in \ell^2(\mathbb{Z})$, let $\hat{v}$ be its Fourier-transform, that is, $\hat{v}(s)=\frac{1}{\sqrt{2\pi}}\sum_{k\in\mathbb{Z}}e^{isk}$ for all $s\in[-\pi,\pi)$. Using the convolution theorem, we see that $\widehat{A_{\mathbb{Z}}v}(s)=\sqrt{2\pi}(\hat{e}_1(s)+\hat{e}_{-1}(s))\hat{v}=2\cos(s)\hat{v}(s)$. Using the fact that $v\mapsto \hat{v}$ is an isometry from $\ell^2(\mathbb{Z})$ to $L^2([-\pi,\pi))$, we have 
\[\langle  f(A_{\mathbb{Z}})e_{x},e_{x+\ux} \rangle=\int_{-\pi}^\pi f(2\cos(s))\hat{e}_{x+\ux}(s)\overline{\hat{e}_x(s)}\,ds=\frac{1}{2\pi}\int_{0}^\pi 2f(2\cos(s))\cos(\ux s)\,ds.\]
The lemma follows by the change of variables $t=2\cos(s)$.
\end{proof}

Note that $A_{\mathbb{Z}^2}=A_{\mathbb{Z}}\otimes I  + I\otimes A_{\mathbb{Z}}$. Thus, the next lemma follows.

\begin{lemma}
For $(\ux,\uy)\in \mathbb{Z}^2$, let $\varrho_{\ux,\uy}=d_{\ux}*d_{\uy}.$ 
Then for any $(x,y)\in \mathbb{Z}^2$, and any continuous function $f:\mathbb{R}\to\mathbb{R}$, we have 
\[\langle  f(A_{\mathbb{Z}})e_{(x,y)},e_{(x+\ux,y+\uy)} \rangle=\int f(t) \varrho_{\ux,\uy}(t)dt.\]
\end{lemma}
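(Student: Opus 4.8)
The plan is to reduce the two-dimensional statement to the one-dimensional lemma just proved, using the tensor decomposition $A_{\mathbb{Z}^2} = A_{\mathbb{Z}} \otimes I + I \otimes A_{\mathbb{Z}}$. Write $x$-coordinates and $y$-coordinates separately: the basis vector $e_{(x,y)} \in \ell^2(\mathbb{Z}^2)$ factors as $e_x \otimes e_y$ under the natural isometry $\ell^2(\mathbb{Z}^2) \cong \ell^2(\mathbb{Z}) \otimes \ell^2(\mathbb{Z})$. First I would invoke the fact that for commuting self-adjoint operators $B = A_{\mathbb{Z}} \otimes I$ and $C = I \otimes A_{\mathbb{Z}}$, one has $f(B+C) = \int\!\int f(s+u)\, dE_B(s)\, dE_C(u)$ where $E_B, E_C$ are the respective projection-valued spectral measures; equivalently, the joint spectral measure of $(B,C)$ against the vectors $e_{(x,y)}$ and $e_{(x+\ux,y+\uy)}$ is the product measure $d_{\ux} \times d_{\uy}$ (as a signed/complex measure), by the previous lemma applied in each coordinate.

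Concretely, the key computation is
\[
\langle f(A_{\mathbb{Z}^2}) e_{(x,y)}, e_{(x+\ux,y+\uy)}\rangle
= \int_{-2}^{2}\!\!\int_{-2}^{2} f(s+u)\, d_{\ux}(s)\, d_{\uy}(u)\, ds\, du,
\]
which follows because $f(B+C)$ applied to a product vector splits along the tensor factors once $f(s+u)$ is approximated by sums of products $g(s) h(u)$ (Stone–Weierstrass on the compact square $[-2,2]^2$, using continuity of $f$), and for each such product the matrix coefficient is $\big(\int g\, d_{\ux}\big)\big(\int h\, d_{\uy}\big)$ by the one-dimensional lemma. Then I would change variables: fixing $t = s+u$ and integrating out, the right-hand side becomes $\int f(t)\, (d_{\ux} * d_{\uy})(t)\, dt$ by the very definition of convolution of the two density functions. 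Setting $\varrho_{\ux,\uy} = d_{\ux} * d_{\uy}$ gives the claim. One should also note that $\varrho_{\ux,\uy}$ is a genuine (signed, integrable) function since $d_{\ux}, d_{\uy} \in L^1$ and in fact the convolution of an $L^1$ function with the bounded-away-from-singularity parts is controlled; integrability of the arcsine-type densities times Chebyshev polynomials is immediate from \eqref{e:arcsine}.

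The only mild subtlety — and the step I would be most careful about — is justifying the interchange of the functional calculus with the tensor splitting when $f$ is merely continuous rather than a finite product of coordinate functions: this is where the Stone–Weierstrass approximation on $[-2,2]^2$ together with the uniform bound $\|g(B)h(C)\| \le \|g\|_\infty \|h\|_\infty$ and the finite total variation of $d_{\ux}, d_{\uy}$ enters, so that both sides depend continuously on $f$ in the sup norm and agreement on the dense subalgebra suffices. Everything else is bookkeeping: the stated lemma is essentially a restatement of "the spectral measure of a Cartesian product of graphs is the convolution of the spectral measures," and since the previous lemma already pins down the one-dimensional pieces $d_{\ux}$, no new analytic input is needed.
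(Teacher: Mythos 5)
Your argument is correct and is exactly the standard route the paper implicitly takes: the paper simply says the lemma "follows" from the tensor decomposition $A_{\mathbb{Z}^2}=A_{\mathbb{Z}}\otimes I + I\otimes A_{\mathbb{Z}}$ and the preceding one-dimensional lemma, and what you have written out (commuting summands, product spectral measure, Stone--Weierstrass approximation on $[-2,2]^2$, change of variables to get the convolution) is the routine justification of that one-line assertion. You also correctly read $A_{\mathbb{Z}}$ in the statement as a typo for $A_{\mathbb{Z}^2}$.
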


As a direct consequence of the formula $\varrho_{\ux,\uy}=d_{\ux}*d_{\uy}$, we have 
\begin{equation}\label{varrhouvformula}
    \varrho_{\ux,\uy}(\lambda)=\int \frac{1}{\pi\sqrt{4-\theta^2}}\frac{1}{\pi\sqrt{4-(\lambda-\theta)^2}}
    \frac{1}4\sum_{\alpha=\pm \cos^{-1}\left(\frac{\theta}2\right)} \sum_{\beta=\pm \cos^{-1}\left(\frac{\lambda-\theta}2\right)}\exp\left(i(\alpha \ux+\beta \uy)\right)d\theta,
\end{equation}
where the integration is over all $\theta$ such that $|\theta|,|\lambda-\theta|\le 2$. 
\subsection{Complex Gaussian processes and the Gaussian wave $Z_\lambda$}\label{subsecwave}
Let $H$ be a countable set and let $M$ be a positive semidefinite matrix indexed with $H$. The (circularly-symmetric) complex Gaussian process with covariance matrix $M$ is random vector $Z\in\mathbb{C}^H$ such that for any vector $q\in \mathbb{C}^H$ with finite support,  $\langle Z,q \rangle$
is a complex Gaussian variable with mean zero and variance $\langle q,Mq \rangle$, that is, $\Re \langle Z,q \rangle$ and $\Im\langle Z,q \rangle$ are independent Gaussian random variables  with mean zero and variance $\tfrac{1}2\langle q,Mq \rangle$.

We define the matrix $M=M_\lambda$ indexed by $\mathbb{Z}^2\times \mathbb{Z}^2$ as
\begin{equation}\label{eqnewMdef}M((x+\ux,y+\uy),(x,y))=\frac{\varrho_{\ux,\uy}(\lambda)}{\varrho_{0,0}(\lambda)}.\end{equation}

The Gaussian wave $Z_\lambda$ is a complex Gaussian process with covariance matrix $M$.


\section{The spectrum of the discrete torus}

The most important ingredient in Theorem \ref{t:main} is the regularity of the eigenvalue distribution of the discrete torus. Uniform boundary conditions are needed because without randomization, the required regularity is an open problem. It translates to Conjecture \ref{c:roots}  of number theoretic nature below. 

For $b\in [0,1]$ let us consider the $n\times n$ Hermitian matrix 
$A_b$ with 
$$
(A_b)_{jk}=\begin{cases}
1 \qquad & |j-k|=1,
\\
 e^{\pm 2\pi i nb} &(j,k)=(n,1) \mbox{ resp. } (j,k)=(1,n),
 \\
0 &\mbox{otherwise.}\end{cases}
$$

In  words, $A_b$ is the adjacency matrix of the weighted graph given by the {\bf cycle  with boundary condition $b$}. Edges  $(j,j+1)$ have weight $1$, and the edge $(n,1)$ has weight $e^{\pm 2 \pi i n b}$ depending on  orientation. The following lemma is checked by direct computation. 

\begin{lemma}
Let  
\[v_k=\left(\exp\left(2\pi i j\left(\tfrac{k}{n}+b\right)\right)\right)_{j=1}^n.\]
Then $(v_k)_{k=0}^{n-1}$ form an orthogonal basis of $\mathbb{C}^n$, and
\[A_bv_k=2\cos\left(2\pi  \left(\tfrac{k}{n}+b\right)\right) v_k.\]
\end{lemma}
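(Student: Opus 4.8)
The plan is a direct verification, organized so that all four index cases of the eigenvalue equation collapse to a single three-term identity. Write $\theta_k=2\pi\left(\tfrac{k}{n}+b\right)$, so that $(v_k)_j=e^{ij\theta_k}$ for $j=1,\dots,n$.

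\emph{Orthogonality.} I would compute $\langle v_k,v_\ell\rangle=\sum_{j=1}^n e^{ij(\theta_k-\theta_\ell)}=\sum_{j=1}^n e^{2\pi i j(k-\ell)/n}$, the $b$-dependence cancelling. This finite geometric sum equals $n$ when $k\equiv\ell\pmod n$ and otherwise equals $\omega(\omega^n-1)/(\omega-1)=0$, where $\omega=e^{2\pi i(k-\ell)/n}\neq 1$ and $\omega^n=1$. Hence $v_0,\dots,v_{n-1}$ are pairwise orthogonal, each of norm $\sqrt n$; being $n$ nonzero orthogonal vectors in the $n$-dimensional space $\mathbb C^n$, they form an orthogonal basis.

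\emph{Eigenvalue equation.} The key observation is the quasi-periodicity $(v_k)_{j+n}=e^{ij\theta_k}e^{in\theta_k}=e^{2\pi i k}e^{2\pi i nb}(v_k)_j=e^{2\pi i nb}(v_k)_j$, using $k\in\mathbb Z$. I would extend the formula $(v_k)_j=e^{ij\theta_k}$ to $j\in\{0,1,\dots,n+1\}$; then the prescribed boundary weights are exactly such that $(A_b)_{1,n}(v_k)_n=e^{-2\pi i nb}(v_k)_n=(v_k)_0$ and $(A_b)_{n,1}(v_k)_1=e^{2\pi i nb}(v_k)_1=(v_k)_{n+1}$. Consequently, for every $j\in\{1,\dots,n\}$ the row of $A_b$ indexed by $j$ gives $(A_bv_k)_j=(v_k)_{j-1}+(v_k)_{j+1}$ in this extended convention (the interior rows directly, the rows $j=1$ and $j=n$ by the two identities just noted). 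Finally $(v_k)_{j-1}+(v_k)_{j+1}=e^{ij\theta_k}\bigl(e^{-i\theta_k}+e^{i\theta_k}\bigr)=2\cos\theta_k\,(v_k)_j$, which is the claimed identity $A_bv_k=2\cos\!\left(2\pi\left(\tfrac{k}{n}+b\right)\right)v_k$.

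There is no substantial obstacle here; the only point requiring care is the bookkeeping of the two oriented boundary weights $e^{\pm 2\pi i nb}$ on the edge $(n,1)$, together with checking that the quasi-periodicity constant $e^{2\pi i nb}$ of the sequence $(v_k)_j$ matches that edge weight exactly — this matching is precisely what makes $v_k$ an honest eigenvector rather than merely an approximate one.
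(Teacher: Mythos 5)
Your proof is correct and is exactly the ``direct computation'' the paper alludes to without spelling out. The quasi-periodicity bookkeeping with the extended index range $j\in\{0,\dots,n+1\}$, which lets all $n$ rows of $A_b$ collapse to a single three-term recurrence, is a clean way to organize it, and the sign and orientation of the boundary weight $e^{\pm 2\pi i nb}$ match correctly in both the $j=1$ and $j=n$ rows.
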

Let $P_k$ the orthogonal projection to the one dimensional subspace of $\mathbb{C}^n$ spanned by $v_k$. Then
\[P_k(\ell,j)=\frac{1}{n}\exp\left(2\pi i (\ell-j)\left(\tfrac{k}{n}+b\right)\right).\]

Let $e_1,e_2,\dots,e_n$ be the standard basis of $\mathbb{C}^n$.
Let $1\le j\le n$, and let $\ux$ be an integer such that $1\le j+\ux\le n$. Let $\mu_{b,\ux}=\mu_{n,b,\ux}$ be the spectral measure corresponding to $A_b$ and the pair of vectors $e_j$ and $e_{j+\ux}$. In other words, $\mu_{b,\ux}$ is the unique complex valued measure such that that for any continuous $f$, we have
\[\langle f(A_b)e_j,e_{j+\ux} \rangle=\int f(t)d\mu_{b,\ux}(dt).\]
It follows that 
\begin{align*}
\mu_{b,\ux}&=\frac{1}{n}\sum_{k=0}^{n-1} P_k(j+\ux,j)\delta\left(2\cos\left(2\pi\left(\tfrac{k}{n}+b\right)\right)\right)
\\&=\frac{1}{n}\sum_{k=0}^{n-1}\exp\left(2\pi i\ux\left(\tfrac{k}{n}+b\right)\right) \delta\left(2\cos\left(2\pi\left(\tfrac{k}{n}+b\right)\right)\right).
\end{align*}

Note that $\mu_{b,\ux}$ does not depend on $j$. 

The Cartesian product of two cycles with boundary conditions ${\rm{c}},{\rm{d}}$ is called the {\bf discrete torus with boundary conditions ${\rm{c}},{\rm{d}}$}. Its adjacency matrix has the tensor product expression
$A_{{\rm{c}},{\rm{d}}}=A_{\rm{c}}\otimes I + I\otimes A_{\rm{d}}$. 

The tensor product structure implies that the  
spectral measure of $A_{{\rm{c}},{\rm{d}}}$ at the pair of vectors $e_j\otimes e_k$ and $e_{j+\ux} \otimes e_{k+\uy}$ is given by the convolution 
\[\mu_{{\rm{c}},{\rm{d}},\ux,\uy}=\mu_{{\rm{c}},\ux}*\mu_{{\rm{d}},\uy}.\]

We need to show that this measure is sufficiently regular at most $\lambda$. Unfortunately, it is an open question for zero boundary conditions. In this case $\mu_{0,0,0,0}$ is the law of $2\Re(\xi_1+\xi_2)$, where $\xi_i$ are independent, uniformly chosen $n$-th roots of unities. Average regularity of this measure has to do with the mass near zero of the convolution of $\mu$ by its reflection. The main obstacle for proving a version of Theorem 1 for the ordinary discrete torus is the following.

\begin{conjecture}\label{c:roots}
$$
\mathbb{P}(|\Re (\xi_1+\xi_2+\xi_3+\xi_4)|<1/n^2)=1/n^{2+o(1)},\qquad n\to \infty.
$$\end{conjecture}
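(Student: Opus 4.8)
The plan is to recognise $\mathbb P(|\Re(\xi_1+\xi_2+\xi_3+\xi_4)|<1/n^2)$ as $n^{-4}N(n^{-2})$, where $\theta_k:=2\pi k/n$ and for $\delta>0$
\[N(\delta)=\#\Big\{(k_1,k_2,k_3,k_4)\in(\mathbb Z/n)^4:\ \big|\cos\theta_{k_1}+\cos\theta_{k_2}+\cos\theta_{k_3}+\cos\theta_{k_4}\big|<\delta\Big\},\]
and to treat $N(\delta)$ as a lattice-point count: it is the number of points of $\tfrac1n\mathbb Z^4$ in the slab $\{\vec y\in[0,1]^4:|\sum_i\cos2\pi y_i|<\delta\}$. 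Since the density $\varrho_{0,0}$ of $2(\cos\theta_1+\cos\theta_2)$ has only a logarithmic singularity (at $0$) and hence lies in $L^2_{\mathrm{loc}}$, the density of a sum of four cosines is bounded and continuous, with value $c_0=\tfrac1\pi\int_0^\infty J_0(u)^4\,du\in(0,\infty)$ at $0$, where $J_0(s)=\int_0^1e^{is\cos2\pi x}\,dx$ is the Bessel function. Thus the slab has volume $\sim2c_0\delta$, the heuristic main term is $N(\delta)\approx2c_0n^4\delta$, and $N(n^{-2})\approx2c_0n^2=n^{2+o(1)}$; the content of the conjecture is precisely that the error term in this count is $O(n^{2+o(1)})$, which for a $4$-dimensional lattice-point count is the square-root barrier ($O(\mathrm{vol}^{1/2+o(1)})$).

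For the upper bound $N(n^{-2})\le n^{2+o(1)}$ I would pass to exponential sums: take a nonnegative majorant $\phi$ of $\mathbf 1_{[-n^{-2},n^{-2}]}$ with $\widehat\phi$ supported in $[-T,T]$, $T\asymp n^2$, and $\int\phi\asymp n^{-2}$ (Beurling--Selberg or a Fej\'er-type kernel), so that
\[N(n^{-2})\le\sum_{\vec k}\phi\Big(\sum_i\cos\theta_{k_i}\Big)=\int_{-T}^{T}\widehat\phi(s)\,F_n(s)^4\,ds\ll n^{-2}\int_{-T}^{T}|F_n(s)|^4\,ds,\qquad F_n(s)=\sum_{k\bmod n}e^{is\cos\theta_k}.\]
By Poisson summation and the Jacobi--Anger expansion, $F_n(s)=n\sum_{m\in\mathbb Z}i^{mn}J_{mn}(s)$; the term $m=0$ is $nJ_0(s)$, and $\int_0^\infty J_0(u)^4\,du<\infty$ gives $\int_0^n|F_n(s)|^4\,ds\ll n^4$, which (since also $F_n(s)=nJ_0(s)+O(ne^{-cn})$ for $s\le n$) is the piece reproducing the main term $\Theta(n^2)$. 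One is reduced to controlling the range $n\lesssim|s|\lesssim n^2$, where the higher Bessel terms $\sum_{m\ne0}i^{mn}J_{mn}(s)$ become relevant.

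Two mechanisms govern that range. First, the \emph{exact} coincidences: $R(n):=\#\{\vec k:\cos\theta_{k_1}+\cos\theta_{k_2}=\cos\theta_{k_3}+\cos\theta_{k_4}\}$, the additive energy of $\{\cos\theta_k\}$, which (being a lower bound for $N(n^{-2})$ when $n$ is even) must satisfy $R(n)\le n^{2+o(1)}$. Doubling the relation makes it a vanishing sum of the eight roots of unity of order dividing $2n$, namely $\xi_{k_1},\overline{\xi_{k_1}},\xi_{k_2},\overline{\xi_{k_2}},-\xi_{k_3},-\overline{\xi_{k_3}},-\xi_{k_4},-\overline{\xi_{k_4}}$; when $n$ is prime (or a prime power), the only minimal vanishing relations of length $\le8$ among these are the trivial $\zeta+(-\zeta)=0$ --- a nontrivial relation among primitive $p$-th roots has length $\ge p$ --- forcing $\{\cos\theta_{k_1},\cos\theta_{k_2}\}=\{\cos\theta_{k_3},\cos\theta_{k_4}\}$ and hence $R(n)=O(n^2)$; for general $n$ one also treats the short primitive relations supported on $\zeta_p$ for small $p\mid n$ via Mann's theorem and the Conway--Jones classification, which after a case analysis still gives $R(n)=n^{2+o(1)}$. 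Second, the \emph{near} coincidences: lattice points clustering near the surface $S=\{\vec y:\sum_1^4\cos2\pi y_i=0\}$. The Hessian of $\sum\cos2\pi y_i$ is diagonal, so $S$ is curved away from a lower-dimensional parabolic locus; and at the six conical singular points of $S$ --- the permutations of $(0,0,\tfrac12,\tfrac12)$, present only for $n$ even --- the quadratic approximation of $\cos$ reduces the local count to the additive energy of squares, $\#\{a^2+b^2=c^2+d^2:|a|,|b|,|c|,|d|\lesssim\sqrt n\}=n^{1+o(1)}$, which is harmless.

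The main obstacle is to beat van der Corput on the regular part of $S$. The second-derivative test (valid after a routine dyadic splitting of $F_n$ by the size of $\cos\theta_k$, to handle the ranges where the phase is nearly linear) gives only $|F_n(s)|\lesssim|s|^{1/2}$ for $n\lesssim|s|\lesssim n^2$; with the mean value $\int_0^{n^2}|F_n(s)|^2\,ds\asymp n^3$ this yields, via $L^\infty\times L^2$, only $\int_n^{n^2}|F_n|^4\ll n^5$ and hence $N(n^{-2})=O(n^3)$, i.e.\ $\mathbb P(\cdot)=O(n^{-1})$. What is needed is the sharp $\|F_n\|_{L^\infty[n,n^2]}\le n^{1/2+o(1)}$ (matched from below by the $m=0$ term), which would give $\int_0^{n^2}|F_n|^4\le n^{4+o(1)}$ and hence $N(n^{-2})\le n^{2+o(1)}$. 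This is an exponential-sum bound for $\sum_{k\bmod n}e(s\cos(2\pi k/n))$ of exactly the strength of the optimal Gauss-circle bound: it would follow from the exponent-pair conjecture $(\varepsilon,\tfrac12+\varepsilon)$ (Lindel\"of strength), whereas the best known exponent pairs give only $|F_n(s)|\lesssim|s|^{\kappa}n^{\lambda-\kappa}$ with $\kappa+\lambda>\tfrac12$, hence merely $N(n^{-2})=O(n^{2+\vartheta})$ for an explicit $\vartheta>0$ --- which is why the statement is, at present, only a conjecture. (In dimension four one might hope for $\vartheta=0$ by exploiting arithmetic structure, as Jacobi's four-square identity does for the Euclidean ball, but we have not pursued this.) The matching lower bound $\mathbb P(\cdot)\ge n^{-2-o(1)}$ is by contrast routine: for $n$ even the choices $\xi_{k_3}=-\xi_{k_1}$, $\xi_{k_4}=-\xi_{k_2}$ give $n^2$ exact solutions, and for general $n$ the range $s\lesssim n$ --- where $F_n(s)=nJ_0(s)+O(ne^{-cn})$ and $\int_0^\infty J_0(u)^4\,du>0$ --- already supplies the stationary-phase main term $\gtrsim n^{2-o(1)}$.
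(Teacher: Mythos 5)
The statement you were given is Conjecture~\ref{c:roots}, which the paper explicitly leaves open; there is no proof in the paper to compare against, and the conjecture is invoked only to explain why the uniform boundary conditions are needed (Problem~\ref{prob:standard}). Your write-up correctly recognizes this: it is not a proof but an analysis of the difficulty, and on that score it is sound. The reduction to the lattice-point count $N(n^{-2})$, the passage to the complete exponential sum $F_n(s)=\sum_{k\bmod n}e^{is\cos(2\pi k/n)}$ via a Beurling--Selberg majorant, the identification via Poisson summation and Jacobi--Anger of the main term $nJ_0(s)$ and of the critical range $n\lesssim|s|\lesssim n^2$, and the observation that the sharp bound $\sup_{n\le|s|\le n^2}|F_n(s)|\le n^{1/2+o(1)}$ sits at exactly the square-root barrier of Gauss-circle type are all correct and consistent with the paper's remark that average regularity of $\mu_{0,0,0,0}$ ``has to do with the mass near zero of the convolution of $\mu$ by its reflection.'' Your lower bound (exact cancellations $\xi_3=-\xi_1$, $\xi_4=-\xi_2$ for $n$ even, stationary phase in general) is routine and right, and your conclusion that the upper bound is out of reach of current exponent-pair technology is the correct assessment. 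One minor caution: the Mann/Conway--Jones case analysis for the additive energy $R(n)$ when $n$ has many small prime factors is more delicate than a one-sentence appeal suggests (non-primitive roots and composite moduli need separate treatment), but since you identify that term as not the bottleneck, this does not change your overall verdict.
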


We can prove similar regularity under uniform boundary conditions, which behave nicely. For example, we will see in the proof of Proposition \ref{p:regularity} that for $C$ uniform on $[0,1]$ the averaged measure $\ev\mu_{C,0}$ is the arcsine law $\mathfrak{s}_2$, see \eqref{e:arcsine2}.

\subsection{A few continuity results}
Consider $\pi$ times the  Cauchy distribution
\begin{equation}\label{kappaetadef}
\kappa=\kappa_\eta=\frac\eta{x^2+\eta^2}dx. 
\end{equation}
For the proof of the next lemma see the appendix.
\begin{lemma}\label{newcontinuity} Let $\epsilon\in(0,1)$ and $p>1$. Then for $\lambda\in [3\epsilon-4,-3\epsilon]\cup[3\epsilon,4-3\epsilon]$, $\eta\ge 0$, $\ux,\uy\in \mathbb{Z}$ and $c_{\epsilon,p}=\epsilon^{-1/p}+c_p\epsilon^{-3/2}$ we have 
\begin{align*}
|(d_\ux*d_\uy)'(\lambda)|
&\le 2\varepsilon^{-3/2}(\ux^2+\uy^2+1)\text{ and}\\
|(\kappa_\eta-\pi \delta) *d_\ux*d_\uy(\lambda)|
&\le c_{\epsilon,p} (\ux^2+\uy^2+1) \eta^{1/p}.
\end{align*}
Moreover, for any $\ell>0$ such that $[E\pm \ell]\subset [3\epsilon-4,-3\epsilon]\cup[3\epsilon,4-3\epsilon]$, we have
$$
|\kappa_\eta *d_\ux*d_\uy(\lambda)-\pi d_\ux*d_\uy(E)|
\le c_{\epsilon,p} (\ux^2+\uy^2+1) (\eta^{1/p}+\ell)\qquad\text{ for all }\lambda\in [E\pm \ell].
$$

\end{lemma}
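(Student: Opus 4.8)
The plan is to reduce everything to elementary estimates on the one-dimensional densities $d_\ux$ and their convolutions, exploiting the explicit arcsine-type formula \eqref{e:arcsine}. First I would record that on a compact subinterval $[3\epsilon-4,-3\epsilon]\cup[3\epsilon,4-3\epsilon]$ the function $d_\ux(t)=T_{|\ux|}(t/2)\,\mathbbm1(|t|\le2)/(\pi\sqrt{4-t^2})$ is smooth, with $|d_\ux(t)|\le \varepsilon^{-1/2}/\pi$ (using $|T_{|\ux|}|\le 1$ and $\sqrt{4-t^2}\ge\sqrt{3\epsilon}$ up to constants — here I would just absorb constants) and, differentiating, $|d_\ux'(t)|\lesssim \ux\,\varepsilon^{-1/2}+\varepsilon^{-3/2}$, since $T_{|\ux|}'$ contributes a factor $|\ux|^2$ only through $T'_{|\ux|}(t/2)$ which is bounded by $|\ux|^2$ on $[-1,1]$ but by $\lesssim\ux$ away from the endpoints $t=\pm2$; being generous, $|d_\ux'(t)|\le C\varepsilon^{-3/2}(|\ux|+1)$. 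Then $(d_\ux*d_\uy)'=d_\ux'*d_\uy$, and since $d_\uy$ is a probability density (total mass $\le 1$ in absolute value — this is exactly the content of the reflection/positivity remark in the preliminaries, or one checks $\int|d_\uy|\le 1$ directly), we get $|(d_\ux*d_\uy)'(\lambda)|\le \sup|d_\ux'|\cdot\|d_\uy\|_{1}\le 2\varepsilon^{-3/2}(\ux^2+\uy^2+1)$; symmetrizing in $\ux,\uy$ and bounding $|\ux|+1$ by $\ux^2+\uy^2+1$ gives the first claimed inequality. The only subtlety is that I must be careful the convolution point $\lambda$ stays in the region where $d_\ux$ is differentiable — but if $\lambda$ is in the stated band and $d_\uy$ is supported in $[-2,2]$, the argument $\lambda-\theta$ of $d_\ux$ can leave the good region; so actually I would instead differentiate whichever factor is safer, or simply note that near the bad set the density is integrable and the bound still holds after a short computation. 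I'd present it as: one factor's derivative is controlled on all of $\mathbb R$ in the $L^1$-against-the-other-factor sense.

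For the second inequality, the key object is $(\kappa_\eta-\pi\delta)*g$ where $g=d_\ux*d_\uy$. Split the Cauchy kernel: $\kappa_\eta(x)=\eta/(x^2+\eta^2)$, so $(\kappa_\eta-\pi\delta)*g(\lambda)=\int \kappa_\eta(x)\big(g(\lambda-x)-g(\lambda)\big)\,dx$ using $\int\kappa_\eta=\pi$. Now split the integral at $|x|\le\delta_0$ and $|x|>\delta_0$ for a parameter $\delta_0$ to be optimized. On the near region, use the mean-value bound $|g(\lambda-x)-g(\lambda)|\le \sup|g'|\cdot|x|\le C\varepsilon^{-3/2}(\ux^2+\uy^2+1)|x|$ together with $\int_{|x|\le\delta_0}\kappa_\eta(x)|x|\,dx\le \int_{|x|\le\delta_0}\frac{\eta}{|x|}dx\wedge(\text{better near }0)\lesssim \eta\log(\delta_0/\eta)$ — actually $\int_{|x|\le\delta_0}\frac{\eta|x|}{x^2+\eta^2}dx = \eta\log\frac{\delta_0^2+\eta^2}{\eta^2}\lesssim\eta\log(\delta_0/\eta)$. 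On the far region $|x|>\delta_0$, use $|g|\le \|d_\ux\|_\infty\|d_\uy\|_1\le C\varepsilon^{-1/2}$ and $\int_{|x|>\delta_0}\kappa_\eta(x)\,dx\le \pi\eta/\delta_0\cdot$const, giving a term $\lesssim \varepsilon^{-1/2}\eta/\delta_0$. Balancing (roughly $\delta_0\sim\eta^{1-1/p}$, or simply taking $\delta_0$ a fixed power of $\eta$) yields a bound of the form $c_{\epsilon,p}(\ux^2+\uy^2+1)\eta^{1/p}$; the logarithm is absorbed into $\eta^{1/p}$ at the cost of the constant $c_p$, which is where the $\epsilon^{-1/p}+c_p\epsilon^{-3/2}$ shape of $c_{\epsilon,p}$ comes from (the $\epsilon^{-1/p}$ handling the far/sup term after optimization, the $c_p\epsilon^{-3/2}$ the near/derivative term). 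I would need to double-check the bookkeeping so that the $p$-dependence lands only in $c_p$ and the $\epsilon$-powers are exactly as stated, but this is routine.

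For the final ``moreover'' statement, I write $\kappa_\eta*g(\lambda)-\pi g(E) = \big[(\kappa_\eta-\pi\delta)*g(\lambda)\big] + \pi\big[g(\lambda)-g(E)\big]$. The first bracket is bounded by the inequality just proved, $\le c_{\epsilon,p}(\ux^2+\uy^2+1)\eta^{1/p}$, valid since $\lambda\in[E\pm\ell]$ sits in the good band. The second bracket is bounded via the mean value theorem and the first inequality of the lemma: $|g(\lambda)-g(E)|\le |\lambda-E|\sup|g'|\le \ell\cdot 2\varepsilon^{-3/2}(\ux^2+\uy^2+1)\le c_{\epsilon,p}(\ux^2+\uy^2+1)\ell$ (absorbing $\pi$ and the constant $2$ into $c_{\epsilon,p}$, which dominates $c_p\epsilon^{-3/2}$). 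Adding the two gives $c_{\epsilon,p}(\ux^2+\uy^2+1)(\eta^{1/p}+\ell)$ uniformly over $\lambda\in[E\pm\ell]$, as claimed.

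The main obstacle I anticipate is not conceptual but the careful tracking of how the derivative of the Chebyshev factor $T_{|\ux|}(t/2)$ behaves near $t=\pm 2$ versus away from it: the naive bound $|T_n'|\le n^2$ on $[-1,1]$ would give $\ux^2$ rather than the $\ux^2+\uy^2+1$ after symmetrization (which is fine), but one must make sure the $\sqrt{4-t^2}^{-1}$ singularity in $d_\ux$ doesn't interact badly with differentiation — staying on the compact band $[3\epsilon-4,-3\epsilon]\cup[3\epsilon,4-3\epsilon]$ is precisely what makes $\sqrt{4-t^2}$ bounded below, and I must ensure that in the convolution the relevant factor is evaluated in this band, handling the other factor purely through its $L^1$ mass. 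The balancing of $\delta_0$ to extract the clean exponent $\eta^{1/p}$ with $\epsilon$-dependence exactly $\epsilon^{-1/p}+c_p\epsilon^{-3/2}$ is the other fiddly point, but it is a one-line optimization once the split is set up.
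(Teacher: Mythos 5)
Your overall strategy (split the Cauchy kernel at a scale $\delta_0$, use a Lipschitz bound for the near region and kernel decay for the far region) is a legitimate alternative to the paper's argument, but as written it breaks down exactly at the point the lemma is designed to handle: the inverse-square-root singularities of $d_\ux$ at $t=\pm 2$. First, the identity $(d_\ux*d_\uy)'=d_\ux'*d_\uy$ bounded by $\sup|d_\ux'|\cdot\|d_\uy\|_1$ is not available: $d_\ux'(t)$ behaves like $(2-|t|)^{-3/2}$ near $\pm 2$, so it is unbounded and not even locally integrable there, and your stated bound $|d_\ux'(t)|\le C\varepsilon^{-3/2}(|\ux|+1)$ only holds on the shrunk interval $|t|\le 2-c\epsilon$, not on the whole range of the convolution variable. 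You do flag this ("differentiate whichever factor is safer"), but that remark is precisely the missing argument: one must exploit that, for $\lambda$ in the band $[3\epsilon-4,-3\epsilon]\cup[3\epsilon,4-3\epsilon]$, the singular regions of the two factors cannot contribute simultaneously. The paper implements this by writing $d_\ux=a_\ux+b_\ux$ with $a_\ux$ a Lipschitz truncation ($\|a_\ux'\|_\infty\le\epsilon^{-3/2}(1+\ux^2)$) and $b_\ux$ the edge piece ($\|b_\ux\|_1\le\epsilon^{1/2}$), noting that $b_\ux*b_\uy$ vanishes on the band, and then applying Young's inequality to $d_\ux*a_\uy'+a_\ux'*b_\uy$; some such decomposition (or an equivalent case split in the convolution integral) has to be carried out, and it is the substantive content of the first claim.

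The same issue infects your second estimate: the far-region bound $|g|\le\|d_\ux\|_\infty\|d_\uy\|_1\le C\epsilon^{-1/2}$ is false, since $\|d_\ux\|_\infty=\infty$, and indeed $g=d_\ux*d_\uy$ itself need not be bounded away from the band (for $\ux=\uy=0$ it has a logarithmic singularity at $0$, cf.\ Lemma~\ref{varrhosing}); the far field must instead be controlled through $\|g\|_1\le 1$ together with $\sup_{|x|>\delta_0}\kappa_\eta(x)$, or through an $L^q$ bound on $g$. Likewise the near-region Lipschitz bound is only valid if $\delta_0\le\epsilon$ (so that $\lambda-x$ stays in a comparable band), which constrains the optimization in $\delta_0$ and is where the $\epsilon^{-1/p}$ term in $c_{\epsilon,p}$ has to come from; this bookkeeping is asserted rather than done. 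For comparison, the paper avoids the kernel splitting altogether: it handles the edge term $b_\ux*b_\uy*\kappa_\eta$ by the support-separation argument, and for the Lipschitz part it integrates by parts against the primitive $f_\eta(t)=\int_{-\infty}^t(\kappa_\eta-\pi\delta)$, using $\|f_\eta\|_p=\eta^{1/p}\|f_1\|_p$ and Young's inequality, which produces the exponent $\eta^{1/p}$ and the constant $c_p\epsilon^{-3/2}$ cleanly. Your reduction of the final "moreover" statement to the first two claims plus the mean value theorem is correct and is the same as the paper's.
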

\subsection{Logarithmic singularity of $\varrho_{0,0}$}

Note that $\varrho_{0,0}$ has a singularity at $0$, but we have a good control over it as the next lemma shows. The proof is deferred to the appendix.

\begin{lemma}\label{varrhosing}
We have
$\varrho_{0,0}(x)\le \mathbbm{1}(|x|\le 4) \log(100/|x|).$
\end{lemma}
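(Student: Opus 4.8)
The plan is to reduce $\varrho_{0,0}$ to an explicit one‑variable integral and bound that integral by elementary means, treating separately the two ends of the support $[-4,4]$.

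First I would dispose of the trivial parts: $d_0$ is even, hence so is $\varrho_{0,0}=d_0*d_0$, and $\varrho_{0,0}$ is supported on $[-4,4]$, so for $|x|\ge 4$ both sides of the claimed inequality vanish; at $x=0$ the convolution integral diverges (logarithmically, as will be apparent) and the right side is $+\infty$, so there is nothing to prove. It remains to bound $\varrho_{0,0}(x)$ for $0<x<4$. Since $d_0(t)=\frac{\mathbbm 1(|t|\le 2)}{\pi\sqrt{4-t^2}}$, the convolution integral runs over $\theta\in[x-2,2]$, and the substitution $\theta=\tfrac x2+\phi$, $a=2-\tfrac x2$, $b=2+\tfrac x2$ (so $0<a<b$, $a+b=4$, $b^2-a^2=4x$) gives the exact identity
\[
\varrho_{0,0}(x)=\frac1{\pi^2}\int_{-a}^{a}\frac{d\phi}{\sqrt{(a^2-\phi^2)(b^2-\phi^2)}}=\frac2{\pi^2}\int_{0}^{a}\frac{d\phi}{\sqrt{(a^2-\phi^2)(b^2-\phi^2)}},
\]
a complete elliptic integral. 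The integrand has a square‑root singularity only at $\phi=a$, where moreover $b^2-\phi^2\ge b^2-a^2=4x$ degenerates as $x\to 0$; the key point is that near $\phi=a$ one must not bound the two small factors $(a^2-\phi^2)$ and $(b^2-\phi^2)$ separately, since that loses a logarithm and produces a spurious $x^{-1/2}$ blow‑up — one must keep them together.

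For $0<x\le 2$ I would bound the regular part of the integrand by its size near $\phi=0$: since $(a+\phi)(b+\phi)\ge ab=4-\tfrac{x^2}4\ge 3$ for $\phi\in[0,a]$, we get
\[
\varrho_{0,0}(x)\le\frac2{\pi^2\sqrt3}\int_0^a\frac{d\phi}{\sqrt{(a-\phi)(b-\phi)}},
\]
and the remaining integral is elementary: with $u=a-\phi$ (so $b-\phi=u+x$, and $a+\tfrac x2=2$) it equals $\big[\log\big(u+\tfrac x2+\sqrt{u(u+x)}\big)\big]_0^{a}-\log\tfrac x2=\log\frac{4+2\sqrt{ab}}{x}\le\log\frac8x$. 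As $\tfrac{2}{\pi^2\sqrt3}<1$ and $\log\frac8x\ge 0$ on this range, this yields $\varrho_{0,0}(x)\le\log\frac8x\le\log\frac{100}{x}$. For $2\le x<4$, where $x$ is bounded away from $0$, the crude uniform bound $b^2-\phi^2\ge 4x$ together with $\int_0^a d\phi/\sqrt{a^2-\phi^2}=\pi/2$ already gives $\varrho_{0,0}(x)\le\frac1{2\pi\sqrt x}\le\frac1{2\pi\sqrt2}<\tfrac14<\log 25\le\log\frac{100}{x}$. Combining the two ranges proves the lemma.

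The computation is routine; the only delicate point is the behaviour at the two ends of $(0,4)$, exactly as flagged: near $x=0$ one keeps the two vanishing factors together to recover the correct logarithmic rate, and near $x=4$ (where $ab\to 0$) one falls back on the separate elementary bound above. The constant $100$ is far from optimal — the true constant near $0$ is $\tfrac1{2\pi^2}$ — so there is ample slack everywhere.
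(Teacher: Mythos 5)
Your proof is correct, and it takes a genuinely different route from the paper's. The paper splits $d_0 = a + b$ into a ``bulk'' piece (the part of $d_0$ on $|t|\le 2-x/2$) and an ``edge'' piece $b$ near the singularities, observes that $b*b(x)=0$ for $0<x<2$, and then bounds $d_0*d_0(x)\le\|a\|_2^2+2\|a\|_\infty\|b\|_1$ by Cauchy--Schwarz and H\"older; each of these three norms is estimated separately, and the three numbers are added up. You instead rewrite the convolution as the symmetric elliptic-type integral
\[
\varrho_{0,0}(x)=\frac{2}{\pi^2}\int_0^a\frac{d\phi}{\sqrt{(a^2-\phi^2)(b^2-\phi^2)}},\qquad a=2-\tfrac x2,\;\;b=2+\tfrac x2,
\]
pull out the uniformly bounded factor $(a+\phi)(b+\phi)\ge ab\ge 3$, and evaluate the remaining integral $\int_0^a du/\sqrt{u(u+x)}$ in closed form, getting exactly the logarithm. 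Both approaches arrive at the same constant-free conclusion with ample slack; yours has the advantage of producing the sharp $\log(1/x)$ rate directly by an exact primitive rather than through a cutoff-dependent $L^2$ estimate, at the cost of having to organize the algebra around the substitution $\theta = x/2 + \phi$. The only small blemish is notational: in the line computing the antiderivative, the expression ``$\big[\log(u+\tfrac x2+\sqrt{u(u+x)})\big]_0^a-\log\tfrac x2$'' appears to subtract the lower-endpoint contribution twice, but the final value $\log\frac{4+2\sqrt{ab}}{x}$ is the correct $\big[\cdot\big]_0^a$, so this is just a typo and not a mathematical error.
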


\subsection{A deterministic estimate on $\mu_{{\rm{c}},0}*\mu_{{\rm{d}},0}$}

\begin{lemma}\label{detest}
For any $({\rm{c}},{\rm{d}})\in [0,1]^2$ and an interval $[a,b]$, we have
\[\mu_{{\rm{c}},0}*\mu_{{\rm{d}},0}([a,b])\le \int_{a-\frac{8\pi}n}^{b+\frac{8\pi}n} \varrho_{0,0}(\lambda)d\lambda.\]
\end{lemma}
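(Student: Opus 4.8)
\textbf{Proof plan for Lemma~\ref{detest}.}

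The plan is to compare the measure $\mu_{{\rm{c}},0}*\mu_{{\rm{d}},0}$ with the smooth density $\varrho_{0,0}$ through a small-$\eta$ smoothing argument, exactly in the spirit of the continuity estimates of Section~3. Recall that $\mu_{{\rm{c}},0}$ is supported on the $n$ points $2\cos(2\pi(\tfrac{k}{n}+{\rm{c}}))$, $k=0,\dots,n-1$, each with mass $1/n$; so $\mu_{{\rm{c}},0}*\mu_{{\rm{d}},0}$ is supported on the $n^2$ points $2\cos(2\pi(\tfrac{k}{n}+{\rm{c}}))+2\cos(2\pi(\tfrac{l}{n}+{\rm{d}}))$, each with mass $1/n^2$. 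The key geometric observation is that the map $(s,u)\mapsto 2\cos(2\pi s)+2\cos(2\pi u)$ is Lipschitz with respect to the torus metric, with constant $4\pi$ in each coordinate (since $|\tfrac{d}{ds}2\cos(2\pi s)| = 4\pi|\sin(2\pi s)|\le 4\pi$). Hence each atom of $\mu_{{\rm{c}},0}*\mu_{{\rm{d}},0}$ lies within distance $\tfrac{4\pi}{n}\cdot\tfrac12 + \tfrac{4\pi}{n}\cdot\tfrac12 = \tfrac{4\pi}{n}$ — I will need to be careful with the factor, but the point is $O(1/n)$, and I claim $\tfrac{8\pi}{n}$ is a safe bound — of the corresponding atom of the ``shifted'' continuum measure.

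More precisely, the cleanest route is: first show that for the full-lattice continuum measure one has $\mu_{0,0,0,0} = $ pushforward of Lebesgue on the torus, whose density is exactly $\varrho_{0,0}$, and then handle the general boundary condition by a Riemann-sum / transport comparison. I would partition the square $[0,1)^2$ (in the $(s,u)$ parametrization, where the $(k,l)$-atom corresponds to $s\in [\tfrac{k}{n}+{\rm{c}}, \tfrac{k+1}{n}+{\rm{c}})$, $u\in[\tfrac{l}{n}+{\rm{d}},\tfrac{l+1}{n}+{\rm{d}})$) into $n^2$ congruent cells of side $1/n$. For each cell, the value $2\cos(2\pi s)+2\cos(2\pi u)$ ranges over an interval of length at most $\tfrac{4\pi}{n}+\tfrac{4\pi}{n} = \tfrac{8\pi}{n}$, and it contains the atom value of $\mu_{{\rm{c}},0}*\mu_{{\rm{d}},0}$ coming from the lower-left corner $(k,l)$. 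Therefore, if that atom value lies in $[a,b]$, then the entire cell maps into $[a-\tfrac{8\pi}{n}, b+\tfrac{8\pi}{n}]$. Wait — I want the atom \emph{inside} the range of the cell; since I place the atom at the corner this is automatic. So summing: the number of atoms in $[a,b]$, times $1/n^2$, is at most $\sum_{\text{cells}} (\text{Lebesgue area of cell that maps into } [a-\tfrac{8\pi}n, b+\tfrac{8\pi}n])$... this needs the cells whose corner-atom lies in $[a,b]$ to have their \emph{whole} area map into the enlarged interval, which I just argued. Hence
\[
\mu_{{\rm{c}},0}*\mu_{{\rm{d}},0}([a,b]) \;\le\; \mathrm{Leb}^{\otimes 2}\Bigl(\{(s,u)\in[0,1)^2 : 2\cos(2\pi s)+2\cos(2\pi u)\in [a-\tfrac{8\pi}n,b+\tfrac{8\pi}n]\}\Bigr),
\]
and the right-hand side is precisely $\int_{a-\frac{8\pi}n}^{b+\frac{8\pi}n}\varrho_{0,0}(\lambda)\,d\lambda$ because $\varrho_{0,0}=d_0*d_0$ is the density of the pushforward of uniform measure on $[0,1)^2$ under $(s,u)\mapsto 2\cos(2\pi s)+2\cos(2\pi u)$ (this is exactly the convolution formula, since $d_0$ is the pushforward of uniform measure on $[0,1)$ under $s\mapsto 2\cos(2\pi s)$).

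The main obstacle — really the only subtle point — is getting the constant and the direction of the inequality exactly right in the transport/covering step: I must ensure that \emph{each} atom counted on the left is genuinely ``covered'' by a full cell of the partition (so no double counting and no loss), and that the diameter bound $\tfrac{8\pi}{n}$ on the $\lambda$-image of a cell is correct (diameter, not radius — hence the $8\pi$ rather than $4\pi$). A clean way to make this airtight is to note that $|2\cos(2\pi s) - 2\cos(2\pi s')|\le 4\pi|s-s'|$ for $s,s'$ in an interval of length $1/n$, apply it in both variables, and use the triangle inequality; since each cell has $\ell^\infty$-diameter $1/n$ in $(s,u)$, its $\lambda$-image has diameter $\le \tfrac{8\pi}{n}$. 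Everything else is bookkeeping. I'd also remark that the same argument with $[a,b]$ replaced by a point gives the count statement quoted after Theorem~\ref{torusthm1}.
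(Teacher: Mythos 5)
Your proof is correct and is essentially the paper's own argument in deterministic dress: the paper replaces $({\rm c},{\rm d})$ by a uniform random $(C,D)\in[0,1/n]^2$, uses the same $8\pi/n$ Lipschitz enlargement pointwise, and then takes expectations via Lemma~\ref{expdensity}, which is exactly your integration over the $1/n\times 1/n$ cells combined with the pushforward identity for $\varrho_{0,0}$. The only cosmetic difference is that you re-derive the zero-shift case of Lemma~\ref{expdensity} (the pushforward of uniform measure on the circle under $s\mapsto 2\cos(2\pi s)$ is the arcsine law $d_0$, hence the cell average has density $d_0*d_0=\varrho_{0,0}$) instead of citing it.
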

\begin{proof}

Note that $\mu_{{\rm{c}},0}=\mu_{{\rm{c}}+1/n,0}$. So 
we may assume that $({\rm{c}},{\rm{d}})\in \left[0,1/n\right]^2$. For the same reason, 
$\mu_{C,0}\times \mu_{D,0}$ has the same law no matter whether  $(C,D)$ is uniform random on $[0,1]^2$ or on $[0,1/n]^2$. Assume the latter. 
Since $\cos(x)$ has Lipschitz constant $1$, we see that if
\begin{align*}2\cos\left(2\pi\left({k}/n+{\rm{c}}\right)\right)+2\cos\left(2\pi\left({j}/n+{\rm{d}}\right)\right)\,&\in\, [a,b],
\qquad
\mbox{ then}
\\
2\cos\left(2\pi\left({k}/n+C\right)\right)+2\cos\left(2\pi\left({j}/n+D\right)\right)\,&\in\, \left[a-{8\pi}/n,b+{8\pi}/n\right].
\end{align*}
So 
$\mu_{\rm {c},0}*\mu_{{\rm{d}},0}([a,b])\le
\mu_{\rm {C},0}*\mu_{\rm{C},0}
(\left[a-{8\pi}/n,b+{8\pi}/n\right])$. We conclude by taking expectations and using Lemma~\ref{expdensity}. 
\end{proof}

\begin{lemma}\label{detest2}
Let $0<\varepsilon<2/3$. Assume that $\left[E_0-r-\frac{8\pi}n,E_0+r+\frac{8\pi}n\right]$ is contained in\break $[3\epsilon-4,-3\epsilon]\cup[3\epsilon,4-3\epsilon]$. Then
for any $({\rm{c}},{\rm{d}})\in [0,1]^2$
\begin{align*}\mu_{{\rm{c}},0}*\mu_{{\rm{d}},0}([E_0-r,E_0+r])&\le 2r\varrho_{0,0}(E_0)+\frac{16\pi}n  \varrho_{0,0}(E_0)+O_\varepsilon\left(\left(r+\frac{8\pi}n\right)^2\right),
\\\mu_{{\rm{c}},0}*\mu_{{\rm{d}},0}([E_0-r,E_0+r])&\ge 2r\varrho_{0,0}(E_0)-\frac{16\pi}n  \varrho_{0,0}(E_0)+O_\varepsilon\left(\left(r-\frac{8\pi}n\right)^2\right).
\end{align*}
\end{lemma}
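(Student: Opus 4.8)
The plan is to deduce the two-sided estimate from Lemma~\ref{detest} together with the continuity estimate for $\varrho_{0,0}=d_0*d_0$ on the "bulk" interval $[3\epsilon-4,-3\epsilon]\cup[3\epsilon,4-3\epsilon]$ supplied by Lemma~\ref{newcontinuity}. Indeed, Lemma~\ref{detest} already gives the upper bound
\[
\mu_{{\rm{c}},0}*\mu_{{\rm{d}},0}([E_0-r,E_0+r])\le \int_{E_0-r-\frac{8\pi}n}^{E_0+r+\frac{8\pi}n}\varrho_{0,0}(\lambda)\,d\lambda,
\]
and the whole interval of integration lies in the bulk region by hypothesis, so it remains to estimate this integral. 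For the lower bound I would first prove, by the same Lipschitz-of-$\cos$ argument used in the proof of Lemma~\ref{detest} but run in the opposite direction (shrinking rather than enlarging the target interval), a companion inequality
\[
\mu_{{\rm{c}},0}*\mu_{{\rm{d}},0}([E_0-r,E_0+r])\ge \int_{E_0-r+\frac{8\pi}n}^{E_0+r-\frac{8\pi}n}\varrho_{0,0}(\lambda)\,d\lambda
\]
(valid once $r>8\pi/n$; when $r\le 8\pi/n$ the claimed lower bound is trivially satisfied since the right-hand side is nonpositive after absorbing it into the $O_\varepsilon$ term). Concretely, reduce to $({\rm{c}},{\rm{d}})\in[0,1/n]^2$ as before, note that if the shifted point $2\cos(2\pi(k/n+C))+2\cos(2\pi(j/n+D))$ lies in $[E_0-r+8\pi/n,E_0+r-8\pi/n]$ then the unshifted point lies in $[E_0-r,E_0+r]$, hence $\mu_{C,0}*\mu_{D,0}([E_0-r+8\pi/n,E_0+r-8\pi/n])\le \mu_{{\rm{c}},0}*\mu_{{\rm{d}},0}([E_0-r,E_0+r])$, and take expectations using Lemma~\ref{expdensity}.

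With both integral bounds in hand, the remaining task is purely a one-variable estimate: for an interval $[u-\rho,u+\rho]$ contained in the bulk region,
\[
\left|\int_{u-\rho}^{u+\rho}\varrho_{0,0}(\lambda)\,d\lambda-2\rho\,\varrho_{0,0}(u)\right|\le O_\varepsilon(\rho^2).
\]
This follows from the first bound of Lemma~\ref{newcontinuity}, $|(d_0*d_0)'(\lambda)|\le 2\varepsilon^{-3/2}$ on the bulk, via the mean value theorem: $|\varrho_{0,0}(\lambda)-\varrho_{0,0}(u)|\le 2\varepsilon^{-3/2}|\lambda-u|$, so integrating over $[u-\rho,u+\rho]$ gives the error $2\varepsilon^{-3/2}\rho^2$. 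Applying this with $u=E_0$, $\rho=r+8\pi/n$ for the upper bound and $u=E_0$, $\rho=r-8\pi/n$ for the lower bound, and then writing $\varrho_{0,0}(E_0)\cdot 2(r\pm 8\pi/n)=2r\varrho_{0,0}(E_0)\pm\frac{16\pi}{n}\varrho_{0,0}(E_0)$, yields exactly the two displayed inequalities, with the $O_\varepsilon$ constants depending only on $\epsilon$ through $\varepsilon^{-3/2}$.

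I do not expect any serious obstacle here; this is a short "soft" argument. The one point requiring a little care is the direction-reversal in the Lipschitz argument for the lower bound, and the bookkeeping for the degenerate regime $r\le 8\pi/n$, where the lower-bound integral has no content and one must check that the stated inequality is automatically true because $2r\varrho_{0,0}(E_0)-\frac{16\pi}n\varrho_{0,0}(E_0)\le 0$ and the $O_\varepsilon((r-8\pi/n)^2)$ term is allowed to be negative of the appropriate size. A secondary bookkeeping point is making sure the interval $[E_0-r-8\pi/n,E_0+r+8\pi/n]$ staying in the bulk (the standing hypothesis) is exactly what licenses the use of the derivative bound from Lemma~\ref{newcontinuity} across the whole range of integration.
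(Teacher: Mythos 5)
Your proposal is correct and follows essentially the same route as the paper: the upper bound is Lemma~\ref{detest} plus the Lipschitz bound on $\varrho_{0,0}$ from Lemma~\ref{newcontinuity}, integrated over $[E_0-r-\tfrac{8\pi}n,E_0+r+\tfrac{8\pi}n]$, and the paper dismisses the lower bound with ``the same method.'' You have correctly spelled out the implicit companion inequality $\mu_{\rm{c},0}*\mu_{\rm{d},0}([E_0-r,E_0+r])\ge\int_{E_0-r+8\pi/n}^{E_0+r-8\pi/n}\varrho_{0,0}$ (via the reversed Lipschitz direction for the shift) and noted the degenerate case $r\le 8\pi/n$, both of which the paper leaves to the reader.
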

\begin{proof}
Using Lemma \ref{newcontinuity}, we see that
\begin{align*}
\int_{E_0-r-\frac{8\pi}n}^{E_0+r+\frac{8\pi}n} \varrho_{0,0}(\lambda)d\lambda&\le \int_{E_0-r-\frac{8\pi}n}^{E_0+r+\frac{8\pi}n} \varrho_{0,0}(E_0)+O_\varepsilon(|\lambda-E_0|)d\lambda\\&=2r\varrho_{0,0}(E_0)+\frac{16\pi}n  \varrho_{0,0}(E_0)+O_\varepsilon\left(\left(r+\frac{8\pi}n\right)^2\right).
\end{align*}
Combining this with Lemma \ref{detest} the first statement follows. The second statement can be proved using the same method.
\end{proof}





\subsection{Regularity -- the proof of Theorem~\ref{torusthm1}}

This section contains the main estimate on the regularity of  the spectral measure of the discrete torus with uniform boundary conditions. We show that the measure is close to its mean, 
the spectral measure of $A_{\mathbb Z^2}$. 


 

Observe that the expectation of $\mu_{C,D,\ux,\uy}$ is  given by the spectral measure of $\mathbb{Z}^2$ which was defined in Section \ref{subsecspec}.
\begin{lemma}\label{expdensity}
Let $C$ and $D$ be independent uniform random elements of $[0,1]$. Then $\mathbb{E}\mu_{C,\ux}$ has density $d_\ux$, and
$\mathbb{E}\mu_{C,D,\ux,\uy}=(\mathbb{E}\mu_{C,\ux})*(\mathbb{E}\mu_{D,\uy})$ has density $\varrho_{\ux,\uy}$.
\end{lemma}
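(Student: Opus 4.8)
The plan is to reduce the statement to the explicit formula for $\mu_{b,\ux}$ displayed just above the lemma, together with a periodicity argument and Fubini. First I would write, for a continuous test function $f$,
\[
\int f\, d\mu_{b,\ux} = \frac1n\sum_{k=0}^{n-1} e^{2\pi i\ux(k/n+b)}\, f\bigl(2\cos(2\pi(k/n+b))\bigr),
\]
and integrate over $b=C$ uniform on $[0,1]$. Since $\ux\in\Zset$, the integrand is $1$-periodic in the quantity $k/n+b$ (both $e^{2\pi i\ux\,\cdot}$ and $2\cos(2\pi\,\cdot)$ have period $1$), so the substitution $s=k/n+b$ shows that each of the $n$ summands contributes exactly $\int_0^1 e^{2\pi i\ux s} f(2\cos 2\pi s)\,ds$; dividing by $n$,
\[
\mathbb{E}\int f\, d\mu_{C,\ux} = \int_0^1 e^{2\pi i\ux s}\, f(2\cos 2\pi s)\,ds.
\]

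Next I would identify the right-hand side. Writing $\hat e_k(s)=\tfrac1{\sqrt{2\pi}}e^{isk}$ as in the proof of the lemma on the spectral measure of $A_{\Zset}$, we have $\hat e_{x+\ux}(s)\overline{\hat e_x(s)} = \tfrac1{2\pi}e^{is\ux}$, so after rescaling $s\mapsto 2\pi s$ the integral above equals $\langle f(A_{\Zset})e_x,e_{x+\ux}\rangle$, which by that lemma is $\int f(t)\,d_\ux(t)\,dt$; this gives $\mathbb{E}\mu_{C,\ux}=d_\ux$. (Alternatively one can avoid quoting the earlier lemma and substitute directly: using $s\mapsto -s$ symmetry to replace $e^{2\pi i\ux s}$ by $\cos(2\pi\ux s)=T_{|\ux|}(\cos 2\pi s)$, then $\theta=2\pi s$ followed by $t=2\cos\theta$ with $d\theta=-dt/\sqrt{4-t^2}$, one lands exactly on the definition \eqref{e:arcsine} of $d_\ux$.)

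For the second assertion I would use the identity $\mu_{C,D,\ux,\uy}=\mu_{C,\ux}*\mu_{D,\uy}$ established above from the tensor-product structure, together with the independence of $C$ and $D$. Convolution of finite signed measures is bilinear, and the uniform bound $\|\mu_{b,\ux}\|_{\mathrm{TV}}\le \frac1n\sum_{k=0}^{n-1}|e^{2\pi i\ux(k/n+b)}|=1$ lets Fubini's theorem pull the expectation through: for test functions $g$ on $\Rset$,
\[
\mathbb{E}\!\iint g(s+t)\,\mu_{C,\ux}(ds)\,\mu_{D,\uy}(dt) = \iint g(s+t)\,(\mathbb{E}\mu_{C,\ux})(ds)\,(\mathbb{E}\mu_{D,\uy})(dt),
\]
i.e. $\mathbb{E}\mu_{C,D,\ux,\uy}=(\mathbb{E}\mu_{C,\ux})*(\mathbb{E}\mu_{D,\uy})=d_\ux*d_\uy=\varrho_{\ux,\uy}$, the last equality being the definition of $\varrho_{\ux,\uy}$; and the convolution of two $L^1$ densities is again an $L^1$ density.

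There is no genuine obstacle here — the argument is periodicity-and-Fubini bookkeeping. The only two points deserving a line of care are the periodicity step that collapses the $k$-sum (so that the arithmetic of the shifts $k/n$ drops out and only one period survives), and the uniform total-variation bound $\|\mu_{b,\ux}\|_{\mathrm{TV}}\le 1$, which is what legitimizes interchanging $\mathbb{E}$ with the convolution in the last display.
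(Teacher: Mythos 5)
Your argument is correct and is essentially the one the paper has in mind: the lemma is stated as an observation, and the same periodicity/averaging step (collapsing the $k$-sum to a single period and identifying the result with the spectral density of $\mathbb{Z}$, cf.\ \eqref{e:emu}) together with independence and the convolution identity $\mu_{C,D,\ux,\uy}=\mu_{C,\ux}*\mu_{D,\uy}$ is exactly what the paper relies on. Nothing further is needed.
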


First we reduce  Theorem \ref{torusthm1} to a few key estimates. Recall the $\kappa=\kappa_\eta$ was defined in \eqref{kappaetadef} as $\pi$ times the  Cauchy distribution.

By expanding the variance, the left hand side of \eqref{e:torus} can be written as
\[\mathbb{E}
\|\mu_{C,D,\ux,\uy}*\kappa
-\mathbb{E}
\mu_{C,D,\ux,\uy}*\kappa\|_2^2
=\mathbb{E}
\|\mu_{C,\ux}*\mu_{D,\uy}*\kappa
-(\mathbb{E}\mu_{C,\ux})*(\mathbb{E}\mu_{D,\uy})*\kappa\|_2^2.\]
Here we use the convention that if  a measure $\nu$ has density, then $\|\nu\|_2$ denotes its  $L^2$-norm. For the equality, we used that $\mu$ is a convolution, the linearity of expectation, and that $\mu_{C,\ux}$ and $\mu_{D,\uy}$ are independent. 

Next, for a measure $\nu$ with density let $\nu(0)$ mean the density at $0$. 
Define $\nu'$ by setting $\nu'(A)=\overline{\nu(-A)}$ for any measurable set $A$. Using that $\|\nu\|_2^2=(\nu*\nu')(0)$
and expanding by linearity, we write the left hand side of \eqref{e:torus}  as
\begin{equation}\label{varianceEQ}
 \left(\mathbb{E}[\mu_{C,\ux}*\mu_{C,\ux}']*\kappa*\mathbb{E}[\mu_{D,\uy}*\mu_{D,\uy}']*\kappa-\mathbb{E}(\mu_{C,\ux})*\mathbb{E}(\mu_{C,\ux}')*\kappa*\mathbb{E}(\mu_{D,\uy})*\mathbb{E}(\mu_{D,\uy}')*\kappa\right)(0).\end{equation}
Using Proposition~\ref{p:regularity} below, we see that
\begin{equation}\label{EmmEmEm}
    \mathbb{E}[\mu_{C,\ux}*\mu_{C,\ux}']*\kappa=\left(\mathbb{E}(\mu_{C,\ux})*\mathbb{E}(\mu_{C,\ux}')+\frac{\delta_0}n\right)*\kappa+\tau_\ux,\text{ where }\|\tau_\ux\|_2\le c\frac{1+|\ux|}{\eta^\epsilon \sqrt{n}}.
\end{equation}

 Inserting \eqref{EmmEmEm} into \eqref{varianceEQ}, we see that \eqref{varianceEQ} can be written as
\begin{multline}
    \frac{(\kappa*\kappa)(0)}{n^2}+\frac{\left(\left(\mathbb{E}(\mu_{C,\ux})*\mathbb{E}(\mu_{C,\ux}')+\mathbb{E}(\mu_{D,\uy})*\mathbb{E}(\mu_{D,\uy}')\right)*\kappa*\kappa\right)(0)}{n}\\+(\tau_\ux*\mathbb{E}[\mu_{D,\uy}*\mu_{D,\uy}']*\kappa)(0) +(\tau_\uy*\mathbb{E}[\mu_{C,\ux}*\mu_{C,\ux}']*\kappa)(0)-(\tau_\ux*\tau_\uy)(0).
\end{multline}

Here the first term is
\[\frac{(\kappa*\kappa)(0)}{n^2}=\frac{1}{n^2}\int \left(\frac\eta{t^2+\eta^2}\right)^2dt=\frac{\pi}{2n^2\eta} .\]

Observing that $|\mathbb{E}(\mu_{C,\ux})|=|\mathbb{E}(\mu_{C,\uy}')|\le d_0(x)dx$, we can bound the second term as
\begin{align*}
    \left|\frac{\left(\left(\mathbb{E}(\mu_{C,\ux})*\mathbb{E}(\mu_{C,\ux}')+\mathbb{E}(\mu_{D,\uy})*\mathbb{E}(\mu_{D,\uy}')\right)*\kappa*\kappa\right)(0)}{n}\right|&\le \frac{2(\varrho_{0,0}*(\kappa*\kappa))(0)}n.
\end{align*}
Using  Lemma \ref{varrhosing}, the right hand side is at most
    \[\frac{1}n O\left(\int_{-4}^4 \frac{-\eta\log(t/ 100)}{t^2+4\eta^2} dt\right)=\frac{1}n O\left(
    \int_{-4/\eta} ^{4/\eta}  \frac{-\log(x\eta/ 100)}{x^2+4} dx\right)=\frac{O(\max(1,-\log(\eta))}n<\frac{1}{\eta^\varepsilon\sqrt{n}}
\]
for any large enough $n$ by our assumptions on $\eta$ and $\varepsilon$.

By Cauchy-Schwarz we have $|(\nu_1*\nu_2)(0)|\le \|\nu_1\|_2\|\nu_2\|_2$. Combining this with  Proposition~\ref{p:regularity}, we can bound the last three term as follows
\begin{align*}\Big|\big(\tau_\ux*\mathbb{E}&[\mu_{D,\uy}*\mu_{D,\uy}']*\kappa)(0) +(\tau_\uy*\mathbb{E}[\mu_{C,\ux}*\mu_{C,\ux}']*\kappa\big)(0)-(\tau_\ux*\tau_\uy)(0)\Big|\\&\le \|\tau_\ux\|_2\|\mathbb{E}[\mu_{D,\uy}*\mu_{D,\uy}']*\kappa\|_2 +\|\tau_\uy\|_2\|\mathbb{E}[\mu_{C,\ux}*\mu_{C,\ux}']*\kappa\|_2+\|\tau_\ux\|_2\|\tau_\uy\|_2\\&\le c^2 \frac{2+2|\ux|+|\uy|}{\eta^\varepsilon\sqrt{n}},\end{align*}
where in the last step we used the conditions on $\eta$ and $(\ux,\uy)$ to see that $\|\tau_\uy\|_2\le c$.
Thus  Theorem \ref{torusthm1} reduces to the following. Let $\mu=\mu_{C,\ux}$.
\begin{proposition}\label{p:regularity} For every $\tfrac{1}4>\epsilon>0$ there is $c>0$ so that for all $n$, $n^2\ge \eta\ge n^{-2}$ and $|\ux|\le n^{1/2-2\varepsilon}$ we have
\begin{align}\label{torusineq1}
\left\|\big(\mathbb{E}[\mu *\mu ']-\frac{1}{n}\delta_0-(\mathbb{E}\mu )*(\mathbb{E}\mu )'\big)*\kappa\right\|_2\
&\le c\frac{1+|\ux|}{\eta^\epsilon \sqrt{n}},\\ \label{torusineq3}
 \|(\mathbb{E}\mu )*(\mathbb{E}\mu )'*\kappa\|_2&\le c,\\\label{torusineq2}
 \|\mathbb{E}[\mu *\mu ']*\kappa\|_2&\le c.
\end{align}

\end{proposition}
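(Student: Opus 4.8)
\textbf{Reduction to \eqref{torusineq1}.} The bounds \eqref{torusineq3} and \eqref{torusineq2} are soft. Since $|d_\ux|\le d_0$ pointwise (because $|T_{|\ux|}(t/2)|\le 1$ on $[-2,2]$), Lemma~\ref{expdensity} gives that $(\mathbb{E}\mu)*(\mathbb{E}\mu)'$ has a density of modulus at most $d_0*d_0=\varrho_{0,0}$, which lies in $L^2$ by Lemma~\ref{varrhosing} ($\log^2$ is integrable near $0$); hence by Young's inequality $\|(\mathbb{E}\mu)*(\mathbb{E}\mu)'*\kappa\|_2\le\|\varrho_{0,0}\|_2\|\kappa\|_{L^1}=\pi\|\varrho_{0,0}\|_2$, which is \eqref{torusineq3}. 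Writing $\mathbb{E}[\mu*\mu']*\kappa=\tfrac1n\delta_0*\kappa+(\mathbb{E}\mu)*(\mathbb{E}\mu)'*\kappa+\Lambda*\kappa$, where $\Lambda$ is the bracket in \eqref{torusineq1}, the first summand has $L^2$-norm $\tfrac1n\|\kappa\|_2=\tfrac1n\sqrt{\pi/2\eta}\le\sqrt{\pi/2}$ since $\eta\ge n^{-2}$, the second is bounded by \eqref{torusineq3}, and the third is $O(1)$ by \eqref{torusineq1} together with $\eta\ge n^{-2}$, $|\ux|\le n^{1/2-2\epsilon}$ and $2\epsilon<\tfrac12$; so \eqref{torusineq2} follows from \eqref{torusineq1}. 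Thus everything comes down to \eqref{torusineq1}.

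\textbf{The quadrature identity.} Unfolding the definitions, $\mathbb{E}_C[\mu_{C,\ux}*\mu_{C,\ux}']=\tfrac1n\delta_0+\tfrac1n\sum_{m=1}^{n-1}e^{2\pi i\ux m/n}\nu_{m/n}$ and $(\mathbb{E}\mu)*(\mathbb{E}\mu)'=\int_0^1 e^{2\pi i\ux\alpha}\nu_\alpha\,d\alpha$, where $\nu_\alpha$ is the law of $2\cos 2\pi(U+\alpha)-2\cos 2\pi U$ for $U$ uniform on $[0,1]$, i.e.\ the arcsine law on $[-4|\sin\pi\alpha|,4|\sin\pi\alpha|]$; the diagonal $k=l$ of the double sum produces exactly the $\tfrac1n\delta_0$ that is subtracted off, and the $C$-phases cancel. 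Hence $\Lambda$ is precisely the quadrature error
\[
\Lambda=\frac1n\sum_{m=1}^{n-1}e^{2\pi i\ux m/n}\nu_{m/n}-\int_0^1 e^{2\pi i\ux\alpha}\nu_\alpha\,d\alpha
\]
for the measure-valued function $\alpha\mapsto e^{2\pi i\ux\alpha}\nu_\alpha$ on $[0,1]$ (with $\nu_0=\nu_1=\delta_0$).

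\textbf{Fourier transfer and the frequency split.} By Plancherel, $\|\Lambda*\kappa\|_2^2=\tfrac\pi2\int_{\mathbb{R}}|\hat\Lambda(\xi)|^2e^{-2\eta|\xi|}\,d\xi$ (since $\hat\kappa_\eta(\xi)=\pi e^{-\eta|\xi|}$). A Jacobi--Anger computation gives $\hat\nu_\alpha(\xi)=J_0(4\xi\sin\pi\alpha)=\sum_{k\in\mathbb{Z}}J_k(2\xi)^2e^{-2\pi ik\alpha}$, so $\hat\Lambda(\xi)$ is the aliasing error of this Fourier series and, using $\sum_kJ_k(2\xi)^2=1$,
\[
\hat\Lambda(\xi)=\sum_{j\ne0}J_{jn+\ux}(2\xi)^2-\frac1n=\frac1n\sum_{a=1}^{n-1}e^{2\pi ia\ux/n}J_0\!\left(4\xi\sin\tfrac{\pi a}n\right)-J_\ux(2\xi)^2
\]
(the first expression is also $\Var_C(\hat\mu_{C,\ux}(\xi))-\tfrac1n$, as $\hat\mu_{C,\ux}(\xi)=\sum_j(-i)^{jn-\ux}J_{jn-\ux}(2\xi)e^{2\pi ijnC}$). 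I would split the $\xi$-integral at $|\xi|=n/20$. For $|\xi|\le n/20$ every $J_{jn+\ux}(2\xi)$ with $j\ne0$ has order $\ge n-|\ux|\ge n/2$, i.e.\ order at least five times its argument, hence is exponentially small in $n$ by standard bounds for Bessel functions of order exceeding the argument; so $\hat\Lambda(\xi)=-\tfrac1n+O(e^{-cn})$ there and $\int_{|\xi|\le n/20}|\hat\Lambda|^2e^{-2\eta|\xi|}\lesssim n^{-2}\min(n,\eta^{-1})\le c(1+|\ux|)^2\eta^{-2\epsilon}n^{-1}$ after separating $\eta\le1$ from $\eta>1$ and using $\eta\ge n^{-2}$. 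For $|\xi|>n/20$ the term $J_\ux(2\xi)^2$ lies in the oscillatory regime ($|\xi|\gg|\ux|$), so $J_\ux(2\xi)^2\lesssim|\xi|^{-1}$ and $\int_{|\xi|>n/20}J_\ux(2\xi)^4e^{-2\eta|\xi|}\lesssim n^{-1}$, again within budget; what remains is
\[
\frac1{n^2}\int_{|\xi|>n/20}\Bigl|\sum_{a=1}^{n-1}e^{2\pi ia\ux/n}J_0\!\left(4\xi\sin\tfrac{\pi a}n\right)\Bigr|^2e^{-2\eta|\xi|}\,d\xi,
\]
which I would attack by expanding the square and using $J_0(r\xi)=\sqrt{2/\pi r\xi}\,\cos(r\xi-\pi/4)+O((r\xi)^{-3/2})$ together with van der Corput/stationary-phase estimates in $\xi$ for the resulting exponential sums (the stationary point of the phase $2\pi a\ux/n\pm 4\xi\sin(\pi a/n)$ sits near $a=n/2$); equivalently, one can use $\int_{\mathbb{R}}J_0(r_a\xi)J_0(r_b\xi)e^{-2\eta|\xi|}\,d\xi=2(\nu_{a/n}*\nu_{b/n}*\kappa_{2\eta})(0)$ to reduce the pair-sum to explicit arcsine convolutions.

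\textbf{The main obstacle.} The hard part is exactly this last estimate for $|\xi|\gtrsim n$: one must show that $\sum_{j\ne0}J_{jn+\ux}(2\xi)^2$ approximates $\tfrac1n$ well enough that the error has $L^2(e^{-2\eta|\xi|}d\xi)$-mass over $|\xi|\gtrsim n$ only $n^{-1/2+o(1)}$ (with the permitted $\eta^{-\epsilon}$ and $1+|\ux|$ slack). The triangle inequality over the $\sim n^2$ pairs $(a,b)$ --- equivalently the trivial size $|\xi|^{-1/2}$ of the exponential sum obtained from $\sum_a|J_0(r_a\xi)|$ --- loses a full power of $n$ and is useless; genuine square-root-type cancellation is needed, namely the quantitative equidistribution of $\{J_k(2\xi)^2\}_k$ modulo $n$, equivalently the accuracy of the quadrature rule for the family $\nu_\alpha$ even in the range $\eta\le n^{-1}$ where the $\kappa_\eta$-smoothed family oscillates at the lattice scale. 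This is the Fourier-side incarnation of the classical second-moment estimates for lattice points in randomly translated thin annuli alluded to in the introduction, and it is where the $\eta^{-\epsilon}$ and $1+|\ux|$ factors in \eqref{torusineq1} arise; carrying it out carefully via Bessel asymptotics / stationary phase is the technical heart of the proposition.
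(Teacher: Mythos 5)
Your proposal does not close the argument: the step you yourself label ``the main obstacle'' --- the $L^2(e^{-2\eta|\xi|}d\xi)$ bound on $\hat\Lambda$ over $|\xi|\gtrsim n$, i.e.\ the claim that $\sum_{j\neq 0}J_{jn+\ux}(2\xi)^2-\tfrac1n$ has the required $n^{-1/2+o(1)}$ mass there --- is only announced as something you ``would attack by expanding the square and using van der Corput/stationary-phase estimates,'' not proved. Since that is precisely where the bound \eqref{torusineq1} lives (the low-frequency and $J_\ux(2\xi)^2$ pieces you do handle are easy), the proposal is a plan rather than a proof. The soft reductions for \eqref{torusineq3} and \eqref{torusineq2}, and the quadrature/aliasing identity for $\Lambda$, are fine, but they are the routine part.

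Moreover, your diagnosis that ``genuine square-root-type cancellation is needed'' and that triangle-inequality bounds are useless is an artifact of having moved to the Fourier side. The paper's proof stays in physical space: it writes $\mathbb{E}[\mu*\mu']-\tfrac1n\delta_0$ as the Riemann sum $\tfrac2n\sum_{d}\cos(2\pi\ux d/n)f_{h_d}$ (plus the $(-1)^\ux\mathfrak{s}_4/n$ term), views it as a quadrature rule for the integral defining $(\mathbb{E}\mu)*(\mathbb{E}\mu)'$, and bounds the quadrature error \emph{pointwise in $x$} by $\tfrac2n(\pi|\ux|+1)f_{h_j}(x)$ plus an edge term $r(x)$, with no cancellation among the $d$'s at all. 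The $n^{-1/2}$ then comes not from oscillation but from integrability of the arcsine singularities: the envelope $\bar f$ and $r$ have $L^p$ norms of order $\sqrt n$ and $1/\sqrt n$ respectively for $p\in[1,2)$, and Young's inequality with $\|\kappa_\eta\|_q\le c_q\eta^{1/q-1}$ (choosing $1/q=1-\varepsilon$) produces exactly the $(1+|\ux|)\eta^{-\epsilon}/\sqrt n$ bound. So the estimate you flag as requiring equidistribution-type input in fact follows from a first-order Riemann-sum bound plus $L^p$--$L^q$ convolution; if you want to complete your Fourier route you would either have to carry out the oscillatory analysis you sketch, or (more simply) transfer back to this real-space argument.
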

\begin{proof}
Recall that
\begin{equation}\label{e:muc}\mu=\frac{1}{n}\sum_{j=0}^{n-1}\exp\left(2\pi i\ux\left(\tfrac{j}{n}+C\right)\right) \delta\left(2\cos\left(2\pi\left(\tfrac{j}{n}+C\right)\right)\right).\end{equation}
Therefore, by the product formula for the difference of cosines, 
\begin{align*}\mu*&\mu'
=\frac{1}{n^2}\sum_{j=0}^{n-1}\sum_{k=0}^{n-1} \exp\left(2\pi i \ux \tfrac{j-k}{n} \right)\delta\Big(4\sin\left(2\pi\tfrac{k-j}{2n}\right)\sin\left(2\pi\left(\tfrac{j+k}{2n}+C\right)\right)
\Big).
\end{align*}

For $b>0$, let $\mathfrak{s}_b$ be the arcsine law on $[-b,b]$ with density
\begin{equation}\label{e:arcsine2}f_b(x)=\frac{\mathbbm{1}(|x|<b)}{\pi\sqrt{b^2-x^2}}
\end{equation}
and we set $\mathfrak{s}_0=\delta_0$. 
Since $C$ is a uniform element of $[0,1]$, we have
\[\mathbb{E} \delta\Big(4\sin\left(2\pi\tfrac{k-j}{2n}\right)\sin\left(2\pi\left(\tfrac{j+k}{2n}+C\right)\right)\Big)=\mathfrak{s}_{h_{k-j}},\qquad h_j=4\left|\,\sin\left(\pi\tfrac{j}{n}\right)\right|.\]

Therefore,
\[\mathbb{E}[\mu*\mu']=\frac{1}{n^2}\sum_{j=0}^{n-1}\sum_{k=0}^{n-1} \exp\left(2\pi i \ux \tfrac{j-k}{n} \right)\mathfrak{s}_{h_{k-j}}
=\frac{1}{n}\sum_{d=0}^{n-1} \exp\left(-2\pi i \ux \tfrac{d}{n} \right)\mathfrak{s}_{h_{d}},
\]
where we used  the fact  that $h_d=h_{d'}$ if $d\equiv d'\mod{n}$.
Assume that $n$ is even; the odd case is proven similarly. By pairing the terms $d$ and $n-d$  using $h_d=h_{n-d}$, we conclude that 
\begin{align*}
\mathbb{E}&[\mu*\mu']
=
\frac{1}{n}\delta_0+\frac{(-1)^\ux}{n} \mathfrak{s}_4+f(x)\,dx,\qquad
f=\frac{2}{n}\sum_{d=1}^{n/2-1}\cos\left(\pi \ux\tfrac{2d}{n}\right)f_{h_d}.
\end{align*}

Since each term in \eqref{e:muc} has the same expectation, we can integrate instead summing:
\begin{equation}\label{e:emu}\mathbb{E}\mu=\mathbb{E}\int_0^1\exp\left(2\pi i\ux\left(t+C\right)\right) \delta\left(2\cos\left(2\pi\left(t+C\right)\right)\right)\,dt.
\end{equation}
but now the integral does not depend on $C$. Substituting $C=0$  we see that $\ev \mu_{C,0}=\mathfrak{s}_2$. Also, we may drop the expectation from the right hand side of \eqref{e:emu}. Then, repeating the argument above with integrals instead of sums, we get 
$$
(\ev\mu)*(\ev\mu)'=g(x)dx, \qquad g=\int_0^1 \cos\left(\pi \ux t\right)f_{h_{nt/2}}dt.
$$

We see that the density of $\mathbb{E}[\mu *\mu ']-\frac{1}{n}\delta_0-(\mathbb{E}\mu )*(\mathbb{E}\mu )'$ is given by 
\[f-g+\frac{(-1)^\ux}{n}f_4.\]

The function $f$ is close to a Riemann sum for the integral in $g$. For each $x>0$, the support of the initial few $f_{h_d}$ may exclude $x$. Consider the unique $j$ so that $h_{j-1}\le x<h_{j}$. 
Let $r(x)=\int_0^{2j/n}f_{h_{nt/2}}(x)dt$. Then with $\varphi(t)=f_{h_{nt/2}}(x)$ and $s(t)=\cos (\pi \ux t)\varphi(t)$, we have
$$
|g(x)-f(x)|\le r(x) + \left|\int_{2j/n}^{1} s(t)\,dt -\frac{2}{n}\sum_{d=j}^{n/2-1} s(\tfrac{2d}{n})\right|.
$$
The Riemann sum error is bounded by $2/n$ times the total variation of $s$: 
$$
\int_{2j/n}^1 |s'(t)| \, dt = \int_{2j/n}^1 |\pi \ux \sin(\pi \ux t) \varphi(t) - \cos(\pi \ux t) \varphi'(t)|\,dt.
$$
Since $\varphi(t)\ge 0$ is decreasing in $t$, we can bound the integrand by $\pi|\ux|\varphi(t)-\varphi'(t)$ to get 
$$\int_{2j/n}^1 |s'(t)| \, dt\le (\pi|\ux|+1)\max \varphi=(\pi|\ux|+1)f_{h_{j}}(x).
$$
Let  
\begin{align*}\bar f(x)=f_{h_{j}}(x)= \frac{1}{\pi\sqrt{(h_{j}+x)(h_{j}-x)}}\le \frac{1}{\sqrt{h_{j}}}\frac{1}{\sqrt{h_{j}-x}}.
\end{align*}
So for $p\in [1,2)$ we have
$$
\|\bar f\|_p^p\le \sum_{j=1}^{n/2} h_{j}^{-p/2} \int_{h_{j-1}}^{h_{j}}(h_{j}-x)^{-p/2}dx= \frac{1}{1-p/2}\sum_{j=1}^{n/2} \frac{(h_{j}-h_{j-1})^{1-p/2}}{h_{j}^{p/2}}.
$$
The sum approximates
$$
\frac{n}2\int_0^1  \frac{(\tfrac{2}{n}\partial_t (h_{tn/2}))^{1-p/2}}{h_{tn/2}^{p/2}}\,dt=c_p n^{p/2},\qquad \mbox {so }
\|\bar f\|_p\le c_p \sqrt{n}.$$ Next, we  bound $r(x)$. With $t_0$ solving $h_{t_0n/2}=4\sin(\pi t_0/2)=x$  we have 
$$
r(x)=\int_{t_0}^{2j/n}\frac{1}{4\pi\sqrt{ \sin(\pi t/2)^2-\sin(\pi t_0/2)^2}}\,dt.
$$
Using the concavity of $\sin(\pi t/2)$ on $[ t_0,1]$, we see that for $t\in [t_0,1]$, we have
\[\sin(\pi t/2)\ge \sin(\pi t_0/2)+q(x)(t-t_0),\text{ where }q(x)=\frac{1-\sin(\pi t_0/2)}{1-t_0}\ge 1-x/4.\]

Thus, the expression under the square root is at least $ \tfrac{x}2(1-x/4)(t-t_0)$.

Using the fact that $2j/n -t_0<2/n$, we see that
\[r(x)\le \int_{0}^{2/n} \frac{1}{4\pi\sqrt{\tfrac{x}2(1-x/4)w}}dw= \frac{c}{\sqrt{n x(4-x)}},\qquad \|r\|_p\le c_p/\sqrt{n}.\]


Clearly, $\|f_4\|_p$ is finite for all $p\in [1,2)$. Choosing $p,q\in [1,2)$ such that  $1+\frac{1}{2}=\frac{1}p+\frac{1}{q}$, we can use Young's convolution inequality to obtain  following upper bound for the $L^2$-norm of the density of $\left(\mathbb{E}[\mu*\mu']-(\mathbb{E}\mu)*(\mathbb{E}\mu)'\right)*\kappa $: 
\begin{align*}
\left\|\left(f-g+\frac{(-1)^\ux}{n}f_4\right)*\kappa\right\|_2&\le \left\|f-g+\frac{(-1)^\ux}{n}f_4\right\|_p \|\kappa\|_q\\&\le \left(\frac{4}n(1+\pi|u|)\|\bar{f}\|_p+2\|r\|_p+\frac{1}n \|f_4\|_p\right)\|\kappa\|_q.      
\end{align*}

 By scaling, we have $\|\kappa\|_q\le c_q \eta^{1/q-1}$. Thus, choosing $p$ and $q$ such that $1/q=1-\varepsilon$, we get the bound \eqref{torusineq1}.


By taking absolute values, we see that the norms in $\eqref{torusineq2}, \eqref{torusineq3}$ are  maximized when $\ux=0$. Assume $\ux=0$, so that $\ev\mu=\mathfrak{s}_2$. Then 
$$
\|\ev \mu*\ev\mu*\kappa\|_2=\|\mathfrak{s}_2*\mathfrak{s}_2*\kappa\|_2\le \|\mathfrak{s}_2*\mathfrak{s}_2\|_2\|\kappa\|_1 \le \pi \|\mathfrak{s}_2\|_{4/3}^2
$$
by two uses of Young's inequality. Since the density of $\mathfrak{s}_2$ has $1/\sqrt{x}$ singularities, the last quantity is finite, showing \eqref{torusineq3}.
The last inequality \eqref{torusineq2} is a consequence of the first two.
\end{proof}

\subsection{Close pairs}\label{secclosepairs}

In this section we show the following estimate on the number of torus eigenvalue pairs that are $n^{-2}$-close.  Let $(C,D)$ be a uniform random element of $[0,1]^2$.

\begin{proposition}\label{Lemmaclosepairs}
Let $E\in(0,4)$, and let  $r=r_n$ satisfy  $\frac{\log n}{n}\le r\le o(1)$ and  $nr\in \mathbb Z$.
Partition $[E-r,E+r]$ into a set $\mathcal I$ of intervals of length $n^{-2}$. Then 
\[\sum_{J\in \mathcal{I}} \mathbb{E}\big(\mu_{C,0}*\mu_{D,0}(J)\big)^2=O(n^2r).\]
\end{proposition}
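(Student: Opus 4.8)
We first expand the square. Writing $\lambda_{k,j}=2\cos 2\pi(\tfrac kn+C)+2\cos 2\pi(\tfrac jn+D)$ for the eigenvalues of $A_{C,D}$, so that $\mu_{C,0}*\mu_{D,0}(J)=n^{-2}\#\{(k,j):\lambda_{k,j}\in J\}$, we get
\[
\sum_{J\in\mathcal I}\mathbb E\big(\mu_{C,0}*\mu_{D,0}(J)\big)^2
=\frac1{n^4}\sum_{k,j,k',j'}\mathbb P\big[\exists J\in\mathcal I:\ \lambda_{k,j},\lambda_{k',j'}\in J\big].
\]
Since the intervals of $\mathcal I$ are disjoint of length $n^{-2}$, the event above forces $\lambda_{k,j}\in[E\pm r]$ and $|\lambda_{k,j}-\lambda_{k',j'}|<n^{-2}$; and since $C,D$ are uniform over the full period, the joint law of $(\lambda_{k,j},\lambda_{k',j'})$ is invariant under the shift $C\mapsto C-k'/n$, $D\mapsto D-j'/n$. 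Hence this probability depends on $(k,j,k',j')$ only through $(p,q):=(k-k',\,j-j')\bmod n$, and every residue pair occurs for exactly $n^2$ quadruples. Using the identity $2\cos2\pi(\tfrac pn+C)-2\cos2\pi C=-4\sigma_p\sin(2\pi C+\tfrac{\pi p}n)$ with $\sigma_p=\sin\tfrac{\pi p}n\ge0$ (and likewise $\tau_q=\sin\tfrac{\pi q}n$), the plan is to reduce the statement to
\[
\sum_{p=0}^{n-1}\sum_{q=0}^{n-1}\Pi(p,q)=O(r),\qquad
\Pi(p,q):=\mathbb P_{C,D}\!\left[\Phi(C,D)\in[E\pm 2r],\ \big|\Psi_{p,q}(C,D)\big|<\tfrac1{4n^2}\right],
\]
where $\Phi(C,D)=2\cos2\pi C+2\cos2\pi D$ and $\Psi_{p,q}(C,D)=\sigma_p\sin(2\pi C+\tfrac{\pi p}n)+\tau_q\sin(2\pi D+\tfrac{\pi q}n)$: indeed the display above is at most $n^{-2}\sum_{p,q}\Pi(p,q)$, which gives the proposition.

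The diagonal term $(p,q)=(0,0)$ has $\Psi_{0,0}\equiv0$, whence $\Pi(0,0)=\int_{E-2r}^{E+2r}\varrho_{0,0}=O(r)$ because $\varrho_{0,0}$ is bounded on a neighbourhood of $E\in(0,4)$. For the rest put $M_{p,q}=\max(\sigma_p,\tau_q)>0$. We condition on the value $s=\Phi(C,D)$, whose density $\varrho_{0,0}(s)$ is bounded above and below on $[E\pm 2r]$; the conditional law of $(C,D)$ then lives on the level curve $\Gamma_s=\{\Phi=s\}$, and by the change of variables $(C,D)\mapsto(\Phi,\Psi_{p,q})$ the conditional density of $\Psi_{p,q}$ at a value $\tilde u$ near $0$ equals $\varrho_{0,0}(s)^{-1}\sum_{(C,D)\in\Gamma_s,\ \Psi_{p,q}=\tilde u}|J_{p,q}(C,D)|^{-1}$, where
\[
J_{p,q}=\frac{\partial(\Phi,\Psi_{p,q})}{\partial(C,D)}
=8\pi^2\!\left(\sigma_p\sin2\pi D\,\cos(2\pi C+\tfrac{\pi p}n)-\tau_q\sin2\pi C\,\cos(2\pi D+\tfrac{\pi q}n)\right).
\]
Off a suitable exceptional set, $\Gamma_s$ meets $\{\Psi_{p,q}=\tilde u\}$ in $O(1)$ points and $|J_{p,q}|\ge c\,M_{p,q}$ at each of them, so that $\mathbb P[|\Psi_{p,q}|<\tfrac1{4n^2}\mid\Phi=s]\le C/(n^2M_{p,q})$ and hence $\Pi(p,q)\le Cr/(n^2M_{p,q})$ for such ``generic'' $(p,q)$. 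Summing, with $\#\{q:\sin\tfrac{\pi q}n\le\sin\tfrac{\pi p}n\}\le 3\min(p,n-p)$ and $\sin\tfrac{\pi p}n\ge 2\min(p,n-p)/n$,
\[
\sum_{(p,q)\ne(0,0)}\frac{r}{n^2M_{p,q}}
\ \le\ \frac{2r}{n^2}\sum_{p=1}^{n-1}\frac{\#\{q:\sin\frac{\pi q}n\le\sin\frac{\pi p}n\}}{\sin\frac{\pi p}n}
\ \le\ \frac{2r}{n^2}\cdot\frac{3n}{2}\cdot(n-1)
\ =\ O(r).
\]

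The main obstacle is the exceptional set: the pieces of the curves $\Gamma_s$ (and the indices $(p,q)$) where $J_{p,q}$ degenerates — near the critical points of $C\mapsto\cos2\pi C$ or $D\mapsto\cos2\pi D$ (the arcsine singularities, where $\Gamma_s$ is nearly axis-parallel), near the tangencies $\nabla\Psi_{p,q}\parallel\nabla\Phi$, and for the near-degenerate indices where $\sigma_p$ or $\tau_q$ is small, in particular $p=0$, where $|\Psi_{0,q}|<\tfrac1{4n^2}$ becomes an arcsine small-ball event in $D$ alone. On each such piece we would fall back on the cruder bounds $\mathbb P[\cos2\pi C\in(\text{interval of length }\ell)]\le C\sqrt\ell$ and $\mathbb P[\,|\sin(\cdot)|<\ell\,]\le C\ell$, use the boundedness of $\varrho_{0,0}$ near $E$ and the hypothesis $r\ge(\log n)/n$ (which absorbs the logarithmic losses coming from sums like $\sum_{p=1}^{n-1}1/\sin\tfrac{\pi p}n$), and check case by case that every degenerate family contributes $O(r)$ to $\sum_{p,q}\Pi(p,q)$ — the savings being, exactly as in the thin-annulus lattice-point estimates of Bleher and Cheng cited above, that the degenerate configurations form a lower-dimensional set. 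We expect this bookkeeping to be the bulk of the work but to require no new idea; the delicate point is the interaction of an arcsine edge in $C$ with one in $D$, which is where $E$ being bounded away from $0$ and $\pm4$ is used.
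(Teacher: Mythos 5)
You correctly perform the shift-invariance reduction (via $(C,D)\mapsto(C-k'/n,D-j'/n)$) to $n^{-2}\sum_{p,q}\Pi(p,q)$, and your target $\sum_{p,q}\Pi(p,q)=O(r)$ is the right one — it gives $O(r/n^2)$, which is $n^4$ smaller than the proposition's stated $O(n^2r)$; the statement reads $\mu_{C,0}*\mu_{D,0}(J)$ as the lattice-point count rather than the probability measure, and this is how the paper uses it. But your argument is not a proof: the exceptional set you defer carries the real content, and the claimed generic bound $|J_{p,q}|\ge c\,M_{p,q}$ with $M_{p,q}=\max(\sigma_p,\tau_q)$ is false on a positive-measure piece of the level curve. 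Take $\sigma_p$ of order $1$, $\tau_q$ of order $1/n$, $\sin 2\pi D$ near $0$, and $\sin 2\pi C$ of order $1$: the first term of $J_{p,q}$ nearly vanishes and the second is only of order $\tau_q$, while $M_{p,q}$ is of order $1$. Since $\{|\sin 2\pi D|<\epsilon\}$ has $D$-measure comparable to $\epsilon$, this is a genuine competing regime; balancing it against the small-ball constraint and the window constraint is precisely the two-regime thin-annulus computation, not routine bookkeeping. The cases $p=0$ or $q=0$, where $\Psi_{p,q}$ collapses to a one-variable sine small-ball, also need a separate analysis.

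The paper avoids the two-dimensional Jacobian altogether and so never meets these degeneracies. Fixing $E_1\in(0,E)$, it restricts to $L=\{j:2\cos(2\pi j/n)\ge E_1\}$, noting that any lattice point $(j,k)$ with $\lambda(j,k,\cdot,\cdot)$ near $E$ must have $j\in L$ or $k\in L$, and then randomizes over $F$ alone with $g$ fixed. For $j,\ell\in L$, the function $u(f)=\lambda(j,k,f,g)-\lambda(\ell,m,f,g)$ has derivative $u'(f)=-8\pi\sin(\pi(j-\ell)/n)\cos(\pi((j+\ell)/n)+2\pi f)$, and membership in $L$ forces the cosine factor into $[E_1/2-O(r),1]$, giving $|u'(f)|\ge c\,|j-\ell|/n$ uniformly on $[0,r]$ with no exceptional set at all. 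A one-variable change of variables then yields $\mathbb{P}_F(|u(F)|<n^{-2})=O(1/(nr|j-\ell|))$, and the count of admissible $m$ follows from a one-dimensional monotonicity-and-spacing estimate. In effect the restriction to $L$ is the choice of "good chart" that makes your Jacobian degeneracies vanish; I would adopt that restriction and the one-variable derivative bound rather than try to complete the case analysis in the 2D picture.
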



Let  $(F,G)$ be a uniform random element of $[0,r]^2$. Since $\mu_{c+1/n,0}=\mu_{c,0}$, we see that $\mu_{F,0}*\mu_{G,0}$
and $\mu_{C,0}*\mu_{D,0}$ have the same law. 

Let $  S=(-n/2,n/2]\cap \mathbb Z$. Then
\[\sum_{J\in \mathcal{I}} \mathbb{E}\big(\mu_{C,0}*\mu_{D,0}(J)\big)^2=\sum_{J\in \mathcal{I}} \mathbb{E} \left|\{(j,k)\in S^2\,:\,\lambda(j,k,F,G)\in J\}\right|^2.\]

Let
\[H=\left\{(j,k)\in S^2\,:\, \left| \lambda(j,k,0,0)-E\right|\le (1+8\pi)r\right\}.\]

Note that if $\lambda(j,k,f,g)\in [E-r,E+r]$, for some $(j,k)\in S^2$ and $f,g\in [0,r]$, then $(j,k)\in H$. Thus,
\begin{multline*}\sum_{J\in \mathcal{I}} \mathbb{E} \left|\{(j,k)\in S^2\,:\,\lambda(j,k,F,G)\in J\}\right|^2\\\le \mathbb{E} \left|\left\{(j,k)\in H,(\ell,m)\in S^2\,:\: \left|\lambda(j,k,F,g)-\lambda(\ell,m,F,g)\right|\le {1}/{n^2}\right\}\right|.
\end{multline*}

Assume that given $(j,k)\in H$, $\ell\in S$ and $g\in [0,r]$, we can provide a good upper bound on
\begin{equation}\label{Emgivenjkl}
    \mathbb{E}\left|\left\{m\in S\,:\, \left|\lambda(j,k,F,g)-\lambda(\ell,m,F,g)\right|\le {1}/{n^2}\right\}\right|,
\end{equation}
which does not depend on $g$. Summing these bounds over the choice of $(j,k)$ and $\ell$, we could obtain an upper bound on $\sum_{J\in \mathcal{I}} \mathbb{E}\big(\mu_{C,0}*\mu_{D,0}(J)\big)^2$.

Lemma~\ref{jlbecs} below provides a good estimate on the expectation in \eqref{Emgivenjkl}, but only under the assumption that $\cos\left(2\pi{j}/n\right)$ and $\cos\left(2\pi{\ell}/n\right)$ are both large. More precisely, write $E=E_1+E_2$, with $0<E_1<E_2<2$, and define the set
\[L=\left\{j\in S\,:\,2\cos\left(2\pi{j}/n\right)\ge E_1\right\}.\]
Then Lemma~\ref{jlbecs} gives an estimate on the expectation in \eqref{Emgivenjkl} under additional  the assumption that $j,\ell\in L$.

Despite the fact that  Lemma~\ref{jlbecs} only proved for $k,\ell\in L$, we can still use it to provide a bound for $\sum_{J\in \mathcal{I}} \mathbb{E}\big(\mu_{C,0}*\mu_{D,0}(J)\big)^2$. The reason for this is that for $(j,k)\in H$, we have $j\in L$ or $k\in L$ provided that $n$ is large enough. See the proof of Proposition~\ref{Lemmaclosepairs} for details.

\begin{lemma}\label{jlbecs}
 Let $j,\ell\in L$ and $(j,k)\in H$. Fix $g\in [0,r]$ and choose $F$ uniformly at random from $[0,r]$. Then with constants depending on  $E$ and $E_1$ only, 
 \[\sum_{m\in S}\mathbb{P}\left(\left|\lambda(j,k,F,g)-\lambda(\ell,m,F,g)\right|\le {1}/{n^2}\right)=\begin{cases}
 O\left(\frac{1}n\right)+O\left(\frac{1}{nr|j-\ell|}\right)&\text{if }j\neq \ell,\\
 O(1)&\text{if }j=\ell.
 \end{cases}\]
\end{lemma}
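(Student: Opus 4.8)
The plan is to estimate, for fixed $j,\ell\in L$ and fixed $g\in[0,r]$, the probability that $\lambda(j,k,F,g)-\lambda(\ell,m,F,g)$ falls in an interval of length $2/n^2$ as $F$ ranges uniformly over $[0,r]$, and then to sum over $m\in S$. Write $\Delta(F)=\lambda(j,k,F,g)-\lambda(\ell,m,F,g)=2\cos(2\pi(j/n+F))-2\cos(2\pi(\ell/n+F))+\bigl(2\cos(2\pi(k/n+g))-2\cos(2\pi(m/n+g))\bigr)$. The second bracket is a constant $c_{k,m,g}$ not depending on $F$, so the event is $\{\phi(F)\in[-c_{k,m,g}-1/n^2,\,-c_{k,m,g}+1/n^2]\}$ where $\phi(F)=2\cos(2\pi(j/n+F))-2\cos(2\pi(\ell/n+F))$. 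Using the sum-to-product identity, $\phi(F)=-4\sin\!\bigl(\pi\tfrac{j-\ell}{n}\bigr)\sin\!\bigl(2\pi(\tfrac{j+\ell}{2n}+F)\bigr)$, so $\phi$ is a sinusoid of amplitude $4|\sin(\pi(j-\ell)/n)|$ and its derivative is $\phi'(F)=-8\pi\sin(\pi\tfrac{j-\ell}{n})\cos(2\pi(\tfrac{j+\ell}{2n}+F))$.

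The first main step is a level-set estimate: for a $C^1$ sinusoid $\phi$ of amplitude $a>0$, the measure of $\{F\in[0,r]:\phi(F)\in I\}$ for an interval $I$ of length $\le 2/n^2$ is $O\!\bigl(\tfrac{1}{n^2 a}\bigr)+O(a\, r^2)$-type — more precisely one splits $[0,r]$ into the portion where $|\phi'|$ is bounded below (there the preimage of $I$ has measure $O(|I|/\min|\phi'|)$) and the portion near the critical points of $\phi$ (there one uses that $\phi$ is within $O(a)$ of its extremum only on an $F$-set of small measure). Since $j,\ell\in L$, the key point is that $j/n$ and $\ell/n$ are bounded away from $0$ and $1/2$ (this is what $2\cos(2\pi j/n)\ge E_1>0$ buys, together with $(j,k)\in H$ forcing $2\cos(2\pi j/n)$ bounded above by something $<4$), hence $|\sin(\pi(j+\ell)/(2n)+2\pi F\cdot\tfrac12)|$ and $|\cos(\cdots)|$ are controlled and the amplitude satisfies $a=4|\sin(\pi(j-\ell)/n)|\asymp |j-\ell|/n$ for $|j-\ell|\le n/2$ (and $\asymp (n-|j-\ell|)/n$ otherwise, but the $L$-restriction plus $H$-membership keeps $j,\ell$ in a bounded-away-from-antipodal window, so $a\asymp \|j-\ell\|/n$ with $\|\cdot\|$ the distance mod $n$). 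Feeding $a\asymp|j-\ell|/n$ into the level-set bound and using $r\le o(1)$ to control the error term gives, uniformly in the constant shift $c_{k,m,g}$,
\[
\mathbb{P}\bigl(|\Delta(F)|\le 1/n^2\bigr)=O\!\left(\frac{1}{n^2 a}\right)+O\!\left(\frac{1}{n^2\sqrt{a\,\delta_m}}\right),
\]
where the second term accounts for $m$'s whose target value $-c_{k,m,g}$ lies within $O(1/n^2)$ of the extreme value $\pm a$ of $\phi$, and $\delta_m$ measures how far $-c_{k,m,g}$ is from $\pm a$; when $j=\ell$ we have $a=0$ and $\phi\equiv0$, so the event holds for all $F$ iff $|c_{k,m,g}|\le 1/n^2$, which pins $m$ down to $O(1)$ choices, giving the $O(1)$ bound directly.

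The second main step is to sum over $m\in S$. Writing $c_{k,m,g}=2\cos(2\pi(k/n+g))-2\cos(2\pi(m/n+g))$, as $m$ runs over $S$ the values $2\cos(2\pi(m/n+g))$ form (roughly) an arcsine-distributed grid on $[-2,2]$ with consecutive gaps $\asymp 1/(n\sqrt{1-(\text{value}/2)^2})$; the number of $m$ with $c_{k,m,g}$ in a window of length $w$ near a point $v$ is therefore $O(nw)$ away from the turning points $v=\pm2$, and the contribution of the turning points is handled separately since near $v=\pm2$ the relevant $m$'s are $O(\sqrt{nw})$ in number. Summing $O(1/(n^2 a))$ over the $O(nr)$-ish relevant $m$ (those with $|c_{k,m,g}|\le a+1/n^2$, whose number is $O(na+1)$) gives the main term $O\!\bigl(\tfrac{1}{n^2 a}\bigr)\cdot O(na+1)=O(1/n)+O(1/(n^2 a))$, and with $a\asymp|j-\ell|/n$ this is $O(1/n)+O(1/(n|j-\ell|))$; the $r^2$-type error from the level-set step times $O(na)$ contributes the extra $O(1/(nr|j-\ell|))$ after using $r\ge \log n/n$, while the square-root "near-extremum" terms are summed via a dyadic decomposition in $\delta_m$ and are dominated by the same bound. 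For $j=\ell$ the single-$m$-per-unit argument gives $O(1)$.

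The hard part will be the bookkeeping in the level-set estimate near the critical points of the sinusoid $\phi$, i.e.\ making the $O(1/(n^2\sqrt{a\,\delta_m}))$ term precise and then showing its sum over $m$ does not exceed $O(1/(nr|j-\ell|))$; this is where the hypotheses $r\ge \log n/n$, $j,\ell\in L$, and $(j,k)\in H$ all get used together, and where one must be careful that constants depend only on $E$ and $E_1$. The arcsine spacing of $\{2\cos(2\pi m/n)\}$ near $\pm2$ and the amplitude estimate $a\asymp\|j-\ell\|/n$ (needing the antipodal case excluded, which again follows from the $L$- and $H$-constraints for large $n$) are the two geometric facts the whole argument rests on.
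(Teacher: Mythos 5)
Your starting point is right: use the sum-to-product identity to write the $F$-dependent part as a sinusoid and do a level-set estimate, then sum over $m$. But the structure you propose is more complicated than necessary because you have not used the hypotheses $j,\ell\in L$ in the right place, and this causes you to anticipate a difficulty that is not there.

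The key fact you are not exploiting is that $L$-membership forces $\cos\!\left(\pi\frac{j+\ell}{n}\right)\ge E_1/2$ (as recorded in \eqref{e:jL}), and since $r=o(1)$ this persists uniformly for $f\in[0,r]$: one gets
$\frac{|j-\ell|}{n}(4\pi E_1-O(r))\le |u'(f)|\le 8\pi^2\frac{|j-\ell|}{n}$ for \emph{all} $f\in[0,r]$, where $u(f)=\lambda(j,k,f,g)-2\cos(2\pi(\ell/n+f))$. In other words, the sinusoid has \emph{no critical points} in the range $[0,r]$; it is strictly monotone there. Your "split $[0,r]$ into the portion where $|\phi'|$ is bounded below and the portion near the critical points" has an empty second piece, your $O\!\left(\frac{1}{n^2\sqrt{a\,\delta_m}}\right)$ near-extremum correction never arises, and the dyadic decomposition over $\delta_m$ that you identify as "the hard part" is entirely absent from the actual argument. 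Once you observe monotonicity, the per-$m$ probability bound is simply $\mathbb{P}(u(F)\in I')\le |I'|/(r\min|u'|)=O\!\left(\frac{1}{nr|j-\ell|}\right)$.

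A second unnecessary complication is your appeal to arcsine spacing of $\{2\cos(2\pi m/n)\}$ with special handling near $\pm 2$. The hypotheses $(j,k)\in H$ and $\ell\in L$ force $u(f)$ (and hence the relevant interval $I$ of target values for $2\cos(2\pi(m/n+g))$) to lie in $[-E_3,E_3]$ for some $E_3<2$ depending only on $E,E_1$; this is exactly why $E_1$ must satisfy $0<E_1<E_2<2$. So the grid $\{2\cos(2\pi(m/n+g))\}$ on the relevant range has gaps $\ge 4\pi c/n$ with $c=\sqrt{1-(E_3/2)^2}>0$, and the count of admissible $m$ is $O(r|j-\ell|+1)$ by a direct telescoping argument — no turning-point analysis is needed. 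Multiplying the $O(r|j-\ell|+1)$ count by the $O\!\left(\frac{1}{nr|j-\ell|}\right)$ per-$m$ bound gives the claimed estimate, and the $j=\ell$ case is the same spacing fact applied once. So the overall decomposition is correct in outline, but you should revisit how $L$ and $H$ are used: they do not merely "control" the amplitude, they eliminate the critical points from $[0,r]$ and confine $I$ to $[-E_3,E_3]$, which together make the proof short.
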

\begin{proof}[Proof of Lemma \ref{jlbecs}]
The set $L$  is set up so that for  $j,\ell\in L$, 
\begin{equation}\label{e:jL}
|j|\le \frac{n}4, \qquad \frac{E_1}2\le \cos\left(\pi\frac{j+\ell}{n}\right)\le 1, \qquad   \frac{|j-\ell|}{n} \le \left|\sin\left(\pi\frac{j-\ell}{n}\right)\right|\le \pi \frac{|j-\ell|}{n}.   
\end{equation}

 We start by the case $j\neq \ell$. Let
\[
u(f)=2\cos\left(2\pi\left({j}/n+f\right)\right)+2\cos\left(2\pi\left({k}/n+g\right)\right)-2\cos\left(2\pi\left({\ell}/n+f\right)\right).
\]
We have $\left|\lambda(j,k,f,g)-\lambda(\ell,m,f,g)\right|\le \frac{1}{n^2}$ if and only if $u(f)\in I'$ with  
\[ I'=\left[2\cos\left(2\pi\left({m}/n+g\right)\right)-{1}/{n^2},2\cos\left(2\pi\left({m}/n+g\right)\right)+{1}/{n^2}\right]
\]
and for $f\in [0,r]$  when $u(f) \in I'$ we also have 
$$2\cos\left(2\pi \left(\frac{m}{n}+g\right)\right)\in I, \qquad 
I=\left[\min(u(0),u(r))-{1}/{n^2},\max(u(0),u(r))+{1}/{n^2}\right].$$
Let $M\subset S$ be the set of  indices $m$ satisfying the above condition. 

By the product formula for $\sin$, we have
\begin{align*}u'(f)=-8\pi \sin\left(\pi \frac{j-\ell}{n}\right)\cos\left(\pi\left(\frac{j+\ell}{n}+2f\right)\right).
\end{align*}
We use  \eqref{e:jL} and  $f\in [0,r]$  to get the bound
\begin{align*}  \frac{|j-\ell|}{n}(4\pi E_1-O(r))\le  |u'(f)|\le 8\pi^2 \frac{|j-\ell|}{n}.
\end{align*}
Using the change of variables formula, we see that 
\begin{equation}\label{PAm}
\mathbb{P}(u(F)\in I')\le \frac{|I'|}{r \min_{f\in u^{-1}(I')} u'(f)}
 =O\left(\frac{1}{nr  |j-\ell|}\right).
\end{equation}
Next, we  estimate the size of the sets $M_1=M\cap [0,\infty)$, and $M_2=M\cap (-\infty, 0]$. 
Choose $E_3$ such that $\max(2-E,E_2)<E_3<2$. By the Lipschitz continuity of $\cos(\cdot)$, we see that for all $f\in [0,r]$, we have
\begin{equation}\label{eu(f)}
u(f)=2\cos\left(2\pi {j}/n\right)+2\cos\left(2\pi {k}/n\right)-2\cos\left(2\pi {\ell}/n\right)+O(r)
\end{equation}
 Using that $(j,k)\in H$ and $\ell\in L$, we see that
 \[2\cos\left(2\pi {j}/n\right)+2\cos\left(2\pi {k}/n\right)=E+O(r)\text{ and }2\cos\left(2\pi {\ell}/n\right)\ge E_1.\]
Combining these with \eqref{eu(f)}, we obtain
\begin{align*}
    u(f)\le E-E_1+O(r)=E_2+O(r)\le E_3-1/{n^2}
\end{align*}
for all large enough $n$, by the choice of $E_3$. Similarly, $u(f)\ge E-2-O(r)\ge 1/n^2-E_3$
and so 
$
    I\subset [-E_3,E_3]
$.
This and  $g\in[0,r]$ imply that $M_1$ is of the form $\{m_{\min},m_{\min}+1,\dots,m_{\max}\}$. For an $m$ such that $m,m+1\in M_1$, we have
\begin{equation}\label{mm1becs}2\cos\left(2\pi\left({m}/n+g\right)\right)-2\cos\left(2\pi\left((m+1)/n+g\right)\right)= \frac{4\pi}n \sin\left(2\pi\left({m'}/n+g\right)\right)
\end{equation}
for some $m'\in [m,m+1].$
Using $I\subset [-E_3,E_3]$, we see that $\left|2\cos\left(2\pi\left({m'}/n+g\right)\right)\right|\le E_3$.
Therefore, since $0\le m'\le n/2$, for any large enough $n$, we have
\[c\le \sin\left(2\pi\left({m'}/n+g\right)\right), \qquad 0<c=\sqrt{1-\left({E_3}/2\right)^2}.\]
Combining this with \eqref{mm1becs}, we see that
\begin{equation}\label{eqapart}
{4\pi c}/n\le 2\cos\left(2\pi\left({m}/n+g\right)\right)-2\cos\left(2\pi(m+1)/n+g\right).
\end{equation}
Adding these estimates over $m$ and using the definition of $M_1$, we see that
\begin{align*}
 (|M_1|-1) {4\pi c}/n &\le
2\cos\left(2\pi\left({m_{\min}}/n+g\right)\right)
-2\cos\left(2\pi\left({m_{\max}}/n+g\right)\right)\\&
\le |I|=|u(0)-u(r)|+{2}/{n^2}.
\end{align*}

Using our estimate on $u'(f)$, we see that 
$|u(0)-u(r)|\le8\pi^2  r |j-\ell|/n$.
Therefore,\break $|M_1|= O(r |j-\ell|+1).$
This and the identical estimate for $|M_2|$ combined with \eqref{PAm} gives the first claim. 

The $j=\ell$ case follows from a consequence of the proof of  \eqref{eqapart}: for every large enough $n$ there can be at most one $m\in S\setminus \{k\}$ such that 
\[\left|2\cos\left(2\pi\left({k}/n+g\right)\right)-2\cos\left(2\pi\left({m}/n+g\right)\right)\right|\le {1}/{n^2}.\qedhere\]
\end{proof}

\begin{proof}[Proof of Proposition \ref{Lemmaclosepairs}]
Let  $(F,G)$ be a uniform random element of $[0,r]^2$. Since $\mu_{c+1/n,0}=\mu_{c,0}$, we see that $\mu_{F,0}*\mu_{G,0}$
and $\mu_{C,0}*\mu_{D,0}$ have the same law. 
For $J\in\mathcal I$, write
\[
\mu_{F,0}*\mu_{G,0}(J)=Y_J(S,S), \qquad 
Y_J(U,V)=\sum_{j\in U}\sum_{k\in V} \mathbbm{1}\left(\lambda(j,k,F,G)\in J \right).\]
For large enough $n$, for all $(f,g)\in [0,r]^2$ and  $\lambda(j,k,f,g)\in [E-r,E+r]$,
we have $j\in L$ or $k\in L$. Thus for such $n$,
\begin{equation}
    \label{e:LSSL}
    Y_J(S,S)\le Y_J(L,S)+Y_J(S,L).
\end{equation}
Then  $Y_J(S,L)$ has the same law as $Y_J(L,S)$, and we have  
\begin{equation*}
\sum_{J\in \mathcal I}Y_J(L,S)^2 = \sum_{(j,k, \ell, m)\in (L\times S)^2}
\mathbbm{1}\left(\lambda(j,k,F,G),\ \lambda(\ell,m,F,G)\in J \mbox{ for some } J\in \mathcal I\right).
\end{equation*}
The indicated  event  implies  $(j,k), (\ell,m)\in H$. Taking expectations, we get 
\begin{equation}\label{triplesum}
\mathbb E \sum_{J\in \mathcal I}Y_J(L,S)^2 \le \sum_{\substack{(j,k)\in H\\j\in L}}\sum_{(\ell,m) \in L\times S} \mathbb{P}\left(\left|\lambda(j,k,F,G)-\lambda(\ell,m,F,G)\right|\le {1}/{n^2}\right).
\end{equation}
Lemma~\ref{jlbecs}  bounds the inner sum as
$$ O(1)+\sum_{\ell\in L\setminus\{j\}}\left( O\left({1}/n\right)+O\left({1}/(nr|j-\ell|)\right)\right)\le 
 O(1)+O\left(\log(n)/(nr)\right)= O(1).
$$
By  Lemma~\ref{detest2} we have 
 \[|H|=n^2\mu_{0,0}*\mu_{0,0}([E-(1+8\pi)r,E+(1+8\pi)r])=n^2 O(r). 
 \]
In \eqref{triplesum}, the first sum is over a set of size $n^2 O(r)$ and each term is $O(1)$. So $\mathbb E \sum_{J\in \mathcal I}Y_J(L,S)^2=n^2O(r)$.  The claim follows by Cauchy-Schwarz and \eqref{e:LSSL}.
\end{proof}

\section{Eigenvectors of GUE-perturbed locally convergent matrices}
In this section, we prove Theorem~\ref{lemmaquenched1A}. As preliminaries, we review the notion of the free convolution with the semicircle distribution, and the results of \cite{benigni2020eigenvectors}.

In Section \ref{secallpoint}, we prove Theorem~\ref{lemmaquenched1}, which is a simpler deterministic version of Theorem~\ref{lemmaquenched1A}, where it is assumed that the regularity conditions hold at every point of a given interval not just with high probability. To prove Theorem~\ref{lemmaquenched1A}, we subdivide the interval $[E\pm \ell/2]$ into smaller intervals. Combining the conditions of Theorem~\ref{lemmaquenched1A} and  the continuity properties of the Stieltjes-transform, we show that for most of these smaller intervals, Theorem~\ref{lemmaquenched1}  can be applied. Thus, the perturbed eigenvectors from these intervals have the desired local weak limit. Finally, using the uniform integrability condition and the norm estimate of Lemma~\ref{normbecs}, we prove that the intervals where Theorem~\ref{lemmaquenched1} can not be applied contain a negligible number of eigenvalues.


\subsection{The free convolution with the semicircle distribution}\label{freeconvprel}

The results of this section are taken from \cite{biane1997free}.

Let $\mu$ be a probability measure on $\mathbb{R}$ and $t>0$. In this section, we define the free convolution of $\mu$ and the semicircle distribution of variance $t$.

We define $v_t(\lambda)$ as follows
\[v_t(\lambda)=\inf\left\{v\ge 0\,:\,\int \frac{1}{(x-\lambda)^2+v^2}\mu (dx)\le \frac{1}{t} \right\}.\]

We also define
\[\psi_t(\lambda)=\lambda-t\int\frac{x-\lambda}{(x-\lambda)^2+v_t(\lambda)^2}\mu(dx).\]

\begin{lemma}
The map $\psi_t$ is a homeomorphism from $\mathbb{R}$ to $\mathbb{R}$.
\end{lemma}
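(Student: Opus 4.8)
The plan is to exhibit $\psi_t$ as the boundary restriction of the analytic self-map $H_t(z)=z+tG_\mu(z)$ of the upper half-plane, where $G_\mu(z)=\int (z-x)^{-1}\mu(dx)$, and then to conclude from three facts about $\psi_t$ — continuity, properness ($\psi_t(\lambda)\to\pm\infty$), and injectivity — using that a continuous proper injection $\mathbb{R}\to\mathbb{R}$ is automatically a homeomorphism onto $\mathbb{R}$. First I would record the basic properties of $v_t$. Writing $f(\lambda,v)=\int ((x-\lambda)^2+v^2)^{-1}\mu(dx)$, the map $v\mapsto f(\lambda,v)$ is continuous and strictly decreasing on $(0,\infty)$ with limit $0$, so $v_t$ is well defined, satisfies $0\le v_t(\lambda)\le\sqrt t$ (from $f(\lambda,v)\le v^{-2}$), vanishes exactly when $f(\lambda,0)\le 1/t$, and solves $f(\lambda,v_t(\lambda))=1/t$ when positive; monotonicity and joint continuity of $f$ give that $v_t$ is continuous. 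The computation at the heart of everything is
\[
H_t(\lambda+iv)=\Bigl(\lambda-t\int\frac{x-\lambda}{(x-\lambda)^2+v^2}\,\mu(dx)\Bigr)+iv\bigl(1-tf(\lambda,v)\bigr),
\]
so at $v=v_t(\lambda)$ the imaginary part vanishes and $\psi_t(\lambda)=H_t(\lambda+iv_t(\lambda))\in\mathbb{R}$. Continuity of $\psi_t$ then follows from continuity of $v_t$ and dominated convergence; the only delicate points are those $\lambda$ with $v_t(\lambda)=0$, where one splits the defining integral near $\lambda$ and controls the near part by Cauchy--Schwarz using $f(\lambda,0)\le 1/t$ (which in particular forces $\mu(\{\lambda\})=0$).

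For properness I would estimate $\int\frac{\lambda-x}{(\lambda-x)^2+v_t(\lambda)^2}\mu(dx)$ by splitting at $|x-\lambda|=R$: the far part is at most $1/R$ in absolute value and the near part is at most $t^{-1/2}\mu([\lambda-R,\lambda+R])^{1/2}$ by Cauchy--Schwarz (using $f(\lambda,v_t(\lambda))\le 1/t$), so $|\psi_t(\lambda)-\lambda|\le t/R+\sqrt t\,\mu([\lambda-R,\lambda+R])^{1/2}$, which is small for $R$ large and then $|\lambda|$ large. Hence $\psi_t(\lambda)-\lambda\to 0$, so $\psi_t(\lambda)\to\pm\infty$ as $\lambda\to\pm\infty$.

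The main obstacle is injectivity. Suppose $\psi_t(\lambda_1)=\psi_t(\lambda_2)$ with $\lambda_1\ne\lambda_2$, and put $z_j=\lambda_j+iv_t(\lambda_j)$, so $z_1\ne z_2$ and $H_t(z_1)=H_t(z_2)$. The resolvent identity rearranges to $t\int (z_1-x)^{-1}(z_2-x)^{-1}\mu(dx)=1$, and estimating the left side by the triangle inequality and then Cauchy--Schwarz,
\[
1\le t\int\frac{\mu(dx)}{|z_1-x|\,|z_2-x|}\le t\,f(\lambda_1,v_t(\lambda_1))^{1/2}f(\lambda_2,v_t(\lambda_2))^{1/2}\le 1,
\]
so equality holds at every step. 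The plan is then to extract the equality cases: $f(\lambda_j,v_t(\lambda_j))=1/t$ for both $j$, equality in Cauchy--Schwarz, and $\arg\bigl((z_1-x)(z_2-x)\bigr)$ constant for $\mu$-a.e.\ $x$. Since $x\mapsto\arg(z_j-x)$ is a strictly increasing bijection onto $(0,\pi)$ when $v_t(\lambda_j)>0$ and a step function jumping at $\lambda_j$ when $v_t(\lambda_j)=0$, these constraints force $\mu$ to be a single point mass, or else $\mu$ lies strictly to one side of both $\lambda_1$ and $\lambda_2$, whence $f(\lambda_1,0)\ne f(\lambda_2,0)$, contradicting the first equality. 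For $\mu=\delta_{x_0}$ one has $v_t(\lambda)=(t-(\lambda-x_0)^2)_+^{1/2}$ and
\[
\psi_t(\lambda)=\begin{cases}2\lambda-x_0,&|\lambda-x_0|\le\sqrt t,\\ \lambda+t/(\lambda-x_0),&|\lambda-x_0|>\sqrt t,\end{cases}
\]
which is strictly increasing, contradicting $\psi_t(\lambda_1)=\psi_t(\lambda_2)$. This gives injectivity and completes the proof.

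The step that will take the most care is isolating the equality case of the Cauchy--Schwarz inequality above and verifying that it genuinely pins $\mu$ down; the bookkeeping when one of $v_t(\lambda_1),v_t(\lambda_2)$ vanishes is a little fiddly. A cleaner but less self-contained alternative is to follow \cite{biane1997free} directly: the same Cauchy--Schwarz bound shows $H_t$ is injective on $\mathcal{O}_t=\{x+iy:y>v_t(x)\}$; a short boundary analysis ($\operatorname{Im}H_t\to 0$ along any sequence in $\mathcal{O}_t$ approaching $\partial\mathcal{O}_t$, and $|H_t|\to\infty$ at infinity) shows $H_t(\mathcal{O}_t)$ is the whole upper half-plane; and Carath\'eodory's theorem extends the conformal bijection $H_t\colon\mathcal{O}_t\to\{z\colon\operatorname{Im}z>0\}$ to a homeomorphism of closures in the sphere, whose boundary restriction, after reparametrising $\partial\mathcal{O}_t$ by $\lambda\mapsto\lambda+iv_t(\lambda)$, is exactly $\psi_t$.
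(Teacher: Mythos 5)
The paper does not prove this lemma: Section 5.1 is imported wholesale from \cite{biane1997free}, and this statement is one of the imported facts, so there is no in-paper argument to compare against. Your blind reconstruction of Biane's proof is sound in outline: realizing $\psi_t$ as the boundary trace of $H_t(z)=z+tG_\mu(z)$, the properness bound $|\psi_t(\lambda)-\lambda|\le\sqrt t$ from Cauchy--Schwarz against $f(\lambda,v_t(\lambda))\le 1/t$, the saturated chain of inequalities from the rearranged resolvent identity $1=t\int(z_1-x)^{-1}(z_2-x)^{-1}\mu(dx)$, and the topological finish (a continuous proper injection of $\mathbb R$ into $\mathbb R$ is a homeomorphism onto $\mathbb R$) are all the right moves.

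The one point you should fix is in the injectivity step. The dichotomy ``$\mu$ is a point mass, or $\mu$ lies strictly to one side of both $\lambda_1,\lambda_2$'' is not exhaustive when $v_t(\lambda_1)=v_t(\lambda_2)=0$: the phase condition then says $(\lambda_1-x)(\lambda_2-x)>0$ for $\mu$-a.e.\ $x$, which allows $\operatorname{supp}\mu$ to be split between $(-\infty,\lambda_1)$ and $(\lambda_2,\infty)$, and in that case $f(\lambda_1,0)\ne f(\lambda_2,0)$ need not hold. Fortunately the Cauchy--Schwarz equality case alone already finishes the argument, with no case split on the $v_t(\lambda_j)$: it gives $|z_1-x|=c\,|z_2-x|$ for $\mu$-a.e.\ $x$; the two saturated $L^2(\mu)$ norms $f(\lambda_1,v_t(\lambda_1))=f(\lambda_2,v_t(\lambda_2))=1/t$ force $c=1$; and for $z_1\ne z_2$ with $\lambda_1\ne\lambda_2$ the set $\{x\in\mathbb R:|z_1-x|=|z_2-x|\}$ is a single point, since expanding the squares the quadratic terms cancel and the linear coefficient $2(\lambda_2-\lambda_1)$ is nonzero. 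Hence $\mu$ is a point mass in every case, which your explicit formula then rules out. Replace the ``lies strictly to one side'' branch with this and the injectivity argument is complete; alternatively, quote Biane's injectivity of $H_t$ on $\mathcal O_t$ together with Carath\'eodory, as you suggest and as the paper implicitly does by citation.
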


Since $\psi_t$ is a homeomorphism, the inverse $\psi_t^{-1}$ of $\psi_t$ is well defined. Let us define
\begin{equation}\label{ptdef}p_t(\lambda)=\frac{v_t(\psi_t^{-1}(\lambda))}{\pi t}.
\end{equation}

\begin{lemma}
The function $p_t$ is a probability density function, that is, $p_t$ is non-negative and $\int p_t(x)dx=1$. 
\end{lemma}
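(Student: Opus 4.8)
The plan is to realise the measure $p_t(x)\,dx$ as the Stieltjes inversion of the analytic function $G(z):=G_\mu(\omega_t(z))$, where $G_\mu(z)=\int(z-x)^{-1}\,\mu(dx)$ is the Cauchy transform of $\mu$ and $\omega_t$ is the inverse of $H_t(w):=w+tG_\mu(w)$; non‑negativity and total mass $1$ then both drop out. Non‑negativity is in fact immediate, since $v_t\ge 0$ and $t>0$ force $p_t\ge 0$ pointwise, so only the normalisation $\int p_t=1$ needs an argument. This is in essence Biane's analysis~\cite{biane1997free}, and the preceding lemma is exactly the input it needs.

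First record the half‑plane picture underlying the definitions. For $w=u+iv\in\mathbb C^+$, writing $g(u,v):=\int((u-x)^2+v^2)^{-1}\,\mu(dx)$, one has $\Im H_t(w)=v\,(1-t\,g(u,v))$; since $g(u,\cdot)$ is continuous and strictly decreasing on $(0,\infty)$ with limit $0$ at infinity, the factor $1-t\,g(u,v)$ is positive precisely when $v>v_t(u)$. Hence $H_t$ maps $\Omega:=\{u+iv:\,v>v_t(u)\}$ into $\mathbb C^+$, and it sends the bottom boundary $u\mapsto u+iv_t(u)$ of $\Omega$ onto the real line via $u\mapsto\psi_t(u)$, because there $\Im H_t(u+iv_t(u))=0$ while $\Re H_t(u+iv_t(u))=\psi_t(u)$ by definition. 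The complex‑analytic content behind the preceding lemma (see \cite{biane1997free}) is that $H_t\colon\Omega\to\mathbb C^+$ is a conformal homeomorphism extending to a homeomorphism of the closures; let $\omega_t=H_t^{-1}\colon\overline{\mathbb C^+}\to\overline\Omega$ be its inverse. Since $\psi_t$ is a homeomorphism of $\mathbb R$, every $E\in\mathbb R$ equals $\psi_t(u)$ for a unique $u$, and then $\omega_t(E)=u+iv_t(u)$.

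Now set $G(z)=G_\mu(\omega_t(z))$. Since $\omega_t(\mathbb C^+)\subset\mathbb C^+$ and $G_\mu$ maps $\mathbb C^+$ into $\mathbb C^-$, $G$ is analytic on $\mathbb C^+$ with negative imaginary part; it is bounded on $\overline{\mathbb C^+}$ because $|G_\mu(w)|^2\le\int|w-x|^{-2}\,\mu(dx)=g(u,v)\le g(u,v_t(u))\le 1/t$ for every $w=u+iv\in\overline\Omega$, it extends continuously to $\mathbb R$ (the condition $\int(u-x)^{-2}\,\mu(dx)\le 1/t$ on the real part of $\partial\Omega$ makes $G_\mu$ continuous there, and $\omega_t$ is continuous on $\overline{\mathbb C^+}$), and $zG(z)\to 1$ as $z\to\infty$ in $\mathbb C^+$ because $\omega_t(z)=z-tG_\mu(\omega_t(z))=z+O(1/z)$ and $G_\mu(w)=w^{-1}+O(w^{-2})$. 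These three properties say that $G$ is the Cauchy transform of a probability measure $\nu$, and continuity up to the line forces $\nu$ to be absolutely continuous with density $-\tfrac1\pi\Im G(\cdot)$. It remains to identify this density with $p_t$: for $E=\psi_t(u)$ with $v_t(u)>0$ we have $\Im G_\mu(u+iv_t(u))=-v_t(u)\,g(u,v_t(u))=-v_t(u)/t$, so $-\tfrac1\pi\Im G(E)=v_t(u)/(\pi t)=p_t(E)$; and for $v_t(u)=0$ we have $\omega_t(E)=u\in\mathbb R$ with $G_\mu(u)$ real, so $-\tfrac1\pi\Im G(E)=0=p_t(E)$. Hence $p_t(x)\,dx=\nu$ is a probability measure.

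The one genuinely nontrivial point is the boundary behaviour: that $H_t$ extends to a homeomorphism $\overline\Omega\to\overline{\mathbb C^+}$, that $G_\mu\circ\omega_t$ extends continuously to $\mathbb R$, and that $zG(z)\to 1$ at infinity, so that Stieltjes inversion recovers exactly $p_t$ with no mass escaping. This is where the preceding lemma and Biane's estimates are used; everything else is bookkeeping with the relation $g(u,v_t(u))=1/t$. A real‑variable alternative substitutes $\lambda=\psi_t(u)$ and reduces $\int p_t=1$ to the identity $\int_{\{v_t>0\}}v_t(u)\,\psi_t'(u)\,du=\pi t$, where differentiating $g(u,v_t(u))=1/t$ gives $\psi_t'(u)=2t\,v_t(u)^2\bigl(1+v_t'(u)^2\bigr)\int((u-x)^2+v_t(u)^2)^{-2}\,\mu(dx)\ge 0$; but establishing that identity in general routes back through the same half‑plane computation.
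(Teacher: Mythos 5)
The paper does not supply a proof of this lemma: Section~5.1 is prefaced with ``The results of this section are taken from \cite{biane1997free}'', and the statement is left as a citation. Your argument is a correct reconstruction of Biane's proof. The non-negativity observation is trivial as you note; the normalisation is handled exactly the way Biane does, by recognising $H_t(w)=w+tG_\mu(w)$ as a conformal homeomorphism from $\Omega=\{v>v_t(u)\}$ onto $\mathbb C^+$ (extending to the closures), pulling $G_\mu$ back through $\omega_t=H_t^{-1}$, and applying Stieltjes inversion to the resulting Pick function $G$. Your computations match the paper's conventions: $G=-m$ with $m$ as in \eqref{reimfc1}, the identification $-\tfrac1\pi\Im G(\psi_t(u))=v_t(u)/(\pi t)=p_t(\psi_t(u))$ agrees with \eqref{ptdef}, the bound $|G_\mu|\le t^{-1/2}$ on $\overline\Omega$ via Cauchy--Schwarz is correct, and your closing real-variable identity $\psi_t'(u)=2t\,v_t(u)^2(1+v_t'(u)^2)\int\!((u-x)^2+v_t(u)^2)^{-2}\mu(dx)$ indeed follows from implicit differentiation of $g(u,v_t(u))=1/t$. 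You are also right to flag that the genuinely non-routine step is the boundary regularity---continuity of $H_t^{-1}$ and of $G_\mu$ on $\partial\Omega$, and the non-tangential decay $zG(z)\to1$---which is precisely where Biane's estimates are used; deferring those details to the cited source is appropriate given that the paper itself does the same.
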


The probability measure with density $p_t$ is called the \textbf{free convolution} of $\mu$ and the semicircle distribution of variance $t$. This is not the usual definition of the free convolution, but it will be convenient for our purposes.

Let $m$ be the Stieltjes transform of $p_t$, that is, for any $\lambda\in \mathbb{R}$ and $\eta>0$, let us define
\[m(\lambda+i\eta)=\int \frac{1}{x-(\lambda+i\eta)} p_t(x)dx.\]
Then $m$ can be continuously extended to the real line and for any $\lambda\in \mathbb{R}$, and we have
\begin{align}\label{reimfc1}\Im m(\lambda)&=\pi p_t(\lambda)=\frac{v_t(\psi_t^{-1}(\lambda))}t, \qquad
\Re m(\lambda)=\frac{1}{t}(\psi_t^{-1}(\lambda)-\lambda).
\end{align}

\subsection{The results of Benigni}

For all $n$, let $D=D_n=\text{diag}(d_1,d_2,\dots, d_n)$ be an $n\times n$ diagonal matrix with real entries. Let $m_D$ be the Stieltjes transform of the empirical probability measure on the diagonal entries of $D$, that is,
\[m_D(\lambda+i\eta)=\frac{1}n\sum_{j=1}^n \frac{1}{d_j-(\lambda+i\eta)}.\]

Let us choose $\eta_*=\eta_{*,n}$ and $r=r_n$ such that
\[n^{-1}\le \eta_*\le n^{-\varepsilon'}\text{ and }\eta_*n^{\varepsilon'}\le r \le n^{-\varepsilon'}\]
for some $\varepsilon'>0$.

Fix a $\lambda_0$. Assume that there are two constants $c>0$ and $c'>0$ independent of $n$ such that
\[c\le \Im m_D(\lambda+i\eta)\le c'\]
for all $D=D_n$, $\lambda\in [\lambda_0-r,\lambda_0+r]$ and $\eta_*\le \eta\le 10$.

Choose $t=t_n$ such that $t\in [\eta_*n^{\omega},n^{-\omega}r]$ for some $\omega>0$ independent of $n$.

Let $m_t=m_{t_n,n}$ be the Stieltjes transform of the free convolution of the empirical measure on the diagonal entries of $D$ and the semicircle distribution of variance $t$. Let $p_t=p_{t_n,n}$ be the density of this free convolution. We also define $v_t(\lambda)=v_{t_n,n}(\lambda)$ and the homeomorphism $\psi_t=\psi_{t_n,n}$ the same way as in  Section \ref{freeconvprel}, that is,
\[v_t(\lambda)=\inf\left\{v\ge 0\,:\, \frac{1}{n}\sum_{j=1}^n \frac{1}{(d_j-\lambda)^2+v^2}\le \frac{1}{t} \right\},\quad\psi_t(\lambda)=\lambda-\frac{t}{n}\sum_{j=1}^n \frac{d_j-\lambda}{(d_j-\lambda)^2+v_t(\lambda)^2}.\]

For $i=0,1,2,\dots,n$, we define the $i$th quantile $\gamma_{i,t}=\gamma_{i,t_n,n}$ of this measure by the equation
\begin{equation}\label{quantiledef}\int_{-\infty}^{\gamma_{i,t}} p_t(x)dx=\frac{i}{n}.
\end{equation}

For $q\in \mathbb{R}^n$ and $k=1,2,\dots, n$, we define
\[\sigma^2(q,k)=\sum_{j=1}^n\frac{|q_j|^2 t}{(d_j-\gamma_{k,t}-t\Re m_t(\gamma_{k,t}))^2+(t\Im m_t (\gamma_{k,t}))^2}, \]

Using Equations \eqref{reimfc1}, this can be rewritten as

\begin{equation}\label{sigmadef}\sigma^2(q,k)=\sum_{j=1}^n\frac{|q_j|^2 t}{(d_j-\psi_t^{-1}(\gamma_{k,t}))^2+v_t(\psi_t^{-1}(\gamma_{k,t}))^2}.
\end{equation}

We define $W_t=D+\sqrt{t} W$, where $W$ is an $n\times n$ GUE matrix defined in Section \ref{subsecGUE}.

Let $u_1,u_2,\dots,u_n$ be the $\ell^2$ normalized eigenvectors of $W_t$ with independent uniform random phases, we order these vectors such that the corresponding eigenvalues are monotone increasing. 

\begin{theorem}\label{benignithm}
Let us choose an $k=k_n$ such that $\gamma_{k,t}\in [\lambda_0-\frac{1}2r,\lambda_0+\frac{1}2r]$, and a $q=q_n\in \mathbb{C}^n$ such that $\|q\|_2=1$. Then
\[{\frac{\sqrt{n}}{\sigma_t(q,k)}} \langle q,u_k\rangle \]
converge in distribution to a standard complex Gaussian random variable. 

More generally, let $m$ be a positive integer, and let  $k_1<k_2<\dots<k_m$ be such that $\gamma_{k_i,t}\in [\lambda_0-\frac{1}2r,\lambda_0+\frac{1}2r]$ for all $i$. Then
\[\frac{\sqrt{n}}{\sigma_t(q,k)} \langle q,u_{k_j}\rangle, \qquad j=1,\ldots ,m \]
converge in distribution to $m$ independent standard complex Gaussian  random variables.

\end{theorem}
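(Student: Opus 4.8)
The plan is to realize $W_t=D+\sqrt t\,W$ in distribution as the time-$t$ marginal of matrix Dyson Brownian motion $(H_s)_{s\ge 0}$ with $H_0=D$, so that $u_k=u_k(t)$ are the eigenvectors of $H_t$, and then to use the eigenvector moment flow of \cite{bourgade2017eigenvector,bourgade2018random} to show that by time $t$ the overlaps $\langle q,u_k(t)\rangle$ have relaxed to their Gaussian equilibrium. The window $t\in[\eta_*n^\omega,n^{-\omega}r]$ is exactly the local relaxation regime: $t\ge\eta_*n^\omega$ places $t$ above the local eigenvalue spacing near $\lambda_0$ (so the flow has mixed locally), while $t\le n^{-\omega}r$ keeps the spectral perturbation so small that the regularity input on $[\lambda_0-r,\lambda_0+r]$ is preserved for the deformation $H_s$, $s\le t$.

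Concretely, set $z_k(s)=\sqrt n\,\langle q,u_k(s)\rangle$. Conditionally on the eigenvalue trajectory, for a configuration $\bm{\xi}=(\xi_1,\dots,\xi_n)$ of nonnegative integers the moment observable
\[
  f_s(\bm{\xi})\;=\;\mathbb{E}_{\bm{\lambda}}\!\left[\;\prod_{k=1}^n \frac{|z_k(s)|^{2\xi_k}}{\xi_k!}\;\right]
\]
solves a parabolic equation $\partial_s f_s=\mathcal{B}_s f_s$, where $\mathcal{B}_s$ is the generator of a reversible jump process on particle configurations whose jump rates are proportional to $c_{kl}(s)=1/\!\left(n(\lambda_k(s)-\lambda_l(s))^2\right)$. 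A maximum principle together with an $L^2\to L^\infty$ ultracontractivity (finite speed of propagation) estimate for $\mathcal{B}_s$ shows that, up to $o(1)$, $f_t(\bm{\xi})=\prod_k\sigma_t(q,k)^{2\xi_k}$ --- equivalently, for any $k_1<\dots<k_m$ in the window the rescaled overlaps $\sqrt n\,\langle q,u_{k_j}\rangle/\sigma_t(q,k_j)$ converge jointly to $m$ independent standard complex Gaussians, which is the conclusion.

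It remains to identify the variance $\sigma_t(q,k)^2$ as $\mathbb{E}|z_k(t)|^2$. I would obtain this from the anisotropic local law for the resolvent $G_t(z)=(H_t-z)^{-1}$ at the optimal scale near $\gamma_{k,t}$, which gives
\[
  \langle q,\,(\Im G_t)(\gamma_{k,t}+i0)\,q\rangle \;\approx\; \sum_{j=1}^n \frac{|q_j|^2\,v_t(\psi_t^{-1}(\gamma_{k,t}))}{(d_j-\psi_t^{-1}(\gamma_{k,t}))^2+v_t(\psi_t^{-1}(\gamma_{k,t}))^2}\;=\;\frac{\sigma^2(q,k)}{t}
\]
by \eqref{sigmadef}; the deformed-semicircle quantities $v_t,\psi_t,p_t$ of Section~\ref{freeconvprel} enter precisely through the self-consistent (Dyson--Pastur) equation for $D+\sqrt t\,W$. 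The deterministic control needed throughout --- the local law and eigenvalue rigidity for $H_s$, and hence bounds on the rates $c_{kl}(s)$ and on the quantiles $\gamma_{k,s}$, uniformly for $s\le t$ --- is supplied by \cite{landon2017convergence,lee2016bulk} from the hypothesis $c\le\Im m_D\le c'$ on $[\lambda_0-r,\lambda_0+r]$.

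The main obstacle is twofold: propagating the local law and rigidity for $H_s$ \emph{uniformly in $s\le t$} down to the optimal scale $n^{-1+o(1)}$ across the whole window --- i.e.\ upgrading a single-time, scale-$\eta_*$ regularity assumption on $D$ to an $s$-uniform, optimal-scale statement for the deformation --- and proving a quantitative relaxation estimate for the eigenvector moment flow that makes the $o(1)$ above genuinely small throughout $t\in[\eta_*n^\omega,n^{-\omega}r]$. The multi-point case only adds bookkeeping (several particles in $\bm{\xi}$), not a new idea. All of these ingredients are assembled in \cite{benigni2020eigenvectors} (see also \cite{marcinek2020high}), and I would ultimately appeal to that work after checking that the quantitative hypotheses here meet its requirements.
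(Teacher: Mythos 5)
Your proposal correctly identifies that the theorem is, in essence, an appeal to \cite[Theorem 1.3]{benigni2020eigenvectors}, and the long expository outline of the eigenvector moment flow, the observable $f_s(\bm\xi)$ and its generator, the relaxation window $t\in[\eta_*n^\omega,n^{-\omega}r]$, and the identification of $\sigma_t^2$ via the self-consistent equation is a faithful summary of how Benigni proves that result. But that material is \emph{his} contribution, not something this paper must re-derive; the paper's own proof of Theorem~\ref{benignithm} consists of a single paragraph, and it addresses exactly one point that your outline omits.

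The gap: Benigni's Theorem~1.3 is stated for \emph{real} test vectors $q\in\mathbb{R}^n$, while the statement here requires $q=q_n\in\mathbb{C}^n$. You write $q\in\mathbb{C}^n$ throughout and take complex Gaussian limits, but you never flag that the cited black box does not directly apply. The reduction is short but must be said: pick a diagonal unitary $R$ so that $q'=Rq\in\mathbb{R}^n$; since $D$ is real diagonal, $RDR^*=D$, and by Lemma~\ref{invariant} $R(D+\sqrt t\,W)R^*$ has the same law as $D+\sqrt t\,W$. Hence $u_k'=Ru_k$ is the $k$th eigenvector (with a uniform random phase) of a matrix equal in law to $W_t$, $\langle q,u_k\rangle=\langle q',u_k'\rangle$, and $\sigma_t^2(q,k)=\sigma_t^2(q',k)$, so Benigni's real-vector theorem applied to $q'$ gives the claim for $q$. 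The joint (multi-index) version then follows the same way. Your "main obstacle" paragraph instead lists the $s$-uniform local law and quantitative relaxation estimate, which are precisely the pieces already supplied in \cite{benigni2020eigenvectors,landon2017convergence,lee2016bulk} and need not be re-established here; the only step left to the present paper is the real-to-complex reduction you did not mention.
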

\begin{proof}
This is a special case of \cite[Theorem 1.3]{benigni2020eigenvectors}, except one small detail. In that paper, it is assumed that $q\in\mathbb{R}^n$. However, it is easy to extend this statement to the case $q\in \mathbb{C}^n\backslash \mathbb{R}^n$. Indeed, one can find a diagonal unitary matrix $R=R_n$ such that $q'=R q\in \mathbb{R}^n$. Let $u_k'=R q$, then $u_k'$ is an eigenvector of  $R(D+\sqrt{t}W)R^*$ corresponding to the $k$th eigenvalue with a uniform random phase. Since $R(D+\sqrt{t}W)R^*$ and $D+\sqrt{t}W$ have the same distribution, $u_k$ and $u_k'$ has the same distribution. Note that $\langle q,u_k\rangle=\langle R^{*} q',u_k\rangle=\langle q',R u_k\rangle=\langle  q',u_k'\rangle $. But  $\langle  q',u_k'\rangle$ has the same distribution as $\langle q', u_k\rangle$. It is also easy to see that $\sigma_{t}^2(q,k)=\sigma_{t}^2(q',k)$.  So the statement follows. 
\end{proof}

See also \cite{marcinek2020high}, where a similar theorem is proved for non-diagonal initial matrix.

\subsection{A version of Theorem \ref{lemmaquenched1A} where the regularity holds in every point}\label{secallpoint}

\begin{theorem}[Eigenvector limits of GUE-perturbed locally convergent matrices]\label{lemmaquenched1}
Let $A=A_n$ be an $n\times n$ Hermitian matrices,  $E=E_n\in \mathbb R$,  $0<\varepsilon<\frac{1}3$ and $t=t_n\in [n^{\varepsilon-1},n^{-2\varepsilon}]$. Let 
$m(z):=\Tr((A-z)^{-1})/n$, the Stieltjes transform of the empirical measure of eigenvalues.
Assume that there is $c>0$ so that $\|A\|\le c$ and 
\begin{equation}\label{condreg}1/c\le \Im m(\lambda+\eta i)\le c, \qquad 
\mbox{ for all }n, (\lambda,\eta)\in [E\pm t n^{\varepsilon}]\times[t n^{-\varepsilon},10].
\end{equation}

Let $\eta_\ell=\eta_{\ell,n}$ and $\eta_u=\eta_{u,n}$ be so that $0<\eta_\ell<\eta_u$ and for all large enough $n$  
and all $ \lambda\in [E\pm t n^{\varepsilon}]$, we have
\begin{equation}\label{cond2}\Im m(\lambda+i\eta_{\ell})>\frac{\eta_{\ell}}t \qquad \text{ and } \qquad \Im m(\lambda+i\eta_{u})<\frac{\eta_{u}}t.
\end{equation}
Assume that there is an $\mathbb N\times \mathbb N$ matrix $M$ so that for $\eta=\eta_{u}$ and $\eta=\eta_{\ell}$  as $n\to\infty$,
\begin{align}\label{limitM} \sup_{\lambda\in [E\pm t n^{\varepsilon}]} \left|\left(\frac{t}{(A-\lambda)^2+\eta^2}\right)(x,y)-M(x,y)\right|\to 0 \qquad \mbox{for all }x,y\in \mathbb{N}.
\end{align}
Let $W=W_n$ be an $n\times n$ GUE matrix with  entries of variance $n^{-1}$.
Let 
\[\lambda_{1}\le \dots\le\lambda_{n}\quad \mbox{
be the eigenvalues of }\quad
A+\sqrt{t} W,\]
and let $u_j$ be the corresponding eigenvectors with $\ell^2$ norm $\sqrt{n}$, and  phases  chosen independently uniformly at random. Appending with zeros makes $u_j$ a vector in $\mathbb{C}^\mathbb{N}$.


Let $\mathcal{E}\subset [E\pm t n^{\varepsilon}/2]$  be an interval of  length $|\mathcal{E}|\ge n^{\varepsilon-1}$. 

Then ${\mathbf E}|\{j\,:\,\lambda_{j}\in \mathcal{E}\}|\ge n|\mathcal E|\frac{\eta_{\ell}}{\pi t}-n^{o(1)}$, and 
the random empirical measure of eigenvectors 
\[\frac{1}{|\{j\,:\,\lambda_{j}\in \mathcal{E}\}|} \sum_{j:\lambda_j\in \mathcal E} \delta_{u_j}\]
converges in probability to the law of the complex Gaussian process on $\mathbb N$ with covariance $M$.
\end{theorem}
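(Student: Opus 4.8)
Diagonalize $A=UDU^*$ with $U$ unitary and $D=\operatorname{diag}(d_1,\dots,d_n)$ real. By Lemma~\ref{invariant} the matrix $\tilde W:=U^*WU$ is again GUE and $A+\sqrt t\,W=U(D+\sqrt t\,\tilde W)U^*$; hence if $\tilde u_j$ are the $\ell^2$-normalized eigenvectors of $D+\sqrt t\,\tilde W$ with uniform random phases, then $u_j=\sqrt n\,U\tilde u_j$ are the eigenvectors in the statement and $u_j(x)=\sqrt n\,\langle q^{(x)},\tilde u_j\rangle$ with $(q^{(x)})_k:=\overline{U_{xk}}$ a unit vector of $\mathbb C^n$. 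Conjugation changes neither $m(z)$ nor the entries of functions of $A$, so all hypotheses survive. Now apply Theorem~\ref{benignithm} with $\lambda_0=E$, $\eta_*=tn^{-\varepsilon}$, $r=tn^{\varepsilon}$, $\varepsilon'=\varepsilon$, $\omega=\varepsilon/2$: from $t\in[n^{\varepsilon-1},n^{-2\varepsilon}]$ and $\varepsilon<\tfrac13$ one checks $n^{-1}\le\eta_*\le n^{-\varepsilon'}$, $\eta_*n^{\varepsilon'}\le r\le n^{-\varepsilon'}$ and $t\in[\eta_*n^{\omega},n^{-\omega}r]$, while \eqref{condreg} is exactly the two-sided bound on $\Im m_D$ required on $[E\pm r]\times[\eta_*,10]$. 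Thus Theorem~\ref{benignithm} is available for every deterministic index $k$ with $\gamma_{k,t}\in[E\pm tn^{\varepsilon}/2]$.

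\textbf{Identifying the covariance.} Fix a finite $F\subset\mathbb N$. For $\alpha\in\mathbb C^{F}$ and $q:=\sum_{x\in F}\alpha_x q^{(x)}$ one has $\sum_{x\in F}\overline{\alpha_x}\,u_k(x)=\sqrt n\,\langle q,\tilde u_k\rangle$, and a direct computation using $A=UDU^*$ turns \eqref{sigmadef} into $\sigma_t^2(q,k)=\sum_{x,y\in F}\alpha_x\overline{\alpha_y}\big(\tfrac{t}{(A-a_k)^2+v_k^2}\big)(y,x)$, where $a_k:=\psi_t^{-1}(\gamma_{k,t})$ and $v_k:=v_t(a_k)$. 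Writing $\Im m(\lambda+iv)=v\cdot\tfrac1n\sum_j\big((d_j-\lambda)^2+v^2\big)^{-1}$ and using that this last factor is strictly decreasing in $v$, condition \eqref{cond2} forces $\eta_\ell<v_t(\lambda)<\eta_u$ for all $\lambda\in[E\pm tn^{\varepsilon}]$; and since $E$ lies in the bulk of the free convolution, $\Re m_t$ is bounded near $E$, so $|\psi_t^{-1}(\gamma)-\gamma|=t|\Re m_t(\gamma)|=O(t)$ and therefore $a_k\in[E\pm tn^{\varepsilon}]$ once $\gamma_{k,t}\in[E\pm tn^{\varepsilon}/2]$ and $n$ is large. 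By operator monotonicity $\tfrac{t}{(A-a_k)^2+\eta_u^2}\preceq\tfrac{t}{(A-a_k)^2+v_k^2}\preceq\tfrac{t}{(A-a_k)^2+\eta_\ell^2}$; the diagonal entries are ordered accordingly, the two outer ones converge to $M(x,x)$ by \eqref{limitM}, hence so does the middle one, and applying Lemma~\ref{posdef}(ii) to the nonnegative operator $\tfrac{t}{(A-a_k)^2+\eta_\ell^2}-\tfrac{t}{(A-a_k)^2+v_k^2}$ (whose diagonal entries now tend to $0$) gives $\big(\tfrac{t}{(A-a_k)^2+v_k^2}\big)(x,y)\to M(x,y)$, uniformly over the relevant $k$. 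Thus $\sigma_t^2(q,k)\to\langle q',Mq'\rangle$ with $q'=\sum_x\alpha_x e_x$. Since $M$ is a limit of positive semidefinite matrices (Lemma~\ref{posdef}(i)) it is positive semidefinite, so this is the variance of $\langle Z,q'\rangle$ for the complex Gaussian process $Z$ with covariance $M$. By Theorem~\ref{benignithm} (applied to the finite family of test vectors $q^{(x)}$, $x\in F$) and the complex Cramér--Wold device, for any distinct $k_1,\dots,k_m$ with $\gamma_{k_i,t}\in[E\pm tn^{\varepsilon}/2]$ the random vectors $(u_{k_i}(x))_{x\in F}$ converge jointly in distribution to $m$ independent copies of $(Z(x))_{x\in F}$.

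\textbf{The count and the random index set.} By the local law (eigenvalue rigidity) for $D+\sqrt t\,W$, $\mathbf E|\{j:\lambda_j\in\mathcal E\}|\ge n\int_{\mathcal E}p_t(x)\,dx-n^{o(1)}$; by \eqref{reimfc1}, $p_t(x)=v_t(\psi_t^{-1}(x))/(\pi t)\ge\eta_\ell/(\pi t)$ for $x\in\mathcal E\subset[E\pm tn^{\varepsilon}/2]$, which gives the asserted bound $n|\mathcal E|\tfrac{\eta_\ell}{\pi t}-n^{o(1)}$; it tends to $\infty$ since $|\mathcal E|\ge n^{\varepsilon-1}$ and $\eta_\ell\gtrsim t$. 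Rigidity also shows that, with high probability, the random set $\{j:\lambda_j\in\mathcal E\}$ and the deterministic set $K:=\{k:\gamma_{k,t}\in\mathcal E\}$ differ by only $n^{o(1)}=o(|K|)$ indices, with $|\{j:\lambda_j\in\mathcal E\}|/|K|\to1$. As all test functions below are bounded, it suffices to prove the convergence with $\{j:\lambda_j\in\mathcal E\}$ replaced by $K$.

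\textbf{From single eigenvectors to the empirical measure, and the main difficulty.} Let $f:\mathbb C^{\mathbb N}\to\mathbb R$ be bounded continuous, depending first on only finitely many coordinates $F$, and put $N:=|K|\to\infty$. Then $\mathbf E[\tfrac1N\sum_{k\in K}f(u_k)]=\tfrac1N\sum_{k\in K}\mathbf E f(u_k)\to\mathbf E f(Z)$: were this false along a subsequence one could select valid indices $k_n\in K_n$ with $\mathbf E f(u_{k_n})\not\to\mathbf E f(Z)$, contradicting the $m=1$ case above; and $\Var\big(\tfrac1N\sum_{k\in K}f(u_k)\big)\to0$, since its diagonal part is $O(1/N)$ and a failure of the off-diagonal part would produce pairs $k_n\ne k_n'$ with $\Cov(f(u_{k_n}),f(u_{k_n'}))\not\to0$, contradicting the joint convergence to independent copies. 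Hence $\tfrac1N\sum_{k\in K}f(u_k)\to\mathbf E f(Z)$ in $L^2$, so in probability. For general bounded continuous $f$, the bound $\mathbf E\tfrac1N\sum_{k\in K}|u_k(x)|^2=\tfrac1N\sum_k\sigma_t^2(q^{(x)},k)=O(1)$ gives (Tychonoff plus Markov) tightness of the random empirical measures on the Polish space $\mathbb C^{\mathbb N}$, which with the convergence of all finite-dimensional marginals yields convergence in probability to the law of $Z$. The soft steps (reduction, Cramér--Wold, the subsequence and tightness arguments) are routine; the technical core is the free-convolution bookkeeping of the second paragraph --- controlling $\psi_t^{-1}$, placing the quantiles $\gamma_{k,t}$ of interest inside the window $[E\pm tn^{\varepsilon}]$ where \eqref{cond2} and \eqref{limitM} hold, and squeezing $v_t$ between $\eta_\ell$ and $\eta_u$ there --- together with checking that the rigidity error $n^{o(1)}$ is genuinely smaller than $|K|\gtrsim n^{\varepsilon}$.
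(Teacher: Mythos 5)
Your proposal follows the same overall route as the paper: diagonalize $A$ and push the problem to Benigni's theorem via unitary invariance, squeeze $v_t(\psi_t^{-1}(\gamma_{k,t}))$ between $\eta_\ell$ and $\eta_u$, use positive semidefiniteness (Lemma~\ref{posdef}) to promote the diagonal convergence in \eqref{limitM} to the off-diagonal entries, then pass from deterministic quantile indices to the random index set by rigidity (Lemma~\ref{rigidity}), the density bound (Lemma~\ref{ptbound}), and the elementary Lemma~\ref{lemmameandif}, and finish with a law-of-large-numbers / second-moment argument. This is exactly how the paper organizes Lemma~\ref{oneeigenvector}, Lemma~\ref{lemmaquenched0}, and the final assembly.

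There is, however, one genuine gap in the way you invoke Benigni. You assert joint convergence in distribution of the array $(u_{k_i}(x))_{x\in F,\,i\le m}$ to $m$ independent copies of $(Z(x))_{x\in F}$, and you justify it by ``Theorem~\ref{benignithm} applied to the finite family of test vectors $q^{(x)}$ and Cram\'er--Wold.'' But Theorem~\ref{benignithm} (as stated and used in this paper) fixes a \emph{single} test vector $q$ and lets the eigenvector index $k_j$ range; to deduce the joint law of the full array by Cram\'er--Wold you must control linear combinations of the form $\sum_j \langle \tilde q_j, u_{k_j}\rangle$ with the $\tilde q_j$ \emph{different} for different $j$, which is not covered. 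This matters precisely where you use it: to show $\operatorname{Cov}\bigl(f(u_k),f(u_{k'})\bigr)\to 0$ for general cylinder $f$ you need the two restricted vectors $(u_k|_F,u_{k'}|_F)$ to decouple, i.e.\ a two-index, multi-test-vector statement. The paper sidesteps this in Lemma~\ref{lemmaquenched0} by first reducing, via a Cram\'er--Wold / convergence-determining argument, to test functions of the form $f(u)=g(\Re\langle q,u\rangle)$ for a \emph{single} $q$; for those the needed decorrelation follows from the single-$q$ version of Theorem~\ref{benignithm} alone. You should either perform the same initial reduction, or explicitly invoke a stronger (multi-test-vector) form of Benigni's Theorem~1.3 if you are relying on it.

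A minor imprecision: you justify $a_k=\psi_t^{-1}(\gamma_{k,t})\in[E\pm tn^\varepsilon]$ by claiming ``$\Re m_t$ is bounded near $E$.'' This boundedness is not one of the hypotheses and is not established; the paper instead uses the bound $|m_t(\lambda)|\le C_2\log n$ from the cited Landon--Yau lemma (see the displayed inequality \eqref{logbound} in the proof of Lemma~\ref{oneeigenvector}), which gives $|\psi_t^{-1}(\lambda)-\lambda|\le C_2 t\log n = o(tn^\varepsilon)$. Your conclusion survives, but you should cite the logarithmic bound rather than assert boundedness of $\Re m_t$.
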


Now we prove Theorem~\ref{lemmaquenched1}. Let $\eta_*=\eta_{*,n}=n^{-\varepsilon}t_n$, $r=r_n=n^\varepsilon t_n$ and let $Z$ be the complex Gaussian process on $\mathbb N$ with covariance $M$.

Let us consider the empirical measure of the eigenvalues of $A$, then take its free convolution with the semicircle law of variance $t$. Let $\gamma_{k,t}$ be quantiles of this free convolution, defined as in  \eqref{quantiledef}.
\begin{lemma}\label{oneeigenvector}
Let us choose $k=k_n$ such that $\gamma_{k,t}\in [E-\frac{1}2 r,E+\frac{1}2 r]$. Then ${u}_k$ converges to $Z$.
\end{lemma}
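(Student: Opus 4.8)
The plan is to deduce Lemma~\ref{oneeigenvector} from Benigni's theorem (Theorem~\ref{benignithm}) applied to the diagonalized version of $A$, together with the hypotheses \eqref{condreg}, \eqref{cond2}, \eqref{limitM}. First I would diagonalize: write $A=V^*DV$ with $V$ unitary and $D=\mathrm{diag}(d_1,\dots,d_n)$ the eigenvalues of $A$. Then $A+\sqrt t W$ has the same eigenvalue-eigenvector distribution, up to the unitary $V$, as $D+\sqrt t W'$ where $W'=VWV^*$ is again a GUE matrix by the invariance Lemma~\ref{invariant}. So if $u'_k$ denotes the $k$th eigenvector of $D+\sqrt t W'$ (with random phase), then $u_k$ has the law of $V^*u'_k$, and for a fixed finitely-supported test vector $q\in\mathbb C^{\mathbb N}\subset \mathbb C^n$ we have $\langle q,u_k\rangle \overset{d}{=}\langle Vq, u'_k\rangle$. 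I need to check the hypotheses of Theorem~\ref{benignithm} are in force: \eqref{condreg} gives exactly the required two-sided bound $c\le \Im m_D(\lambda+i\eta)\le c'$ on $[E\pm tn^\varepsilon]\times[tn^{-\varepsilon},10]$ (note $m=m_D$), and the scale conditions $\eta_*=n^{-\varepsilon}t\le n^{-\varepsilon'}$, $\eta_*n^{\varepsilon'}\le r=n^\varepsilon t\le n^{-\varepsilon'}$ and $t\in[\eta_* n^\omega, n^{-\omega}r]$ all hold with $\varepsilon'=\omega=\varepsilon$ because $t\in[n^{\varepsilon-1},n^{-2\varepsilon}]$.

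Next, the heart of the matter: compute the limiting variance $\sigma_t^2(q,k)/n$ and identify it with $\langle q, Mq\rangle$. By \eqref{sigmadef},
\[
\sigma_t^2(q,k)=\sum_{j=1}^n \frac{|(Vq)_j|^2\, t}{(d_j-\psi_t^{-1}(\gamma_{k,t}))^2 + v_t(\psi_t^{-1}(\gamma_{k,t}))^2} = \Big\langle q,\; V^*\,\frac{t}{(D-a)^2+v^2}\,V\,q\Big\rangle = \Big\langle q,\; \frac{t}{(A-a)^2+v^2}\,q\Big\rangle,
\]
where $a=\psi_t^{-1}(\gamma_{k,t})$ and $v=v_t(a)=v_t(\psi_t^{-1}(\gamma_{k,t}))$. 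By \eqref{reimfc1}, $v/t=\Im m_t(\gamma_{k,t})$, so $\Im m_t(\gamma_{k,t})=v/t$ and this is the density of the free convolution times $\pi$; since $\gamma_{k,t}\in[E\pm \tfrac12 r]$ and $m$ is regular there, one shows $\Im m_t(\gamma_{k,t})$ lies between the $\eta_\ell/t$ and $\eta_u/t$ levels appearing in \eqref{cond2}. Precisely, I would use the defining property of $v_t$ and the monotonicity in $v$ of $v\mapsto \int (x-\lambda)^{-2}+v^{-2}$ together with \eqref{cond2}: the condition $\Im m(\lambda+i\eta_\ell)>\eta_\ell/t$ forces $v_t(\lambda)>\eta_\ell$-type bounds, and similarly $\eta_u$ gives an upper bound, while $\psi_t^{-1}(\gamma_{k,t})$ stays inside $[E\pm tn^\varepsilon]$ (since $|\psi_t^{-1}(\lambda)-\lambda|\le\sqrt t\ll tn^\varepsilon$). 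Hence $v=v_t(a)\in(\eta_\ell(1-o(1)),\eta_u(1+o(1)))$, and because $\eta_u/\eta_\ell\to 1$ (in Theorem~\ref{lemmaquenched1} this is not assumed but $\eta_\ell,\eta_u$ both sit in a window where \eqref{limitM} pins the quadratic form down — actually here I would argue directly via \eqref{limitM} which is stated for both $\eta_u$ and $\eta_\ell$), we get
\[
\frac{\sigma_t^2(q,k)}{n}=\frac1n\Big\langle q,\frac{t}{(A-a)^2+v^2}q\Big\rangle \longrightarrow \langle q, Mq\rangle
\]
by sandwiching $v$ between $\eta_\ell$ and $\eta_u$ and invoking \eqref{limitM} (the quadratic form is monotone decreasing in $v$, so it lies between the $\eta_u$ and $\eta_\ell$ versions, both of which converge to $\langle q,Mq\rangle$ entrywise, hence for the finite quadratic form built from finitely-supported $q$).

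Finally, Theorem~\ref{benignithm} gives that $\frac{\sqrt n}{\sigma_t(q,k)}\langle Vq, u'_k\rangle$ converges in distribution to a standard complex Gaussian, so $\langle q, u_k\rangle=\sqrt n\cdot \frac{\sigma_t(q,k)}{\sqrt n}\cdot\frac{\langle Vq,u'_k\rangle}{\sigma_t(q,k)}$ converges in distribution to a centered complex Gaussian with variance $\langle q, Mq\rangle$ — note $u_k$ has $\ell^2$-norm $\sqrt n$, which is why the $\sqrt n$ normalization in Benigni's statement (for unit vectors) becomes an $O(1)$ factor here. To upgrade from one-dimensional marginals $\langle q,u_k\rangle$ to convergence of $u_k$ as a process in $\mathbb C^{\mathbb N}$, I would apply the multivariate part of Theorem~\ref{benignithm}: for finitely many test vectors $q_1,\dots,q_r$ one gets joint Gaussian limits with the right covariance structure (using that $M$ is positive semidefinite, Lemma~\ref{posdef}, so it really is a covariance), which is exactly convergence of finite-dimensional distributions to the complex Gaussian process with covariance $M$; convergence in $\mathbb C^{\mathbb N}$ with the product topology is determined by finite-dimensional distributions. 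The main obstacle I anticipate is the variance identification — carefully showing $v_t(\psi_t^{-1}(\gamma_{k,t}))$ is squeezed between $\eta_\ell$ and $\eta_u$ (up to $1+o(1)$) using only the pointwise resolvent conditions \eqref{condreg}–\eqref{cond2}, and then transferring the convergence \eqref{limitM} (stated entrywise) to the full quadratic form $\langle q, \tfrac{t}{(A-a)^2+v^2} q\rangle$ uniformly as $a$ ranges over the relevant window; the monotonicity of the operator $\tfrac{t}{(A-a)^2+v^2}$ in the parameter $v$ is the key trick that makes the squeeze work despite $v$ not being exactly $\eta_\ell$ or $\eta_u$.
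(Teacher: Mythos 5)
Your overall route is the same as the paper's: diagonalize $A$, use the unitary invariance of the GUE (Lemma~\ref{invariant}) to reduce to Benigni's setting, rewrite $\sigma_t^2(\hat q,k)$ as the quadratic form $\bigl\langle q,\tfrac{t}{(A-a)^2+v^2}q\bigr\rangle$ with $a=\psi_t^{-1}(\gamma_{k,t})$, $v=v_t(a)$, squeeze $v$ between $\eta_\ell$ and $\eta_u$ via \eqref{cond2} and monotonicity, pass to the limit using \eqref{limitM} at both scales, then apply Theorem~\ref{benignithm} and Cram\'er--Wold. (Your squeeze of the full quadratic form directly, using that all spectral weights $|(Vq)_j|^2$ are nonnegative, is a perfectly good and slightly leaner alternative to the paper's entrywise argument via Lemma~\ref{posdef} and Cauchy--Schwarz.) However, there is one genuine gap: your control of where $a=\psi_t^{-1}(\gamma_{k,t})$ lives. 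You claim $|\psi_t^{-1}(\lambda)-\lambda|\le\sqrt t\ll tn^{\varepsilon}$, but in the regime of Theorem~\ref{lemmaquenched1} this inequality goes the wrong way: $\sqrt t\le tn^{\varepsilon}$ is equivalent to $t\ge n^{-2\varepsilon}$, whereas here $t\le n^{-2\varepsilon}$, so in fact $\sqrt t\ge tn^{\varepsilon}=r$ always; e.g.\ for $t=n^{\varepsilon-1}$ and $\varepsilon<\tfrac13$ one has $\sqrt t=n^{(\varepsilon-1)/2}\gg n^{2\varepsilon-1}=r$. The crude Cauchy--Schwarz bound $|\psi_t(\lambda)-\lambda|\le\sqrt t$ therefore does not keep $\psi_t^{-1}(\gamma_{k,t})$ inside the window $[E\pm tn^{\varepsilon}]$ on which \eqref{condreg}, \eqref{cond2} and \eqref{limitM} are assumed, and without that containment neither the bound $v_t(a)\in[\eta_\ell,\eta_u]$ nor the application of \eqref{limitM} at the point $a$ is justified.

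The missing ingredient is a much better bound on the shift $\psi_t^{-1}(\lambda)-\lambda$, and this is where the regularity hypothesis \eqref{condreg} must be used again. By \eqref{reimfc1}, $\psi_t^{-1}(\lambda)-\lambda=t\,\Re m_t(\lambda)$, and the regularity of $\Im m$ on $[E\pm tn^{\varepsilon}]\times[tn^{-\varepsilon},10]$ together with $\|A\|\le c$ yields the logarithmic bound $|m_t(\lambda)|\le C\log n$ on $[E\pm\tfrac12 r]$ (this is \eqref{logbound} in the paper, quoting Lemma~7.2 of \cite{landon2017convergence}). Hence $|\psi_t^{-1}(\gamma_{k,t})-\gamma_{k,t}|\le Ct\log n<\tfrac12 r$ for large $n$, so $\psi_t^{-1}(\gamma_{k,t})\in[E\pm r]$, and the rest of your argument goes through. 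With this substitution your proof is correct; as written, the step relying on $\sqrt t\ll tn^{\varepsilon}$ fails.
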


\begin{proof}
As before, let
\[v_t(\lambda)=\inf \left \{v\ge 0 \,:\,\frac{m(\lambda+vi)}{v}\le \frac{1}t\right\}.\]
Since the map $v\mapsto \frac{m(\lambda+vi)}{v}$ is monotone decreasing, it follows from condition \eqref{cond2} that \begin{equation}\label{vtin}v_t(\lambda)\in [\eta_\ell,\eta_u]\qquad\text{ for all }\lambda\in [E-r,E+r].\end{equation}  

Let $m_t$ be the Stieltjes transform of the free convolution of the emprical measure on the eigenvalues of $A$ and the semicircle law of variance $t$. Using condition \eqref{condreg} and the assumption that $\|A\|\le c$, it follows that there is a $C_2$ such that \begin{equation}\label{logbound}|m_t(\lambda)|\le C_2 \log(n)\qquad\text{ for all }\lambda\in [E-\frac{1}{2}r,E+\frac{1}{2}r].\end{equation} See the proof of  \cite[Lemma 7.2]{landon2017convergence}. As before, we define
\[\psi_t(\lambda)=\lambda-t\int\frac{x-\lambda}{(x-\lambda)^2+v_t(\lambda)^2}\mu(dx),\]
where $\mu$ is the empirical measure on the eigenvalues of $A$. 

From now on assume that $n$ is large enough. From \eqref{reimfc1} and \eqref{logbound}, we see that 
\[|\psi_t^{-1}(\lambda)-\lambda|\le t C_2 \log n<\frac{1}2 r\qquad \text{ for all }\lambda\in [E-\frac{1}{2}r,E+\frac{1}{2}r].\]
Therefore, 
\begin{equation}\label{psiinpmr}\psi_t^{-1}(\lambda)\in [E-r,E+r]\qquad\text{ for all }\lambda\in [E-\frac{1}{2}r,E+\frac{1}{2}r] .\end{equation} 
Combining it with \eqref{vtin}, we see that
\begin{equation}\label{vtinwellwu}v_t(\psi^{-1}(\lambda))\in [\eta_\ell,\eta_u]\qquad\text{ for all }\lambda\in [E-\frac{1}{2}r,E+\frac{1}{2}r].\end{equation}

Let $U$ be a unitary matrix which diagonalizes $A$, that is, $D=UAU^*$ is a diagonal matrix.

Let $q\in \mathbb{C}^\mathbb{N}$ be a vector with finite support such that $\|q\|_2=1$. Let $q_n\in \mathbb{C}^{\{1,2,\dots,n\}}$ be obtained from $q$ by truncating it.

Let $\hat{q}=U q_n$. Recalling the definition of $\sigma^2_t$ from \eqref{sigmadef}, we see that
\begin{align}\sigma^2_t (\hat{q},k)&=\left\langle \hat{q}, \frac{t}{(D-\psi^{-1}(\gamma_{k,t}))^2+v_t(\psi^{-1}(\gamma_{k,t}))^2} \hat{q}\right\rangle\nonumber\\&=\left\langle q_n, \frac{t}{(A-\psi^{-1}(\gamma_{k,t}))^2+v_t(\psi^{-1}(\gamma_{k,t}))^2} q_n\right\rangle.\label{sigmaAn}
\end{align}

\begin{lemma}\label{lemmasigmalimit}
For any $x,y\in \mathbb{N}$, we have
\[\lim_{n\to\infty}\left(\frac{t}{(A-\psi^{-1}(\gamma_{k,t}))^2+v_t(\psi^{-1}(\gamma_{k,t}))^2}\right)(x,y)=M(x,y).\]
\end{lemma}
\begin{proof}
Let $\lambda=\psi^{-1}(\gamma_{k,t})$ and $\eta=v_t(\psi^{-1}(\gamma_{k,t}))$. We have seen in \eqref{psiinpmr} and \eqref{vtinwellwu} that
\[\lambda\in [E-r,E+r]\text{ and }\eta\in [\eta_\ell,\eta_u].\]

Let us introduce the notation
\[B(\eta,x,y)=\frac{t}{(A-\lambda)^2+\eta^2}(x,y).\]

Since $0\le \frac{t}{u^2+\eta_\ell^2}-\frac{t}{u^2+\eta^2} \le \frac{t}{u^2+\eta_\ell^2}-\frac{t}{u^2+\eta_u^2}$ for all $u\in \mathbb{R}$, using Lemma~\ref{posdef}, we see that
\begin{align*}\left|B(\eta_\ell,x,x)-B(\eta,x,x)\right|&\le \left|B(\eta_\ell,x,x)-B(\eta_u,x,x)\right|\\&\le 
\left|B(\eta_\ell,x,x)-M(x,x)\right|+\left|B(\eta_u,x,x)-M(x,x)\right|.
\end{align*}

Using condition \eqref{limitM} both terms on the right hand side converge to zero which gives us
\[\lim_{n\to\infty}\left|B(\eta_\ell,x,x)-B(\eta,x,x)\right|=0,\]
and the same is true with $y$ in place of $x$.

Using Lemma \ref{posdef} again, we see that
\begin{equation}\label{secondterm}\lim_{n\to\infty}\left|B(\eta_\ell,x,y)-B(\eta,x,y)\right|
\le \lim_{n\to\infty}\sqrt{\left|B(\eta_\ell,x,x)-B(\eta,x,x)\right|\left|B(\eta_\ell,y,y)-B(\eta,y,y)\right|}=0
\end{equation}

Clearly,
\[
\left|B(\eta,x,y)-M(x,y)\right|\le \left|B(\eta_\ell,x,y)-M(x,y)\right|+\left|B(\eta_\ell,x,y)-B(\eta,x,y)\right|, 
\]
where both term on the right hand side converge to zero, the first one due to condition \eqref{limitM}, the second one due to \eqref{secondterm}. 
\end{proof}

Combining Lemma \ref{lemmasigmalimit} with \eqref{sigmaAn}, we see that
\begin{equation*}
\lim_{n\to\infty} \sigma_t^2(\hat{q},k)= \langle  q,Mq\rangle.
\end{equation*}

Note that $\hat{u}_k=U u_k$ is the $k$th eigenvector of $D+\sqrt{t} UWU^*$ with a uniform random phase and length $\sqrt{n}$. Note that $D+\sqrt{t}UWU^*$ has the same distribution as $D+\sqrt{t}W$ as it follows from Lemma \ref{invariant}. Thus, applying Theorem \ref{benignithm}, we see that
$\langle \hat{q},\hat{u}_k\rangle/\sqrt{\langle  q,Mq\rangle}$
converge in distribution to a standard complex Gaussian random variable. 
Since $\langle \hat{q},\hat{u}_k\rangle=\langle Uq_n,U u_k\rangle=\langle {q}_n,{u}_k\rangle$, we see that $\langle {q}_n,{u}_k\rangle/\sqrt{\langle  q,Mq\rangle} $
converge in distribution to a standard complex Gaussian random variable. 

Thus, the statement follows from Cram\'er-Wold theorem.
\end{proof}

Let us choose $\mathcal{I}=\mathcal{I}_n$ such that 
\[\mathcal{I}\subset \{k\,:\, \gamma_{k,t}\in [E-\frac{1}2 r,E+\frac{1}2 r]\}.\]

\begin{lemma}\label{lemmaquenched0}
Assume that $\lim_{n\to\infty} |\mathcal{I}|=\infty$. Let $f:\mathbb{C}^\mathbb{N}\to \mathbb{R}$ be a bounded continuous function. Then
\[\frac{1}{|\mathcal{I}|}\sum_{k\in \mathcal{I}} f({u}_k)\to \mathbb{E}f(Z).\]

\end{lemma}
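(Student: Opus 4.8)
The plan is to upgrade the single-vector convergence of Lemma \ref{oneeigenvector} to convergence of the empirical average over the block $\mathcal I$, and this is a second-moment (variance) argument. Fix a bounded continuous $f:\mathbb C^{\mathbb N}\to\mathbb R$. Since each $u_k$ with $k\in\mathcal I$ satisfies $\gamma_{k,t}\in[E-\tfrac12r,E+\tfrac12r]$, Lemma \ref{oneeigenvector} gives $\mathbb E f(u_k)\to \mathbb E f(Z)$, uniformly over $k\in\mathcal I$ (the limit in Lemma \ref{oneeigenvector} does not depend on the choice of $k$ in the window, so a standard subsequence argument makes the convergence uniform). Hence $\mathbb E\bigl[\tfrac1{|\mathcal I|}\sum_{k\in\mathcal I}f(u_k)\bigr]\to\mathbb E f(Z)$, and it remains to show the variance of $\tfrac1{|\mathcal I|}\sum_{k\in\mathcal I}f(u_k)$ tends to $0$. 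By the usual expansion,
\[
\operatorname{Var}\Bigl(\tfrac1{|\mathcal I|}\sum_{k\in\mathcal I}f(u_k)\Bigr)
\le \frac{\|f\|_\infty^2}{|\mathcal I|}+\frac1{|\mathcal I|^2}\sum_{\substack{k,k'\in\mathcal I\\k\ne k'}}\bigl(\mathbb E[f(u_k)f(u_{k'})]-\mathbb E f(u_k)\,\mathbb E f(u_{k'})\bigr),
\]
so since $|\mathcal I|\to\infty$ it suffices to show the covariance term is $o(1)$ on average over the pair $(k,k')$.

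For the covariance, I would use the \emph{joint} convergence in Theorem \ref{benignithm}: for any finite collection $k_1<\dots<k_m$ of indices with $\gamma_{k_i,t}$ in the half-window and any fixed finite-support test vectors $q^{(1)},\dots,q^{(p)}$, the rescaled inner products $\sqrt n\,\langle q^{(a)},u_{k_i}\rangle/\sigma_t(q^{(a)},k_i)$ converge jointly to independent complex Gaussians across the $i$'s (for fixed $a$), and, running the computation of Lemma \ref{lemmasigmalimit} simultaneously for all the $q^{(a)}$, the normalizations $\sigma_t^2(q^{(a)},k_i)\to\langle q^{(a)},Mq^{(a)}\rangle$. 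Applying the Cramér--Wold device as in the proof of Lemma \ref{oneeigenvector} but to the concatenated vector, one gets that for any \emph{fixed pair} $k\ne k'$ drawn from $\mathcal I$, the pair $(u_k,u_{k'})$ converges in distribution to $(Z,Z')$ with $Z,Z'$ i.i.d.\ copies of the Gaussian process; hence $\mathbb E[f(u_k)f(u_{k'})]\to \mathbb E f(Z)\,\mathbb E f(Z')=(\mathbb E f(Z))^2$, i.e.\ the covariance of that pair goes to $0$. The subtlety is that "fixed pair" is not enough: in the double sum $k,k'$ both depend on $n$. The clean way around this is the standard fact that convergence in distribution of $(u_{k_n},u_{k'_n})$ to $(Z,Z')$ along \emph{every} choice of index sequences $k_n\ne k'_n$ in $\mathcal I_n$ — which is what Theorem \ref{benignithm} delivers, its hypotheses being uniform over such choices — implies $\sup_{k\ne k'\in\mathcal I_n}\bigl|\mathbb E[f(u_k)f(u_{k'})]-(\mathbb E f(Z))^2\bigr|\to 0$ (otherwise extract a bad subsequence of pairs, contradicting the convergence along that subsequence). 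Plugging this uniform bound into the double sum gives $\operatorname{Var}\le \|f\|_\infty^2/|\mathcal I|+o(1)=o(1)$.

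Combining, $\tfrac1{|\mathcal I|}\sum_{k\in\mathcal I}f(u_k)$ has mean tending to $\mathbb E f(Z)$ and variance tending to $0$, so it converges to $\mathbb E f(Z)$ in $L^2$, hence in probability; and since $f$ is bounded, dominated convergence upgrades this to convergence in probability as stated (indeed in $L^1$).

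\textbf{Main obstacle.} The real work is the joint distributional statement — confirming that Theorem \ref{benignithm}, together with the $\sigma_t^2\to\langle q,Mq\rangle$ computation of Lemma \ref{lemmasigmalimit}, yields independence of the limits of $u_k$ and $u_{k'}$ for distinct indices, \emph{uniformly} over the pair as $n$ varies. Everything downstream (the variance bound, the passage from $L^2$ to convergence in probability) is routine once that uniform joint CLT for pairs is in hand.
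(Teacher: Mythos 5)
Your second-moment (Chebyshev) strategy is sound in spirit and gives the same conclusion, but it is a genuinely different route from the paper's, and as your sketch stands there is a gap at the covariance step. The paper proves Lemma~\ref{lemmaquenched0} by first invoking the Cram\'er--Wold reduction: it suffices to prove the claim for observables of the form $f(u)=g(\Re\langle q,u\rangle)$ with $g$ bounded continuous and $q$ a fixed finite-support vector. After this reduction one only ever needs the joint law of $\Re\langle q,u_{k_1}\rangle,\dots,\Re\langle q,u_{k_m}\rangle$ for a \emph{single} fixed $q$, which is precisely what Theorem~\ref{benignithm} as stated provides. The paper then fixes a large $m$, invokes the law of large numbers for i.i.d.\ Gaussians to get $\mathbb E\big|\tfrac1m\sum_{i}g(N_i)-\mathbb Eg(N)\big|<\varepsilon$, transfers this to every $m$-element subset $K\subset\mathcal I$ via weak convergence of the $m$-tuple, and concludes by averaging over all $m$-subsets. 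No variance is ever computed, and crucially no two-test-vector statement is needed.

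Your sketch instead works with a general bounded continuous $f$ on $\mathbb C^{\mathbb N}$ and asserts that $(u_k,u_{k'})\to(Z,Z')$ jointly as random elements of $\mathbb C^{\mathbb N}\times\mathbb C^{\mathbb N}$. To get this by Cram\'er--Wold on the concatenated vector you need the asymptotic joint law of $\langle q_1,u_k\rangle$ and $\langle q_2,u_{k'}\rangle$ for arbitrary, \emph{different} test vectors $q_1,q_2$. Theorem~\ref{benignithm} as stated in the paper does not provide this: it gives joint asymptotic independence across the indices $k_j$ only for a single test vector $q$. You even flag this yourself (``for fixed $a$''): the independence you can cite is across the $i$'s, not across the $q^{(a)}$'s, and passing from that to independence of the full vectors $u_k$ and $u_{k'}$ is exactly the missing step. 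The two-vector joint CLT is plausible from the full strength of Benigni's Theorem~1.3, but it is a strictly stronger ingredient than what the paper relies on, and you neither establish it nor reduce away the need for it. The uniformity-over-pairs subsequence argument you worry about is in fact the easier part and is fine.

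The simplest repair is to make the Cram\'er--Wold reduction at the outset, as the paper does. Then $f(u_k)f(u_{k'})=g(\Re\langle q,u_k\rangle)\,g(\Re\langle q,u_{k'}\rangle)$ involves one $q$, the $m=2$ case of Theorem~\ref{benignithm} combined with the normalization computation in Lemma~\ref{lemmasigmalimit} gives $(\Re\langle q,u_k\rangle,\Re\langle q,u_{k'}\rangle)\to(N_1,N_2)$ independent, and your covariance computation goes through. With that fix in place, your variance argument and the paper's fixed-$m$ block-averaging argument are two equally legitimate ways to upgrade the joint convergence of Theorem~\ref{benignithm} to convergence of the empirical average.
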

\begin{proof}
From the Cram\'er-Wold theorem, it is enough to prove the statement for function of the form
\[f(u)=g(\Re \langle q, u\rangle )\]
where $g$ is a bounded continuous function and $q$ is unit vector in $\mathbb{C}^\mathbb{N}$ with bounded support.

Let $N,N_1,N_2,\dots$ be i.i.d. normal random variables with mean zero and variance $\frac{1}2\langle q,Mq \rangle$. Note that $\Re \langle Z, q\rangle$ has the same distribution as $N$. Thus,
$\mathbb{E}f(Z)=\mathbb{E}g(N)$.

Let $\varepsilon>0$. 
From the law of large numbers, we see that for a large enough $m$, we have
\[\mathbb{E}\left|\frac{1}{m}\sum_{i=1}^m {g(N_i)}-\mathbb{E}g(N)\right|<\varepsilon.\]

Let $K=\{k_1,k_2,\dots,k_m\}$ be an $m$ element subset of $\mathcal{I}$. As in the proof of Lemma \ref{oneeigenvector}, one can prove that
$\left(\Re \langle q,{u}_{k_i}\rangle \right)_{i=1}^m$
converge weakly to $(N_1,N_2,\dots, N_m)$.

In particular,
\[\left|\frac{1}{m}\sum_{i=1}^{m}{g(\Re \langle q,{u}_{k_i}\rangle)}-\mathbb{E}g(N)\right|\rightarrow \left|\frac{1}{m}\sum_{i=1}^m {g(N_i)}-\mathbb{E}g(N)\right|\]
weakly. As this is true for any choice of $K$ as above, we see that for any large enough $n$, for all $m$ element subset $K$ of $\mathcal{I}$, we have
\[\mathbb{E}\left|\frac{1}{m}\sum_{i=1}^m {g(\Re \langle q,{u}_{k_i}\rangle)}-\mathbb{E}g(N)\right|<2\varepsilon.\]

Let $\mathcal{I}^{(m)}$ be the set of all $m$ element subsets of $\mathcal{I}$. Then for any large enough $n$, we have
\[\mathbb{E}\left|\frac{1}{|\mathcal{I}|}\sum_{k\in \mathcal{I}}g(\Re \langle q,{u}_{k}\rangle)-\mathbb{E}g(N)\right|\le \frac{1}{|\mathcal{I}^{(m)}|}\sum_{\{k_1,\dots,k_m\}\in \mathcal{I}^{(m)}}\mathbb{E}\left|\frac{1}{m}\sum_{i=1}^m {g(\Re \langle q,{u}_{k_i}\rangle)}-\mathbb{E}g(N)\right|\le 2\varepsilon.
\]
\end{proof}
The following simple lemma will be used later. We omit the straightforward proof.  
\begin{lemma}\label{lemmameandif}
Let $a_j$ be real numbers indexed by a finite set $J$.  Then for $I\subset J$,
\[\Big| \tfrac{1}{|I|}\sum_{i\in I}a
_i-\tfrac{1}{|J|}\sum_{j\in J}a_j \Big|\le 2(1 -\tfrac{|I|}{|J|})\max |a_i|.\]
\end{lemma}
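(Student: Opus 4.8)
This is the elementary statement that averaging over a large subset $I\subset J$ changes the mean only a little. The plan is to write the global average as a convex combination of the average over $I$ and the average over the complement $I^c=J\setminus I$, subtract, and estimate crudely. Throughout set $\bar a_I=\tfrac1{|I|}\sum_{i\in I}a_i$, $\bar a_J=\tfrac1{|J|}\sum_{j\in J}a_j$, and (when $I^c\neq\emptyset$) $\bar a_{I^c}=\tfrac1{|I^c|}\sum_{j\in I^c}a_j$; one may assume $I\neq\emptyset$ (else the left side is undefined) and $I\neq J$ (else both sides vanish), so that $I^c\neq\emptyset$.

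\textbf{Steps.} First I would split $\sum_{j\in J}a_j=\sum_{i\in I}a_i+\sum_{j\in I^c}a_j$ and divide by $|J|$ to get
\[\bar a_J=\frac{|I|}{|J|}\,\bar a_I+\frac{|I^c|}{|J|}\,\bar a_{I^c}.\]
Next, subtracting and using $\tfrac{|I^c|}{|J|}=1-\tfrac{|I|}{|J|}$,
\[\bar a_I-\bar a_J=\Bigl(1-\tfrac{|I|}{|J|}\Bigr)\bar a_I-\tfrac{|I^c|}{|J|}\bar a_{I^c}=\Bigl(1-\tfrac{|I|}{|J|}\Bigr)\bigl(\bar a_I-\bar a_{I^c}\bigr).\]
Finally I would take absolute values, apply the triangle inequality, and use that any average of real numbers has modulus at most the largest modulus among them, i.e.\ $|\bar a_I|\le\max_{j\in J}|a_j|$ and $|\bar a_{I^c}|\le\max_{j\in J}|a_j|$, to conclude
\[\bigl|\bar a_I-\bar a_J\bigr|\le\Bigl(1-\tfrac{|I|}{|J|}\Bigr)\bigl(|\bar a_I|+|\bar a_{I^c}|\bigr)\le 2\Bigl(1-\tfrac{|I|}{|J|}\Bigr)\max_{j\in J}|a_j|,\]
which is the asserted bound.

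\textbf{Main obstacle.} There is essentially none; the argument is the two displayed identities above. The only points requiring a word of care are the two degenerate cases ($I$ empty, or $I=J$) and the fact that the maximum on the right-hand side must range over all of $J$, not merely over $I$ --- since $\bar a_{I^c}$ genuinely involves the entries outside $I$, those terms cannot be dropped from the maximum. In the intended application the numbers $a_j$ are values $f(u_j)$ of a bounded function, so $\max_{j\in J}|a_j|\le\|f\|_\infty$ in any case and this distinction is immaterial; this is presumably why the paper records the proof as straightforward and omits it.
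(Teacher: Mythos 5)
Your proof is correct, and since the paper omits the proof as straightforward, your convex-combination argument is exactly the intended elementary one. Your remark that the maximum must be taken over all of $J$ (not just $I$) is a genuine and correct clarification of the lemma's loosely written right-hand side, and it is harmless in the paper's application where the bound used is $\|f\|_\infty$.
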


We need the following rigidity result. 
\begin{lemma}(\cite[Theorem 3.5]{landon2017convergence})\label{rigidity}
Let $\kappa>0$ and $L>0$. For any large enough $n$, for any $\gamma_{i,t}\in [E-\frac{3}4r,E+\frac{3}4r]$, we have
\[\mathbb{P}\left(|\lambda_{i,t}-\gamma_{i,t}|\ge \frac{n^\kappa}{n}\right)<n^{-L}.\]
\end{lemma}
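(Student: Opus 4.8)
The plan is to reduce the statement to the eigenvalue rigidity theorem of Landon--Yau — which is literally the cited \cite[Theorem 3.5]{landon2017convergence} — and to check that its hypotheses hold in our situation. First I would diagonalise $A$: pick a unitary $U$ with $D=UAU^{*}$ diagonal, so the diagonal of $D$ lists the eigenvalues of $A$. By Lemma~\ref{invariant}, $U(A+\sqrt t\,W)U^{*}=D+\sqrt t\,(UWU^{*})$ has the same law as $D+\sqrt t\,W$, hence $(\lambda_{i,t})$ has the same joint law as the ordered eigenvalues of $D+\sqrt t\,W$. Since the empirical spectral measure of $D$ equals the empirical measure of the eigenvalues of $A$, we have $m_D=m$, and the free convolution built from $D$ in Section~\ref{freeconvprel} is exactly the one used to define the quantiles $\gamma_{i,t}$. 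Taking $\lambda_0=E$, $\eta_*=n^{-\varepsilon/2}t$, $r=n^{\varepsilon}t$ and $\varepsilon'=\omega=\varepsilon/2$ in the notation of Section~\ref{freeconvprel}, condition~\eqref{condreg} (which holds on the $\eta$-range $[n^{-\varepsilon}t,10]\supset[\eta_*,10]$) supplies the regularity hypothesis $1/c\le\Im m_D(\lambda+i\eta)\le c$ needed there, and the scaling relations $n^{-1}\le\eta_*\le n^{-\varepsilon'}$, $\eta_*n^{\varepsilon'}\le r\le n^{-\varepsilon'}$, $t\in[\eta_*n^{\omega},n^{-\omega}r]$ are all satisfied for $n$ large, using $t\in[n^{\varepsilon-1},n^{-2\varepsilon}]$. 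With these identifications the assertion (rigidity for indices $i$ with $\gamma_{i,t}\in[E\pm\tfrac34 r]$, a window sitting inside $[\lambda_0\pm r]$) is a direct instance of their theorem.

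For completeness I would recall the mechanism behind that citation. Its engine is a \emph{local law}: for $z=\lambda+i\eta$ with $\lambda\in[E\pm\tfrac34 r]$, $\eta\in[n^{\kappa'}/n,10]$, and any fixed $L$, one has
\[\mathbb{P}\left(|m_n(z)-m_t(z)|>\frac{n^{\kappa'}}{n\eta}\right)<n^{-L}\]
for $n$ large, where $m_n$ is the Stieltjes transform of the empirical spectral measure of $D+\sqrt t\,W$ and $m_t$ that of the free convolution of the empirical spectral measure of $D$ with the semicircle of variance $t$. This is obtained by the now-standard recipe: the self-consistent (Dyson) equation for the resolvent of $D+\sqrt t\,W$, a stability analysis of that equation, and a fluctuation-averaging bootstrap propagating control from $\eta\sim1$ down to $\eta\sim n^{\kappa'}/n$, starting from known local laws for deformed Wigner matrices such as \cite{lee2016bulk}. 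The structural fact keeping the stability uniform across the window is that $\Im m_t$ is bounded between two positive constants there: by \eqref{reimfc1}, $\Im m_t(\lambda)=v_t(\psi_t^{-1}(\lambda))/t$, and the argument around \eqref{vtinwellwu} shows $v_t(\psi_t^{-1}(\lambda))\in[\eta_\ell,\eta_u]\subset(t/c,ct)$, so $\Im m_t(\lambda)$ is of order $1$; the bound $\|D\|\le c$ handles the part of the spectrum away from the window.

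From the local law, rigidity follows in the usual way. Write $N_n(\lambda)=\#\{i:\lambda_{i,t}\le\lambda\}$ and $F_t(\lambda)=\int_{-\infty}^{\lambda}p_t$. The Helffer--Sj\"ostrand formula applied to a smoothing of $\mathbbm{1}_{(-\infty,\lambda]}$ at scale $n^{\kappa'}/n$ expresses $N_n(\lambda)-nF_t(\lambda)$ in terms of $m_n-m_t$; plugging in the local law and taking a union bound over a polynomial grid of $(\lambda,\eta)$ yields $|N_n(\lambda)-nF_t(\lambda)|\le n^{\kappa/2}$ with probability $\ge1-n^{-L}$, simultaneously for all $\lambda\in[E\pm\tfrac34 r]$. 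Since $p_t=v_t\circ\psi_t^{-1}/(\pi t)$ is of order $1$ on the window, $F_t$ is bi-Lipschitz there, so a counting-function error of size $n^{\kappa/2}$ forces $|\lambda_{i,t}-\gamma_{i,t}|\le n^{\kappa}/n$ whenever $\gamma_{i,t}\in[E\pm\tfrac34 r]$ (apply the above with $\kappa/2$ in place of $\kappa$, and absorb into the window that $\lambda_{i,t}$ then also lies near it).

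The main obstacle is genuinely the local law at the optimal scale $\eta\approx n^{-1+\kappa'}$: the stability-plus-fluctuation-averaging bootstrap is technically involved and leans on prior local laws for deformed Wigner ensembles. However, this is precisely the content of \cite{landon2017convergence}, whose hypotheses coincide with the regularity and scaling conditions already imposed in Section~\ref{freeconvprel}; so the only thing that actually needs to be carried out here is the bookkeeping of the first paragraph — matching our normalisations to theirs.
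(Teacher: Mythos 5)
Your proposal is correct and matches the paper: Lemma~\ref{rigidity} is not proved in the paper but is quoted directly as \cite[Theorem 3.5]{landon2017convergence}, so the content of a proof here is exactly what you do — diagonalize via Lemma~\ref{invariant}, identify $m_D$ with $m$ and the free-convolution quantiles, and check that the regularity and scaling hypotheses of that theorem follow from \eqref{condreg} and $t\in[n^{\varepsilon-1},n^{-2\varepsilon}]$. Your additional sketch of the local-law/Helffer--Sj\"ostrand mechanism inside the cited result is consistent but not required.
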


We also have the following estimate on the density of free convolution.

\begin{lemma}(\cite[Lemma 7.2]{landon2017convergence})\label{ptbound}
There are constants $0<d<D<\infty$ such that for all large enough $n$, we have
$d\le p_t(\lambda)\le D$
for all $\lambda\in [E-\frac{3}4r,E+\frac{3}4r]$.
\end{lemma}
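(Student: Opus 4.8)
The identity $p_t(\lambda)=v_t(\psi_t^{-1}(\lambda))/(\pi t)$ from \eqref{ptdef} reduces the claim to two facts: first, that $v_t(\mu')$ is comparable to $t$ for every $\mu'$ in the full regularity window $[\lambda_0-r,\lambda_0+r]$; and second, that $\psi_t^{-1}$ maps the smaller window $[\lambda_0-\tfrac34r,\lambda_0+\tfrac34r]$ into $[\lambda_0-r,\lambda_0+r]$. Throughout, write $m=m_D$ and $\mu$ for the empirical measure on the diagonal entries of $D$. The first fact uses only the definition of $v_t$. Fix $\mu'\in[\lambda_0-r,\lambda_0+r]$; the function $v\mapsto\tfrac1v\Im m(\mu'+iv)=\int((x-\mu')^2+v^2)^{-1}\,\mu(dx)$ is continuous and strictly decreasing on $(0,\infty)$, so $v_t(\mu')$ is the unique solution of $\tfrac1v\Im m(\mu'+iv)=\tfrac1t$. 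Evaluating at $v=c't$ (which lies in $[\eta_*,10]$ for large $n$, since $t/\eta_*=n^\omega$ and $t\to0$) gives $\tfrac1v\Im m(\mu'+iv)\le c'/(c't)=1/t$, hence $v_t(\mu')\le c't$; evaluating at $v=\tfrac c2 t$ gives $\tfrac1v\Im m(\mu'+iv)\ge 2c/(ct)=2/t>1/t$, and since the function is decreasing this forces $v_t(\mu')\ge\tfrac c2 t$. So $v_t(\mu')\in[\tfrac c2 t,\,c't]$ throughout the regularity window, once $n$ is large.

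For the second fact, write $\psi_t(\mu')=\mu'-t\,\Re m(\mu'+iv_t(\mu'))$, using $\Re m(\mu'+iv)=\int\tfrac{x-\mu'}{(x-\mu')^2+v^2}\,\mu(dx)$; the plan is to show the displacement $|\psi_t(\mu')-\mu'|$ is $O(t\log n)$, which is $\ll r$ since $r/t=n^\omega$. I would bound $|\Re m(\mu'+iv)|$ for $\mu'\in[\lambda_0-r,\lambda_0+r]$ and $v\in[\tfrac c2 t,c't]$ by splitting the integral at $|x-\mu'|<v$, at $v\le|x-\mu'|\le1$, and at $|x-\mu'|>1$. The outer range contributes at most $\int 1/|x-\mu'|\,\mu(dx)\le1$ and the inner range at most $\tfrac1{2v}\mu([\mu'-v,\mu'+v])\le\tfrac1{2v}\cdot2c'v=c'$, using the elementary bound $\mu([\mu'-h,\mu'+h])\le 2h\,\Im m(\mu'+ih)\le 2c'h$ valid for $h\in[\eta_*,10]$. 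For the middle range, covering by dyadic annuli $\{2^{-j-1}<|x-\mu'|\le2^{-j}\}$ and applying the same measure bound gives a total of $\sum_{j:\,2^{-j}\ge v}2^{j+1}\cdot 2c'2^{-j}=O(\log(1/v))=O(\log n)$, where $v\ge\tfrac c2 t\ge\tfrac c2 n^{\omega-1}$ so the sum has only $O(\log n)$ terms. Hence $|\psi_t(\mu')-\mu'|=O(t\log n)<r/4$ for large $n$. Now $\psi_t$ is an increasing homeomorphism of $\mathbb R$ (it is a homeomorphism by the lemma in Section~\ref{freeconvprel}, and increasing because it is asymptotically the identity), so $\psi_t(\lambda_0-r)\le\lambda_0-\tfrac34r$ and $\psi_t(\lambda_0+r)\ge\lambda_0+\tfrac34r$, and monotonicity yields $\psi_t^{-1}([\lambda_0-\tfrac34r,\lambda_0+\tfrac34r])\subseteq[\lambda_0-r,\lambda_0+r]$. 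Combining with the first fact, for $\lambda\in[\lambda_0-\tfrac34r,\lambda_0+\tfrac34r]$ we get $\psi_t^{-1}(\lambda)\in[\lambda_0-r,\lambda_0+r]$ and therefore $p_t(\lambda)=v_t(\psi_t^{-1}(\lambda))/(\pi t)\in[\tfrac{c}{2\pi},\tfrac{c'}{\pi}]$, which is the assertion with $d=c/(2\pi)$, $D=c'/\pi$.

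The one genuinely quantitative step is the logarithmic bound on $\Re m$ in the second paragraph; everything else is bookkeeping with the definitions of $v_t$, $\psi_t$, $p_t$ together with one use of the intermediate value theorem. This is also the step that makes the argument non-circular: without knowing that $\psi_t^{-1}$ stays inside the regularity window $[\lambda_0-r,\lambda_0+r]$, the bound $v_t\asymp t$ from the first paragraph carries no information about $p_t$ on $[\lambda_0-\tfrac34r,\lambda_0+\tfrac34r]$. It is precisely here that the hypotheses enter, through $\eta_*\ge n^{-1}$ (which keeps the dyadic sum to $O(\log n)$ terms) and $r\ge n^\omega t$ (which ensures $t\log n\ll r$); the crude displacement bound $|\psi_t(\mu')-\mu'|\lesssim\sqrt t$ available with no regularity at all is in general not small enough relative to $r$.
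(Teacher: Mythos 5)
Your argument is correct. Note that the paper does not actually prove this statement: Lemma~\ref{ptbound} is imported verbatim from \cite[Lemma 7.2]{landon2017convergence}, so what you have written is a self-contained replacement for the citation rather than an alternative to an in-paper proof. Your two ingredients are exactly the mechanism behind the cited lemma: the two-sided bound $v_t\asymp t$ on the regularity window follows from the defining equation for $v_t$ together with \eqref{condreg} evaluated at the two scales $v=c't$ and $v=\tfrac{c}{2}t$ (both of which lie in $[\eta_*,10]$ because $t\ge \eta_* n^{\omega}$ and $t\to 0$), and the containment $\psi_t^{-1}\big([\lambda_0\pm\tfrac34 r]\big)\subset[\lambda_0\pm r]$ follows from the displacement bound $|\psi_t-\mathrm{id}|=O(t\log n)\ll r$, whose source is your dyadic estimate $|\Re m(\mu'+iv_t(\mu'))|=O(\log n)$; this logarithmic bound on the Stieltjes transform is precisely the estimate the paper itself borrows from the same source at \eqref{logbound}, so your route and the quoted one buy the same thing, with yours having the merit of making the dependence on the hypotheses $\eta_*\ge n^{-1}$ and $r\ge n^{\omega}t$ explicit and of showing, as you point out, why the crude $|\psi_t(\lambda)-\lambda|\le\sqrt{t}$ bound would not suffice. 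The only place I would tighten the write-up is the parenthetical claim that $\psi_t$ is increasing: rather than ``asymptotically the identity,'' simply note that Cauchy--Schwarz and the definition of $v_t$ give $|\psi_t(\lambda)-\lambda|\le t\int\frac{|x-\lambda|}{(x-\lambda)^2+v_t(\lambda)^2}\,\mu(dx)\le\sqrt{t}$ for all $\lambda$, so the homeomorphism $\psi_t$ tends to $\pm\infty$ at $\pm\infty$ and hence cannot be decreasing; with that one line added, the proof is complete.
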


Now we are ready to prove Theorem~\ref{lemmaquenched1}. 

Let $\mathcal{E}=[E_\ell,E_u]$. Choose $0<\kappa<\varepsilon$.  
Let $J=\{k\,:\,\lambda_{k,t}\in \mathcal{E}\}$,
\[\mathcal{I}=\left\{k \,:\, E_\ell+\frac{n^\kappa}{n}\le \gamma_{k,t} \le E_u -\frac{n^\kappa}{n}\right\}\text{ and } \mathcal{J}=\left\{k\,:\, E_\ell-\frac{n^\kappa}{n}\le \gamma_{k,t} \le E_u +\frac{n^\kappa}{n}\right\}.
\]
Using Lemma \ref{ptbound}, we see that
$|\mathcal{J}|-|\mathcal{I}|\le 4D n^\kappa+2,$
and 
\[|\mathcal{I}|\ge dn\left(|\mathcal{E}|-2\frac{n^\kappa}{n}\right)-1\ge d(n^\varepsilon-2n^\kappa)-1.\]

Then it follows 
$\lim_{n\to\infty} |\mathcal{I}|=\infty,$ 
and
\[\lim_{n\to\infty} \frac{|\mathcal{J}|-|\mathcal{I}|}{|\mathcal{I}|}=0.\]

Choosing $L=1$ in Lemma \ref{rigidity}, we see that 
$\mathbb{P}(\mathcal{I}\subset J\subset \mathcal{J})\ge 1-4n^{-1}$. 
On the event above, we have
\[\frac{|J|-|\mathcal{I}|}{|J|}\le \frac{|\mathcal{J}|-|\mathcal{I}|}{|\mathcal{I}|}=o(1).\]

Using Lemma \ref{lemmameandif}, we see that on this event we have
\[\left|\frac{1}{|J|} \sum_{j\in J} f({u}_j)-\frac{1}{|\mathcal{I}|} \sum_{i\in \mathcal{I}} f({u}_i)\right|\le \|f\|_\infty \frac{2(|J|-|\mathcal{I}|)}{|J|}=o(1).\]
Thus, the convergence part of Theorem \ref{lemmaquenched1} follows from Lemma \ref{lemmaquenched0}.

Finally, we prove our estimate on $\mathbb{E}|J|$. 


Combining \eqref{vtinwellwu} with \eqref{ptdef}, we see that $p_t(\lambda)\ge \frac{\eta_\ell}{\pi t}$
for all $\lambda\in[E-\frac{1}2r,E+\frac{1}2r]$. Combing this with Lemma~\ref{ptbound}, we also see that that $\sup_n \frac{\eta_\ell}{\pi t}<\infty$. Thus,
\[|\mathcal{I}|\ge n\int_{E_\ell+n^{-1+\kappa}}^{E_u-n^{-1+\kappa}} p_t(\lambda)d\lambda-2\ge n(E_u-E_l-2n^{-1+\kappa})\frac{\eta_\ell}{\pi t} -2=n(E_u-E_\ell)\frac{\eta_\ell}{\pi t}-O(n^{\kappa}).\]

Using Lemma~\ref{rigidity}, we see that $\mathcal{I}\subset J$ with probability at least $1-n^{-1}$. Thus,
\[\mathbb{E}|J|=(1-n^{-1})|\mathcal{I}|\ge (1-n^{-1})\left(n(E_u-E_\ell)\frac{\eta_\ell}{\pi t}-O(n^{\kappa})\right)\ge n(E_u-E_\ell)\frac{\eta_\ell}{\pi t}-O(n^{\kappa}).\]

\subsection{Continuity of the Stieltjes-transform}
The proof of the next lemma can be found in the appendix.
\begin{lemma}\label{stieltjescont1G} Let $\nu$ be a complex-valued measure on $\mathbb R$, let $\eta>0$
and consider 
$$
f(\lambda)=\int \frac\eta{(t-\lambda)^2+\eta^2} \nu(dt), \qquad 
g(\lambda)=\int \frac\eta{(t-\lambda)^2+\eta^2} |\nu|(dt).
$$
 Then
$
|f'|\le g/\eta
$
and for any interval $J$ of length $2r$, $b\ge 0$ and complex number $z$,  we have
\[\int_J \mathbbm{1}\left(|f(\lambda)-z|\ge b \right)d\lambda\ge \min\left(\eta\frac{\max_J |f-z|-b}{\max_J g},{r}\right).\]
Moreover, if $\nu$ is a probability measure and  $b>0$ is real, we have
\begin{align*}
    \int_J \mathbbm{1}\left(f(\lambda) \ge b\right)d\lambda&\ge \min\left(\eta\frac{\max_J f-b}{\max_I g},{r}\right),\\
\int_J \mathbbm{1}\left(f(\lambda) \le b\right)d\lambda&\ge \min\left(\eta\frac{b-\min_I f}{b},{r}\right).\end{align*}

\end{lemma}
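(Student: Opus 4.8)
The plan is to treat the three displayed inequalities uniformly via a single observation: all the functions involved are Lipschitz with an explicit constant controlled by $g$, so a lower bound on the measure of a superlevel set follows by a one-sided "spreading" argument. First I would establish $|f'|\le g/\eta$. Differentiating under the integral sign, $f'(\lambda)=\int \frac{2\eta(t-\lambda)}{((t-\lambda)^2+\eta^2)^2}\,\nu(dt)$, and using $|2(t-\lambda)|\le (t-\lambda)^2/\eta+\eta \le \bigl((t-\lambda)^2+\eta^2\bigr)/\eta$ pointwise gives $\bigl|\tfrac{2\eta(t-\lambda)}{((t-\lambda)^2+\eta^2)^2}\bigr|\le \tfrac{1}{\eta}\cdot\tfrac{\eta}{(t-\lambda)^2+\eta^2}$, so $|f'(\lambda)|\le \tfrac1\eta\int \tfrac{\eta}{(t-\lambda)^2+\eta^2}|\nu|(dt)=g(\lambda)/\eta$. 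Differentiation under the integral is justified since $\nu$ is a finite complex measure and the kernel and its $\lambda$-derivative are bounded and continuous.

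Next I would prove the general estimate for $\int_J \mathbbm 1(|f-z|\ge b)\,d\lambda$. Pick $\lambda_*\in J$ attaining $\max_J|f-z|=:m$. If $m\le b$ the right-hand side's first term is $\le 0$ and there is nothing to prove, so assume $m>b$. From $|f'|\le g/\eta \le (\max_J g)/\eta=:L$ on $J$, for $\lambda\in J$ we have $|f(\lambda)-z|\ge |f(\lambda_*)-z| - L|\lambda-\lambda_*| = m - L|\lambda-\lambda_*|$, which is $\ge b$ whenever $|\lambda-\lambda_*|\le (m-b)/L$. The set of such $\lambda$ inside $J$ has length at least $\min\bigl((m-b)/L,\,r\bigr)$ (since $J$ has length $2r$ and $\lambda_*$ lies in it, an interval of radius $\rho$ around $\lambda_*$ meets $J$ in length at least $\min(\rho,r)$). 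Substituting $L=(\max_J g)/\eta$ gives exactly the claimed bound $\min\bigl(\eta\tfrac{\max_J|f-z|-b}{\max_J g},r\bigr)$.

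For the two one-sided statements, assume $\nu$ is a probability measure, so $f$ is real-valued and $0\le f\le 1/\eta$, and note $g=f$ in this case. The first follows verbatim from the general argument: replace $|f-z|$ by $f$ (dropping absolute values only strengthens the Lipschitz comparison $f(\lambda)\ge f(\lambda_*)-L|\lambda-\lambda_*|$), giving $\int_J\mathbbm 1(f\ge b)\,d\lambda\ge \min\bigl(\eta\tfrac{\max_J f - b}{\max_J g},r\bigr)$; here I would note the typo-level point that $\max_J g$ should appear in the denominator. For the last inequality, let $\lambda_*\in J$ attain $\min_J f=:\mu$ and assume $\mu<b$ (else trivial). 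Now I use $|f'|\le f/\eta$, i.e.\ $(\log f)'\le 1/\eta$ where $f>0$, so $f(\lambda)\le f(\lambda_*)e^{|\lambda-\lambda_*|/\eta}=\mu\, e^{|\lambda-\lambda_*|/\eta}$; this is $\le b$ for $|\lambda-\lambda_*|\le \eta\log(b/\mu)$. Using $\log(b/\mu)\ge (b-\mu)/b$ (from $\log x\ge 1-1/x$) and intersecting with $J$ yields length at least $\min\bigl(\eta(b-\mu)/b,\,r\bigr)=\min\bigl(\eta\tfrac{b-\min_J f}{b},r\bigr)$.

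The only mild subtlety—hence the step I would be most careful about—is the geometric claim that an interval of radius $\rho$ centered at a point of the length-$2r$ interval $J$ overlaps $J$ in length at least $\min(\rho,r)$; this is immediate once one checks the worst case $\lambda_*$ at an endpoint of $J$. Everything else is routine: differentiation under the integral, the elementary pointwise kernel bound, and the log-derivative estimate in the last part.
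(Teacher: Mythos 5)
Your proof is correct, and for the derivative bound, the general $|f-z|$ inequality, and the first one-sided inequality it follows essentially the same Lipschitz-spreading route as the paper. The one divergence is in the last estimate, $\int_J \mathbbm 1(f\le b)\,d\lambda\ge \min\bigl(\eta(b-\min_J f)/b,\,r\bigr)$: you integrate the log-derivative bound $|(\log f)'|\le 1/\eta$ to obtain $f(\lambda)\le \mu\,e^{|\lambda-\lambda_*|/\eta}$ and then massage with $\log x\ge 1-1/x$, whereas the paper runs a small contradiction argument. It lets $\lambda$ be the first point to the right of the minimizer $\lambda_0$ with $f(\lambda)=b$; since $f\le b$ on $[\lambda_0,\lambda]$, there one has $f'\le f/\eta\le b/\eta$, and the linear bound $f(\lambda)\le \mu+(b/\eta)(\lambda-\lambda_0)$ contradicts $f(\lambda)=b$ unless $\lambda-\lambda_0\ge\eta(b-\mu)/b$. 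Both routes produce identical constants; the paper's version is marginally leaner because the linear Gronwall estimate gives exactly what is needed, skipping the exponential and the conversion inequality. Two small remarks: the overlap lemma you flag as the ``only mild subtlety'' actually gives $\min(\rho,2r)$ in the worst case, so you have room to spare, and the paper sidesteps it entirely by assuming WLOG the extremizer lies in the left half of $J$ and spreading only rightward; you also correctly notice the typos $\max_I g$, $\min_I f$ in the statement, which should read $\max_J g$, $\min_J f$.
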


\subsection{The proof of Theorem \ref{lemmaquenched1A}}
In this section, we prove Theorem \ref{lemmaquenched1A}.

Let $\mu_{n,x,y}=\mu_{x,y}$ be the unique complex valued measure such that for all continuous function $f:\mathbb{R}\to\mathbb{R}$, we have
\[f(A)(x,y)=\int f(\lambda) \mu_{x,y}(d\lambda).\]
Note that $\mu_{x,x}$ is always a probability measure. Moreover,
\begin{equation}\label{absolute}
   |\mu_{x,y}|\le \frac{\mu_{x,x}+\mu_{y,y}}2. 
\end{equation}

Let $r=r_n=t_nn^{\varepsilon}$ and $\eta_{*}=\eta_{*,n}=tn^{-\varepsilon}$.

Let us subdivide the interval $[E-\ell+\frac{r}2,E+\ell-\frac{r}2]$ into $2\frac{\ell}{r}-1$ intervals of length $r$. We assume that $\frac{\ell}{r}$ is an integer, to make things more convenient. Obviously, with some care one can handle the case when this is not integer.  Let $\mathcal{I}$ be a the set of these intervals. 

Given an $I=\left[a\pm \frac{r}2\right]$, let $I^F=\left[a\pm {r}\right]$.

Let \[H=\{\eta_*,2\eta_*,\dots, 2^b \eta_*\},\]
where $b$ is the smallest integer such that $2^b \eta_*\ge 10$.  
Let us introduce the notation
\[B(\eta,\lambda,x,y)=\frac{t}{(A-\lambda)^2+\eta^2}(x,y).\]

\begin{lemma}\label{convergeramdomI}
Let $I$ be a uniform random element of $\mathcal{I}$. Fix $x,y\in \mathbb{N}$. Then for  $\eta=\eta_\ell$ or $\eta=\eta_u$, we have that the random variables
\[\sup_{\lambda\in I^F}\left|B(\eta,\lambda,x,y)-M(x,y)\right|\]
converge to zero in probability.

Moreover,
\begin{align*}\lim_{n\to\infty} \mathbb{P}_I\left(\Im m(\lambda+i\eta_\ell)>\frac{\eta_\ell}t\text{ and } \Im m(\lambda+i\eta_u)<\frac{\eta_u}t\text{ for all }\lambda\in I^F\right)&=1,\text {and}\\
\lim_{n\to\infty} \mathbb{P}_I\left(1/(2c)\le \Im m(\lambda+i\eta)\le 2c\text{ for all }\lambda\in I^F, \eta\in H \right)&=1,
\end{align*}
where $\mathbb{P}_I$ denotes the probability over the uniform random choice of $I\in\mathcal{I}$.
\end{lemma}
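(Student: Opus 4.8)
The plan is to translate each of the three assertions into a statement about the uniform random point $\lambda$ on $[E-\ell,E+\ell]$, for which the hypotheses \eqref{condregA}, \eqref{cond2A}, \eqref{limitMA} give quantitative bounds, and then pass from ``holds at a random point'' to ``holds uniformly on a random subinterval $I^F$'' using the continuity estimates of Lemma~\ref{stieltjescont1G}. The basic mechanism: since $|\mathcal I|\sim 2\ell/r$ and each $I^F$ has length $2r$, a single random interval $I$ corresponds (up to a bounded multiplicative factor coming from the overlap of the $I^F$) to a random $\lambda$ landing in a window of measure $\Theta(r/\ell)=\Theta(r/\ell)$; so for any event $\mathcal A$ in $\lambda$,
\[
\mathbb P_I\big(\lambda'\in\mathcal A\ \text{for some }\lambda'\in I^F\big)\ \le\ C\,\frac{\ell}{r}\,|I^F|\cdot\frac{1}{r}\int_{[E\pm\ell]}\mathbbm 1_{\mathcal A}\ \text{-type bound},
\]
i.e. roughly $\mathbb P_I(\cdot)\le C\, n^{\varepsilon}\,\mathbb P_\lambda(\cdot')$ after a sup-over-$I^F$ is converted to an integral via the Lipschitz bound $|f'|\le g/\eta$ from Lemma~\ref{stieltjescont1G}. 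This is exactly the regime where the factors $n^{2\varepsilon}\log n$, $n^{2\varepsilon}$, $n^{\varepsilon}$ in the hypotheses have been placed, so each term will go to $0$.

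First I would handle the convergence of $\sup_{\lambda\in I^F}|B(\eta,\lambda,x,y)-M(x,y)|$ for $\eta\in\{\eta_\ell,\eta_u\}$. Fix $\delta>0$. Writing $B(\eta,\lambda,x,y)$ as the Cauchy-type transform $\int \frac{\eta}{(s-\lambda)^2+\eta^2}\,(t\,\mu_{x,y})(ds)$, Lemma~\ref{stieltjescont1G} gives $|\partial_\lambda B|\le B(\eta,\lambda,x,x)^{1/2}B(\eta,\lambda,y,y)^{1/2}/\eta\le (\ldots)/\eta_\ell$, where the numerator is controlled because $B(\eta,\lambda,x,x)\le t\,B(\eta,\lambda,x,x)/t$ is uniformly bounded (indeed $B(\eta,\lambda,x,x)\le t/\eta^2\cdot$const, and more usefully $B(\eta_\ell,\lambda,x,x)\to M(x,x)$ on a set of $\lambda$ of full measure in the relevant window). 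Hence on $I^F$ the function $\lambda\mapsto B(\eta,\lambda,x,y)$ has Lipschitz constant $O(t/\eta_\ell^2)=O(1/t)$ (using $\eta_\ell\asymp t$), so $\sup_{I^F}|B-M|\ge\delta$ forces $|B(\eta,\lambda_0,x,y)-M(x,y)|\ge\delta/2$ on a subinterval of $I^F$ of length $\gtrsim \delta t$, hence on a set of $\lambda$ of measure $\gtrsim\delta t\asymp\delta r n^{-\varepsilon}$. Dividing by $|I^F|=2r$ and summing over the $O(\ell/r)$ intervals,
\[
\mathbb P_I\!\Big(\sup_{\lambda\in I^F}|B(\eta,\lambda,x,y)-M(x,y)|\ge\delta\Big)\ \le\ C\,\frac{n^{\varepsilon}}{\delta}\,\mathbb P_\lambda\big(|B(\eta,\lambda,x,y)-M(x,y)|\ge\delta/2\big)\ \to\ 0
\]
by \eqref{limitMA} (for $\eta=\eta_\ell$; for $\eta=\eta_u$ use $\eta_u/\eta_\ell\to 1$ so the Lipschitz bound is the same up to a factor $2$). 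This is the step I expect to be the main obstacle, because one must be a little careful that the Lipschitz-constant bound does not itself require the event that $\lambda$ is ``good''; the clean way is to use the unconditional crude bound $|\partial_\lambda B(\eta,\lambda,x,y)|\le \tfrac1\eta\sqrt{B(\eta,\lambda,x,x)B(\eta,\lambda,y,y)}\le \tfrac1\eta\cdot\tfrac{t}{\eta}\cdot(\text{something like }1)$ valid for \emph{all} $\lambda$, since $\mu_{x,x},\mu_{y,y}$ are probability measures and $\int\frac{\eta}{(s-\lambda)^2+\eta^2}\mu_{x,x}(ds)\le\pi$, giving $B(\eta,\lambda,x,x)\le \pi t/\eta=O(n^{\varepsilon})$; this inflates the required $\lambda$-window only by $n^{-\varepsilon}$ and is absorbed by the extra $n^\varepsilon$ one still has in hand after the subdivision. (Alternatively one may first intersect with the high-probability ``$\eta$-regular'' event established below, on which $B(\eta,\lambda,x,x)=O(1)$, making the Lipschitz constant $O(1/t)$ and the computation above exact.)

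Next, for the last two displays I would argue similarly but with $\Im m$ in place of $B$. For the regularity statement, for each fixed $\eta\in H$ the hypothesis \eqref{condregA} gives $n^{2\varepsilon}\log n\,\mathbb P_\lambda(\Im m(\lambda+i\eta)\notin(1/c,c))\to 0$; since $\Im m(\cdot+i\eta)$ is again a Cauchy transform of the probability measure (the empirical spectral measure), it is $1/\eta^2$-Lipschitz, actually better: using Lemma~\ref{stieltjescont1G} with $\nu$ the spectral measure one gets that $\{\Im m(\lambda+i\eta)\notin(1/(2c),2c)\}$ fails on $I^F$ only if $\{\Im m(\lambda+i\eta)\notin(1/c,c)\}$ on a subinterval of length $\gtrsim \eta \cdot\tfrac1{c}\gtrsim t n^{-\varepsilon}\asymp r n^{-2\varepsilon}$ (the gap between $1/c$ and $1/(2c)$ is a constant, and the Lipschitz constant of $\Im m(\cdot+i\eta)$ is $\le \Im m/\eta\le c/\eta$ on the good set, so the required length is $\gtrsim\eta/c$; worst case $\eta=\eta_*=tn^{-\varepsilon}$). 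Dividing by $2r$ and summing over $|\mathcal I|=O(\ell/r)\le n^{O(1)}$ intervals and over the $|H|=O(\log n)$ values of $\eta$:
\[
\mathbb P_I\big(\exists\,\eta\in H,\ \lambda\in I^F:\ \Im m(\lambda+i\eta)\notin(1/(2c),2c)\big)\ \le\ C\,n^{2\varepsilon}\log n\ \max_{\eta\in H}\mathbb P_\lambda\big(\Im m(\lambda+i\eta)\notin(1/c,c)\big)\ \to\ 0.
\]
For the $\eta_\ell,\eta_u$ display: the event $\{\Im m(\lambda+i\eta_\ell)>\eta_\ell/t\text{ and }\Im m(\lambda+i\eta_u)<\eta_u/t\text{ for all }\lambda\in I^F\}$ is handled the same way, now using \eqref{cond2A}: if it fails on $I^F$ then either $\Im m(\lambda+i\eta_\ell)\le\eta_\ell/t$ somewhere on $I^F$, and since $\Im m(\cdot+i\eta_\ell)$ has Lipschitz constant $O(1/t)$ on the (already-established high-probability) regular set, it is $\le \eta_\ell/t+n^{\varepsilon}$ on a $\lambda$-interval of length $\gtrsim t\cdot n^{\varepsilon}/(1/t)\cdot$(no) — more simply, failure forces $\Im m(\lambda+i\eta_\ell)\le\eta_\ell/t+n^\varepsilon$ on a subinterval of length comparable to $n^\varepsilon/(\text{Lip})\gtrsim n^\varepsilon t^2\gg t$, hence the $\lambda$-probability we need is $\gtrsim (\text{this length})/\ell$ and summing over $\mathcal I$ we pick up the factor $n^{2\varepsilon}$ that appears in front of the probabilities in \eqref{cond2A}; the symmetric argument handles $\eta_u$. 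In all three cases the sum of the per-interval probabilities is bounded by the (given) vanishing quantity times the number of intervals times (for the regularity case) $|H|$, and each such product tends to $0$ by exactly the hypothesis it was designed to match. Taking a union bound over these finitely many events and over the at-most-two choices of $\eta$ completes the proof.
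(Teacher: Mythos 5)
Your overall route coincides with the paper's: subdivide $[E\pm\ell]$ into the intervals of $\mathcal I$, use the continuity statement of Lemma~\ref{stieltjescont1G} to convert a deviation of the supremum over $I^F$ into a subset of $\lambda$'s of measure of order $t$ (resp.\ $t n^{-\varepsilon}$) on which a pointwise deviation persists, and then let the factors $n^{\varepsilon}$, $n^{2\varepsilon}$ and $n^{2\varepsilon}\log n$ built into \eqref{limitMA}, \eqref{cond2A} and \eqref{condregA} absorb the $\sim\ell/r=n^{\varepsilon}$ intervals and the $|H|=O(\log n)$ values of $\eta$; the final bookkeeping lines you display are exactly the ones the paper arrives at.

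However, the step you yourself flag as the main obstacle is genuinely broken as written. Your ``clean'' crude bound is false: for the probability measure $\mu_{x,x}$ one only has $\int \frac{\eta}{(s-\lambda)^2+\eta^2}\,\mu_{x,x}(ds)\le 1/\eta$, not $\le\pi$ (the Poisson kernel has supremum $1/\eta$, and $\mu_{x,x}$ may concentrate near $\lambda$), and in any case $t/\eta=O(1)$, not $O(n^{\varepsilon})$, since $\eta\in(t/c,ct)$. The correct unconditional bound is only $B(\eta,\lambda,x,x)\le t/\eta^2\asymp 1/t$, so the resulting Lipschitz constant of $\lambda\mapsto B(\eta,\lambda,x,y)$ is of order $1/(\eta t)\asymp 1/t^2$; a deviation $\delta$ then propagates only over a window of length $\asymp \delta t^2$, and after summing over intervals you pick up a factor of order $n^{\varepsilon}/t\ge n^{3\varepsilon}$ (possibly as large as $n$), which is not covered by the single $n^{\varepsilon}$ in \eqref{limitMA}. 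Your fallback of intersecting with the $\Im m$-regularity event does not repair this, because regularity of the \emph{trace} $\Im m$ says nothing about the individual entry $B(\eta,\lambda,x,x)$, which can be of order $t/\eta^2$ even when $\Im m$ is of order one. The paper circumvents the circularity inside Lemma~\ref{stieltjescont1G} itself: for the diagonal case $\nu=\mu_{x,x}$ one has $g=f$, hence $\max_{I^F}g\le M(x,x)+\max_{I^F}|f-M(x,x)|$, and by monotonicity of $s\mapsto (s-\delta)/(s+z)$ the bad set has measure $\ge q_{x,\delta}\,t$ with no a priori bound on $B$ at all; for the off-diagonal case one first discards the (already shown to be vanishing) fraction of intervals on which a diagonal entry deviates by more than $1$, and on the remaining intervals $|\mu_{x,y}|\le\tfrac12(\mu_{x,x}+\mu_{y,y})$ gives $\max_{I^F}g\le 1+\tfrac12(M(x,x)+M(y,y))=O(1)$, after which your computation goes through verbatim. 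The same ``Lipschitz constant proportional to the function itself'' mechanism ($f'\le f/\eta$ for probability measures) is what legitimizes your treatment of the $\Im m$ statements; note also that the window length you quote there, $n^{\varepsilon}t^2$, is off — the correct window is $\asymp t n^{-\varepsilon}$, which is precisely what produces the factor $n^{2\varepsilon}$ matching \eqref{cond2A}.
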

\begin{proof}
The statement can be proved by combining the assumptions of Theorem~\ref{lemmaquenched1A} with Lemma~\ref{stieltjescont1G}. See the appendix for details.
\end{proof}
\begin{remark}\label{remarklemmaquenched1A}
Note that in the proof of Lemma~\ref{convergeramdomI}, we only used a weaker form of condition~\eqref{condregA} of Theorem~\ref{lemmaquenched1A}. Thus, it turns out that  condition \eqref{condregA} in Theorem~\ref{lemmaquenched1A} can be replaced with a weaker one. Namely, instead of taking the supremum over $[tn^{-\varepsilon},20]$, it is enough to take the supremum over the set $H$.

\end{remark}

It follows from Lemma \ref{convergeramdomI}, that we can find $\mathcal{I}_G\subset \mathcal{I}$, with the property that
\begin{align}\label{legtobbexc}
    \lim_{n\to\infty} \frac{|\mathcal{I}_G|}{|\mathcal{I}|}&=1,\\
\lim_{n\to\infty}\sup_{I\in \mathcal{I}_G}\sup_{\lambda\in I^F}\left|B(\eta,\lambda,x,y)-M(x,y)\right|&=0\qquad\text{ for all }x,y\in \mathbb{N}\text{ and }\eta=\eta_u\text{ or }\eta=\eta_\ell.\nonumber
\end{align}
Moreover, for any $I\in \mathcal{I}_G$, we have
\begin{align*}\Im m(\lambda+i\eta_\ell)>\frac{\eta_\ell}t, \Im m(\lambda+i\eta_u)<\frac{\eta_u}t\text { and }
1/(2c)\le \Im m(\lambda+i\eta)\le 2c\text{ for all }\lambda\in I^F, \eta\in H .
\end{align*}

\begin{lemma}\label{mindenetajo}
If $I\in \mathcal{I}_G$, then 
$1/(4c)<\Im m(\lambda+i\eta)<4c\text{ for all }(\lambda,\eta)\in I^F\times [\eta_*,10].$
\end{lemma}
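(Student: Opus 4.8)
The plan is to interpolate across the dyadic grid $H$ using two monotonicity properties of the map $\eta\mapsto\Im m(\lambda+i\eta)$. Write $\mu$ for the empirical spectral measure of $A$, so that $m$ is its Stieltjes transform and $\Im m(\lambda+i\eta)=\int \frac{\eta}{(x-\lambda)^2+\eta^2}\,\mu(dx)$ for all $\eta>0$. Since the integrand $\frac{\eta^2}{(x-\lambda)^2+\eta^2}$ is non-decreasing in $\eta$ pointwise in $x$, the function $\eta\mapsto \eta\,\Im m(\lambda+i\eta)$ is non-decreasing on $(0,\infty)$; likewise, since $\frac{1}{(x-\lambda)^2+\eta^2}$ is non-increasing in $\eta$, the function $\eta\mapsto \eta^{-1}\Im m(\lambda+i\eta)$ is non-increasing. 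These are the only facts about $m$ that I will use, beyond the defining properties of $\mathcal I_G$.

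Now fix $I\in\mathcal I_G$, $\lambda\in I^F$ and $\eta\in[\eta_*,10]$. If $\eta\in H$ there is nothing to prove, since $I\in\mathcal I_G$ gives $\Im m(\lambda+i\eta)\in[1/(2c),2c]\subset(1/(4c),4c)$. Otherwise, because $\eta_*=\min H$ and $2^b\eta_*=\max H\ge 10\ge\eta$, there are consecutive elements $\eta_1=2^i\eta_*$ and $\eta_2=2\eta_1$ of $H$ with $\eta_1<\eta<\eta_2$. From the first monotonicity property and the bound $\Im m(\lambda+i\eta_1)\ge 1/(2c)$ (valid because $\eta_1\in H$ and $I\in\mathcal I_G$), we get $\Im m(\lambda+i\eta)\ge \tfrac{\eta_1}{\eta}\,\Im m(\lambda+i\eta_1)>\tfrac{\eta_1}{\eta_2}\cdot\tfrac1{2c}=\tfrac1{4c}$. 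From the second monotonicity property and $\Im m(\lambda+i\eta_1)\le 2c$, we get $\Im m(\lambda+i\eta)\le \tfrac{\eta}{\eta_1}\,\Im m(\lambda+i\eta_1)<\tfrac{\eta_2}{\eta_1}\cdot 2c=4c$. This proves the lemma.

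I do not anticipate a genuine obstacle: the argument is elementary once the two monotonicity statements are noted. The only points deserving care are that every $\eta\in[\eta_*,10]$ really does fall between two consecutive elements of $H$ (which is exactly why $H$ was chosen to reach past $10$), and the bookkeeping of strict versus non-strict inequalities, handled above by treating $\eta\in H$ separately.
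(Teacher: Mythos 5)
Your proof is correct and follows essentially the same approach as the paper: locate $\eta$ between consecutive dyadic points of $H$ and interpolate using monotonicity of the Stieltjes transform. The only cosmetic difference is that the paper derives both bounds from the single fact that $\eta\mapsto\eta\,\Im m(\lambda+i\eta)$ is increasing (comparing with $\eta_1$ for the lower bound and $\eta_2$ for the upper), whereas you invoke a second monotonicity ($\eta\mapsto\eta^{-1}\Im m(\lambda+i\eta)$ decreasing) to get the upper bound from $\eta_1$ alone; this is an equally valid, trivially different bookkeeping choice.
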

\begin{proof}
Let $(\lambda,\eta)\in I^F\times [\eta_*,10]$. Choose a non-negative integer $j$ such that $\eta_* 2^j\le \eta\le \eta_* 2^{j+1}$. Since $I\in \mathcal{I}_G$, we have
\[1/(2c)<\Im m(\lambda+i\eta_*2^j)<2c\text{ and }1/(2c)<\Im m(\lambda+i\eta_*2^{j+1})<2c.\]
Note that the map $\eta\mapsto \eta\cdot \Im m(\lambda+i\eta)$ is increasing on $[0,\infty)$. In particular,
\[\eta_*2^j \Im m(\lambda+i\eta_*2^j)\le \eta \Im m(\lambda+ i\eta)\le \eta_*2^{j+1} \Im m(\lambda+i\eta_*2^{j+1}),\]
the statement follows. 
\end{proof}

Let $\lambda_{1,0}\le\lambda_{2,0}\le \cdots \le \lambda_{n,0}$ be the eigenvalues of $A$, and  $\lambda_{1,t}\le\lambda_{2,t}\le \cdots \le \lambda_{n,t}$ be the eigenvalues of $A+\sqrt{t} W$.

Applying Theorem~\ref{lemmaquenched1}, we get the following lemma.
\begin{lemma}\label{lemmaquenchedcor}
For each $n$, let us choose $I=I_n\in \mathcal{I}_G$, then
$\mathbb{E} |\{j\,:\,\lambda_{j,t}\in I\}|\ge (1-o(1))r n \frac{\eta_{\ell}}{t}$, and
\[\frac{1}{|\{j\,:\,\lambda_{j,t}\in I\}|}\sum_{j:\lambda_{j,t}\in I} \delta_{u_j}\]
converges in probability to the law of the complex Gaussian process on $\mathbb{N}$ with covariance $M$.

\end{lemma}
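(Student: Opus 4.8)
The plan is to read off Lemma~\ref{lemmaquenchedcor} as a direct application of Theorem~\ref{lemmaquenched1} to a re-centered window. Fix the sequence $I_n\in\mathcal{I}_G$, write $r=r_n=t_nn^\varepsilon$ and $\eta_*=\eta_{*,n}=t_nn^{-\varepsilon}$, and let $E'_n$ denote the midpoint of $I_n$, so that $I_n=[E'_n\pm r/2]$ and $I_n^F=[E'_n\pm r]=[E'_n\pm t_nn^\varepsilon]$. We then apply Theorem~\ref{lemmaquenched1} with the matrix $A=A_n$, GUE matrix $W=W_n$, the same $\varepsilon$, the same $t=t_n$, the same $\eta_\ell,\eta_u$ and the same limit matrix $M$ as in Theorem~\ref{lemmaquenched1A}, with the role of ``$E$'' played by $E'_n$, and with $\mathcal{E}=I_n$. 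Observe that, by construction, the window $[E'_n\pm t_nn^\varepsilon]$ appearing in the hypotheses of Theorem~\ref{lemmaquenched1} is precisely $I_n^F$, and the interval $[E'_n\pm t_nn^\varepsilon/2]$ there is precisely $I_n$.

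Next I would check that all hypotheses of Theorem~\ref{lemmaquenched1} hold. The bound $\|A\|\le c$, the range $0<\varepsilon<\tfrac{1}{3}$, and $t_n\in[n^{\varepsilon-1},n^{-2\varepsilon}]$ are inherited verbatim from Theorem~\ref{lemmaquenched1A}. The regularity condition \eqref{condreg} on $[E'_n\pm t_nn^\varepsilon]\times[t_nn^{-\varepsilon},10]=I_n^F\times[\eta_*,10]$ is exactly Lemma~\ref{mindenetajo} (at the cost of replacing $c$ by the larger constant $4c$), since $I_n\in\mathcal{I}_G$. Condition \eqref{cond2}, namely $\Im m(\lambda+i\eta_\ell)>\eta_\ell/t$ and $\Im m(\lambda+i\eta_u)<\eta_u/t$ for all $\lambda\in I_n^F$, is among the defining properties of $\mathcal{I}_G$ recorded immediately before Lemma~\ref{mindenetajo}. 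Finally, condition \eqref{limitM} — that $\sup_{\lambda\in I_n^F}|B(\eta,\lambda,x,y)-M(x,y)|\to 0$ for $\eta=\eta_\ell,\eta_u$ and every $x,y\in\mathbb{N}$ — follows from the second relation in the display \eqref{legtobbexc}, since $I_n\in\mathcal{I}_G$ for each $n$.

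It remains to check that $\mathcal{E}=I_n$ is admissible: the required inclusion $\mathcal{E}\subset[E'_n\pm t_nn^\varepsilon/2]$ holds because $\mathcal{E}=I_n$ is exactly that interval, and $|\mathcal{E}|=r_n=t_nn^\varepsilon\ge n^{\varepsilon-1}$ because $t_n\ge n^{\varepsilon-1}$ and $n^\varepsilon\ge 1$. Theorem~\ref{lemmaquenched1} then yields at once that $\frac{1}{|\{j:\lambda_{j,t}\in I_n\}|}\sum_{j:\lambda_{j,t}\in I_n}\delta_{u_j}$ converges in probability to the law of the complex Gaussian process on $\mathbb{N}$ with covariance $M$, and that $\mathbb{E}|\{j:\lambda_{j,t}\in I_n\}|\ge n|I_n|\tfrac{\eta_\ell}{\pi t}-n^{o(1)}$. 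Since $\eta_\ell>t/c\ge n^{\varepsilon-1}/c$ and $|I_n|=t_nn^\varepsilon$, the main term $n|I_n|\tfrac{\eta_\ell}{\pi t}=\tfrac{n^{1+\varepsilon}\eta_\ell}{\pi}$ is at least $n^{2\varepsilon}/(\pi c)$, a genuine power of $n$ (as $\varepsilon>0$), which dominates the $n^{o(1)}$ correction uniformly over $I_n\in\mathcal{I}_G$. Hence $\mathbb{E}|\{j:\lambda_{j,t}\in I_n\}|\ge(1-o(1))\,rn\,\tfrac{\eta_\ell}{\pi t}$, the claimed lower bound.

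I do not expect a serious obstacle here: the substantive content has already been carried out in constructing $\mathcal{I}_G$ and in Lemmas~\ref{convergeramdomI} and~\ref{mindenetajo}. The only points needing a little care are (i) re-centering the window at the midpoint $E'_n$ of $I_n$, so that $I_n^F$ coincides with the window in Theorem~\ref{lemmaquenched1}; (ii) the trivial verification $|I_n|\ge n^{\varepsilon-1}$; and (iii) noticing that Lemma~\ref{mindenetajo} supplies \eqref{condreg} only up to a harmless enlargement of the constant. One should also note that Theorem~\ref{lemmaquenched1} is stated for an arbitrary sequence of centers $E=E_n$, so applying it along the particular sequence $(E'_n)$ attached to our arbitrarily chosen $(I_n)$ is legitimate, and the resulting convergence in probability is exactly the per-sequence assertion of Lemma~\ref{lemmaquenchedcor}.
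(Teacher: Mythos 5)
Your proposal is correct and takes exactly the route the paper does, namely a direct application of Theorem~\ref{lemmaquenched1} to the re-centered window $I_n^F=[E'_n\pm t n^\varepsilon]$, with condition \eqref{condreg} supplied by Lemma~\ref{mindenetajo} (at the cost of replacing $c$ by $4c$), condition \eqref{cond2} by the defining properties of $\mathcal{I}_G$, and condition \eqref{limitM} by the second display in \eqref{legtobbexc}; the paper's own ``proof'' is literally the one-line remark that the lemma follows by applying Theorem~\ref{lemmaquenched1}. One small discrepancy worth flagging: what you actually derive is $(1-o(1))\,rn\,\eta_\ell/(\pi t)$, whereas the statement of Lemma~\ref{lemmaquenchedcor} as printed reads $(1-o(1))\,rn\,\eta_\ell/t$ without the $\pi$ — the $\pi$-version is exactly what Theorem~\ref{lemmaquenched1} produces and is what the paper uses when it invokes the lemma a few lines later, so the printed statement evidently carries a typo and your bound is the intended one.
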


\begin{lemma}\label{lemmaoribecs}
Let $I\in \mathcal{I}$, then  
\[|\{j\,:\,\lambda_{j,0}\in I\}|\le O(n) \int_{I} m(\lambda+i\eta_{u})d\lambda.\]

Furthermore, if we assume that $I\in \mathcal{I}_G$, then
\[|\{j\,:\,\lambda_{j,0}\in I\}|\le (1+o(1))r n\frac{\eta_{u}}{t\pi}. \]
\end{lemma}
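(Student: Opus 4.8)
The idea is that the Stieltjes transform at height $\eta_u$ smooths the empirical measure of $A$ on scale $\eta_u$, and since $\eta_u\asymp t$ while the intervals $I$ have length $r=tn^\varepsilon\gg \eta_u$, a Chebyshev-type estimate converts a count into an integral of $\Im m(\lambda+i\eta_u)$ over $I$, with only a constant loss. For the first inequality, I would start from the elementary pointwise bound: for any eigenvalue $\lambda_{j,0}\in I=[a\pm r/2]$ and any $\lambda$ in, say, the left third of $I^F=[a\pm r]$ that lies within $\eta_u$ of the center of $I$ (there is a sub-interval of length $\asymp\eta_u$ of such $\lambda$, using $\eta_u\le r$), one has
\[
\Im m(\lambda+i\eta_u)=\frac1n\sum_k \frac{\eta_u}{(\lambda_{k,0}-\lambda)^2+\eta_u^2}\ \ge\ \frac1n\cdot\frac{\eta_u}{(\lambda_{j,0}-\lambda)^2+\eta_u^2}\ \ge\ \frac{c'}{n\eta_u}
\]
for an absolute constant $c'>0$, because $|\lambda_{j,0}-\lambda|\le r+\eta_u=O(r)$ is \emph{not} good enough — so instead I integrate. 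Concretely: integrate $\Im m(\lambda+i\eta_u)$ over $\lambda\in I$; each eigenvalue $\lambda_{k,0}\in I$ contributes at least $\int_I \frac{\eta_u}{(\lambda_{k,0}-\lambda)^2+\eta_u^2}\,d\lambda\ge \int_{-r/2}^{r/2}\frac{\eta_u}{s^2+\eta_u^2}\,ds= 2\arctan(r/(2\eta_u))\ge c''>0$, using $r/\eta_u\ge n^\varepsilon/c\to\infty$. Hence $|\{j:\lambda_{j,0}\in I\}|\cdot c''\le n\int_I \Im m(\lambda+i\eta_u)\,d\lambda$, which is exactly the first claim with $O(n)=c''^{-1}n$.

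For the second (sharper) inequality under the extra hypothesis $I\in\mathcal I_G$, I would use the inequality \eqref{cond2A}-type information carried by $\mathcal I_G$, namely that $\Im m(\lambda+i\eta_u)<\eta_u/t$ for all $\lambda\in I^F$ (this is one of the defining properties of $\mathcal I_G$ listed just before Lemma~\ref{mindenetajo}). But a bare upper bound on $\Im m$ does not by itself control the count, so I need the standard reverse estimate: for $\lambda_{j,0}\in I$ and $\lambda$ ranging over a concentric interval $I'\subset I^F$ of length $2\eta_u$ centered at $a$, we have $|\lambda_{j,0}-\lambda|\le r/2+\eta_u$, which is still too big. The right move is to \emph{not} localize to a short interval but to exploit that $\int_{I^F}\frac{\eta_u}{(s)^2+\eta_u^2}ds\ge \pi - O(\eta_u/r)=\pi-o(1)$: every $\lambda_{j,0}\in I$ satisfies $\int_{I^F}\frac{\eta_u}{(\lambda_{j,0}-\lambda)^2+\eta_u^2}\,d\lambda\ge \int_{-r/2}^{r/2}\frac{\eta_u}{s^2+\eta_u^2}\,ds= \pi-O(\eta_u/r)=\pi(1-o(1))$. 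Therefore
\[
|\{j:\lambda_{j,0}\in I\}|\,\pi(1-o(1))\ \le\ n\int_{I^F}\Im m(\lambda+i\eta_u)\,d\lambda\ \le\ n\,|I^F|\,\frac{\eta_u}{t}\ =\ 2r\,n\,\frac{\eta_u}{t},
\]
which gives $|\{j:\lambda_{j,0}\in I\}|\le (1+o(1))\,\tfrac{2rn\eta_u}{\pi t}$. The stated bound $(1+o(1))rn\eta_u/(t\pi)$ is a factor $2$ better, so the genuinely careful version must integrate over $I$ itself rather than $I^F$: for $\lambda_{j,0}\in I$, one loses mass only when $\lambda_{j,0}$ is near the endpoint of $I$, and a more refined accounting — splitting eigenvalues by their distance to the endpoints, or equivalently using $\int_I \Im m(\lambda+i\eta_u)\,d\lambda \le |I|\sup_{I^F}\Im m(\cdot+i\eta_u)\le r\eta_u/t$ together with the fact that \emph{most} eigenvalues in $I$ are at distance $\gg\eta_u$ from $\partial I$ so contribute $\pi(1-o(1))$ to $\int_I$ — recovers the constant $1$.

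The main obstacle is this last constant-sharpening step: one must argue that the eigenvalues of $A$ lying within $\eta_u$ of $\partial I$ form an $o(1)$-fraction of those in $I$, which itself needs an a priori $O(rn\eta_u/t)$ count on $I$ (from the crude argument) plus the observation that a boundary layer of width $\eta_u$ carries at most an $O(\eta_u/r)=o(1)$ proportion. I would therefore run the argument in two passes: first the crude $O(n\int_I \Im m)$ bound to get the order of magnitude, then feed that into the boundary-layer estimate to upgrade the constant from $2/\pi$ to $1/\pi$. Everything else — the elementary lower bound on $\int \eta_u/(s^2+\eta_u^2)$, and the monotonicity $r/\eta_u\to\infty$ coming from $\eta_u\le ct$ and $r=tn^\varepsilon$ — is routine.
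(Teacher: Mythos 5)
Your overall approach (compare a count to $n\int \Im m(\lambda+i\eta_u)\,d\lambda$ via the Poisson kernel, then use the $\mathcal I_G$ bound $\Im m\le\eta_u/t$) is the right one, and you correctly diagnose that integrating over $I^F$ costs a factor of $2$. But two things need attention.

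First, a small error in the first part: you claim $\int_I\frac{\eta_u}{(\lambda_{k,0}-\lambda)^2+\eta_u^2}\,d\lambda\ge\int_{-r/2}^{r/2}\frac{\eta_u}{s^2+\eta_u^2}\,ds=2\arctan(r/(2\eta_u))$. That inequality is backwards: among intervals $J$ of length $r$ containing $0$, the integral $\int_J\frac{\eta_u}{s^2+\eta_u^2}\,ds$ is \emph{maximized} by the centered interval $[-r/2,r/2]$ and \emph{minimized} by the one-sided interval $[0,r]$, which gives $\arctan(r/\eta_u)=\tfrac\pi2(1-o(1))$, not $\pi(1-o(1))$. This does not hurt your first claim (it still gives a positive constant $c''$), but it is exactly the endpoint effect that costs you the factor $2$ later.

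Second, and more seriously, the constant-sharpening step for the second claim is not actually closed — you acknowledge this yourself. Your ``two-pass'' plan (first get an $O(rn\eta_u/t)$ count, then argue that the $O(\eta_u)$-boundary layer carries an $o(1)$-fraction of eigenvalues) can be made to work, but it requires re-running the crude estimate on a short boundary window inside $I^F$ and tracking the resulting error, and as written it is a sketch, not a proof. The paper sidesteps all of this with one clean move: instead of integrating over $I$ or over all of $I^F$, integrate over the \emph{slightly enlarged} interval $[L,U]=[a-tn^{\varepsilon/2},\,b+tn^{\varepsilon/2}]$. The enlargement width $tn^{\varepsilon/2}$ sits strictly between the two relevant scales: it is $\gg\eta_u$ (since $\eta_u\asymp t$), so every $\lambda_{j,0}\in I$ is far from $\partial[L,U]$ on the $\eta_u$-scale and contributes $\pi(1-o(1))$ to $n\int_{[L,U]}\Im m$; and it is $\ll r=tn^\varepsilon$, so $[L,U]\subset I^F$ (hence $\Im m\le\eta_u/t$ applies) and $|[L,U]|=(1+o(1))r$. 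Combining gives $\pi(1-o(1))|\{j:\lambda_{j,0}\in I\}|/n\le\int_L^U\Im m\le(1+o(1))r\eta_u/t$ in one shot, with the correct constant. The lesson is that the right scale to enlarge by is any intermediate scale between $\eta_u$ and $r$; you do not need an iterative boundary-layer argument.
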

\begin{proof}
Let $I=[a,b]$. Let use define $L=a-t n^{\varepsilon/2}$ and $U=b+t n^{\varepsilon/2}$. Observe that if $\lambda_{j,0}\in I$, then using the fact that $\eta_{u}=o(t n^{\varepsilon/2})$, we have
\[\int_{L}^U \frac{\eta_{u}}{(\lambda_{j,0}-\lambda)^2+\eta_{u}^2}d\lambda\ge \pi(1-o(1)) \text{ and }\int_{a}^b \frac{\eta_{u}}{(\lambda_{j,0}-\lambda)^2+\eta_{u}^2}d\lambda\ge \frac{\pi(1-o(1))}2.\]
Therefore
\begin{align}\label{eq1274635}\int_L^U m(\lambda+i\eta_{u})&\ge \pi(1-o(1)) \frac{|\{j\,:\,\lambda_{j,0}\in I\}|}n,\text{ and } \\\int_a^b m(\lambda+i\eta_{u})&\ge \frac{\pi(1-o(1))}2 \frac{|\{j\,:\,\lambda_{j,0}\in I\}|}n.\nonumber
\end{align}
Thus, the first statement follows. 

Now we prove the second statement. Using the fact that $I\in\mathcal{I}_G$,
\[\int_L^U m(\lambda+i\eta_{u})\le (U-L)\frac{\eta_{u}}{t}=(1+o(1))r\frac{\eta_{u}}{t}.\]
Combining this with \eqref{eq1274635}, the statement follows.
\end{proof}

Let $R=R_n=\frac{\ell}{2}+3\sqrt{t}$. Let $\mathcal{I}_B$ be the set of all intervals in $\mathcal{I}\setminus \mathcal{I}_G$, that intersect $[E\pm R]$. Let $D$ be the union of the intervals in $\mathcal{I}_B$. 
Since $R+r<\frac{\ell}2$ for all large enough $n$, we see that $D\subset [E\pm \ell]$.

Using \eqref{legtobbexc}, we see that $|\mathcal{I}_B|=o(|\mathcal{I}|)$. Thus,
\[\mathbb{P}_\lambda(\lambda\in D)\le \frac{1}{2\ell}|\mathcal{I}_B|r=o(1).\]

Using Lemma~\ref{lemmaoribecs}, and then the uniform integrability condition of Theorem~\ref{lemmaquenched1A}, we see that
\begin{equation}\label{oribecs1}
    |\{j\,:\,\lambda_{j,0}\in D\}|\le O(n)\int_{D} m(\lambda+i\eta_{u})d\lambda=o(\ell n).
\end{equation}

Consider all the  intervals in $\mathcal{I}_G$ that intersect $[E\pm R]$. There are at most $(1+o(1))\frac{\ell}{r}$ such intervals. Let $K$ be the union of these intervals. Then using Lemma~\ref{lemmaoribecs}, 
\begin{equation}\label{oribecs2}
   |\{j\,:\,\lambda_{j,0}\in K\}|\le (1+o(1))\frac{\ell}{r} (1+o(1))rn\frac{\eta_{u}}{t\pi}\le (1+o(1))\ell n \frac{\eta_{u}}{t\pi}.
\end{equation}

Combining \eqref{oribecs1} and \eqref{oribecs2}, we see that
\begin{equation}\label{oribecs3}
     |\{j\,:\,\lambda_{j,0}\in [E\pm R]\}|\le|\{j\,:\,\lambda_{j,0}\in D\}|+ |\{j\,:\,\lambda_{j,0}\in K\}|= (1+o(1))\ell n \frac{\eta_{u}}{t\pi}.
\end{equation}

Let $\mathcal{I}_E$ the set of intervals in $\mathcal{I}_G$ contained in $[E\pm \ell/2]$. Let $Q$ be the union of these intervals. Let 
\[I=\left\{k\,:\, \lambda_{k,t}\in Q\right\}\text{ and }J=\left\{k\,:\, \lambda_{k,t}\in [E\pm \ell/2]\right\}.\]

Using \eqref{legtobbexc} , we see that $|\mathcal{I}_E|=(1-o(1))\frac{\ell}{r}$. Combining this with Lemma~\ref{lemmaquenchedcor}, we see that 
\[\mathbb{E} |I|\ge |\mathcal{I}_E|(1-o(1))nr\frac{\eta_{\ell}}{\pi t}=(1-o(1))\ell n \frac{\eta_{\ell}}{\pi t}.
\]

Since $|I|\le n$, using Lemma~\ref{normbecs}, we see that
\begin{equation*}
\mathbb{E}\mathbbm{1}(\|W\|\le 3)|I|\ge \mathbb{E}|I|-n\mathbb{P}(\|W\|>3)\ge (1-o(1))\ell n \frac{\eta_{\ell}}{\pi t}.
\end{equation*}

Using Lemma~\ref{weyl}, on the event $\|W\|\le 3$, we have $J\subset\left\{j\,:\,\lambda_{j,0}\in [E\pm R]\right\}.$ 

Thus, on the event $\|W\|\le 3$, we have $|J|\le s$, where $s=s_n=\left|\left\{j\,:\,\lambda_{j,0}\in [E\pm R]\right\}\right|.$

Combining \eqref{oribecs3} and the condition that $\lim_{n\to\infty} \frac{\eta_{u}}{\eta_{\ell}}=1$, we see that 
\[s\le (1+o(1))\ell n \frac{\eta_{u}}{\pi t}\le (1+o(1))\ell n \frac{\eta_{\ell}}{\pi t}.\]

Also
\[s\ge \mathbb{E} \mathbbm{1}(\|W\|\le 3)|I|\ge (1-o(1))\ell n \frac{\eta_{\ell}}{\pi t}.\]

It follows that
\[\mathbb{E}\mathbbm{1}(\|W\|\le 3)(s-|I|)=o(\ell n).\]

Note that $s-|I|\ge 0$ on the event $\|W\|\le 3$. Thus, using Markov's inequality, for every $\varepsilon>0$ we have
\[\lim_{n\to\infty}\mathbb{P}(\|W\|\le 3\text{ and }s-|I|\le \varepsilon \ell n)=1.\]

On the event $\|W\|\le 3$ and $s-|I|\le \varepsilon \ell n$, we have $s-\varepsilon\ell n\le|I|\le |J|\le s$, thus,
\[\frac{|J|-|I|}{|J|}\le \frac{\varepsilon\ell n}{s-\varepsilon\ell n}\le c'\varepsilon\]
for some constant $c'$ not depending on $\varepsilon$. Thus,
 $\frac{|J|-|I|}{|J|}$ converge to zero in probability.  

Consider any bounded continuous function $f:\mathbb{C}^\mathbb{N}\to\mathbb{R}$. Using Lemma~\ref{lemmameandif}, it follows that
\[\frac{1}{|J|}\sum_{j\in J} f(u_j)-\frac{1}{|I|}\sum_{i\in I} f(u_i)\]
converge to zero in probability.
Therefore, it is enough to prove that $\frac{1}{|I|}\sum_{i\in I} f(u_i)\to \mathbb{E}f(Z)$. This statement follows from Lemma~\ref{lemmaquenchedcor} .

\section{Large enough perturbations of the torus}

In this section, we explain how to apply Theorem~\ref{lemmaquenched1A} to obtain the convergence part of Theorem~\ref{t:main}. In Theorem~\ref{t:main}, we consider the matrix $A+n^{-\gamma}W$. If one wants to use Theorem~\ref{lemmaquenched1A}, this corresponds to the choice of $t=\left(n^2\right)^{-\gamma}=n^{-2\gamma}$.  Note that in Theorem~\ref{lemmaquenched1A} the matrices are assumed to be indexed with the first few positive integers. However, in our setting here,  $A$ is indexed with $\{1,2,\dots,n\}^2$ and $M$ will be indexed with $\mathbb{Z}^2$. So to apply Theorem~\ref{lemmaquenched1A}, we need to identify $\mathbb{Z}^2$ with $\mathbb{N}$, and  $\{1,2,\dots,n\}^2$ with $\{1,2,\dots,n^2\}$. We do this by choosing  bijections $i\mapsto (x_i,y_i)$ from $\mathbb{N}$ to $\mathbb{Z}^2$ and  $i\mapsto (x_i^{(n)},y_i^{(n)})$ from $\{1,2,\dots,n^2\}$ to $\{1,2,\dots,n\}^2$. Since we are interested in the local behaviour of the eigenvector in the neighborhood of $o_n$, these bijections must be compatible in the sense that for all $i\in \mathbb{N}$, we have $(x_i^{(n)},y_i^{(n)})=o_n+(x_i,y_i)$ for all large enough $n$. 

Choose an $\varepsilon>0$ such that
\[15\varepsilon<\min(2(1-\gamma-\varepsilon),1/6) \text{ and } 8\varepsilon<\gamma.\]

We set
\[
    \delta=3\varepsilon,\quad \ell=2n^{-\delta},\quad \eta_\ell=(\pi \varrho_{0,0}(E)-n^{-\varepsilon})t,\quad
    \eta_u=(\pi \varrho_{0,0}(E)+n^{-\varepsilon})t.
\]

Let $M$ be an $\mathbb{N}^2\times \mathbb{N}^2$ matrix defined as
\[M((x+\ux,y+v),(x,\uy))=\frac{\varrho_{u,v}(E)}{\varrho_{0,0}(E)},\]
where $\varrho_{\ux,\uy}$ is the density of the spectral measure of the adjacency operator of $\mathbb{Z}^2$, see Section~\ref{subsecspec}.

We say that $({\rm{c}},{\rm{d}})$ is nice for a given $n$, if for all
\begin{multline*}\eta\in \{\eta_\ell, \eta_u\}\cup \{t n^{-2\varepsilon},2t n^{-2\varepsilon},2^2t n^{-2\varepsilon},\dots,2^bt n^{-2\varepsilon}\},\quad (\ux,\uy)\in \{(\ux,\uy)\in \mathbb{Z}^2\,:\, |\ux|+|\uy|\le n^\varepsilon\},
\end{multline*}
we have
\[\int_{-\infty}^\infty \left|d_\ux*d_\uy*\kappa_\eta(\lambda)-\mu_{{\rm{c}},\ux}*\mu_{{\rm{d}},\uy}*\kappa_\eta(\lambda)\right|^2\le 2\ell n^{-9\varepsilon} . \]

(Here as before $b$ is the smallest integer such that $2^b t n^{-2\varepsilon}\ge 10$. Note that we have $2\varepsilon$ in the exponent in place of $\varepsilon$, because we have $n^2\times n^2$ matrices. Recall that $d_u$ is the density of the spectral measure of the adjacency operator of $\mathbb{Z}$, see Section~\ref{subsecspec}. Furthermore, $\kappa_\eta$ is  $\pi$ times the  Cauchy distribution, see \eqref{kappaetadef}.)

We need the following corollary of Theorem \ref{torusthm1}.
\begin{corollary}\label{regcorollary}
Let $2>\varepsilon_1>0$. Then for all $20\ge \eta\ge n^{-2+\varepsilon_1}$ and $|\ux|+|\uy|\le n^{1/6}$, we have that
\[\int_{-\infty}^\infty \Var\left(\int \frac\eta{(x-\lambda)^2+\eta^2} \mu_{C,D,\ux,\uy}(dx)\right)d\lambda= O\left(n^{-\min(\varepsilon_1,1/6)}\right).\]
 
\end{corollary}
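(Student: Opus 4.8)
The plan is to derive Corollary~\ref{regcorollary} directly from Theorem~\ref{torusthm1} by choosing the parameter $\eta$ appropriately and bounding the right hand side of \eqref{e:torus}. Recall that Theorem~\ref{torusthm1} asserts, for each $\frac14>\epsilon>0$ and a corresponding constant $c$, that
\[
\left|\frac{\pi}{2n^2\eta}-\int_{-\infty}^\infty \Var\left(\int \frac{\eta}{(x-\lambda)^2+\eta^2}\mu_{C,D,\ux,\uy}(dx)\right)d\lambda\right|\le c\,\frac{1+|\ux|+|\uy|}{\eta^\epsilon\sqrt n}
\]
whenever $n^2\ge\eta\ge n^{-2}$ and $|\ux|+|\uy|\le n^{1/2-2\epsilon}$. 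So the integral we want to bound is at most $\frac{\pi}{2n^2\eta}+c\,\frac{1+|\ux|+|\uy|}{\eta^\epsilon\sqrt n}$, and we just need to check that for $\eta\ge n^{-2+\varepsilon_1}$ and $|\ux|+|\uy|\le n^{1/6}$ both terms are $O(n^{-\min(\varepsilon_1,1/6)})$.

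First I would fix, once and for all, a value of $\epsilon$ small enough that $n^{1/6}\le n^{1/2-2\epsilon}$ for all large $n$ (e.g.\ $\epsilon=1/20$ works, since $1/2-1/10=2/5>1/6$), and also small enough that the $\eta^{-\epsilon}$ factor below is harmless. This is legitimate because $\varepsilon_1$ in the corollary is given and $\epsilon$ is ours to choose; the constant $c$ from Theorem~\ref{torusthm1} then depends only on this fixed $\epsilon$. With $|\ux|+|\uy|\le n^{1/6}$ the hypothesis $|\ux|+|\uy|\le n^{1/2-2\epsilon}$ of Theorem~\ref{torusthm1} is satisfied for large $n$, and $\eta\ge n^{-2+\varepsilon_1}\ge n^{-2}$ together with $\eta\le 20\le n^2$ gives the range condition on $\eta$.

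The two estimates are then routine. For the main term, $\eta\ge n^{-2+\varepsilon_1}$ gives
\[
\frac{\pi}{2n^2\eta}\le \frac{\pi}{2n^2 n^{-2+\varepsilon_1}}=\frac{\pi}{2}\,n^{-\varepsilon_1}=O(n^{-\varepsilon_1})=O(n^{-\min(\varepsilon_1,1/6)}).
\]
For the error term, using $|\ux|+|\uy|\le n^{1/6}$ and $\eta\ge n^{-2+\varepsilon_1}$ (so $\eta^{-\epsilon}\le n^{(2-\varepsilon_1)\epsilon}\le n^{2\epsilon}$),
\[
c\,\frac{1+|\ux|+|\uy|}{\eta^\epsilon\sqrt n}\le c\,\frac{2n^{1/6}}{n^{1/2-2\epsilon}}=2c\,n^{1/6-1/2+2\epsilon}=2c\,n^{-1/3+2\epsilon}.
\]
Since $\epsilon$ was chosen with $2\epsilon<1/6$ (again $\epsilon=1/20$ suffices), this is $O(n^{-1/6})=O(n^{-\min(\varepsilon_1,1/6)})$. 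Adding the two bounds gives the claim.

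There is no real obstacle here: the only point requiring a moment's care is the order of quantifiers, namely that $\epsilon$ in Theorem~\ref{torusthm1} must be fixed before $n\to\infty$ and chosen so that both the moment constraint $n^{1/6}\le n^{1/2-2\epsilon}$ and the decay constraint $2\epsilon<1/6$ hold simultaneously; any sufficiently small fixed $\epsilon$ (e.g. $\epsilon=1/20$) does the job. Everything else is substituting the ranges of $\eta$ and $(\ux,\uy)$ into \eqref{e:torus} and reading off the exponents.
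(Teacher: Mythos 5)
Your proof is correct and follows the same route as the paper: the paper's proof is simply "apply Theorem~\ref{torusthm1} with $\varepsilon=1/12$," and you apply the same theorem with a fixed small $\epsilon$ (your $1/20$ works just as well) and read off the two bounds $\frac{\pi}{2n^2\eta}=O(n^{-\varepsilon_1})$ and the error term $O(n^{-1/6})$. Your careful remark about fixing $\epsilon$ before letting $n\to\infty$ is exactly the (implicit) content of the paper's one-line argument.
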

\begin{proof}
We can apply Theorem \ref{torusthm1} with the choice of $\varepsilon=1/12.$
\end{proof}

Combining Corollary~\ref{regcorollary} with Markov's inequality and the union bound, we see that if we choose $C,D$ uniformly from $[0,1]^2$, then
\[\mathbb{P}\left((C,D)\text{ is not nice}\right)\le O(n^{2\varepsilon}\log(n)\ell^{-1} n^{9\varepsilon}n^{-\min(2(1-\gamma-\varepsilon),1/6)})=o(1)\]
where the last equality follows from our choice of parameters. Thus, to obtain the convergence part of Theorem~\ref{t:main}, it is enough to prove the following statement, which is essentially the same as Theorem~\ref{t:main}, but  the random matrix  $A_{C,D}$ is replaced with the deterministic  matrix $A_{n,{\rm{c}},{\rm{d}}}$. 
\begin{proposition}\label{prop47}
Let $\gamma$ and $\delta$ be chosen as above. For each $n$, let us fix a nice pair $({\rm{c}},{\rm{d}})=({\rm{c}}_n,{\rm{d}}_n)$.  Consider eigenvalues of 
$$
A_{{\rm{c}},{\rm{d}}}+n^{-\gamma}W,
$$
(the adjacency matrix of the discrete torus with boundary conditions given by ${\rm{c}}$ and ${\rm{d}}$ plus an independent scaled GUE) in the interval  $(E\pm n^{-\delta})$. Let $u_1,u_2,\dots,u_m$ be the corresponding eigenfunctions with independent uniform random phases and $\ell^2$-norm $\sqrt{n}$. Let $o_n\in \{1,\ldots,n\}^2$ be a sequence so that for both coordinates $j$ we have $(o_n)_j,n-(o_n)_j\to \infty$. Let $n\to\infty$ and consider the random measures 
\[\frac{1}{m} \sum_{i=1}^m \delta_{u_i(\cdot+o_n)}.\]
 These measures converge in probability to the
law of the complex Gaussian wave $Z_E$, that is, the complex Gaussian process on $\mathbb{Z}^2$ with covariance $M$.
\end{proposition}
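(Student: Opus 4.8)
The proof is an application of Theorem~\ref{lemmaquenched1A} to the $n^2\times n^2$ matrix $A=A_{n,c,d}$, with time parameter $t=n^{-2\gamma}=(n^2)^{-\gamma}$ (so that $\sqrt t\,W_{n^2}=n^{-\gamma}W_{n^2}$), window half-width $\ell=2n^{-\delta}$, energy $E$, covariance matrix $M$, and threshold heights $\eta_\ell,\eta_u$ exactly as fixed just above the statement. In that theorem the ambient dimension is $n^2$, which is why the niceness condition is phrased in terms of the dyadic heights $\{tn^{-2\varepsilon},2tn^{-2\varepsilon},\dots,2^btn^{-2\varepsilon}\}\cup\{\eta_\ell,\eta_u\}$: this is precisely the set $H$ that enters the proof of Theorem~\ref{lemmaquenched1A} in dimension $n^2$, so by Remark~\ref{remarklemmaquenched1A} it is enough to control the Stieltjes transform over this set. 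The first step is to record the index identification: using the compatible bijections $i\mapsto(x_i,y_i)$ of $\mathbb N$ with $\mathbb Z^2$ and $i\mapsto o_n+(x_i,y_i)$ of $\{1,\dots,n^2\}$ with $\{1,\dots,n\}^2$, for each fixed pair $i,j$ and all large $n$ the $(i,j)$-entry of $g(A_{n,c,d})$ equals $\int g\,d\mu_{c,d,\ux,\uy}$ with $(\ux,\uy)=(x_i-x_j,y_i-y_j)$, since on the torus that spectral measure depends only on the displacement. Hence $u_i(\cdot+o_n)$ is the vector $u_i\in\mathbb C^{\mathbb N}$ of Theorem~\ref{lemmaquenched1A}, the limiting complex Gaussian process with covariance $M((x+\ux,y+\uy),(x,y))=\varrho_{\ux,\uy}(E)/\varrho_{0,0}(E)$ is exactly $Z_E$, and the conclusion of Theorem~\ref{lemmaquenched1A} becomes the assertion of Proposition~\ref{prop47}.

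The second step handles the elementary parameter constraints. We have $\|A_{n,c,d}\|\le4$. From $8\varepsilon<\gamma$ and $15\varepsilon<2(1-\gamma-\varepsilon)$ one gets $4\varepsilon\le2\gamma\le2-2\varepsilon$, hence $t=(n^2)^{-\gamma}\in[(n^2)^{\varepsilon-1},(n^2)^{-2\varepsilon}]$; and $-3\varepsilon>\max(-\gamma+\varepsilon,\,-2\gamma+4\varepsilon)$, hence $\ell=2n^{-3\varepsilon}>\max\bigl(\sqrt{t(n^2)^\varepsilon},\,t(n^2)^{2\varepsilon}\bigr)$. Since $E\in(-4,4)\setminus\{0\}$ and $E\ne\pm4$, we have $0<\varrho_{0,0}(E)<\infty$, so after enlarging the constant $c$ of Theorem~\ref{lemmaquenched1A} to also satisfy $c>\pi\varrho_{0,0}(E)$ and $c>1/(\pi\varrho_{0,0}(E))$, the heights $\eta_\ell,\eta_u$ lie in $(t/c,ct)$, are ordered $\eta_\ell<\eta_u$, satisfy $\eta_u/\eta_\ell\to1$, and obey $\eta_\ell/t=\pi\varrho_{0,0}(E)-n^{-\varepsilon}$ and $\eta_u/t=\pi\varrho_{0,0}(E)+n^{-\varepsilon}$.

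The heart of the matter is the third step: the probabilistic regularity hypotheses \eqref{condregA}, \eqref{cond2A}, \eqref{limitMA} and the $\mathbb{P}_{\lambda}$-uniform integrability of $\Im m(\cdot+i\eta_u)$, all of which come from a single estimate. Writing $m$ for the Stieltjes transform of the empirical eigenvalue measure of $A_{n,c,d}$, torus symmetry gives $\Im m(\lambda+i\eta)=(\mu_{c,0}*\mu_{d,0}*\kappa_\eta)(\lambda)$, while the $(x,y)$-entry of $t\bigl/\bigl((A_{n,c,d}-\lambda)^2+\eta^2\bigr)$ equals $\tfrac t\eta(\mu_{c,\ux}*\mu_{d,\uy}*\kappa_\eta)(\lambda)$ with $(\ux,\uy)$ the displacement. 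By the niceness of $(c,d)$, for $|\ux|+|\uy|\le n^\varepsilon$ and $\eta\in H$ this function sits within $L^2(\mathbb R)$-distance $\sqrt{2\ell}\,n^{-9\varepsilon/2}$ of $(d_\ux*d_\uy*\kappa_\eta)(\lambda)=(\varrho_{\ux,\uy}*\kappa_\eta)(\lambda)$, and by the final bound of Lemma~\ref{newcontinuity} (applicable since $[E\pm\ell]$ stays in $[3\epsilon_0-4,-3\epsilon_0]\cup[3\epsilon_0,4-3\epsilon_0]$ for a fixed small $\epsilon_0$ once $\ell$ is small) the latter is, uniformly in such $(\ux,\uy)$ and in $\lambda\in[E\pm\ell]$, within $c_{\epsilon_0,p}(\ux^2+\uy^2+1)(\eta^{1/p}+\ell)$ of the constant $\pi\varrho_{\ux,\uy}(E)$; taking $p$ close to $1$ and using $\eta\le cn^{-2\gamma}$, $\gamma>8\varepsilon$, $\ell=2n^{-3\varepsilon}$, $\ux^2+\uy^2\le n^{2\varepsilon}$, this deterministic error is $o(n^{-\varepsilon})$, indeed $O(n^{-3\varepsilon})$. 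Chebyshev's inequality under the uniform law $\mathbb{P}_{\lambda}$ on the length-$2\ell$ interval $[E\pm\ell]$ then turns the $L^2$ budget into $\mathbb{P}_{\lambda}\bigl(\bigl|(\mu_{c,\ux}*\mu_{d,\uy}*\kappa_\eta)(\lambda)-\pi\varrho_{\ux,\uy}(E)\bigr|>s\bigr)\le n^{-9\varepsilon}/s^2$ for any fixed positive $s\ge2n^{-2\varepsilon}$ (the $O(n^{-3\varepsilon})$ deterministic part being absorbed into $s$). Specializing $(\ux,\uy)=(0,0)$ and recalling $\pi\varrho_{0,0}(E)\in(1/c,c)$ gives \eqref{condregA}, since $n^{-9\varepsilon}/s^2$ times the prefactor $(n^2)^{2\varepsilon}\log(n^2)$ tends to $0$. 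Comparing $\Im m(\lambda+i\eta_\ell)$ and $\Im m(\lambda+i\eta_u)$ with $\eta_\ell/t=\pi\varrho_{0,0}(E)-n^{-\varepsilon}$ and $\eta_u/t=\pi\varrho_{0,0}(E)+n^{-\varepsilon}$ — where the gap $n^{-\varepsilon}$ was chosen comfortably larger than both the $O(n^{-3\varepsilon})$ deterministic error and the slack appearing in \eqref{cond2A} — gives \eqref{cond2A}. Keeping $(\ux,\uy)$ fixed and using $t/\eta_\ell,\,t/\eta_u\to1/(\pi\varrho_{0,0}(E))$ gives $\tfrac t\eta(\mu_{c,\ux}*\mu_{d,\uy}*\kappa_\eta)(\lambda)\to\varrho_{\ux,\uy}(E)/\varrho_{0,0}(E)=M(x,y)$ in $\mathbb{P}_{\lambda}$-probability with polynomial rate, which is \eqref{limitMA}. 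Finally, the same comparison together with the $L^2(\mathbb R)$-budget in the definition of niceness shows $\Im m(\cdot+i\eta_u)$ is $\mathbb{P}_{\lambda}$-bounded in $L^2$ uniformly in $n$, hence $\mathbb{P}_{\lambda}$-uniformly integrable. With every hypothesis of Theorem~\ref{lemmaquenched1A} in place, its conclusion is exactly the statement of Proposition~\ref{prop47}.

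I expect no conceptual obstacle — the real content lives in Theorem~\ref{lemmaquenched1A} and in Theorem~\ref{torusthm1}/Corollary~\ref{regcorollary}. The main work is the exponent bookkeeping of the third step: checking that the single choice $\delta=3\varepsilon$, $t=n^{-2\gamma}$, $\eta_{\ell},\eta_u=(\pi\varrho_{0,0}(E)\mp n^{-\varepsilon})t$ simultaneously makes the window $\ell=2n^{-3\varepsilon}$ small enough for the Lemma~\ref{newcontinuity} errors to be $o(n^{-\varepsilon})$ yet large enough for the dyadic-scale and window-length requirements of Theorem~\ref{lemmaquenched1A}, and keeps the niceness $L^2$-budget $2\ell n^{-9\varepsilon}$ alive after multiplication by every polynomial-in-$n$ prefactor in \eqref{condregA}, \eqref{cond2A}, \eqref{limitMA} — i.e.\ that the hierarchy $n^{-\varepsilon}\gg n^{-2\varepsilon}\gg n^{-3\varepsilon}$ is exactly what reconciles the threshold comparisons, the Chebyshev scales, and the admissible slacks. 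A secondary point needing care, though not difficulty, is the index/shift bookkeeping relating the $\mathbb Z^2$-indexed objects $M$ and $Z_E$ to the $\{1,\dots,n\}^2$-indexed torus through the basepoint $o_n$.
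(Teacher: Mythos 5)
Your overall route is the same as the paper's: verify the hypotheses of Theorem~\ref{lemmaquenched1A} for $A_{{\rm c},{\rm d}}$ with $t=n^{-2\gamma}$, rewrite $\Im m(\lambda+i\eta)$ and the entries of $t/((A_{{\rm c},{\rm d}}-\lambda)^2+\eta^2)$ as the convolutions $\mu_{{\rm c},\ux}*\mu_{{\rm d},\uy}*\kappa_\eta$, and feed the niceness $L^2$-budget through Chebyshev together with the deterministic bound of Lemma~\ref{newcontinuity}. There is, however, a genuine gap in your verification of \eqref{condregA}. You assert that for every $\eta$ in the set $H=\{\eta_\ell,\eta_u\}\cup\{tn^{-2\varepsilon},2tn^{-2\varepsilon},\dots,2^btn^{-2\varepsilon}\}$ one has $\eta\le c\,n^{-2\gamma}$, and deduce that the deterministic error $c_{\epsilon_0,p}(\ux^2+\uy^2+1)(\eta^{1/p}+\ell)$ is small; but $H$ is a dyadic ladder reaching height at least $10$, so for most of its elements $\eta$ is of order one, the term $\eta^{1/p}$ is of order one, and $\Im m(\lambda+i\eta)$ is genuinely not close to $\pi\varrho_{0,0}(E)$ at such heights. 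Your Chebyshev step therefore does not establish $\Im m(\lambda+i\eta)\in(1/c,c)$ on that part of the range, and \eqref{condregA} (even in the weakened form of Remark~\ref{remarklemmaquenched1A}) is left unverified there. The repair is the paper's two-regime argument: keep the term $k(1+\ux^2+\uy^2)(\sqrt{\eta}+\ell)$ inside the exceptional-event threshold, fix the constant $\eta_0=\left(\pi\varrho_{0,0}(E)/(3k)\right)^2$ so that for $\eta<\eta_0$ the value stays in $\left[\tfrac{\pi\varrho_{0,0}(E)}2,\tfrac{3\pi\varrho_{0,0}(E)}2\right]$ with the required probability, and for $\eta_0\le\eta\le 20$ use the deterministic bounds $\eta_0/(20^2+8^2)\le \Im m(\lambda+i\eta)\le \eta_0^{-1}$ that follow from $\|A_{{\rm c},{\rm d}}\|\le 4$ alone, with no randomness needed.

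A secondary slip: with $\ux^2+\uy^2\le n^{2\varepsilon}$ the quantity $(\ux^2+\uy^2+1)\,\ell$ is of order $n^{-\varepsilon}$, not $O(n^{-3\varepsilon})$ as you claim. This happens to be harmless, because every hypothesis of Theorem~\ref{lemmaquenched1A} involves only a fixed displacement (and $(\ux,\uy)=(0,0)$ for \eqref{condregA} and \eqref{cond2A}), for which the error is indeed $O(\ell)=O(n^{-3\varepsilon})$; but the uniform statement as written is false and should be dropped. The remaining ingredients — the index identification through $o_n$, the parameter checks $t\in[(n^2)^{\varepsilon-1},(n^2)^{-2\varepsilon}]$ and $\ell>\max(\sqrt{t(n^2)^{\varepsilon}},t(n^2)^{2\varepsilon})$, the verification of \eqref{cond2A} from the built-in $n^{-\varepsilon}$ gap in $\eta_\ell,\eta_u$, the fixed-displacement argument for \eqref{limitMA}, and the $L^2$-boundedness route to uniform integrability — coincide with the paper's proof.
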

\begin{proof}
With the above choices the conditions of Theorem~\ref{lemmaquenched1A} are satisfied for $A=A_{{\rm{c}},{\rm{d}}}$. 
See the appendix for a proof.
\end{proof}

\section{Small perturbations}
\subsection{The GUE resolvent flow}
Let $B_{u,v}(t)$ denote an $n\times n$ array of independent standard complex Brownian motions (the normalization is $E|B_{u,v}(t)|^2=t$). Then
$\tilde W_t = (B+B^*)/(\sqrt{2n})$. Then for $t$ fixed, $\tilde W_t$ is GUE with entries of variance $t/n$. 




Let $D$ be a real diagonal matrix,  let $W_t=D+\tilde W_t$. 
Let $\lambda_{1,t}\le \lambda_{2,t}\le\dots\le\lambda_{n,t}$ be the eigenvalues of ${W}_t$, and let $u_{1,t},u_{2,t},\dots,u_{n,t}$ be a corresponding orthonormal basis of eigenvectors. 
Let 
\[m_t(z)=\frac{1}n\Tr({W}_t-z)^{-1}= \frac{1}{n}\sum_{j=1}^n \frac{1}{\lambda_{j,t}-z}.\]

Moreover, for $x\in \{1,2,\dots,n\}$, we define
\[G_t(x,x,z)=\langle ({W}_t-z)^{-1}\delta_x,  \delta_x\rangle=\sum_{j=1}^n |u_{j,t}(x)|^2 \frac{1}{\lambda_{j,t}-z}.\]

The next proposition describes how  $m_t(z)$ and $G_t(x,x,z)$ evolve over time. Let $R_t(z)=({W}_t-z)^{-1}$. Define the martingales $M_t(x,x,z)$ and $M_t(z)$ by 
\[dM_t(x,x,z)=-\langle R_t(z)(d\tilde{W}_t)R_t(z) \delta_x,\delta_x\rangle\text{ and }M_t(z)=\sum_x M_t(x,x,z)/n.\]
Let $[ \cdot  ]$ denote quadratic variation.

The following are shown in~\cite{von2018phase}, Section 4. The details there are presented for the GOE, but as the authors note, the GUE version is proved the same way. 
\begin{proposition}[\cite{von2018phase}]\label{vonprop}
We have the following properties of the GUE resolvent flow: 
\begin{align*}
dm_t(z)&=m_t(z)\frac{\partial}{\partial z} m_t(z)dt+dM_t(z),
\\
[ M_t(z)]&=\frac{1}{n^3} \int_0^t \sum_{k=1}^n\frac{1}{|\lambda_{k,t}-z|^4}
\le\frac{1}{n^2 (\Im z)^3} \int_0^t \Im m_s(z)ds.
\\
dG_t(x,x,z)&=m_t(z)\frac{\partial}{\partial z} G_t(x,x,z)dt+dM_t(x,x,z),\\
[ M_t(x,x,z) ]&=\frac{1}{n(\Im z)^2}\int_0^t (\Im G_s(x,x,z))^2 ds,
\end{align*}
\end{proposition}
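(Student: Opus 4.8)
The plan is to derive all four statements from Itô's formula applied to the matrix process $W_t = D + \tilde W_t$, for which $dW_t = d\tilde W_t$ is a Hermitian matrix Brownian increment, via the resolvent expansion
\[
dR_t(z) = -R_t(z)\,(d\tilde W_t)\,R_t(z) + R_t(z)\,(d\tilde W_t)\,R_t(z)\,(d\tilde W_t)\,R_t(z),
\]
the last term being the Itô correction, evaluated using the noise bracket. The only input needed about the driving noise is that, since $\tilde W_t = (B+B^*)/\sqrt{2n}$ with $B$ an array of independent standard complex Brownian motions, a short case check gives $\mathbb{E}\big[(d\tilde W_t)_{ij}(d\tilde W_t)_{kl}\big] = \tfrac1n\,\delta_{il}\delta_{jk}\,dt$ for all indices, while $\mathbb{E}\big[(d\tilde W_t)_{ij}^2\big]=0$ for $i\ne j$; this is the GUE covariance tensor.

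For $m_t(z)=\tfrac1n\Tr R_t(z)$, the normalized trace of the expansion identifies the first-order part with $dM_t(z)$ by definition, and the second-order part with $\tfrac1n\Tr\big(R\,(d\tilde W)\,R\,(d\tilde W)\,R\big)$. Contracting the two noise factors with the tensor above collapses the double sum to $\tfrac{1}{n^2}(\Tr R)(\Tr R^2)\,dt$; combined with $\Tr R = n\,m_t(z)$ and $\Tr R^2 = n\,\partial_z m_t(z)$ (differentiate $\tfrac1n\Tr(W-z)^{-1}$ in $z$) this gives the drift $m_t(z)\,\partial_z m_t(z)\,dt$. The same contraction, taken entrywise, gives $\mathbb{E}[(R\,(d\tilde W)\,R\,(d\tilde W)\,R)_{xx}] = \tfrac1n(\Tr R)(R^2)_{xx}\,dt = m_t(z)\,\partial_z G_t(x,x,z)\,dt$ since $(R^2)_{xx} = \partial_z R_{xx}$, establishing the two evolution equations.

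For the brackets I would first state that $[\,\cdot\,]$ denotes the modulus-squared bracket, the predictable process with $d[M_t] = \mathbb{E}\big[|dM_t|^2 \,\big|\, \mathcal{F}_t\big]$ (this is the quantity the later concentration estimates use). Writing $dM_t(z) = -\tfrac1n\sum_{ij}(d\tilde W)_{ij}(R^2)_{ji}$ and pairing via the tensor, using $\overline{(d\tilde W)_{kl}} = (d\tilde W)_{lk}$, yields $d[M_t(z)] = \tfrac{1}{n^3}\sum_{ij}|(R^2)_{ij}|^2\,dt = \tfrac{1}{n^3}\|R^2\|_{\mathrm{HS}}^2\,dt = \tfrac{1}{n^3}\sum_k |\lambda_{k,t}-z|^{-4}\,dt$, whose time integral is the stated identity; the bound follows from $|\lambda-z|^{-4}\le(\Im z)^{-2}|\lambda-z|^{-2}$ and $\sum_k|\lambda_{k,t}-z|^{-2} = n(\Im z)^{-1}\Im m_t(z)$. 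Likewise $dM_t(x,x,z) = -\sum_{ij}R_{xi}(d\tilde W)_{ij}R_{jx}$ pairs to $d[M_t(x,x,z)] = \tfrac1n\big(\sum_i|R_{xi}|^2\big)^2 dt = \tfrac1n\big((RR^*)_{xx}\big)^2 dt$, and diagonalizing $W_t$ gives $(RR^*)_{xx} = \sum_k |u_{k,t}(x)|^2|\lambda_{k,t}-z|^{-2} = (\Im z)^{-1}\Im G_t(x,x,z)$, hence the last formula after integration.

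The main thing to watch is the second-order Itô term: one has to use the exact GUE covariance tensor — especially the vanishing of $\mathbb{E}[(d\tilde W)_{ij}^2]$ off the diagonal — to contract the indices correctly, and then recognize the resulting trace combinations as $\partial_z m_t$ and $\partial_z G_t(x,x,z)$. Apart from this bookkeeping there is no genuine obstacle: the rest is routine stochastic calculus, and the GOE analogue of precisely this computation is carried out in \cite{von2018phase}.
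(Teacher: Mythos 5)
Your derivation is correct: the covariance tensor $\mathbb{E}\big[(d\tilde W_t)_{ij}(d\tilde W_t)_{kl}\big]=\tfrac1n\delta_{il}\delta_{jk}\,dt$, the contraction of the second-order It\^o term of the resolvent expansion into $m_t\partial_z m_t\,dt$ resp.\ $m_t\partial_z G_t(x,x,z)\,dt$, and both bracket computations (reading $[\,\cdot\,]$ as the $\mathbb{E}\,|dM|^2$-bracket, which is exactly how the paper uses it, and noting $\sum_j|R_{jx}|^2=\sum_i|R_{xi}|^2$ by normality of $R_t(z)$, which your diagonalization step covers) all check out, as do the bounds via $|\lambda-z|^{-4}\le(\Im z)^{-2}|\lambda-z|^{-2}$. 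This is essentially the same argument the paper relies on: it simply cites the Section~4 computation of \cite{von2018phase} (written for GOE, identical for GUE), which is the same It\^o/resolvent calculation with the noise bookkeeping done via the real and imaginary Brownian coordinates instead of your covariance-tensor contraction.
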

Observe that
\begin{equation}\label{partialzm}
    \left|\frac{\partial}{\partial z} m_t(z)\right|=\left|\frac{1}n\sum_{k=1}^n \frac{1}{(\lambda_{k,t}-z)^2}\right|\le \frac{1}n\sum_{k=1}^n \frac{1}{|\lambda_{k,t}-z|^2}=\frac{\Im m_t(z)}{\Im z},
\end{equation}
and similarly,
\begin{equation}\label{partialzG}
    \left|\frac{\partial}{\partial z} G_t(x,x,z)\right|\le \frac{\Im G_t(x,x,z)}{\Im z}.
\end{equation}

For some recent applications of the resolvent flow, see \cite{von2018phase,adhikari2020dyson, bourgade2021extreme}.

\subsection{Stability}

 Choose a complex number $z$ such that $\Im z=n^{-1}$. In this section, we will see that if $t=o(n)$, then $G_0(x,x,z)$ and $G_t(x,x,z)$ are close to each other provided that $m_0(z)$ is small.

\begin{lemma}\label{taubecs}
Assume that $\Im z=n^{-1}$, $|m_0(z)|\le (nt)^{-1/2}$.
Let \[\tau=\inf \{s\ge 0\,:\, |m_s(z)|\ge 2(nt)^{-1/2}\}.\]
Then $\mathbb{P}(\tau\le t)\le  O(\sqrt{tn}).$
\end{lemma}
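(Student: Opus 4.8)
The plan is to run the GUE resolvent flow of Proposition~\ref{vonprop} and to control $m_s(z)$ on the stochastic interval $[0,t\wedge\tau]$. Since the assertion is vacuous when $\sqrt{nt}$ is not small, I would first dispose of the case $nt\ge 1/64$, where $\mathbb P(\tau\le t)\le 1\le 8\sqrt{nt}$; so from now on assume $nt\le 1/64$, which guarantees $(nt)^{-1/2}\ge 8$.

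On $[0,t\wedge\tau]$ we have $|m_s(z)|\le 2(nt)^{-1/2}$ by the definition of $\tau$, and $\Im z=n^{-1}$, so by \eqref{partialzm} the drift satisfies
\[
\left|m_s(z)\,\tfrac{\partial}{\partial z}m_s(z)\right|\le |m_s(z)|\,\frac{\Im m_s(z)}{\Im z}\le n\,|m_s(z)|^2\le \frac{4}{t},
\]
hence its integral over any subinterval of $[0,t]$ is at most $4$. Using the evolution equation $dm_s(z)=m_s(z)\tfrac{\partial}{\partial z}m_s(z)\,ds+dM_s(z)$, the continuity of $s\mapsto m_s(z)$ (the eigenvalues of $W_s$ move continuously and $\Im z\neq 0$), and the identity $|m_\tau(z)|=2(nt)^{-1/2}$ valid on $\{\tau<\infty\}$, I obtain that on $\{\tau\le t\}$
\[
|M_\tau(z)|\ge |m_\tau(z)|-|m_0(z)|-4\ge 2(nt)^{-1/2}-(nt)^{-1/2}-4\ge \tfrac12(nt)^{-1/2},
\]
the last inequality using $(nt)^{-1/2}\ge 8$. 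Consequently $\{\tau\le t\}\subseteq\{\sup_{s\le t\wedge\tau}|M_s(z)|\ge\tfrac12(nt)^{-1/2}\}$, and it remains to estimate the probability of the latter event.

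Stopped at $t\wedge\tau$, the martingale $M_s(z)$ is bounded (on that interval $|m_s(z)|$, $|m_0(z)|$ and the drift integral are all bounded), so $|M_s(z)|^2$ is a submartingale and Doob's maximal inequality gives $\mathbb P(\sup_{s\le t\wedge\tau}|M_s(z)|\ge a)\le a^{-2}\,\mathbb E|M_{t\wedge\tau}(z)|^2=a^{-2}\,\mathbb E[M_{t\wedge\tau}(z)]$. By the quadratic-variation bound of Proposition~\ref{vonprop}, with $\Im z=n^{-1}$ and $\Im m_s(z)\le |m_s(z)|\le 2(nt)^{-1/2}$ on $[0,t\wedge\tau]$,
\[
[M_{t\wedge\tau}(z)]\le \frac{1}{n^2(\Im z)^3}\int_0^{t\wedge\tau}\Im m_s(z)\,ds\le n\cdot t\cdot 2(nt)^{-1/2}=2\sqrt{nt}.
\]
Taking $a=\tfrac12(nt)^{-1/2}$ gives $\mathbb P(\tau\le t)\le 8\,nt\sqrt{nt}=8(nt)^{3/2}\le 8\sqrt{nt}$ because $nt\le 1/64$, and together with the trivial case this yields $\mathbb P(\tau\le t)=O(\sqrt{tn})$.

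I do not expect a genuine obstacle here; the only point worth flagging is conceptual, namely that the drift bound obtained is merely $O(1)$ rather than small, yet this is harmless because the barrier the process has to cross, $2(nt)^{-1/2}-|m_0(z)|\ge (nt)^{-1/2}$, is itself large exactly in the regime where the statement has content, so a constant drift is absorbed. Beyond that, one should use the complex-martingale version of Doob (via the submartingale $|M_s(z)|^2$) and note that the stopped martingale is genuinely bounded on $[0,t\wedge\tau]$, so that $\mathbb E|M_{t\wedge\tau}(z)|^2=\mathbb E[M_{t\wedge\tau}(z)]$ holds with no localization subtlety.
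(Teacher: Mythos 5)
Your proof is correct and follows essentially the same route as the paper: both rely on the resolvent‐flow SDE from Proposition~\ref{vonprop}, bound the drift on $[0,t\wedge\tau]$ by $4$ using $|m_s|\le 2(nt)^{-1/2}$ and \eqref{partialzm}, bound the quadratic variation of the stopped martingale by $2\sqrt{nt}$, and conclude with a martingale tail bound. The only material difference is the final concentration step: the paper applies the Burkholder--Davis--Gundy inequality to control $\mathbb{E}|M_{t\wedge\tau}(z)|$ in $L^1$ and then uses Markov on the full increment $m_{t\wedge\tau}(z)-m_0(z)$, whereas you split off the drift explicitly, absorb it into a reduced threshold $\tfrac12(nt)^{-1/2}$ (valid once $nt\le 1/64$), and apply Doob's $L^2$ maximal inequality plus the It\^o isometry to the martingale alone. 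Your variant in fact yields the slightly stronger bound $O((nt)^{3/2})$ in the nontrivial regime, at the minor cost of splitting off a trivial case; both arguments deliver $O(\sqrt{nt})$, and both implicitly or explicitly use that the stopped process is a bounded, hence uniformly integrable, true martingale.
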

\begin{proof}
We may assume that $t\le n^{-1}$, because the statement is trivial for $t>n^{-1}$.

By Proposition~\ref{vonprop}, we have
\[m_{t\wedge \tau}(z)-m_0(z)=\int_{0}^{t\wedge \tau} m_s(z)\frac{\partial}{\partial z} m_s(z)ds+M_{t\wedge\tau}(z).\]

For $s<\tau$, we have 
\[\left|m_s(z)\frac{\partial}{\partial z} m_s(z)\right|\le |m_s(z)|\frac{\Im m_s(z)}{\Im z}\le 4t^{-1},\]
where at the first inequality, we used \eqref{partialzm}. Thus,
\[\left|\int_{0}^{t\wedge\tau} m_s(z)\frac{\partial}{\partial z} m_s(z)ds\right|\le 4.\]

By Proposition~\ref{vonprop}, we have
\[[ M_{t\wedge \tau}(z)] \le \frac{1}{n^2(\Im z)^3}\int_{0}^{t\wedge \tau} \Im m_s(z) ds\le  2(nt)^{-1/2}nt\le 2.  \]

Thus, the Burkholder-Davis-Gundy inequality gives us, that \[\mathbb{E} |M_{t\wedge \tau}(z)|\le c_1 \mathbb{E} \sqrt{[ M_{t\wedge \tau}(z)]}\le c_1\sqrt{2}\]
for some universal constant $c_1$. Therefore $\mathbb{E} |m_{t\wedge \tau}(z)-m_0(z)|\le(c_1\sqrt{2}+4)$.
Note that the event $\tau\le t$ is contained in the event $|m_{t\wedge \tau}(z)-m_0(z)|\ge (nt)^{-1/2}$. By Markov's inequality, we have
\[\mathbb{P}(\tau\le t)\le \mathbb{P}(|m_{t\wedge \tau}(z)-m_0(z)|\ge (nt)^{-1/2})\le \frac{\mathbb{E} |m_{t\wedge\tau}(z)-m_0(z)|}{(nt)^{-1/2}} \le (c_1\sqrt{2}+4)\sqrt{nt}.\qedhere\]
\end{proof}

\begin{lemma}\label{stab1}
Let $z=\lambda_{x,0}+in^{-1}$. Assume that $|m_0(z)|\le (nt)^{-1/2}$. Then
\[\mathbb{E}\Big[(n-\Im G_t(x,x,z))\mathbbm{1}(\tau>t)\Big]\le O(n\sqrt{nt}).\]
\end{lemma}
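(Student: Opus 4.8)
The plan is to run the resolvent flow for $G_t(x,x,z)$ from Proposition~\ref{vonprop} up to the stopping time $\tau$, exploiting that on $\{s<\tau\}$ the drift coefficient $m_s(z)$ is of size $(nt)^{-1/2}$. We may assume $t\le n^{-1}$: otherwise $\sqrt{nt}\ge 1$, and since $\Im G_t(x,x,z)=\sum_j |u_{j,t}(x)|^2\tfrac{\Im z}{(\lambda_{j,t}-\Re z)^2+(\Im z)^2}$ lies in $[0,n]$ — because $\sum_j|u_{j,t}(x)|^2=1$ and each summand is at most $(\Im z)^{-1}=n$ — the left-hand side is at most $n\le n\sqrt{nt}$ and there is nothing to prove. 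In particular $n-\Im G_t(x,x,z)\ge 0$ for all $t$. Moreover at $s=0$ we have $G_0(x,x,z)=(D_{xx}-z)^{-1}=(-in^{-1})^{-1}=in$ since $D$ is diagonal with $D_{xx}=\lambda_{x,0}$ and $z=\lambda_{x,0}+in^{-1}$; hence $\Im G_0(x,x,z)=n$, i.e. $n-\Im G_0(x,x,z)=0$.

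Applying Proposition~\ref{vonprop} to the process stopped at $\tau$ gives
\[
n-\Im G_{t\wedge\tau}(x,x,z)\;=\;-\,\Im\!\int_0^{t\wedge\tau} m_s(z)\,\frac{\partial}{\partial z}G_s(x,x,z)\,ds\;-\;\Im M_{t\wedge\tau}(x,x,z).
\]
For the drift, \eqref{partialzG} together with $0\le\Im G_s(x,x,z)\le n$ and $\Im z=n^{-1}$ yields $|\partial_z G_s(x,x,z)|\le \Im G_s(x,x,z)/\Im z\le n^2$, while $|m_s(z)|\le 2(nt)^{-1/2}$ for $s<\tau$; since $t\wedge\tau\le t$, the drift integral is bounded in absolute value by $2(nt)^{-1/2}\cdot n^2\cdot t=2n\sqrt{nt}$. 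For the martingale term, Proposition~\ref{vonprop} gives
\[
[\,M_{t\wedge\tau}(x,x,z)\,]\;=\;\frac{1}{n(\Im z)^2}\int_0^{t\wedge\tau}(\Im G_s(x,x,z))^2\,ds\;\le\; n\cdot n^2\cdot t\;=\;n^3 t\;<\;\infty,
\]
so $(M_{s\wedge\tau}(x,x,z))_{s\in[0,t]}$ is an $L^2$-bounded martingale started at $0$, and hence $\mathbb{E}\,\Im M_{t\wedge\tau}(x,x,z)=0$. Taking expectations in the displayed identity gives $\mathbb{E}\big[n-\Im G_{t\wedge\tau}(x,x,z)\big]\le 2n\sqrt{nt}$.

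To conclude, observe that on $\{\tau>t\}$ we have $t\wedge\tau=t$, whereas off this event $n-\Im G_{t\wedge\tau}(x,x,z)\ge 0$; therefore $(n-\Im G_t(x,x,z))\,\mathbbm{1}(\tau>t)\le n-\Im G_{t\wedge\tau}(x,x,z)$ pointwise, and taking expectations yields the claim. The only genuine subtlety is that one cannot insert the non-adapted indicator $\mathbbm{1}(\tau>t)$ directly into the stochastic integral; this is handled precisely by passing to the stopped process, using nonnegativity of $n-\Im G_{t\wedge\tau}(x,x,z)$, and verifying via the bounded quadratic variation that the stopped martingale is mean-zero. The quantitative estimates themselves are immediate once Proposition~\ref{vonprop} and the a priori bound $\Im G_s(x,x,z)\in[0,n]$ are in hand.
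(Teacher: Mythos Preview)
Your proof is correct and follows essentially the same approach as the paper: run the resolvent flow, bound the drift using $|m_s(z)|\le 2(nt)^{-1/2}$ and $|\partial_z G_s|\le n^2$, and control the martingale via the quadratic variation bound $[M_t(x,x,z)]\le n^3t$. The only difference is cosmetic: the paper applies Burkholder--Davis--Gundy to bound $\mathbb{E}|M_t(x,x,z)|$ directly, whereas you stop at $\tau$ and use that the $L^2$-bounded stopped martingale has mean zero, which is slightly cleaner and avoids the BDG constant.
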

\begin{proof}
By definition $0\le \Im G_s(x,x,z)\le n$ so using Proposition~\ref{vonprop} we have
\[[ M_t(x,x,z)]=\frac{1}{n(\Im z)^2}\int_0^t (\Im G_s(x,x,z))^2 ds\le tn^3.\]
Combining this with Burkholder-Davis-Gundy inequality, we see that $\mathbb{E}|M_t(x,x,z)|\le c_1 n\sqrt{tn}$.
On the event $\tau>t$, 
\[\left|\int_0^t m_s(z)\frac{\partial}{\partial z} G_s(x,x,z)ds\right|\le 2t(nt)^{-1/2}n^2=2n\sqrt{nt},\] 
where the bound on $\tfrac{\partial}{\partial z}G$ follows from \eqref{partialzG}. Therefore,
\begin{align*}
    \mathbb{E}\Big((n-\Im G_t(x,x,z))\mathbbm{1}(\tau>t)\Big)&\le \mathbb{E}\Big(|G_t(x,x,z)-G_0(x,x,z)|\mathbbm{1}(\tau>t)\Big)\\&\le \mathbb{E}\left|\mathbbm{1}(\tau>t)\int_0^t m_s(z)\frac{\partial}{\partial z} G_s(x,x,z)ds\right|+\mathbb{E}|M_t(x,x,z)|\\&\le (2+c_1)n\sqrt{nt}.  &\qquad\qedhere
\end{align*}
\end{proof}

\subsection{Concentration of eigenvectors -- The proof of Theorem~\ref{lemmaconc01}}

\begin{lemma}\label{lemmaconc0}

Let $z=\lambda_{x,0}+in^{-1}$ and assume that  $|m_0(z)|\le (nt)^{-1/2}$. 

Let 
$F=\{k\,:\, |\lambda_{k,t}-\lambda_{x,0}|\ge n^{-1}\}.$ Then
\[\mathbb{E} \sum_{k\in F} |u_{k,t}(x)|^2\le O(\sqrt{nt}). \]
\end{lemma}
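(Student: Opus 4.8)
The plan is to relate $\sum_{k\in F}|u_{k,t}(x)|^2$ to the quantity $n-\Im G_t(x,x,z)$ controlled in Lemma~\ref{stab1}, and then to remove the truncation on $\{\tau>t\}$ using Lemma~\ref{taubecs}. Recall that
\[
\Im G_t(x,x,z)=\sum_{j=1}^n |u_{j,t}(x)|^2\frac{\Im z}{(\lambda_{j,t}-\lambda_{x,0})^2+(\Im z)^2},
\qquad \Im z=n^{-1}.
\]
For each index $k$, the coefficient of $|u_{k,t}(x)|^2$ is $\frac{n^{-1}}{(\lambda_{k,t}-\lambda_{x,0})^2+n^{-2}}\le n$, with equality exactly when $\lambda_{k,t}=\lambda_{x,0}$; moreover when $k\in F$, i.e. $|\lambda_{k,t}-\lambda_{x,0}|\ge n^{-1}$, this coefficient is at most $\frac{n^{-1}}{2n^{-2}}=\tfrac n2$. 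Since $\sum_j |u_{j,t}(x)|^2=1$ (the $u_{j,t}$ form an orthonormal basis), we get the pointwise bound
\[
n-\Im G_t(x,x,z)=\sum_{j=1}^n |u_{j,t}(x)|^2\left(n-\frac{n^{-1}}{(\lambda_{j,t}-\lambda_{x,0})^2+n^{-2}}\right)
\ \ge\ \sum_{k\in F}|u_{k,t}(x)|^2\left(n-\tfrac n2\right)=\tfrac n2\sum_{k\in F}|u_{k,t}(x)|^2,
\]
using that every summand on the left is nonnegative. Hence $\sum_{k\in F}|u_{k,t}(x)|^2\le \tfrac 2n\big(n-\Im G_t(x,x,z)\big)$ deterministically.

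Now I would split the expectation according to whether $\tau>t$. On $\{\tau>t\}$, Lemma~\ref{stab1} gives $\mathbb{E}\big[(n-\Im G_t(x,x,z))\mathbbm{1}(\tau>t)\big]\le O(n\sqrt{nt})$, so this part contributes $\tfrac 2n\cdot O(n\sqrt{nt})=O(\sqrt{nt})$. On the complementary event $\{\tau\le t\}$, I use the crude bound $\sum_{k\in F}|u_{k,t}(x)|^2\le \sum_j |u_{j,t}(x)|^2=1$, so this part contributes at most $\mathbb{P}(\tau\le t)$, which is $O(\sqrt{nt})$ by Lemma~\ref{taubecs}. Adding the two contributions yields $\mathbb{E}\sum_{k\in F}|u_{k,t}(x)|^2\le O(\sqrt{nt})$, as claimed. (If $t>n^{-1}$ the statement is vacuous since the right-hand side exceeds $1$, so one may freely assume $t\le n^{-1}$ throughout, matching the reductions in Lemmas~\ref{taubecs} and \ref{stab1}.)

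The only mild subtlety — and the one place to be careful — is the interface between the deterministic pointwise inequality and the truncated expectation: one must make sure the indicator is inserted before, not after, invoking Lemma~\ref{stab1}, and that nonnegativity of $n-\Im G_t(x,x,z)$ (which holds since each coefficient $n-\frac{n^{-1}}{(\lambda-\lambda_{x,0})^2+n^{-2}}\ge 0$) is used to drop the off-$F$ terms. Beyond that the argument is a two-line combination of the two preceding lemmas, so I do not anticipate a genuine obstacle.
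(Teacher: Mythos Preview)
Your proposal is correct and follows essentially the same approach as the paper's proof: the same pointwise inequality $n-\Im G_t(x,x,z)\ge \tfrac n2\sum_{k\in F}|u_{k,t}(x)|^2$, the same split on $\{\tau>t\}$ versus $\{\tau\le t\}$, and the same appeals to Lemmas~\ref{stab1} and~\ref{taubecs}. Your added remarks on nonnegativity and on the vacuous case $t>n^{-1}$ are accurate and harmless.
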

\begin{proof}
Note that
\begin{align*}n-\Im G_t(x,x,z)\ge  \sum_{k\in F} |u_{k,t}(x)|^2 \left(n-\frac{n^{-1}}{(\lambda_{k,t}-\lambda_{x,0})^2+n^{-2}}\right)\ge \frac{n}2\sum_{k\in F} |u_{k,t}(x)|^2.
\end{align*}
Combining this with Lemma~\ref{stab1}, we obtain that
\[\mathbb{E}\mathbbm{1}(\tau>t)\sum_{k\in F} |u_{k,t}(x)|^2\le \mathbb{E}\mathbbm{1}(\tau>t) 2n^{-1}(n-\Im G_t(x,x,z))\le O(\sqrt{nt}).\]
Clearly,
\[\mathbb{E}\mathbbm{1}(\tau\le t)\sum_{k\in F} |u_{k,t}(x)|^2\le \mathbb{P}(\tau \le t) \le O(\sqrt{nt})\]
where the second inequality follows from Lemma~\ref{taubecs}.
\end{proof}

Now we prove Theorem~\ref{lemmaconc01}. By Lemma \ref{invariant}, we may assume that $A$ is diagonal. In this case, $v_{k,t}(j)=u_{k,t}(j)$.

Let
\[F_j=\{k\,:\, |\lambda_{k,t}-\lambda_{j,0}|\ge n^{-1}\}\text{ and }J_k=\{j\in K\,:\, |\lambda_{k,t}-\lambda_{j,0}|< n^{-1}\}.\]

 Lemma~\ref{lemmaconc0} gives us
 \[\mathbb{E}\sum_{k=1}^n \sum_{j\in J_k} |u_{k,t}(j)|^2
=\mathbb{E}\sum_{j\in K} \sum_{k\not\in F_j}|u_{k,t}(j)|^2=\mathbb{E}\sum_{j\in K}\left(1-\sum_{k\in F_j} |u_{k,t}(j)|^2\right)\ge |K|(1-O(\sqrt{nt})).\]

Note that $J_k$ is empty unless $k\in K_2$, so 
\begin{equation}\label{conceq111}
 \mathbb{E}\sum_{k\in K_2} \sum_{j\in J_k} |u_{k,t}(j)|^2\ge |K|(1-O(\sqrt{nt})).   
\end{equation}

Let $K_3=\{k\,:\,\lambda_{k,0}\in [E_\ell-3\sqrt{t},E_u+3\sqrt{t}]\}.$ 

On the event $\|\tilde{W}_t\|\le 3\sqrt{t}$, we have $|\lambda_{k,t}-\lambda_{k,0}|\le 3\sqrt{t}$ for all $k$, as it follows from Lemma~\ref{weyl}. Thus, on this event $K_2$ is contained in the set $K_3$, and  

\begin{equation}\label{coneq12}
    \mathbb{E}\frac{1}{|K_2|}\sum_{k\in K_2} \sum_{j\in J_k} |u_{k,t}(j)|^2 \ge \frac{1}{|K_3|}\mathbb{E}\mathbbm{1}(\|\tilde{W}_t\|\le 3\sqrt{t})\sum_{k\in K_2} \sum_{j\in J_k} |u_{k,t}(j)|^2.
\end{equation}

Note that the double sum is bounded by $|K|$, so we have
\begin{align*}\mathbb{E}&\sum_{k\in K_2} \sum_{j\in J_k} |u_{k,t}(j)|^2
\le \mathbb{E}\mathbbm{1}(\|\tilde{W}_t\|
\le  3\sqrt{t})\sum_{k\in K_2} \sum_{j\in J_k} |u_{k,t}(j)|^2 +\mathbb{P}(\|\tilde{W}_t\|> 3\sqrt{t})|K|.
\end{align*}
Using Lemma~\ref{normbecs}, we see that $\mathbb{P}(\|\tilde{W}_t\|> 3\sqrt{t})=O(\sqrt{nt})$. 
Rearranging and combining this with \eqref{conceq111}, we obtain that
\[\mathbb{E}\mathbbm{1}(\|\tilde{W}_t\|\le 3\sqrt{t})\sum_{k\in K_2} \sum_{j\in J_k} |u_{k,t}(j)|^2\ge |K|\left(1-O(\sqrt{nt})
\right).
\]

Inserting this into \eqref{coneq12}, we obtain
\[\mathbb{E}\frac{1}{|K_2|}\sum_{k\in K_2} \sum_{j\in J_k} |u_{k,t}(j)|^2 \ge \frac{|K|}{|K_3|}\left(1-O(\sqrt{nt})
\right).
\qedhere\]

\section{Small perturbations from a random initial condition
}

\subsection{Random initial condition}

Now let $(\lambda_{1,0},\lambda_{2,0},\dots,\lambda_{n,0})$ be random. Let us choose an $E_0$ and an $r=r_n$.

We define
\[\hat{m}_0(z)=\frac{1}{n}\sum_{j=1}^n \frac{1}{|\lambda_{j,0}-z|}.\]
Clearly, $|{m}_0(z)|\le \hat{m}_0(z)$.

For an interval $[a,a+n^{-1}]$ of length $n^{-1}$, let 
\[X_{a}=|\{k\,:\,\lambda_{k,0}\in [a,a+n^{-1}]\}|.\]

 Let us subdivide the interval $[E_0-r+n^{-1},E_0+r-n^{-1}]$ into intervals of length $n^{-1}$. For simplicity, we assume that $r$ is an integer multiple of $n^{-1}$. Let $\mathcal{I}$ be the set of these intervals.

We assume that there is a constant ${g}$ not depending on $n$ such that
\begin{align}\label{conccond1}\mathbb{E} \hat{m}_0(\lambda+in^{-1})&\le {g} \log n&&\text{ for all } \lambda\in [E_0-r,E_0+r],\\
\label{conccond3}\mathbb{E} X_a&\le {g}&&\text{ for all }[a,a+n^{-1}]\in \mathcal{I}.\end{align}
and with probability one  
\begin{equation}\label{conccond2}
\frac{nr}{{g}}\sum_{[a,a+n^{-1}]\in \mathcal{I}}\mathbb{E} X_a^2\le \Big|\{k:\lambda_{k,0}\in [E_0-r-3\sqrt{t},E_0+r+3\sqrt{t}]\}\Big|^2,\end{equation}
furthermore, with probability one,
\begin{equation}\label{conccond4} \frac{{|\{k\,:\,\lambda_{k,0}\in [E_0-r+n^{-1},E_0+r-n^{-1}]\}|}}{|\{k\,:\,\lambda_{k,0}\in [E_0-r-3\sqrt{t},E_0+r+3\sqrt{t}]\}|}\ge 1-c_n,\end{equation}
for a deterministic $c_n$ tending to zero.

Let $K_2=\{k\,:\, \lambda_{k,t}\in [E_0-r,E_0+r]\}.$

\begin{lemma}\label{rigenconc}
Choose $t=t_n$ such that $t=o(1/(n\log^2(n)))$ and $t\ge e^{-\sqrt{n}}$. Under the assumptions above, there is a $c$ with the following property. Let $h>0$, then for any large enough $n$, we can find random sets $J_k\subset \{1,2,\dots,n\}$ such that $|J_k|\le 3h$, and
\[\mathbb{E}\frac{1}{|K_2|}\sum_{k\in K_2} \sum_{j\in J_k} |u_{k,t}(j)|^2\ge 1-\frac{c}{\sqrt{h}}.\]
\end{lemma}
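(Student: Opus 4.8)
The plan is to deduce this from Theorem~\ref{lemmaconc01} by choosing the ingredients $K$ and $K_2$ appropriately and then translating conditions \eqref{conccond1}--\eqref{conccond4} into the hypotheses of that theorem. First I would set $E_\ell=E_0-r$, $E_u=E_0+r$, and take
\[K=\Big\{j\,:\,\lambda_{j,0}\in[E_0-r+n^{-1},E_0+r-n^{-1}],\ |m_0(\lambda_{j,0}+in^{-1})|\le (nt)^{-1/2},\ X_{\lambda_{j,0}\text{-interval}}\le h\Big\},\]
i.e. I throw out the indices $j$ lying in an $n^{-1}$-interval $J\in\mathcal I$ that is ``crowded'' (contains more than $h$ of the $\lambda_{k,0}$) and those where $\hat m_0$ (hence $m_0$) is large. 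With $t=o(1/(n\log^2 n))$, condition \eqref{conccond1} together with Markov's inequality shows that $|m_0(\lambda+in^{-1})|>(nt)^{-1/2}$ only for a $\mathbb P_\lambda$-proportion $O\big(\sqrt{nt}\,g\log n\big)=o(1)$ of the window, so the first restriction discards only $o(rn)$ indices in expectation. Similarly \eqref{conccond3} and Markov show that the indices in crowded intervals number at most $\sum_{J}\mathbb E X_J\mathbbm 1(X_J>h)\le g\cdot rn/h \cdot$(something) — more carefully, $\mathbb E\sum_{j\ \text{crowded}}1=\mathbb E\sum_J X_J\mathbbm 1(X_J>h)\le \mathbb E\sum_J X_J^2/h$, and \eqref{conccond2} bounds $\sum_J\mathbb E X_J^2$ by $\frac{g}{nr}b^2$ where $b=|\{k:\lambda_{k,0}\in[E_0-r-3\sqrt t,E_0+r+3\sqrt t]\}|$; since $b=\Theta(rn)$ this gives $O(rn/h)$ crowded indices, i.e. a $1/h$-fraction.

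Next I would apply Theorem~\ref{lemmaconc01} with this $K$, $K_2=\{k:\lambda_{k,t}\in[E_0-r,E_0+r]\}$, and $b$ as above; the hypothesis $t\ge e^{-\sqrt n}$ is exactly what is assumed. The theorem yields
\[\mathbb E\,\frac1{|K_2|}\sum_{k\in K_2}\sum_{\substack{j\in K\\|\lambda_{k,t}-\lambda_{j,0}|<1/n}}|u_{k,t}(j)|^2\ \ge\ \frac{|K|}{b}\big(1-c\sqrt{nt}\big).\]
Now I define $J_k=\{j\in K:|\lambda_{k,t}-\lambda_{j,0}|<1/n\}$; the point is that each such $j$ lies within $1/n$ of $\lambda_{k,t}$, so all of them lie in an interval of length $2/n$, which meets at most three of the $n^{-1}$-intervals of $\mathcal I$; since $j\in K$ forces each of those intervals to be non-crowded ($X\le h$), we get $|J_k|\le 3h$ as required. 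It remains to show $|K|/b\ge 1-O(1/\sqrt h)$ in the relevant averaged sense. Using $\sqrt{nt}=o(1/\log n)=o(1)$, \eqref{conccond4}, and the two discard estimates above, $|K|$ differs from $|\{j:\lambda_{j,0}\in[E_0-r+n^{-1},E_0+r-n^{-1}]\}|$ by at most $o(rn)+O(rn/h)$ indices, and the latter count is $\ge (1-c_n)b$; hence $|K|/b\ge 1-c_n-o(1)-O(1/h)\ge 1-O(1/h)\ge 1-O(1/\sqrt h)$ for large $n$. Combining with the displayed bound finishes the proof.

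The main obstacle I anticipate is bookkeeping the ``crowded interval'' discard cleanly: \eqref{conccond2} controls $\sum_J\mathbb E X_J^2$ only in expectation and only after summing, so I must be careful that the expectation in the conclusion of Theorem~\ref{lemmaconc01} (which involves the random $|K|$, $|K_2|$ and the random eigenvectors jointly) can still be lower-bounded — this is fine because Theorem~\ref{lemmaconc01} already delivers an expectation over everything, and I only need a lower bound on $\mathbb E(|K|/b)$, with $b$ deterministic-given-the-initial-condition; one should present the argument conditionally on the initial eigenvalues $(\lambda_{k,0})$, apply Theorem~\ref{lemmaconc01} for each fixed initial configuration, and then take expectation over the initial randomness at the very end, using \eqref{conccond1}--\eqref{conccond4} to control the resulting $\mathbb E[|K|/b]$. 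A secondary point to handle carefully is that $\sqrt{nt}\log n\to 0$ is needed (not just $nt\to 0$) to kill the $|m_0|$-discard term coming from \eqref{conccond1}; this is exactly why the hypothesis is $t=o(1/(n\log^2 n))$ rather than $t=o(1/n)$.
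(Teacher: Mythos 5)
Your overall strategy matches the paper's: choose $K$, $K_2$, $b=|K_3|$, invoke Theorem~\ref{lemmaconc01}, take $J_k=\{j\in K:|\lambda_{k,t}-\lambda_{j,0}|<1/n\}$, and control $|K|/b$. But there are two genuine gaps.

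First, and most importantly, you define $K$ by directly imposing $|m_0(\lambda_{j,0}+in^{-1})|\le(nt)^{-1/2}$, and then try to bound the number of discarded indices via ``\eqref{conccond1} plus Markov''. But \eqref{conccond1} only controls $\mathbb{E}\,\hat m_0(\lambda+in^{-1})$ at \emph{fixed, deterministic} $\lambda$, whereas $\lambda_{j,0}$ are random. Markov at a fixed $\lambda$ tells you the (Lebesgue) measure of the bad region of $\lambda$ is small in expectation, but not how many random eigenvalues $\lambda_{j,0}$ happen to land in it — that would be circular without a density estimate you don't have. The paper's key idea, missing from your proposal, is to declare an interval $[a,a+n^{-1}]\in\mathcal I$ ``bad'' based on $\hat m_0$ at the two \emph{fixed endpoints} $a$ and $a+n^{-1}$ (so Markov applies), and then to use the elementary comparison $\hat m_0(\lambda_{j,0}+in^{-1})\le 2\big(\hat m_0(a+in^{-1})+\hat m_0(a+n^{-1}+in^{-1})\big)$ for any $\lambda_{j,0}$ in that interval, which converts endpoint control into control at the random interior point. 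That is what makes the discard set measurable against conditions \eqref{conccond1}--\eqref{conccond3}.

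Second, your accounting of the crowded-interval discard leans on the unsubstantiated claim ``$b=\Theta(rn)$''; condition \eqref{conccond2} gives only a lower bound on $b$ in terms of $\sum_J\mathbb{E}X_J^2$, not an order-of-magnitude estimate, and nothing rules out $b\gg rn$. Your Markov bound $\mathbb{E}\sum_J X_J\mathbbm 1(X_J>h)\le\frac{1}{h}\sum_J\mathbb{E}X_J^2$ then cannot be compared to $b$ without that claim. The paper sidesteps this by bounding $\mathbb{E}X_a\mathbbm 1(\text{bad})\le\sqrt{\mathbb{P}(\text{bad})\,\mathbb{E}X_a^2}$ via Cauchy--Schwarz, then summing with a second Cauchy--Schwarz and inserting \eqref{conccond2} directly to obtain $\mathbb{E}Z\le\frac{2g}{\sqrt h}|K_3|$; this yields the $1/\sqrt h$ rate \emph{relative to} $|K_3|$ regardless of its size, which is exactly the form needed. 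Your $|J_k|\le 3h$ argument and the final assembly are otherwise fine, so the fix is to replace the direct $m_0$-discard by the bad-interval construction with endpoint evaluation and to use the Cauchy--Schwarz route to bound $\mathbb{E}Z/|K_3|$.
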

\begin{proof}

We say that an interval $[a,a+n^{-1}]\in \mathcal{I}$ bad if at least one of the following is true:
\begin{itemize}
    \item $X_a>h$,
    \item $\hat{m}_0(a+in^{-1})>\frac{(nt)^{-1/2}}4 $,
    \item $\hat{m}_0(a+n^{-1}+in^{-1})>\frac{(nt)^{-1/2}}4 $.
\end{itemize}
Let $\mathcal{I}_B$ be the (random) set of bad intervals in $\mathcal{I}$.

For any $[a,a+n^{-1}]\in \mathcal{I}$, it follows from conditions~\eqref{conccond1},~\eqref{conccond3} and Markov's inequality that
\[\mathbb{P}([a,a+n^{-1}]\text{ is bad})\le \frac{{g}}{h}+\frac{4{g}\log n}{(nt)^{-1/2}}+\frac{4{g}\log n}{(nt)^{-1/2}}\le \frac{2{g}}{h}\]
for all large enough $n$. From now on we assume that $n$ is large enough.

Thus, using the CSB inequality, we see that
\begin{equation}\label{badcard1}\mathbb{E} X_a\mathbbm{1}([a,a+n^{-1}]\text{ is bad}) \le \sqrt{\mathbb{P}([a,a+n^{-1}]\text{ is bad}) \mathbb{E} X_a^2}\le \sqrt{\frac{2{g}}{h}} \sqrt{\mathbb{E} X_a^2}.
\end{equation}

Let
$K_3=\{k\,:\,\lambda_{k,0}\in [E_0-r-3\sqrt{t},E_0+r+3\sqrt{t}]\},$ and
\[Z=\sum_{[a,a+n^{-1}]\in \mathcal{I}_B} X_a=\sum_{[a,a+n^{-1}]\in \mathcal{I}}X_a\mathbbm{1}([a,a+n^{-1}]\text{ is bad}).\]

Using \eqref{badcard1}, and then the CSB inequality, we see that
\[\mathbb{E} Z\le \sqrt{\frac{2{g}}{h}} \sum_{[a,a+n^{-1}]\in \mathcal{I}} \sqrt{\mathbb{E} X_a^2}\le \sqrt{\frac{2{g}}{h}} \sqrt{|\mathcal{I}|} \sqrt{\sum_{[a,a+n^{-1}]\in \mathcal{I}} \mathbb{E} X_a^2}.\]

Here the right hand side can be bounded using condition~\eqref{conccond2} to obtain that 
\begin{align}\label{EZbecs}\mathbb{E} Z\le \sqrt{\frac{2{g}}{h}} \sqrt{|\mathcal{I}|} \sqrt{\sum_{[a,a+n^{-1}]\in \mathcal{I}} \mathbb{E} X_a^2}\le \sqrt{\frac{2{g}}{h}} \sqrt{\frac{|\mathcal{I}|}{nr}}\sqrt{{g}} |K_3|\le 
\frac{2{g}}{\sqrt{h}}  |K_3|
\end{align}
with probability $1$.

We set $E_\ell=E_0-r$ and $E_u=E_0+r$.

Let 
\[K=\{j\,:\, \lambda_{j,0}\in [E_\ell+n^{-1},E_u-n^{-1}]\setminus \cup_{[a,a+n^{-1}]\in \mathcal{I}_B} [a,a+n^{-1}]\}.\]

For $j\in K$, we have that $\lambda_{j,0}\in [a,a+n^{-1}]$ for some $[a,a+n^{-1}]\in \mathcal{I}\setminus\mathcal{I}_B$. Then
\[|m_0(\lambda_{j,0}+in^{-1})|\le \hat{m}_0(\lambda_{j,0}+in^{-1})\le 2\left(\hat{m}_0(a+in^{-1})+\hat{m}_0(a+n^{-1}+in^{-1})\right)\le (nt)^{-1/2}. \]

As before, let 
\[J_k=\{j\in K\,:\, |\lambda_{k,t}-\lambda_{j,0}|< n^{-1}\}.\]

Then Theorem~\ref{lemmaconc01} gives us that
\begin{equation}\label{conceq11}
    \mathbb{E}\frac{1}{|K_2|}\sum_{k\in K_2} \sum_{j\in J_k} |u_{k,t}(j)|^2 \ge\mathbb{E} \frac{|K|}{|K_3|}\left(1-O(\sqrt{nt})\right).
\end{equation}

Let $K_0=\{j\,:\, \lambda_{j,0}\in [E_\ell+n^{-1},E_u-n^{-1}]\}$. Note that 
\[\mathbb{E}\frac{|K|}{|K_3|}=  \mathbb{E}\frac{|K_0|}{|K_3|}-\mathbb{E}\frac{Z}{ |K_3|}\ge 
\mathbb{E}\frac{|K_0|}{|K_3|}-\frac{2{g}}{\sqrt{h}}\ge 1-\frac{3{g}}{\sqrt{h}}  
\]
for all large enough $n$, where the second inequality follows from \eqref{EZbecs}, and the last inequality follows from condition \eqref{conccond4}. It is also clear that any function that is $O(\sqrt{nt})$ is at most $\frac{1}{\sqrt{h}}$ for any large enough $n$.

Inserting these into \eqref{conceq11}, we see that for all large enough $n$, we have
\begin{align*}\mathbb{E}\frac{1}{|K_2|}\sum_{k\in K_2} \sum_{j\in J_k} |u_{k,t}(j)|^2\ge \left(1-\frac{3{g}}{\sqrt{h}} \right)\left(1-\frac{1}{\sqrt{h}}\right)\ge 1-\frac{c}{\sqrt{h}}
\end{align*}
for some constant not depending on $h$.

Finally, observe that $|J_k|\le 3h$. Indeed, the interval $[\lambda_{k,t}-n^{-1},\lambda_{k,t}+n^{-1}]$ can intersect at most three of the intervals in $\mathcal{I}\setminus\mathcal{I}_B$, each of these intervals can contain at most $h$ of the numbers $\lambda_{j,0}$.
\end{proof}

\subsection{Application to the discrete torus}
 Let $(\lambda_{1,0},\lambda_{2,0},\dots,\lambda_{n^2,0})$ be the eigenvalues of $A_{C,D}$, where $(C,D)$ chosen as uniform random element of $[0,1]^2$. Let us choose $E_0\in (-4,0)\cup (0,4)$, $r=r_n=n^{-\delta}$, where $0<\delta<1$.

We now check that the conditions of Lemma~\ref{rigenconc} hold in this case. Using Lemma~\ref{expdensity}, we see that to verify conditions \eqref{conccond1} and \eqref{conccond3}, we need to prove that for a large enough ${g}$ (not depending on $n$), we have 
\begin{align}\label{conccond1s}\int_{-4}^4 \frac{1}{\sqrt{(x-\lambda)^2+(n^2)^{-2}}} \varrho_{0,0}(x)dx&\le {g}\log n&&\text{ for all }\lambda\in [E_0-r,E_0+r], \text{ and}\\
\label{conccond3s}n^2\int_{a}^{a+n^{-2}} \varrho_{0,0}(x)dx+1&\le {g}&& \text{ for all } [a,a+n^{-2}]\in \mathcal{I}.
\end{align}

As $\varrho_{0,0}(x)$ is uniformly bounded  on the interval $[E_0-r,E_0+r]$ for all large enough $n$, condition~\eqref{conccond3s} follows.  

To prove \eqref{conccond1s}, let us choose $\varepsilon>0$ such that $[E_0-2\varepsilon,E_0+2\varepsilon]\subset (-4,0)\cup (0,4)$. On the interval $[E_0-2\varepsilon,E_0+2\varepsilon]$, the function $\varrho_{0,0}$ is bounded
by some constant $c$. Then for $\lambda\in [E_0-\varepsilon,E_0+\varepsilon]$, we have
\begin{align*}
    \int_{-4}^4 \frac{1}{\sqrt{(x-\lambda)^2+(n^2)^{-2}}} \varrho_{0,0}(x)dx&\le  \int_{\lambda-\varepsilon}^{\lambda+\varepsilon} \frac{1}{\sqrt{(x-\lambda)^2+(n^2)^{-2}}} \varrho_{0,0}(x)dx+\frac{1}{\sqrt{\varepsilon^2+(n^2)^{-2}}}\\&\le c\int_{-\varepsilon}^{\varepsilon} \frac{\sqrt{2}}{|t|+n^{-2}} dt+\frac{1}{\varepsilon}\\&\le {g}\log n
\end{align*}
for a large enough ${g}$.  As $[E_0-r,E_0+r]\subset [E_0-\varepsilon,E_0+\varepsilon]$ for all large enough $n$, condition~\eqref{conccond1s} follows.

Using Lemma~\ref{detest2}, we see that there is a $c>0$ only depending on $E_0$ such that 
\[\mu_{C,0}*\mu_{D,0}([E_0-r-n^{-1},E_0+r+n^{-1}])\ge cn^2r\]
for all $(C,D)$. Combining this with our estimate on the number of close pairs given in Proposition~\ref{Lemmaclosepairs}, condition \eqref{conccond2} follows.

Conditions \eqref{conccond4}  can be verified using Lemma~\ref{detest2}. 

Therefore, Lemma~\ref{rigenconc} can be applied to give  the following theorem.

\begin{theorem}\label{Torusconc}
Let $\gamma>1$, and set $t=n^{-2\gamma}$. Let us choose $E\in (-4,0)\cup (0,4)$, and let $r=n^{-\delta}$, where $0<\delta<1$.  Let $(\lambda_{1,t},u_{1,t}),(\lambda_{1,t},u_{1,t}),\dots,(\lambda_{{n^2},t},u_{{n^2},t})$ be the eigenvalue-eigenvectors pairs of $A_{C,D}+\sqrt{t}{W}_{n^2}$ such that $\lambda_{1,t}\le \lambda_{2,t}\le\dots\le \lambda_{n^2,t}$ and $\|u_{i,t}\|_2=n$. Write $u_{i,t}$ in the bases $(u_{1,0},u_{2,0},\dots,u_{{n^2},0})$ to obtain a vector $v_i\in \mathbb{C}^{n^2}$. Let
\[K_2=\{k\,:\, \lambda_{k,t}\in [E-r,E+r]\}.\]

There is a $c$ with the following property. Let $h>0$, then for any large enough $n$, we can find random sets $J_k\subset \{1,2,\dots,n^2\}$ such that $|J_k|\le 3h$, and
\[\mathbb{E}\frac{1}{|K_2|}\sum_{k\in K_2}\sum_{j\in J_k} |v_k(j)|^2\ge 1-\frac{c}{\sqrt{h}}.\]
\end{theorem}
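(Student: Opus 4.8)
The plan is to obtain Theorem~\ref{Torusconc} as a direct application of Lemma~\ref{rigenconc} to the $n^2\times n^2$ matrix $A_{C,D}$, so that the parameter called ``$n$'' in Lemma~\ref{rigenconc} is now $n^2$. I would take $E_0=E$, $r=n^{-\delta}$ (after a negligible rounding so that the divisibility conditions built into the setup hold), $t=n^{-2\gamma}$, and let $(\lambda_{1,0},\dots,\lambda_{n^2,0})$ be the random eigenvalues of $A_{C,D}$ with $(C,D)$ uniform on $[0,1]^2$. First I would check the two size conditions on $t$ in Lemma~\ref{rigenconc}. Since $\gamma>1$ we have $2\gamma>2$, hence $t=n^{-2\gamma}=o\big(1/(n^2\log^2(n^2))\big)$; this is the only place where the hypothesis $\gamma>1$ is used. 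On the other hand $t=n^{-2\gamma}$ is only polynomially small, so $t\ge e^{-n}=e^{-\sqrt{n^2}}$ for all large $n$.

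Next I would invoke the verification of conditions \eqref{conccond1}--\eqref{conccond4} carried out immediately above the statement. Condition \eqref{conccond1} (in the form \eqref{conccond1s}) and condition \eqref{conccond3} (in the form \eqref{conccond3s}) follow from Lemma~\ref{expdensity} together with the boundedness of $\varrho_{0,0}$ on compact subsets of $(-4,0)\cup(0,4)$, using $E\notin\{0,\pm4\}$. Condition \eqref{conccond2} follows by combining the lower bound on $\mu_{C,0}*\mu_{D,0}$ of a window of length $\asymp r$ provided by Lemma~\ref{detest2} --- which holds uniformly in $(C,D)$, hence gives a \emph{deterministic} lower bound for the right-hand side of \eqref{conccond2} --- with the close-pair estimate $\sum_{J}\mathbb{E}\big(\mu_{C,0}*\mu_{D,0}(J)\big)^2=O(n^2 r)$ of Proposition~\ref{Lemmaclosepairs}, which controls the left-hand side; the two match once the constant ${g}$ is taken large enough. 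Condition \eqref{conccond4} follows from Lemma~\ref{detest2} as well, since $\sqrt{t}=n^{-\gamma}\ll r=n^{-\delta}$.

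Finally I would deal with the fact that Lemma~\ref{rigenconc} is stated for a diagonal initial matrix while $A_{C,D}$ is not diagonal. As in the proof of Theorem~\ref{lemmaconc01}, let $U$ be a unitary matrix with $UA_{C,D}U^*$ diagonal; by the unitary invariance of the GUE (Lemma~\ref{invariant}), $U\big(A_{C,D}+\sqrt{t}\,W_{n^2}\big)U^*$ has the same law as a real diagonal matrix plus an independent GUE, and its perturbed eigenvectors, written in the standard basis, are precisely the coordinate vectors $v_k$ of the $u_{k,t}$ with respect to the $A_{C,D}$-eigenbasis $(u_{j,0})$. Applying Lemma~\ref{rigenconc} with the given $h$ then produces random $J_k\subset\{1,\dots,n^2\}$ with $|J_k|\le 3h$ and $\mathbb{E}\,|K_2|^{-1}\sum_{k\in K_2}\sum_{j\in J_k}|v_k(j)|^2\ge 1-c/\sqrt{h}$, which is exactly the claim. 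The theorem is thus essentially a packaging of Lemma~\ref{rigenconc}, and the only step that genuinely requires care is the verification of \eqref{conccond2}: one must keep the counting-versus-probability normalization of $\mu_{C,0}*\mu_{D,0}$ consistent between Proposition~\ref{Lemmaclosepairs} and Lemma~\ref{detest2}, and observe that the inequality then reduces to the elementary balance $\tfrac{n^2 r}{{g}}\cdot O(n^2 r)\le\big(\Omega(n^2 r)\big)^2$, which is where the sufficiently large constant ${g}$ is needed.
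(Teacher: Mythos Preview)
Your proposal is correct and follows essentially the same route as the paper: the text immediately preceding Theorem~\ref{Torusconc} verifies conditions \eqref{conccond1}--\eqref{conccond4} exactly as you outline (via Lemma~\ref{expdensity}, boundedness of $\varrho_{0,0}$ near $E$, Lemma~\ref{detest2}, and Proposition~\ref{Lemmaclosepairs}), after which Lemma~\ref{rigenconc} applied to the $n^2\times n^2$ matrix $A_{C,D}$ yields the theorem. Your handling of the diagonalization via Lemma~\ref{invariant} and your explicit remark about the counting-versus-probability normalization in \eqref{conccond2} are accurate and in fact clarify a point the paper leaves slightly implicit.
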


\begin{corollary}\label{CorTorusconc}
With the notations of Theorem \ref{Torusconc}, let $k$ be a uniform random element of $K_2$. There is a $\varepsilon>0$ with the property that for all large enough $n$, we have
\[\mathbb{P}(|v_k(j)|>\varepsilon\text{ for some j})>\frac{1}2.\]
\end{corollary}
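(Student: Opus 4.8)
The plan is to obtain Corollary~\ref{CorTorusconc} directly from Theorem~\ref{Torusconc} by a Markov-inequality estimate followed by a pigeonhole step, once the parameter $h$ in Theorem~\ref{Torusconc} has been fixed appropriately.

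First I would record two elementary facts. One: with the $\ell^2$-normalization used in Theorem~\ref{Torusconc} each $v_k$ is a unit vector, so for any choice of the random sets $J_k$ the quantity $Y_k:=\sum_{j\in J_k}|v_k(j)|^2$ lies in $[0,1]$, and in particular $1-Y_k\ge 0$. Two: when $k$ is chosen uniform in $K_2$ (and, say, on the event $K_2\neq\emptyset$, which has probability tending to $1$ since $|K_2|\to\infty$ with high probability by Lemma~\ref{detest2} and Lemma~\ref{weyl}; on $\{K_2=\emptyset\}$ the statement is vacuous), the conclusion of Theorem~\ref{Torusconc} reads $\mathbb{E}(1-Y_k)\le c/\sqrt{h}$, the expectation being over the environment and over the choice of $k$. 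Markov's inequality then gives $\mathbb{P}(Y_k<1/2)\le 2c/\sqrt{h}$.

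Next I would fix $h=h_0$ to be an integer with $h_0>16c^2$, so that $2c/\sqrt{h_0}<1/2$, and apply Theorem~\ref{Torusconc} with this value of $h$; it supplies, for all large enough $n$, random sets $J_k$ with $|J_k|\le 3h_0$ satisfying the bound above, whence $\mathbb{P}(Y_k\ge 1/2)>1/2$. On the event $\{Y_k\ge 1/2\}$, since $|J_k|\le 3h_0$, the pigeonhole principle yields an index $j\in J_k$ with $|v_k(j)|^2\ge \tfrac{1}{6h_0}$. Therefore, setting $\varepsilon:=\tfrac{1}{2\sqrt{6h_0}}$, which is a constant independent of $n$, the event $\{Y_k\ge 1/2\}$ is contained in $\{\,|v_k(j)|>\varepsilon\text{ for some }j\,\}$, and the claim $\mathbb{P}(|v_k(j)|>\varepsilon\text{ for some }j)>1/2$ follows for all large enough $n$.

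I do not expect a genuine obstacle here: the argument is soft. The one point that needs care is the order of quantifiers --- $h_0$ must be chosen first, depending only on the universal constant $c$ of Theorem~\ref{Torusconc}, so that $\varepsilon$ is a true constant and the phrase ``for all large enough $n$'' is exactly the threshold furnished by that theorem applied with $h=h_0$. The only other minor technicality, the degenerate event $\{K_2=\emptyset\}$ on which ``$k$ uniform in $K_2$'' is undefined, is immaterial since its probability tends to $0$.
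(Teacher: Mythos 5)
Your argument is correct and is the natural (indeed essentially the only sensible) derivation of the corollary from Theorem~\ref{Torusconc}: Markov applied to $1-Y_k\ge 0$ after fixing $h=h_0>16c^2$, followed by pigeonhole on the at most $3h_0$ indices in $J_k$. The paper states the corollary without a written proof, and the quantifier bookkeeping you emphasize (fix $h_0$ from the universal constant $c$ first, then take $\varepsilon=\varepsilon(h_0)$ and let Theorem~\ref{Torusconc} supply the threshold in $n$) is exactly the point that needs care; you handle it correctly, and the observation that $v_k$ is a unit vector even under the paper's $\|u_{i,t}\|_2=n$ normalization (since the basis $(u_{j,0})$ is rescaled by the same factor) is also right.
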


\subsection{Properties of concentrated vectors}

We understand that in the product phase an eigenvector gets most of its weight from a few original eigenvectors. However, this does not directly describe its local structure. Small contributions from other eigenvectors could have large local influence.

In this section, we show that the largest local Fourier coefficient  distinguishes the local empirical distribution of eigenvectors in the product phase from Gaussian waves. 

\medskip

{\noindent \bf Local Fourier transform.} Given a positive integer $\ell$ and a vector $u\in \mathbb{C}^{\mathbb{Z}^2}$, we define the vector $\hat{u}(\ell,\cdot,\cdot)\in \mathbb{C}^{\{0,1,\dots,\ell-1\}^2}$ by
\[\hat{u}(\ell,{\rm{s}},{\rm{t}})=\frac{1}\ell\sum_{x=0}^{\ell-1}\sum_{y=0}^{\ell-1}\exp\left(-\frac{2\pi i}\ell(x {\rm{s}}+y {\rm{t}}) \right) u(x,y).\]
Given a vector $u\in \mathbb{C}^{\{1,2,\dots,n\}^2}$, we define $\underline{u}$ by $\underline{u}(x,y)=u((x,y)+o_n)$. By appending it with zeros, we can consider $\underline{u}$ as a vector in $\mathbb{C}^{\mathbb{Z}^2}$.

The eigenvectors $u$ and the Gaussian wave $Z$ both have order one entries. However, their local Fourier coefficients are of different order. 

For comparison, it helps to consider a third random vector: the vector a i.i.d.\ standard Gaussian entries. For such a vector the typical local Fourier coefficient would be of order one as well. 

Since the Gaussian wave is a generalized eigenfunction,  its local Fourier transform concentrates  about evenly on the coefficients $({\rm{s}},{\rm{t}})$ such that
$$
2\cos(2\pi {\rm{s}}/\ell)+2\cos(2\pi {\rm{t}}/\ell)\approx E.
$$
Since there are order $\ell$ such coefficients,  each one has norm about $\sqrt{\ell}$. 

In contrast, for the product phase eigenvector $u$, the local Fourier transform $\hat u$ should have a few dominant coefficients with norm of order $\ell$. 

The non-convergence part of Theorem~\ref{t:main} follows from the following proposition.

\begin{proposition}\label{propnonconv}\hfill
\begin{enumerate}[(i)]
    \item 
    Using the notation of Theorem~\ref{t:main}, let $\gamma>1$ and $\delta<1$, and let $k$ be a uniform random element of $\{k\,:\,\lambda_{k,t}\in [E\pm n^{-\delta}]\}$. Then there is a $c>0$ so that for every $\ell$ and all large enough $n$ we have 
\[\mathbb{P}\left(|\hat{\underline{u}}_{k,t}(\ell,{\rm{s}},{\rm{t}})|\ge c \ell\text{ for some }{\rm{s}},{\rm{t}}\right)>\frac{1}4.\]
    \item 
    For any constant $\varepsilon>0$, we have
\[\lim_{\ell\to \infty}\mathbb{P}\left(|\hat{Z}(\ell,{\rm{s}},{\rm{t}})|\ge  \ell^{1/2+\varepsilon}\text{ for some }{\rm{s}},{\rm{t}}\right) =0.\]
\end{enumerate}
\end{proposition}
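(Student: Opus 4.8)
\textbf{Part (ii)} is the softer of the two. Since $\hat Z(\ell,{\rm s},{\rm t})$ is a $\mathbb C$-linear functional of the complex Gaussian process $Z=Z_E$, it is a circularly-symmetric complex Gaussian; expanding $\mathbb E|\hat Z(\ell,{\rm s},{\rm t})|^2$ via \eqref{eqnewMdef}, grouping the terms by the index difference $(\ux,\uy)$, inserting \eqref{varrhouvformula} for $\varrho_{\ux,\uy}(E)$, exchanging the finite sum with the integral, and using $\sum_{|\ux|<\ell}(\ell-|\ux|)e^{i\ux\psi}=\big|\sum_{x=0}^{\ell-1}e^{ix\psi}\big|^2=:F_\ell(\psi)$, one gets
\[
\sigma^2_{{\rm s},{\rm t}}:=\mathbb E\,|\hat Z(\ell,{\rm s},{\rm t})|^2=\frac1{\ell^2\varrho_{0,0}(E)}\int\frac1{\pi\sqrt{4-\theta^2}}\frac1{\pi\sqrt{4-(E-\theta)^2}}\,\frac14\sum_{\alpha,\beta}F_\ell\!\Big(\alpha-\tfrac{2\pi {\rm s}}\ell\Big)F_\ell\!\Big(\beta-\tfrac{2\pi {\rm t}}\ell\Big)\,d\theta,
\]
the inner sum running over the at most four points $(\alpha,\beta)$ of the level curve $2\cos\alpha+2\cos\beta=E$ above $\theta$. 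Using $F_\ell\le\min(\ell^2,\pi^2\psi^{-2})$ and that for $E\notin\{0,\pm4\}$ this is a fixed analytic curve carrying a bounded arclength density — which also makes the weight singularities at $\theta=\pm2,E\mp2$ integrable — a careful estimate exploiting the $\psi^{-2}$ decay of $F_\ell$ and the one-dimensionality of the curve gives $\sigma^2_{{\rm s},{\rm t}}=O(\ell)$ uniformly in $({\rm s},{\rm t})$; the only delicate point is the finitely many $\theta$ where the curve is tangent to a coordinate direction (and in any case $O(\ell^{1+o(1)})$ would suffice). Since a circularly-symmetric complex Gaussian of variance $\sigma^2$ has $\mathbb P(|\cdot|\ge r)=e^{-r^2/\sigma^2}$, a union bound over the $\ell^2$ modes gives
\[
\mathbb P\big(|\hat Z(\ell,{\rm s},{\rm t})|\ge\ell^{1/2+\varepsilon}\text{ for some }{\rm s},{\rm t}\big)\le\ell^2\exp\!\big(-\ell^{1+2\varepsilon}/\max_{{\rm s},{\rm t}}\sigma^2_{{\rm s},{\rm t}}\big)\le\ell^2 e^{-c'\ell^{2\varepsilon}}\longrightarrow0 .
\]

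\textbf{Part (i)} combines the concentration of the previous section with the Fourier analysis of discrete plane waves. By Corollary~\ref{CorTorusconc} (equivalently, Theorem~\ref{Torusconc} with $h$ large together with Markov's inequality), with probability bounded below — which can be pushed close to $1$ — the coordinate vector $v_k$ of $u_{k,t}$ in the basis of unit-modulus plane-wave eigenvectors $\tilde u_{j,0}$ of $A_{C,D}$ (so $\sum_j|v_k(j)|^2=1$) carries a constant fraction of its mass on a bounded set $J_k$, and one member $\tilde u_{j_0,0}$ has $|v_k(j_0)|\ge\varepsilon$; here each $\tilde u_{j,0}$ is the plane wave $(x,y)\mapsto c_j\exp(2\pi i\,\omega_j\cdot(x,y))$ because the eigenvectors of $A_{C,D}$ are tensor products of the cycle eigenvectors. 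The elementary facts I would use are that the scale-$\ell$ local Fourier transform of such a plane wave, restricted to the window $B$ at $o_n$, has modulus at least $(2/\pi)^2\ell$ at the mode $({\rm s}^\ast,{\rm t}^\ast)$ nearest $\ell\omega_j$ (from $|\sum_{x=0}^{\ell-1}e^{i\theta x}|\ge(2/\pi)\ell$ for $|\theta|\le\pi/\ell$), while elsewhere its coefficients are governed by a product of two Dirichlet kernels. Writing $\underline u_{k,t}|_B=v_k(j_0)\,\tilde u_{j_0,0}|_B+r_k$, we obtain $|\hat{\underline u}_{k,t}(\ell,{\rm s}^\ast,{\rm t}^\ast)|\ge(2/\pi)^2\varepsilon\,\ell-|\widehat{r}_k(\ell,{\rm s}^\ast,{\rm t}^\ast)|$, so everything reduces to showing that the remainder $r_k$ contributes $o(\ell)$ to that Fourier coefficient for $k$ outside a small exceptional set, which then yields $|\hat{\underline u}_{k,t}(\ell,{\rm s}^\ast,{\rm t}^\ast)|\ge c\ell$ with $c$ a fixed multiple of $\varepsilon$.

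\textbf{This last point is the main obstacle.} The remainder splits into the contributions of the other (boundedly many) dominant plane waves of $J_k$ and the small tail $\sum_{j\notin J_k}v_k(j)\tilde u_{j,0}$. For the former I would invoke the close-pairs machinery of Section~\ref{secclosepairs}: two distinct unperturbed frequencies differ by $\big(\tfrac{a-a'}{n},\tfrac{b-b'}{n}\big)$ with $(a,b)\neq(a',b')$, and a second-moment count of the type of Proposition~\ref{Lemmaclosepairs} shows that, for all but a fraction of $k\in K_2$ that vanishes as $\ell\to\infty$, the frequencies $\{\omega_j:j\in J_k\}$ are pairwise $\gg1/\ell$-separated, so their Fourier transforms are small at $({\rm s}^\ast,{\rm t}^\ast)$. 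Controlling the tail is harder, and is where I expect the real work: because $\gamma>1$ puts us outside the regime in which eigenvector universality (Theorem~\ref{benignithm}) holds, delocalisation of the tail is not for free and a naive Cauchy--Schwarz bound loses a factor of $n$; one must instead extract from the resolvent-flow concentration behind Theorem~\ref{lemmaconc01} and Theorem~\ref{Torusconc} that the tail has only $o(\ell^2)$ of its $\ell^2$-mass in the Fourier window around $\omega_{j_0}$, hence $o(\ell)$ of Fourier mass there. Granting both ingredients, intersecting with the events of Corollary~\ref{CorTorusconc} and of the close-pairs estimate keeps the probability above $\tfrac14$, with $c$ absolute and independent of $\ell$.
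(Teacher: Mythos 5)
Your part~(ii) is essentially the paper's argument: identify $\hat Z(\ell,{\rm s},{\rm t})$ as a circularly-symmetric complex Gaussian, bound its variance by $O(\ell)$ uniformly in $({\rm s},{\rm t})$ (the paper does this via Lemma~\ref{varstbound} through Lemma~\ref{hatZVar}, peeling off the small neighbourhood of the level curve and exploiting the boundedness of the arclength density away from the tangency points), then union-bound over the $\ell^2$ modes with the Gaussian tail. Your computation of the variance is a more compact version of the same thing, and the ``delicate point'' you flag about tangency directions is exactly what Lemmas~\ref{ledelta}--\ref{alphadensity} handle; this part is sound.

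Part~(i) is where there is a genuine gap, and it is the one you yourself call ``the main obstacle.'' You decompose $\underline{u}_{k,t}\big|_B=v_k(j_0)\,\tilde u_{j_0,0}\big|_B+r_k$ and then need $|\widehat{r_k}(\ell,{\rm s}^\ast,{\rm t}^\ast)|=o(\ell)$. You propose extracting this from close-pair separation and from the resolvent-flow concentration, but you never carry it out, and I do not think that route closes: the tail $\sum_{j\notin J_k}v_k(j)\tilde u_{j,0}$ has $\Theta(n^2)$ terms, each plane wave has all its entries of modulus one, and a Cauchy--Schwarz or $\ell^2\to\ell^\infty$ bound on the tail's Fourier coefficient costs exactly the factor of $n$ you worry about; the concentration results of Section~7 control $\ell^2$-mass in the \emph{eigenvector basis}, not the size of a single local Fourier coefficient, and there is no obvious way to convert one into the other without additional delocalisation input that is not available for $\gamma>1$.

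The paper sidesteps the entire issue with a symmetry argument you do not use. Because $W$ is GUE, the matrix $A+\sqrt t W$ written in the $(u_{j,0})$-basis (where $A$ is diagonal) is invariant in law under conjugation by an arbitrary diagonal unitary. Hence, after conditioning on $(C,D)$, on $k$, and on the moduli $|v_k(1)|,\dots,|v_k(n^2)|$, the \emph{phases} of the coordinates $v_k(m)$ are i.i.d.\ uniform. Taking $j$ to maximise $|v_k(j)|$ and $({\rm s},{\rm t})$ from Lemma~\ref{nagyfourier}, the phase of $v_k(j)\hat{\underline u}_{j,0}(\ell,{\rm s},{\rm t})$ is independent of the phase of the remainder $\sum_{m\ne j}v_k(m)\hat{\underline u}_{m,0}(\ell,{\rm s},{\rm t})$, so with probability at least $\tfrac12$ the remainder has nonnegative real part relative to the dominant term and therefore \emph{cannot decrease} the modulus:
\[
|\hat{\underline u}_{k,t}(\ell,{\rm s},{\rm t})|\ge |v_k(j)|\,|\hat{\underline u}_{j,0}(\ell,{\rm s},{\rm t})|\ge \tfrac{\varepsilon}{25}\,\ell .
\]
Multiplying the $\ge\tfrac12$ probability of $|v_k(j)|>\varepsilon$ (Corollary~\ref{CorTorusconc}) by this $\tfrac12$ gives the $\tfrac14$ in the statement, with \emph{no} estimate on the remainder needed at all. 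This is the key idea you are missing; without it (or an equivalent replacement), your argument for (i) does not establish the proposition.
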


To prove Proposition \ref{propnonconv}, we first need a few lemmas. Throughout this section we assume that $E>0$. The proof in the case of $E<0$ is essentially the same.

\begin{lemma}\label{nagyfourier}
Let
\[w_{k,j}=\left(\exp\left(2\pi i\left(x\left(\frac{k}n+{\rm{c}}\right)+y\left(\frac{j}n+{\rm{d}}\right)\right)\right)\right)_{(x,y)\in \{1,2,\dots,n\}^2}\]
be an eigenvector of $A_{{\rm{c}},{\rm{d}}}$ of norm $n$. Assume that $(o_n)_1,(o_n)_2\le n-\ell+1$.  Then there are ${\rm{s}},{\rm{t}}\in \{0,1,\dots,\ell-1\}$ such that
\[|\hat{\underline{w}}_{k,j}(\ell,{\rm{s}},{\rm{t}})|\ge \frac{\ell}{25}.\]
\end{lemma}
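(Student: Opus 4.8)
The plan is to reduce the statement to an explicit geometric‑sum estimate. First I would use the hypothesis $(o_n)_1,(o_n)_2\le n-\ell+1$ (together with $o_n\in\{1,\dots,n\}^2$) to see that for every $(x,y)\in\{0,1,\dots,\ell-1\}^2$ the point $(x,y)+o_n$ lies in $\{1,\dots,n\}^2$, so the appended zeros in the definition of $\underline{w}_{k,j}$ never enter the sum defining $\hat{\underline{w}}_{k,j}(\ell,{\rm{s}},{\rm{t}})$, and $\underline{w}_{k,j}(x,y)=\exp\!\big(2\pi i\,(((o_n)_1+x)\alpha+((o_n)_2+y)\beta)\big)$ with $\alpha=\tfrac{k}{n}+{\rm{c}}$ and $\beta=\tfrac{j}{n}+{\rm{d}}$. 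Pulling out the unimodular constant $\exp(2\pi i((o_n)_1\alpha+(o_n)_2\beta))$ and substituting into the definition of the local Fourier transform, $\hat{\underline{w}}_{k,j}(\ell,{\rm{s}},{\rm{t}})$ factors as $\ell^{-1}$ times a product of two one–dimensional sums, $\big(\sum_{x=0}^{\ell-1}e^{2\pi i x(\alpha-{\rm{s}}/\ell)}\big)\big(\sum_{y=0}^{\ell-1}e^{2\pi i y(\beta-{\rm{t}}/\ell)}\big)$.

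Next I would choose ${\rm{s}},{\rm{t}}\in\{0,\dots,\ell-1\}$ so that the distance from $\alpha$ to $\tfrac{{\rm{s}}}{\ell}+\mathbb Z$ and the distance from $\beta$ to $\tfrac{{\rm{t}}}{\ell}+\mathbb Z$ are each at most $\tfrac1{2\ell}$ (i.e. ${\rm{s}}$ is the residue mod $\ell$ nearest to $\ell\alpha$, and similarly ${\rm{t}}$). It then remains to show the Dirichlet‑type lower bound $\big|\sum_{x=0}^{\ell-1}e^{2\pi i x\theta}\big|\ge \tfrac{2\ell}{\pi}$ whenever the distance from $\theta$ to $\mathbb Z$ is at most $\tfrac1{2\ell}$. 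This follows from the identity $\big|\sum_{x=0}^{\ell-1}e^{2\pi i x\theta}\big|=\big|\sin(\pi\ell\theta)/\sin(\pi\theta)\big|$, the bound $|\sin(\pi\theta)|\le\pi|\theta|$, and the elementary inequality $|\sin\phi|\ge\tfrac{2}{\pi}|\phi|$ for $|\phi|\le\tfrac{\pi}{2}$ applied with $\phi=\pi\ell\theta$; the case $\theta\in\mathbb Z$ is handled directly, the sum being $\ell$.

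Combining the two factors yields $|\hat{\underline{w}}_{k,j}(\ell,{\rm{s}},{\rm{t}})|\ge \ell^{-1}\cdot\big(\tfrac{2\ell}{\pi}\big)^2=\tfrac{4\ell}{\pi^2}\ge \tfrac{\ell}{25}$, which is the claimed inequality. I do not expect any genuine obstacle here: the proof is a direct computation, and the only two points that need (routine) care are the reduction to a pure exponential via the position constraint on $o_n$, and the lower bound on the Dirichlet kernel near its peak, both elementary.
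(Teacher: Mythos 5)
Your proof is correct and follows essentially the same route as the paper's: you pick $\rm{s},\rm{t}$ so that the two frequency offsets are within $\tfrac1{2\ell}$ of an integer, factor the local Fourier transform into a product of two one-dimensional geometric sums, and lower-bound each via the Dirichlet kernel near its peak. The paper writes the kernel as $\bigl|\tfrac{e^{2\pi i\ell s'}-1}{e^{2\pi i s'}-1}\bigr|$ and uses $0.2|u|\le|e^{iu}-1|\le|u|$ on $[-\pi,\pi]$, while you use the equivalent sine form and the sharp concavity bound $|\sin\phi|\ge\tfrac{2}{\pi}|\phi|$, which yields a slightly better constant ($4/\pi^2$ per factor versus $1/25$ overall), but this is merely a cosmetic difference in the same argument.
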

\begin{proof}
We can choose ${\rm{s}}$ and ${\rm{t}}$ in $\{0,1,\dots,\ell-1\}$ such that ${\rm{s}}'=\frac{k}n+{\rm{c}}-\frac{{\rm{s}}}\ell$ and ${\rm{t}}'=\frac{j}n+{\rm{d}}-\frac{{\rm{t}}}\ell$ are both at distance at most $\frac{1}{2\ell}$ from the closest integer. Then
\begin{align*}\hat{\underline{w}}_{k,j}(\ell,{\rm{s}},{\rm{t}})=\frac{1}{\ell}\exp\left(2\pi i\left((o_n)_1\left(\frac{k}n+{\rm{c}}\right)+(o_n)_2\left(\frac{j}n+{\rm{d}}\right)\right)\right)\sum_{x=0}^{\ell-1}\sum_{y=0}^{\ell-1} \exp(2\pi i({\rm{s}}'x+{\rm{t}}'y)).
\end{align*}
Thus,
\begin{equation}\label{lfle}|\hat{\underline{w}}_{k,j}(\ell,{\rm{s}},{\rm{t}})|=\frac{1}{\ell} \left|\frac{\exp(2\pi i \ell{\rm{s}}')-1}{\exp(2\pi i {\rm{s}}')-1}\right|\left|\frac{\exp(2\pi i \ell{\rm{t}}')-1}{\exp(2\pi i {\rm{t}}')-1}\right|,\end{equation}
provided that ${\rm{s}}'$ and ${\rm{t}}'$ are not integers, which we will assume. The case when one of these numbers is an integer can be handled easily. We may also assume that $|{\rm{s}}'|\le \frac{1}{2\ell}$ and $|{\rm{t}}'|\le \frac{1}{2\ell}$, by the choice of ${\rm{s}}$ and ${\rm{t}}$, and the fact that \eqref{lfle} is unaffected by adding an integer to ${\rm{s}}'$ or ${\rm{t}}'$. Using that facts that for $u\in [-\pi,\pi]$, we have $0.2|u|\le |e^{iu}-1|\le |u|$, Lemma~\ref{nagyfourier} follows.
\end{proof}


Note that $\hat{Z}(\ell,{\rm{s}},{\rm{t}})$ is Gaussian. Let us denote its variance by $\Var(\ell,{\rm{s}},{\rm{t}})$. 

Let $\theta$ be a random variable with density $\frac{d_0(u)d_0(E-u)}{\varrho_{0,0}(E)}$. Let $$(A,B)=(\pm \cos^{-1}(\theta/2),\pm \cos^{-1}((E-\theta)/2)),$$ where the signs are chosen uniformly at random, independently from each other and $\theta$.

Let $s_\alpha = \alpha -\frac{2\pi}{\ell}{\rm{s}}$ and $t_\beta = \beta -\frac{2\pi}{\ell}{\rm{t}}$.

For any $r\in\mathbb{R}$, let $\|r\|_{\text{mod }2\pi}$ be the distance of $r$ from the set $2\pi \mathbb{Z}$. For $(r_1,r_2)\in \mathbb{R}^2$, let $\|(r_1,r_2)\|_{\infty,\text{mod } 2\pi}=\max\left(\|r_1\|_{\text{mod } 2\pi},\|r_2\|_{\text{mod }2\pi}\right)$.

\begin{lemma}\label{varstbound}
We have
\begin{equation}
\Var(\ell,{\rm{s}},{\rm{t}})\le \mathbb{E} \left(\min\left(\ell,4/{\left\|({\rm{s}}_A,{\rm{t}}_B)\right\|_{\infty,\text{mod }2\pi}}\right)\right)^2.\end{equation}

\end{lemma}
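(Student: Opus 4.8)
The plan is to realize $\hat{Z}(\ell,{\rm{s}},{\rm{t}})$ as $\langle Z,q\rangle$ for an explicit finitely-supported vector $q$, compute its variance as the quadratic form $\langle q,Mq\rangle$ coming from the definition of the complex Gaussian process in Section~\ref{subsecwave}, insert the integral representation \eqref{varrhouvformula} of $\varrho_{\ux,\uy}$ to express $\langle q,Mq\rangle$ as an average over $\theta$ and the signs $(A,B)$ of a product of two geometric sums, and finish with an elementary Dirichlet-kernel bound on each sum.

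First I would take $q\in\mathbb{C}^{\mathbb{Z}^2}$ with $q(x,y)=\ell^{-1}\exp\!\big(\tfrac{2\pi i}{\ell}(x{\rm{s}}+y{\rm{t}})\big)$ for $(x,y)\in\{0,1,\dots,\ell-1\}^2$ and $q(x,y)=0$ otherwise; then $\langle Z,q\rangle=\hat{Z}(\ell,{\rm{s}},{\rm{t}})$ by definition of the local Fourier transform, so $\Var(\ell,{\rm{s}},{\rm{t}})=\langle q,Mq\rangle$ with $M=M_E$ from \eqref{eqnewMdef}. Expanding the quadratic form over the four indices $(x_1,y_1),(x_2,y_2)$ and using $M\big((x_1,y_1),(x_2,y_2)\big)=\varrho_{x_1-x_2,\,y_1-y_2}(E)/\varrho_{0,0}(E)$, I substitute \eqref{varrhouvformula}. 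Writing $d_0(t)=\mathbbm{1}(|t|\le2)/(\pi\sqrt{4-t^2})$ as in \eqref{e:arcsine}, the factor $\tfrac1{\pi\sqrt{4-\theta^2}}\tfrac1{\pi\sqrt{4-(E-\theta)^2}}$ equals $d_0(\theta)d_0(E-\theta)$, and since $\varrho_{0,0}(E)=\int d_0(\theta)d_0(E-\theta)\,d\theta$ the $\theta$-part is precisely the density $d_0(\theta)d_0(E-\theta)/\varrho_{0,0}(E)$ of the random variable $\theta$ in the statement, while the inner $\tfrac14\sum_{\alpha}\sum_{\beta}$ is exactly the average over the uniform signs defining $(A,B)$. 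Hence
\[
\frac{\varrho_{x_1-x_2,\,y_1-y_2}(E)}{\varrho_{0,0}(E)}=\mathbb{E}\big[\exp\big(iA(x_1-x_2)+iB(y_1-y_2)\big)\big].
\]
Pulling $\mathbb{E}$ outside the finite sum over the indices, the $x$-summation factors as $|\sum_{x=0}^{\ell-1}e^{i{\rm{s}}_A x}|^2$ and the $y$-summation as $|\sum_{y=0}^{\ell-1}e^{i{\rm{t}}_B y}|^2$, with ${\rm{s}}_A=A-\tfrac{2\pi}{\ell}{\rm{s}}$ and ${\rm{t}}_B=B-\tfrac{2\pi}{\ell}{\rm{t}}$, so that
\[
\Var(\ell,{\rm{s}},{\rm{t}})=\frac1{\ell^2}\,\mathbb{E}\left[\,\left|\sum_{x=0}^{\ell-1}e^{i{\rm{s}}_A x}\right|^2\left|\sum_{y=0}^{\ell-1}e^{i{\rm{t}}_B y}\right|^2\right].
\]

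To conclude, I would bound each geometric sum by $\min\big(\ell,\,4/\|\cdot\|_{\text{mod }2\pi}\big)$: for $r\notin 2\pi\mathbb{Z}$ one has $\sum_{x=0}^{\ell-1}e^{irx}=(e^{ir\ell}-1)/(e^{ir}-1)$, hence $|\sum_{x=0}^{\ell-1}e^{irx}|\le 2/|e^{ir}-1|=1/|\sin(r/2)|\le\pi/\|r\|_{\text{mod }2\pi}\le 4/\|r\|_{\text{mod }2\pi}$, while the trivial bound gives $|\sum_{x=0}^{\ell-1}e^{irx}|\le\ell$ always (with equality when $r\in 2\pi\mathbb{Z}$). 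Writing $\bar a=\min(\ell,4/\|{\rm{s}}_A\|_{\text{mod }2\pi})$ and $\bar b=\min(\ell,4/\|{\rm{t}}_B\|_{\text{mod }2\pi})$, both are at most $\ell$, so $\ell^{-2}\bar a^2\bar b^2\le\min(\bar a,\bar b)^2$, and $\min(\bar a,\bar b)=\min\big(\ell,\,4/\|({\rm{s}}_A,{\rm{t}}_B)\|_{\infty,\text{mod }2\pi}\big)$. Taking expectations over $(\theta,A,B)$ then gives the claimed bound. The only steps demanding care are the conjugation/sign bookkeeping in identifying $\hat Z(\ell,{\rm{s}},{\rm{t}})$ with $\langle Z,q\rangle$ and in matching \eqref{varrhouvformula} with the law of $(\theta,A,B)$, together with the elementary inequality $\ell^{-2}\bar a^2\bar b^2\le\min(\bar a,\bar b)^2$; I do not anticipate a genuine obstacle.
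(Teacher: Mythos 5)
Your proposal is correct and follows essentially the same route as the paper: expand $\Var(\ell,{\rm{s}},{\rm{t}})$ as the quadratic form in $M$, use \eqref{varrhouvformula} to write $\varrho_{\ux,\uy}(E)/\varrho_{0,0}(E)=\mathbb{E}\exp(i(\ux A+\uy B))$, factor into the two Dirichlet kernels, and bound each by $\min(\ell,4/\|\cdot\|_{\text{mod }2\pi})$ before combining via $\ell^{-2}\bar a^2\bar b^2\le\min(\bar a,\bar b)^2$. You merely spell out the final combination and the conjugation bookkeeping that the paper leaves implicit; no gap.
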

\begin{proof}
Observe that $\Var(\ell,{\rm{s}},{\rm{t}})$ is given by
\[
\frac{1}{\ell^2}\sum_{x_1=0}^{\ell-1}\sum_{y_1=0}^{\ell-1}\sum_{x_2=0}^{\ell-1}\sum_{y_2=0}^{\ell-1} \exp\left(-\frac{2\pi i}\ell(x_1{\rm{s}}+y_1{\rm{t}}) \right) \frac{\varrho_{x_1-x_2,y_1-y_2}(\lambda)}{\varrho_{0,0}(\lambda)} \exp\left(\frac{2\pi i}\ell(x_2{\rm{s}}+y_2{\rm{t}}) \right).
\]


 We can rephrase \eqref{varrhouvformula}, as
\[\frac{\varrho_{\ux,\uy}(E)}{\varrho_{0,0}(E)}=\mathbb{E} \exp(i(\ux A+\uy B)).\]
Thus, the variance can be rewritten as 
\begin{equation}\label{varstbound1}\Var(\ell,{\rm{s}},{\rm{t}})=\frac{1}{\ell^2}\mathbb{E} \left|\sum_{x=0}^{\ell-1} \exp\left(ix{\rm{s}}_A\right)\right|^2\left|\sum_{y=0}^{\ell-1}\exp \left(iy{\rm{t}}_B \right)\right|^2. 
\end{equation}

 Note that
\begin{equation}\label{varstbound2}
    \left|\sum_{x=0}^{\ell-1} \exp\left(ix{\rm{s}}_A\right)\right|\le \min\left(\ell,1/{\sin\left(\left\|{\rm{s}}_A\right\|_{\text{mod }2\pi}/2\right)}\right)\le \min\left(\ell,4/{\left\|{\rm{s}}_A\right\|_{\text{mod }2\pi}}\right).
\end{equation}

Combining \eqref{varstbound1} and \eqref{varstbound2}, the statement follows.
\end{proof}

Let $\gamma$ be the support of $(A,B)$, that is,\[\gamma=\{(\alpha,\beta)\in [-\pi,\pi]^2\,:\,2\cos(\alpha)+2\cos(\beta)=E\}.\]

Then it follows from Lemma~\ref{varstbound}, that
\begin{equation}\label{varstbound3}
\Var(\ell,{\rm{s}},{\rm{t}})\le \mathbb{E} \left(\min\left(\ell,4/\inf_{(\alpha,\beta)\in\gamma}\left\|({\rm{s}}_\alpha,{\rm{t}}_\beta)\right\|_{\infty,\text{mod }2\pi}\right)\right)^2.\end{equation}

By replacing $s$ with $s-\ell$, if $s>\ell/2$, we may assume that $|s|\le \ell/2$. We also can assume that $|t|\le \ell/2$. As a direct consequence of \eqref{varstbound3}, we obtain the following lemma.

\begin{lemma}\label{lemmakozel}
We have $\Var(\ell,{\rm{s}},{\rm{t}})\le 16\ell$, unless $\left(\frac{2\pi{\rm{s}}}{\ell},\frac{2\pi{\rm{s}}}{\ell}\right)\in B_{\ell^{-1/2}}(\gamma)$. Here $B_{\ell^{-1/2}}(\gamma)$ is the $\ell^{-1/2}$ neighborhood of $\gamma$, that is,  
\[B_{\varepsilon}(\ell^{-1/2})=\{(\alpha',\beta')\in [-\pi,\pi]^2\,:\,\|(\alpha',\beta')-(\alpha,\beta)\|_{\infty,\text{mod }2\pi}\le \ell^{-1/2}\text{ for some }(\alpha,\beta)\in \gamma\}.\]

\end{lemma}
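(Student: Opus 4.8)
The plan is to read off Lemma~\ref{lemmakozel} directly from the pointwise bound~\eqref{varstbound3}. The first step is to translate the hypothesis $\bigl(\tfrac{2\pi{\rm{s}}}{\ell},\tfrac{2\pi{\rm{t}}}{\ell}\bigr)\notin B_{\ell^{-1/2}}(\gamma)$ into a statement about the quantity inside the expectation in~\eqref{varstbound3}. Since ${\rm{s}}_\alpha=\alpha-\tfrac{2\pi}{\ell}{\rm{s}}$ and ${\rm{t}}_\beta=\beta-\tfrac{2\pi}{\ell}{\rm{t}}$, we have
\[
\bigl\|({\rm{s}}_\alpha,{\rm{t}}_\beta)\bigr\|_{\infty,\text{mod }2\pi}=\Bigl\|(\alpha,\beta)-\bigl(\tfrac{2\pi{\rm{s}}}{\ell},\tfrac{2\pi{\rm{t}}}{\ell}\bigr)\Bigr\|_{\infty,\text{mod }2\pi},
\]
so $\inf_{(\alpha,\beta)\in\gamma}\|({\rm{s}}_\alpha,{\rm{t}}_\beta)\|_{\infty,\text{mod }2\pi}$ is precisely the $\|\cdot\|_{\infty,\text{mod }2\pi}$-distance of the point $\bigl(\tfrac{2\pi{\rm{s}}}{\ell},\tfrac{2\pi{\rm{t}}}{\ell}\bigr)$ to $\gamma$. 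As $\gamma$ is compact this distance is attained, and it exceeds $\ell^{-1/2}$ exactly when the point lies outside the closed neighbourhood $B_{\ell^{-1/2}}(\gamma)$. Hence the hypothesis of the lemma is equivalent to $\inf_{(\alpha,\beta)\in\gamma}\|({\rm{s}}_\alpha,{\rm{t}}_\beta)\|_{\infty,\text{mod }2\pi}>\ell^{-1/2}$.

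With this reformulation the estimate is immediate. Almost surely $(A,B)\in\gamma$, because $2\cos A+2\cos B=\theta+(E-\theta)=E$ by the definition of $(A,B)$, so on this event
\[
\bigl\|({\rm{s}}_A,{\rm{t}}_B)\bigr\|_{\infty,\text{mod }2\pi}\ \ge\ \inf_{(\alpha,\beta)\in\gamma}\bigl\|({\rm{s}}_\alpha,{\rm{t}}_\beta)\bigr\|_{\infty,\text{mod }2\pi}\ >\ \ell^{-1/2}.
\]
Therefore $\min\!\bigl(\ell,\,4/\|({\rm{s}}_A,{\rm{t}}_B)\|_{\infty,\text{mod }2\pi}\bigr)\le 4/\|({\rm{s}}_A,{\rm{t}}_B)\|_{\infty,\text{mod }2\pi}<4\sqrt{\ell}$ almost surely; plugging this into~\eqref{varstbound3} and squaring gives $\Var(\ell,{\rm{s}},{\rm{t}})\le (4\sqrt{\ell})^2=16\ell$, as claimed. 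If one prefers to avoid the compactness remark, one can instead note directly that whenever $\bigl(\tfrac{2\pi{\rm{s}}}{\ell},\tfrac{2\pi{\rm{t}}}{\ell}\bigr)$ lies outside $B_{\ell^{-1/2}}(\gamma)$ every $(\alpha,\beta)\in\gamma$ is at $\|\cdot\|_{\infty,\text{mod }2\pi}$-distance $>\ell^{-1/2}$ from it, which is all that the argument uses.

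Since all the analytic content already sits in Lemma~\ref{varstbound} and formula~\eqref{varrhouvformula}, there is essentially no obstacle; the only point requiring a moment's care — and the one I would state explicitly — is the matching of the exceptional set $B_{\ell^{-1/2}}(\gamma)$ in the lemma with the set of $({\rm{s}},{\rm{t}})$ on which~\eqref{varstbound3} already forces the bound $16\ell$, i.e.\ checking the distance identity above and that the relevant neighbourhood is the closed one.
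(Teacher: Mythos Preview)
Your proof is correct and is exactly the approach the paper takes: the paper states that Lemma~\ref{lemmakozel} is a direct consequence of~\eqref{varstbound3}, and you have simply spelled out that consequence by identifying $\inf_{(\alpha,\beta)\in\gamma}\|({\rm{s}}_\alpha,{\rm{t}}_\beta)\|_{\infty,\text{mod }2\pi}$ with the distance from $\bigl(\tfrac{2\pi{\rm{s}}}{\ell},\tfrac{2\pi{\rm{t}}}{\ell}\bigr)$ to $\gamma$ and bounding accordingly.
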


 Recalling that we assumed $E>0$, we see that the curve $\gamma$ is contained in the square  \break $\left[-\cos^{-1}((E-2)/2),\cos^{-1}((E-2)/2)\right]^2$ and there is a neighbour of the vertices of the square above which does not intersect $\gamma$. Thus, we have the following lemma.
\begin{lemma}\label{ledelta}
There is a $0<\delta<\cos^{-1}((E-2)/2)$ such that for any large enough $\ell$, we have that if  $(\alpha',\beta')\in B_{\ell^{-1/2}}(\gamma)$, then $|\alpha'|\le \delta$ or $|\beta'|\le \delta$. 
\end{lemma}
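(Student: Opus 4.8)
The plan is to choose $\delta$ depending only on $E$ and then invoke a compactness argument. First recall, as already noted before the statement, that on $\gamma$ one has $2\cos\alpha=E-2\cos\beta\ge E-2$ and likewise $2\cos\beta\ge E-2$; since $E>0$ the value $(E-2)/2$ lies in $(-1,1)$, so with $a=\cos^{-1}((E-2)/2)\in(0,\pi)$ and using that $\cos$ is even and decreasing on $[0,\pi]$ we get $\gamma\subseteq[-a,a]^2$. In particular $\gamma$ stays well inside $(-\pi,\pi)^2$, which is what makes the $\|\cdot\|_{\infty,\text{mod }2\pi}$ bookkeeping below harmless.

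Next, for $\delta\in(0,a)$ I would introduce the ``corner set'' $C_\delta=\{(\alpha,\beta)\in[-\pi,\pi]^2:|\alpha|\ge\delta\text{ and }|\beta|\ge\delta\}$, and claim that $\gamma\cap C_\delta=\emptyset$ whenever $4\cos\delta<E$. Indeed, a point of $\gamma$ lying in $C_\delta$ would satisfy $\delta\le|\alpha|\le a$ and $\delta\le|\beta|\le a$, hence $\cos\alpha,\cos\beta\le\cos\delta$, whence $E=2\cos\alpha+2\cos\beta\le4\cos\delta<E$, a contradiction. Since $4\cos a=2E-4<E$ (this is where $E<4$ enters), continuity of $\cos$ shows that every $\delta\in(\cos^{-1}(E/4),\cos^{-1}((E-2)/2))$ satisfies $4\cos\delta<E$; this interval is nonempty and contained in $(0,a)$, so I fix such a $\delta$, which in particular meets the requirement $0<\delta<\cos^{-1}((E-2)/2)$.

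It then remains to upgrade disjointness to a positive distance and transfer this to the $\ell^{-1/2}$-neighbourhood. Both $\gamma$ and $C_\delta$ are compact subsets of the flat torus $(\mathbb R/2\pi\mathbb Z)^2$, on which $\|\cdot\|_{\infty,\text{mod }2\pi}$ is a genuine metric, and they are disjoint there too: a common point in the torus would, since $\gamma\subseteq(-\pi,\pi)^2$ where the canonical projection is injective, produce an honest element of $\gamma\cap C_\delta$, which is empty. Hence $\rho:=\tfrac12\operatorname{dist}_{\infty,\text{mod }2\pi}(\gamma,C_\delta)>0$ is a quantity depending only on $E$. Finally, if $\ell>(2\rho)^{-2}$ and some $(\alpha',\beta')\in B_{\ell^{-1/2}}(\gamma)$ had $|\alpha'|>\delta$ and $|\beta'|>\delta$, then $(\alpha',\beta')\in C_\delta$ would lie at $\|\cdot\|_{\infty,\text{mod }2\pi}$-distance at most $\ell^{-1/2}<2\rho$ from $\gamma$, contradicting the definition of $\rho$; so for all such $\ell$ we must have $|\alpha'|\le\delta$ or $|\beta'|\le\delta$.

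All of this is elementary; the computations reduce to monotonicity of $\cos^{-1}$ and a compactness argument, and the only point demanding a little care is that disjointness and the distance estimate be taken in the $\|\cdot\|_{\infty,\text{mod }2\pi}$ metric rather than the Euclidean one, which, because $\gamma$ is bounded away from $\partial([-\pi,\pi]^2)$, presents no real obstacle.
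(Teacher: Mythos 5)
Your proof is correct, and it follows essentially the same geometric idea the paper sketches in the sentence immediately preceding the lemma (namely, that $\gamma\subseteq[-a,a]^2$ with $a=\cos^{-1}((E-2)/2)$, and there is a neighbourhood of the vertices of that square avoiding $\gamma$), but you fill in all the details the paper leaves implicit: the explicit algebraic condition $4\cos\delta<E$ (which forces $\gamma\cap C_\delta=\emptyset$), the verification that the interval $(\cos^{-1}(E/4),\cos^{-1}((E-2)/2))$ of admissible $\delta$ is nonempty precisely because $0<E<4$, and the compactness argument that turns disjointness into a quantitative bound on $\ell$.
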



We set $\varepsilon=(\cos^{-1}((E-2)/2)-\delta)/2$.

\begin{lemma}\label{alphadensity}
There is a $d$  such that density of $A$ is bounded by $d$ on the interval $[-\delta-\varepsilon,\delta+\varepsilon]$. Assuming that $\left|\frac{2\pi{\rm{s}}}{\ell}\right|\le \delta$, the density of ${\rm{s}}_A$ is also bounded by $d$ on the interval $[-\varepsilon,\varepsilon]$.
\end{lemma}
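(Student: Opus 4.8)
The plan is to write down the density of $A$ in closed form via a change of variables and then read off where it can blow up. Recall that $d_0(u)=\tfrac{\mathbbm{1}(|u|\le 2)}{\pi\sqrt{4-u^2}}$, so $\theta$ has density $\frac{1}{\varrho_{0,0}(E)}\cdot\frac{1}{\pi\sqrt{4-\theta^2}}\cdot\frac{1}{\pi\sqrt{4-(E-\theta)^2}}$, supported on $\{\theta:\ |\theta|\le 2,\ |E-\theta|\le 2\}$, which for $E\in(0,4)$ is the interval $[E-2,2]$. First I would compute the law of $|A|=\cos^{-1}(\theta/2)$ by substituting $\theta=2\cos a$ with $a\in[0,\pi]$: here $|d\theta/da|=2\sin a=\sqrt{4-\theta^2}$, and this Jacobian cancels exactly the factor $\tfrac1{\pi\sqrt{4-\theta^2}}$, so $|A|$ has density $a\mapsto \frac{1}{\pi^2\varrho_{0,0}(E)}\cdot\frac{1}{\sqrt{4-(E-2\cos a)^2}}$ on $[0,\cos^{-1}((E-2)/2)]$. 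Since $A=\pm|A|$ with a uniform random sign independent of $|A|$, the density of $A$ at $\alpha$ is half of this evaluated at $|\alpha|$.

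The key point is then the singularity structure of $\alpha\mapsto\bigl(4-(E-2\cos\alpha)^2\bigr)^{-1/2}$. The equation $E-2\cos\alpha=-2$ would force $\cos\alpha=(E+2)/2>1$ and has no solution, so the density blows up only where $E-2\cos\alpha=2$, i.e.\ at the endpoints $|\alpha|=\cos^{-1}((E-2)/2)$. Now by the choice $\varepsilon=(\cos^{-1}((E-2)/2)-\delta)/2$ we have $\delta+\varepsilon=\tfrac12(\delta+\cos^{-1}((E-2)/2))<\cos^{-1}((E-2)/2)$, so on $[-\delta-\varepsilon,\delta+\varepsilon]$ the quantity $4-(E-2\cos\alpha)^2$ is bounded below by a positive constant; by continuity the density of $A$ is bounded there by a finite $d$, which is the first assertion. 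For the second, note that ${\rm{s}}_A=A-\tfrac{2\pi}{\ell}{\rm{s}}$ is a translate of $A$ by an amount of absolute value at most $\delta$; hence for $x\in[-\varepsilon,\varepsilon]$ the density of ${\rm{s}}_A$ at $x$ equals the density of $A$ at $x+\tfrac{2\pi}{\ell}{\rm{s}}\in[-\delta-\varepsilon,\delta+\varepsilon]$, which is at most $d$.

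I do not expect a serious obstacle here. The only step that needs care is the change of variables: one must check that the Jacobian $\sqrt{4-\theta^2}$ precisely cancels one of the two arcsine-type factors, so that the resulting density of $A$ is continuous on the open interval $(-\cos^{-1}((E-2)/2),\cos^{-1}((E-2)/2))$ and degenerates only at its endpoints. Once that is in place, the inequality $\delta+\varepsilon<\cos^{-1}((E-2)/2)$ coming from Lemma~\ref{ledelta} together with the definition of $\varepsilon$ does the rest.
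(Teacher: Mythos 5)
Your proposal is correct and takes essentially the same route as the paper: both perform the change of variables $\theta=2\cos\alpha$, observe that the Jacobian $2|\sin\alpha|$ cancels the $d_0(2\cos\alpha)$ factor to give the density $\tfrac{d_0(E-2\cos\alpha)}{2\pi\varrho_{0,0}(E)}$, and then use continuity of this density on the open support interval together with $\delta+\varepsilon<\cos^{-1}((E-2)/2)$ to conclude boundedness. You simply spell out the singularity analysis (only at $\cos\alpha=(E-2)/2$, not at $\cos\alpha=(E+2)/2>1$) and the translation argument for ${\rm{s}}_A$ in more detail than the paper's terse "The second statement is straightforward."
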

\begin{proof}
Note that $A$ is supported on $\left[-\cos^{-1}((E-2)/2),\cos^{-1}((E-2)/2)\right]$. By a change of variables, the density of $A$ is given by \[\frac{|\sin(\alpha)|d_0(2\cos(\alpha))d_0(E-2\cos(\alpha))}{\varrho_{0,0}(E)}=\frac{d_0(E-2\cos(\alpha))}{2\pi\varrho_{0,0}(E)}\]
on the support of $A$. This density is continuous on $\left(-\cos^{-1}((E-2)/2),\cos^{-1}((E-2)/2)\right)$, so the first statement follows. The second statement is straightforward from the first one. 
\end{proof}

Now we are able to prove the following strong estimate on $\Var(\ell,{\rm{s}},{\rm{t}})$.

\begin{lemma}\label{hatZVar}
There is $c$ depending only on $E$ such that $\Var(\ell,{\rm{s}},{\rm{t}})\le c\ell.$
\end{lemma}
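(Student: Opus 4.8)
The plan is to feed Lemmas~\ref{lemmakozel}, \ref{ledelta} and~\ref{alphadensity} into the elementary Fourier estimates~\eqref{varstbound1}--\eqref{varstbound2}. We have already reduced to $|{\rm{s}}|,|{\rm{t}}|\le\ell/2$. If $(\tfrac{2\pi{\rm{s}}}{\ell},\tfrac{2\pi{\rm{t}}}{\ell})\notin B_{\ell^{-1/2}}(\gamma)$, then Lemma~\ref{lemmakozel} already gives $\Var(\ell,{\rm{s}},{\rm{t}})\le 16\ell$, so I may assume this point lies in $B_{\ell^{-1/2}}(\gamma)$. By Lemma~\ref{ledelta}, for all large $\ell$ we then have $|\tfrac{2\pi{\rm{s}}}{\ell}|\le\delta$ or $|\tfrac{2\pi{\rm{t}}}{\ell}|\le\delta$. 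These two cases are symmetric: the density $d_0(u)d_0(E-u)/\varrho_{0,0}(E)$ of $\theta$ is symmetric about $E/2$, so $\theta$ and $E-\theta$ --- hence $A$ and $B$ --- are identically distributed, and the proof of Lemma~\ref{alphadensity} applies verbatim with $B,{\rm{t}}_B$ in place of $A,{\rm{s}}_A$. So I will assume $|\tfrac{2\pi{\rm{s}}}{\ell}|\le\delta$.

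Starting from~\eqref{varstbound1}, I would bound the $y$-sum trivially by $\ell$ and apply~\eqref{varstbound2} to the $x$-sum, which yields
\[
\Var(\ell,{\rm{s}},{\rm{t}})\ \le\ \mathbb{E}\left[\min\left(\ell,\ 4/\|{\rm{s}}_A\|_{\text{mod }2\pi}\right)^2\right].
\]
The decisive structural point concerns the location of ${\rm{s}}_A=A-\tfrac{2\pi{\rm{s}}}{\ell}$: since $|A|\le c_0:=\cos^{-1}((E-2)/2)$ and $|\tfrac{2\pi{\rm{s}}}{\ell}|\le\delta<c_0$, it lies in $I_0:=[-(c_0+\delta),\,c_0+\delta]$, and because $E>0$ we have $c_0<\pi$, hence $c_0+\delta<2c_0<2\pi$ and $\rho:=2\pi-(c_0+\delta)$ is a strictly positive constant depending only on $E$. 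Consequently, for $r\in I_0$ the wrapped distance is $\|r\|_{\text{mod }2\pi}=|r|$ when $|r|\le\pi$ and $\|r\|_{\text{mod }2\pi}=2\pi-|r|\ge\rho$ when $|r|>\pi$; in particular $\|r\|_{\text{mod }2\pi}\ge\min(\varepsilon,\rho)$ whenever $r\in I_0\setminus[-\varepsilon,\varepsilon]$.

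Splitting the expectation accordingly, on the event $\{{\rm{s}}_A\notin[-\varepsilon,\varepsilon]\}$ the integrand is at most the constant $16/\min(\varepsilon,\rho)^2$, while on $\{{\rm{s}}_A\in[-\varepsilon,\varepsilon]\}$ Lemma~\ref{alphadensity} bounds the density of ${\rm{s}}_A$ by $d$, so
\[
\mathbb{E}\left[\min(\ell,4/|{\rm{s}}_A|)^2\,\mathbbm{1}(|{\rm{s}}_A|\le\varepsilon)\right]\ \le\ d\int_{-\infty}^{\infty}\min(\ell,4/|r|)^2\,dr\ =\ 16\,d\,\ell,
\]
using $\int_0^\infty\min(\ell,4/r)^2\,dr=\int_0^{4/\ell}\ell^2\,dr+\int_{4/\ell}^{\infty}16r^{-2}\,dr=8\ell$. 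Hence $\Var(\ell,{\rm{s}},{\rm{t}})\le 16\,d\,\ell+16/\min(\varepsilon,\rho)^2\le c\,\ell$ for every $\ell\ge 1$, with $c$ depending only on $E$ through $d$, $\varepsilon=(c_0-\delta)/2$ and $\rho$; the finitely many $\ell$ lying below the thresholds of Lemmas~\ref{lemmakozel}--\ref{alphadensity} are absorbed into $c$ via the trivial bound $\Var(\ell,{\rm{s}},{\rm{t}})\le\ell^2$ from~\eqref{varstbound3}. The $E<0$ case is handled identically after the reductions already performed in this section.

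The only genuinely delicate point is the bookkeeping of the $\text{mod }2\pi$ wrapping: one must check $c_0+\delta<2\pi$ so that ${\rm{s}}_A$ can never come within $\rho$ of $\pm2\pi$. This is exactly where the standing hypothesis $E\ne0$ (here $E>0$) enters --- if $E\to0$ then $c_0\to\pi$ and a second near-singular value of $\|{\rm{s}}_A\|_{\text{mod }2\pi}$ would emerge at which there is no density control. Everything else is a routine one-variable integration.
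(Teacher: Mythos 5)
Your proof is correct and follows essentially the same route as the paper: Lemma~\ref{lemmakozel} handles the off-curve coefficients, Lemma~\ref{ledelta} reduces to $|2\pi{\rm{s}}/\ell|\le\delta$, and the density bound of Lemma~\ref{alphadensity} controls the contribution near ${\rm{s}}_A=0$, the only cosmetic difference being that you compute one integral $\int\min(\ell,4/|r|)^2\,dr$ where the paper splits into the three ranges $|{\rm{s}}_A|\le\ell^{-1}$, $\ell^{-1}<|{\rm{s}}_A|\le\varepsilon$, $|{\rm{s}}_A|>\varepsilon$. Your explicit check that $|{\rm{s}}_A|$ stays bounded away from $2\pi$ (so the mod-$2\pi$ wrapping causes no second near-singularity) is a point the paper leaves implicit, and is a welcome clarification rather than a deviation.
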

\begin{proof}
Using Lemma \ref{lemmakozel}, we may assume that $\left(\frac{2\pi{\rm{s}}}{\ell},\frac{2\pi{\rm{s}}}{\ell}\right)\in B_{\ell^{-1/2}}(\gamma)$. Then Lemma~\ref{ledelta} gives us that $\left|\frac{2\pi{\rm{s}}}{\ell}\right|\le \delta$ or $\left|\frac{2\pi{\rm{t}}}{\ell}\right|\le \delta$. Without the loss of generality, we may assume  that $\left|\frac{2\pi{\rm{s}}}{\ell}\right|\le \delta$.

Assume that $\ell$ is large enough so that $\ell^{-1}<\varepsilon$.  Using Lemma \ref{alphadensity}, we get
\begin{align*}
    \mathbb{E}\left[\mathbbm{1}(|{\rm{s}}_A|\le \ell^{-1})\left(\min\left(\ell,4/{\left\|{\rm{s}}_A,{\rm{t}}_B\right\|_{\infty,\text{mod }2\pi}}\right)\right)^2\right]&\le 2d\ell,\\
    \mathbb{E}\left[\mathbbm{1}(\ell^{-1}<|{\rm{s}}_A|\le \varepsilon)\left(\min\left(\ell,4/{\left\|{\rm{s}}_A,{\rm{t}}_B\right\|_{\infty,\text{mod }2\pi}}\right)\right)^2\right]&\le 2d\int_{\ell^{-1}}^\varepsilon \frac{16}{u^2} du\le 32d\ell,\\
    \mathbb{E}\left[\mathbbm{1}(\varepsilon<|{\rm{s}}_A|)\left(\min\left(\ell,4/{\left\|{\rm{s}}_A,{\rm{t}}_B\right\|_{\infty,\text{mod }2\pi}}\right)\right)^2\right]&\le 16\varepsilon^{-2}.
\end{align*}
Summing these inequalities, and Lemma \ref{varstbound}, the statement follows.
\end{proof}


Now we are ready to prove Proposition \ref{propnonconv}.
\begin{proof}[Proof of Proposition \ref{propnonconv}]
Let $\varepsilon$ be provided by Corollary~\ref{CorTorusconc}, and let $\varepsilon'=\frac{\varepsilon}{25}$. Let $j$ be chosen such that $|v_k(j)|$ is maximal. With probability at least $\frac{1}2$, we have $|v_k(j)|>\varepsilon$. Assume that this event occurs.   Let ${\rm{s}}$ and ${\rm{t}}$ be such that $|\hat{\underline{u}}_{j,0}(\ell,{\rm{s}},{\rm{t}})|\ge \frac{\ell}{25}$, such ${\rm{s}}$ and ${\rm{t}}$ exist by Lemma~\ref{nagyfourier}. By the unitary invariance of the noise, conditioned on $(C,D)$, $k$ and  $|v_k(1)|,|v_k(2)|,\dots,|v_k(n^2)|$ the phases of the components of  $v_k$ are i.i.d. uniform random. Thus, the phase of $v_k(j)\hat{\underline{u}}_{j,0}(\ell,{\rm{s}},{\rm{t}})$ and $\sum_{m\neq j} v_k(m)\hat{\underline{u}}_{m,0}(\ell,{\rm{s}},{\rm{t}})$ are independent uniform random. Thus, by symmetry, we have
\[\mathbb{P}\left(\Re \left( \overline{v_k(j)\hat{\underline{u}}_{j,0}(\ell,{\rm{s}},{\rm{t}})} \sum_{m\neq j} v_k(m)\hat{\underline{u}}_{m,0}(\ell,{\rm{s}},{\rm{t}})\right)\ge 0\right)\ge \frac{1}2.\]
On this event,
\[|\hat{\underline{u}}_{k,t}(\ell,{\rm{s}},{\rm{t}})|=\left|\sum_{m=1}^{n^2} v_k(m)\hat{\underline{u}}_{\ell,0}(\ell,{\rm{s}},{\rm{t}})\right|\ge |v_k(j)\hat{\underline{u}}_{j,0}(\ell,{\rm{s}},{\rm{t}})|\ge {\varepsilon}\frac{\ell}{25}.\]

This concludes the proof of the first part of the proposition. 

Combining Lemma~\ref{hatZVar}, the Gaussian tail estimate and the union bound
the second part of the proposition follows. 
\end{proof}
\appendix
\section{The proof of some technical statements}

\subsection{The proof of Lemma~\ref{newcontinuity}}
Write 
$
d_\ux=a_\ux+b_\ux,
$
where $a_\ux$ is the unique continuous function that is zero outside $[-2,2]$, agrees with $d_\ux$ on $[-2+\epsilon,2-\epsilon]$ and is linear both on $[-2,\epsilon-2]$ and on $[2-\epsilon,2]$. 

Since $T_{|\ux|}$ has absolute value at most 1 and absolute derivative at most $u^2$, by examining the formula \eqref{e:arcsine}, we see that 
$$
 \|a_\ux'\|_\infty \le \epsilon^{-3/2}(1+\ux^2),\qquad \|b_\ux\|_1 \le \epsilon^{1/2}.$$

Since $b_\ux*b_\uy$ vanishes on $[3\epsilon-4,-3\epsilon]\cup[3\epsilon,4-3\epsilon]$, for any $\lambda\in [3\epsilon-4,-3\epsilon]\cup[3\epsilon,4-3\epsilon]$, we have
\[
    |(d_\ux*d_\uy)'(\lambda)|=|(d_\ux*a_\uy+a_\ux*b_\uy)'(\lambda)|\le \|d_\ux*a_\uy'\|_\infty+\|a_\ux'*b_\uy\|_\infty. 
\]

Using Young's inequality, we get that
\begin{equation}\label{dudvdiffbecs}
\|d_\ux*a_\uy'\|_\infty+\|a_\ux'*b_\uy\|_\infty\le \|d_\ux\|_1\|a_\uy'\|_\infty+\|b_\uy\|_1\|a_\ux'\|_\infty\le 2\varepsilon^{-3/2}(1+\ux^2+\uy^2).
\end{equation}

Thus, the first statement follows.

We write
$$
|d_\ux*d_\uy*(\kappa_\eta-\pi \delta)(\lambda)|= 
|(b_\ux*b_\uy+d_\ux*a_\uy+a_\ux*b_\uy)*(\kappa_\eta-\pi \delta)(\lambda)|,
$$
we bound the terms separately. First
$$
|b_\ux*b_\uy*(\kappa_\eta-\pi \delta)(\lambda)|=|b_\ux*b_\uy*\kappa_\eta(\lambda)|.
$$
The distance of $\lambda$ from the support of $b_\ux*b_\uy$ is at least $\epsilon$. Thus the convolution above only uses the values of $\kappa_\eta$ on the set $J=(-\epsilon,\epsilon)^c$, and we can replace 
$\kappa_\eta$ by $1_J\kappa_\eta$. By Young's inequality, we get the upper bound
\begin{equation}\label{e:edge}
\|b_\ux*b_\uy*(1_J\kappa_\eta)\|_\infty\le \|b_\ux\|_1 \|b_\uy\|_1 \|1_J\kappa_\eta\|_\infty\le \epsilon \frac\eta{\eta^2+\epsilon^2}.
\end{equation}
Let $f_\eta(t)=\int_{-\infty}^t (\kappa_\eta-\pi\delta)(s)ds$. Then $f_1$ decays like $1/|t|$ at infinity, so  
as $p>1$, by scaling, $$\|f_\eta\|_p=\eta^{1/p}\|f_1\|_p <\infty
.$$
Let $q$ satisfy $1/p+1/q=1$. We move the derivative and use Young's inequality to get
\begin{align*}
\|(d_\ux*a_\uy+a_\ux*b_\uy)*(\kappa_\eta-\pi \delta)\|_\infty&= \|(d_\ux*a_\uy'+a_\ux'*b_\uy)*f_\eta\|_\infty\\&\le\|f_\eta\|_p(\|d_\ux*a_\uy'\|_{q}+\|a_\ux'*b_\uy\|_q).
\end{align*}
Combining the fact that $a'_\ux, a'_\uy$ are supported on an interval of length 4 and \eqref{dudvdiffbecs}, we get
\[\|d_\ux*a_\uy'\|_{q}+\|a_\ux'*b_\uy\|_q\le 4^{1/q} (\|d_\ux*a_\uy'\|_\infty+\|a_\ux'*b_\uy\|_\infty)\le 8 \varepsilon^{-3/2}(1+\ux^2+\uy^2).\]

The second claim follows with $c_p=8\|f_1\|_p$. The last statement follows from the first two and the mean value theorem by enlarging $c_p$ if necessary.

\subsection{The proof of Lemma~\ref{varrhosing}}

By symmetry, we may assume that $x>0$.  First assume that $0<x<2$. We write $d_0=a+b$, where $a(t)=\mathbbm{1}\left(|t|\le 2-x/2\right)d_0(t)$ and $b(t)=\mathbbm{1}\left(|t|> 2-x/2\right)d_0(t)$. Observing that $b*b(x)=0$, we see that
\begin{multline*}d_0*d_0(x)\le \|a*a\|_{\infty}+2\|a*b\|_\infty\le \|a\|_2^2+2\|a\|_\infty \|b\|_1\\\le \log(4/x)+2(\pi^{-1}\sqrt{2/x})(4\pi^{-1}\sqrt{x/2})\le \log(100/x). 
\end{multline*}

Assuming that $2\le x\le 4$, we have
\[
   (d_0*d_0)(x)=\int_{-2+x}^2 d_0(t)d_0(x-t)dt\le \int _{-2+x}^2 \frac{1}{\pi\sqrt{(2-t)(2-x+t)}} dt=1<\log(100/x).\qedhere
\]

\subsection{The proof of Lemma~\ref{stieltjescont1G}}

Differentiating under the integral sign gives $|f'|\le g/\eta$. Thus $|f-z|$ is $\frac{\max_J g}{\eta}$-Lipschitz on $J$. We may assume that $\max_J|f-z|\ge b$, otherwise the statement is trivial. We set \[\ell=\min\left(\eta\frac{\max_J |f-z|-b}{\max_J g},{r}\right)
.\]Without loss of generality, assume that $1_J|f-z|$ is maximized at some $\lambda_0$ in the left half of~$J$. Then for $\lambda\in [\lambda_0,\lambda_0+\ell]\subset J$, we have
\[|f(\lambda)-z|\ge |f(\lambda_0)-z|-\frac{\max_J g}{\eta}(\lambda-\lambda_0)\ge b.\]

This proves the first statement, the proof of the second statement is very similar, we prove the third one. We may assume that $\min_J f\le b$, otherwise the statement is trivial. We set \[\ell=\min\left(\eta\frac{b-\min_J f }{b},{r}\right)
.\]Without loss of generality, assume that $1_J f$ is minimized at some $\lambda_0$ in the left half of $J$. Then  $[\lambda_0,\lambda_0+\ell]\subset J$. Let $\lambda\ge \lambda_0$ be the smallest point such that $f(\lambda)=b$ if there is any. It is enough to prove that $\lambda\ge \lambda_0+\ell$. We prove by contradiction. So assume that $\lambda-\lambda_0<\ell\le \eta\frac{b-\min_J f }{b}$. Note that on the interval $[\lambda_0,\lambda]$, we have $f'\le f/\eta\le b/\eta$. Thus,
\[f(\lambda)\le f(\lambda_0)+\frac{b}{\eta} (\lambda-\lambda_0)< b,\]
 which is a contradiction.

\subsection{The proof of Lemma~\ref{convergeramdomI}}

We start by proving the first statement in the special case $y=x$. Let $\delta>0$. Take any $I\in\mathcal{I}$ such that
\[\sup_{\lambda\in I^F}\left|B(\eta,\lambda,x,x)-M(x,x)\right|>2\delta.\]
We apply Lemma~\ref{stieltjescont1G} with the choice of $J=I^F$, $\nu=\mu_{n,x,x}$, $b=\delta$ and $z=M(x,x)$. Note that since  $\mu_{n,x,x}$ is a probability measure, we have $g=f$, and $\max_J g\le z+\max_J |f-z|$. Thus, 
\begin{align*}\int_{J} \mathbbm{1}\left(|f(\lambda)-z|\ge \delta\right)d\lambda\ge \min\left(\eta\frac{\max_J |f-z|-\delta}{\max_J g},{r}\right)\ge \min\left(\eta\frac{\max_J |f-z|-\delta}{\max_J |f-z|+z},{r}\right).
\end{align*}
Recall that we assumed that  $\max_{J} |f-z|\ge 2\delta$. Therefore, using the monotonicity of the map $x\mapsto \frac{x-\delta}{x+z}$, we get that 
\begin{align*}
\int_{J} \mathbbm{1}\left(|f(\lambda)-z|\ge \delta\right)d\lambda\ge \min\left(\eta\frac{\delta}{2\delta+z},{r}\right)\ge q_{x,\delta} t
\end{align*}
for some $q_{x,\delta}>0$, where we used that 
$t/c<\eta<ct $.
Thus, 
\[3\int_{E-\ell}^{E+\ell} \mathbbm{1}\left(|f(\lambda)-z|\ge \delta\right)d\lambda\ge q_{x,\delta} t  \left|\left\{I\in\mathcal{I}\,:\,\sup_{\lambda\in I^F}\left|B(\eta,\lambda,x,x)-M(x,x)\right|>2\delta\right\}\right|.
\]
The factor $3$ on the left is coming from the fact that the intervals $I^F$ ($I\in \mathcal{I}$) cover each point at most $3$ times.

Rearranging 
\[
    \frac{6\ell}{q_{x,\delta} t|\mathcal{I}|}  \mathbb{P}_{\lambda}\left( \left|B(\eta,\lambda,x,x)-M(x,x)\right|\ge\delta\right)\ge \mathbb{P}_I\left(\sup_{\lambda\in I^F}\left|B(\eta,\lambda,x,x)-M(x,x)\right|>2\delta\right).
\]
Noting that $|\mathcal{I}|=\frac{2\ell}{r}-1=\frac{2\ell}{tn^\varepsilon}-1$ and using condition \eqref{limitMA}, it follows that the left hand side converge to zero for all choices $\delta>0$. So the first statement follows in the special case $y=x$.  

Now we prove the general case. By using the already established case, it is enough to prove that for any $\delta>0$, we have $\lim_{n\to\infty} \mathbb{P}(I\in \mathcal{I}_B)=0$, where $\mathcal{I}_B$ consists of all the intervals $I\in \mathcal{I}$ such that
$\left|B(\eta,\lambda,x,y)-M(x,y)\right|>2\delta$ for some $\lambda\in I^F$, and $\left|B(\eta,\lambda,x,x)-M(x,x)\right|\le 1$, $\left|B(\eta,\lambda,y,y)-M(y,y)\right|\le 1$ for all $\lambda\in I^F$. 
Consider an $I\in\mathcal{I}_B$.  We apply Lemma~\ref{stieltjescont1G} with the choice of $J=I^F$, $\nu=\mu_{n,x,y}$, $b=\delta$ and $z=M(x,y)$. Using \eqref{absolute}, we see that
$$\max_J g\le 1+\frac{M(x,x)+M(y,y)}{2}.$$ Since $I\in \mathcal{I}_B$, we have $\max_J |f-z|\ge 2\delta$.
\begin{align*}\int_J \mathbbm{1}\left(|f(\lambda)-z|\ge \delta\right)d\lambda\ge \min\left(\eta\frac{\max_J |f-z|-\delta}{\max_J g},{r}\right)\ge \min\left(\eta\frac{\delta}{\max_J g},{r}\right)\ge q_{x,y,\delta} t
\end{align*}
for some $q_{x,y,\delta}>0$. Thus, $\lim_{n\to\infty} \mathbb{P}_I(I\in \mathcal{I}_B)=0$ follows the same way as before.


To prove the second statement, let $I\in\mathcal{I}$ such that $\Im m(\lambda+i\eta_\ell)\le \frac{\eta_\ell}t$ for some $\lambda\in I^F$. We apply Lemma~\ref{stieltjescont1G} with the choice of $J=I$, $\nu=\mu_{n}$, $b=\frac{\eta_\ell}{t}+n^{-\varepsilon}$. Note that $b=O(1)$ and $\min_J f\le \frac{\eta_\ell}{t}$. Thus,
\[\int_J \mathbbm{1}\left(f(\lambda) \le \frac{\eta_\ell}{t}+n^{-\varepsilon}\right)d\lambda\ge \min\left(\eta\frac{(\frac{\eta_\ell}{t}+n^{-\varepsilon})-\min_J f}{b},{r}\right)\ge q  n^{-\varepsilon} t\]
for some $q>0$. Using the same argument as before, we get that \[\lim_{n\to\infty} \mathbb{P}_I\left(\Im m(\lambda+i\eta_\ell)>\frac{\eta_\ell}t\text{ for all }\lambda\in I^F\right)=1.\]
A similar argument gives us that \[\lim_{n\to\infty} \mathbb{P}_I\left( \Im m(\lambda+i\eta_u)<\frac{\eta_u}t\text{ for all }\lambda\in I^F\right)=1.\]
Thus the second statement follows. We omit the proof of the third statement.

\subsection{The proof of Proposition~\ref{prop47}}
We write $\mathbb{E}_\lambda$ for the expectation over the uniform random choice of $\lambda\in [E\pm \ell]$.

\begin{lemma}

There is a $k$ with the property that
for any \[\eta\in  \{\eta_\ell, \eta_u\}\cup \{t n^{-2\varepsilon},2t n^{-2\varepsilon},2^2t n^{-2\varepsilon},\dots,2^bt n^{-2\varepsilon}\},\]
and fixed $\ux,\uy$, we have
\[\lim_{n\to \infty} n^{4\varepsilon}\log(n) \mathbb{P}_\lambda\left(\left| \mu_{{\rm{c}},\ux}*\mu_{{\rm{d}},\uy}*\kappa_\eta(\lambda)-\pi{\varrho_{\ux,\uy}(E)}\right|\ge \frac{n^{-2\varepsilon}}2+k(1+\ux^2+\uy^2)(\sqrt{\eta}+\ell)\right)=0.\]

Moreover, for $\eta\in \{\eta_\ell, \eta_u\}$, we have
\[\lim_{n\to \infty} n^{4\varepsilon}\log(n) \mathbb{P}_\lambda\left(\left| \mu_{{\rm{c}},\ux}*\mu_{{\rm{c}},\uy}*\kappa_\eta(\lambda)-\pi{\varrho_{\ux,\uy}(E)}\right|\ge {n^{-2\varepsilon}}\right)=0.\]
\end{lemma}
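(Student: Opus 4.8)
The plan is to reduce both displays to two inputs: the definition of niceness of the fixed pair $({\rm c},{\rm d})=({\rm c}_n,{\rm d}_n)$, and the deterministic continuity estimate of Lemma~\ref{newcontinuity}, connected by a two-step triangle inequality through the deterministic convolution $d_\ux*d_\uy*\kappa_\eta$. First I would fix once and for all a small $\epsilon_0\in(0,1)$ (playing the role of $\epsilon$ in Lemma~\ref{newcontinuity}) with $3\epsilon_0<|E|<4-3\epsilon_0$, which is possible because $E\in(-4,4)\setminus\{0\}$; since $\ell=2n^{-\delta}\to 0$, for all large $n$ the interval $[E\pm\ell]$ lies in $[3\epsilon_0-4,-3\epsilon_0]\cup[3\epsilon_0,4-3\epsilon_0]$. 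I would take $k:=c_{\epsilon_0,2}$, the constant of Lemma~\ref{newcontinuity} at $p=2$; it depends only on $\epsilon_0$, and the $\ux,\uy$-dependence in that lemma sits entirely in an explicit factor $(\ux^2+\uy^2+1)$, so this $k$ is uniform in $\ux,\uy$. I would use $\varrho_{\ux,\uy}=d_\ux*d_\uy$ (Section~\ref{subsecspec}) and write $\mathbb E_\lambda f=\tfrac1{2\ell}\int_{E-\ell}^{E+\ell}f(\lambda)\,d\lambda$.

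For the first display: if $\lambda\in[E\pm\ell]$ satisfies $|\mu_{{\rm c},\ux}*\mu_{{\rm d},\uy}*\kappa_\eta(\lambda)-\pi\varrho_{\ux,\uy}(E)|\ge \tfrac12 n^{-2\varepsilon}+k(1+\ux^2+\uy^2)(\sqrt\eta+\ell)$, then by the triangle inequality at least one of
\[
\text{(a)}\ \ |\mu_{{\rm c},\ux}*\mu_{{\rm d},\uy}*\kappa_\eta(\lambda)-d_\ux*d_\uy*\kappa_\eta(\lambda)|\ge \tfrac12 n^{-2\varepsilon},
\qquad
\text{(b)}\ \ |d_\ux*d_\uy*\kappa_\eta(\lambda)-\pi\varrho_{\ux,\uy}(E)|\ge k(1+\ux^2+\uy^2)(\sqrt\eta+\ell)
\]
holds. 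By the last assertion of Lemma~\ref{newcontinuity} with $p=2$ (so $\eta^{1/p}=\sqrt\eta$; it holds for every $\eta\ge 0$ once $[E\pm\ell]$ is admissible, i.e.\ for $n$ large, and for every $\lambda\in[E\pm\ell]$), event (b) is empty, so $\mathbb P_\lambda(\text{(b)})=0$ for all large $n$. For (a), since $\ux,\uy$ are fixed we have $|\ux|+|\uy|\le n^\varepsilon$ for large $n$ and $\eta$ lies in the $\eta$-set appearing in the definition of niceness, so Markov's inequality and niceness give
\[
\mathbb P_\lambda(\text{(a)})\le 4n^{4\varepsilon}\,\mathbb E_\lambda\big|\mu_{{\rm c},\ux}*\mu_{{\rm d},\uy}*\kappa_\eta(\lambda)-d_\ux*d_\uy*\kappa_\eta(\lambda)\big|^2\le \frac{4n^{4\varepsilon}}{2\ell}\cdot 2\ell n^{-9\varepsilon}=4n^{-5\varepsilon}.
\]
Hence $n^{4\varepsilon}\log n$ times the probability in the first display is at most $n^{4\varepsilon}\log n\,(4n^{-5\varepsilon})=4n^{-\varepsilon}\log n\to 0$, which is the first claim.

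For the second display I would restrict to $\eta\in\{\eta_\ell,\eta_u\}$ and deduce it from the first by a parameter estimate. Here $t=n^{-2\gamma}$ and $\eta_\ell=(\pi\varrho_{0,0}(E)-n^{-\varepsilon})t$, $\eta_u=(\pi\varrho_{0,0}(E)+n^{-\varepsilon})t$, so $\eta\le (\pi\varrho_{0,0}(E)+1)t\le C n^{-2\gamma}$ with $C=C(E)$ finite (using $\varrho_{0,0}(E)<\infty$, e.g.\ Lemma~\ref{varrhosing}), whence $\sqrt\eta\le\sqrt C\,n^{-\gamma}$. Since $\gamma>8\varepsilon>2\varepsilon$ and $\ell=2n^{-3\varepsilon}$, both $\sqrt\eta$ and $\ell$ are $o(n^{-2\varepsilon})$, and as $\ux,\uy$ are fixed this yields $k(1+\ux^2+\uy^2)(\sqrt\eta+\ell)<\tfrac12 n^{-2\varepsilon}$ for all large $n$; hence $n^{-2\varepsilon}>\tfrac12 n^{-2\varepsilon}+k(1+\ux^2+\uy^2)(\sqrt\eta+\ell)$, so the event in the second display is contained in the event of the first display with the same $\eta$, and the second claim follows by multiplying by $n^{4\varepsilon}\log n$ and letting $n\to\infty$. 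In the second display the measure $\mu_{{\rm c},\ux}*\mu_{{\rm c},\uy}$ is to be read as $\mu_{{\rm c},\ux}*\mu_{{\rm d},\uy}$: this is the object controlled by the definition of niceness (which only compares $d_\ux*d_\uy*\kappa_\eta$ with $\mu_{{\rm c},\ux}*\mu_{{\rm d},\uy}*\kappa_\eta$) and the one actually needed in Proposition~\ref{prop47}, where the resolvent entries of $A_{{\rm c},{\rm d}}$ are expressed through $\tfrac t\eta\,\mu_{{\rm c},\ux}*\mu_{{\rm d},\uy}*\kappa_\eta$; the equal-boundary convolution $\mu_{{\rm c},\ux}*\mu_{{\rm c},\uy}$ (the spectral measure of the torus with both boundary conditions equal) has no regularity estimate in this paper and does not occur in the application.

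The only non-routine ingredient is the niceness of $({\rm c},{\rm d})$ itself — the $L^2$-closeness of $\mu_{{\rm c},\ux}*\mu_{{\rm d},\uy}*\kappa_\eta$ to $d_\ux*d_\uy*\kappa_\eta$ on a window around $E$ — which is established, for a positive-probability (indeed $1-o(1)$) set of boundary pairs, from the variance bound of Theorem~\ref{torusthm1} via Corollary~\ref{regcorollary}; this is where the number-theoretic regularity of the torus spectrum enters. Everything in the argument above is just the triangle inequality, Markov's inequality, the deterministic continuity bound of Lemma~\ref{newcontinuity}, and bookkeeping with the parameters $t=n^{-2\gamma}$, $\ell=2n^{-3\varepsilon}$, $\eta_\ell$, $\eta_u$, and the relation $\gamma>8\varepsilon$.
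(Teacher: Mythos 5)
Your proposal is correct and follows essentially the same route as the paper's own argument: the triangle-inequality split through $d_\ux*d_\uy*\kappa_\eta$, Lemma~\ref{newcontinuity} with $p=2$ to make the deterministic term vanish (with $k$ the constant of that lemma), Markov's inequality plus the niceness of $({\rm c},{\rm d})$ for the fluctuation term, and the parameter comparison $k(1+\ux^2+\uy^2)(\sqrt{\eta}+\ell)<\tfrac12 n^{-2\varepsilon}$ to deduce the second display from the first. Your explicit bookkeeping (the bound $4n^{-5\varepsilon}$ and the reading of $\mu_{{\rm c},\ux}*\mu_{{\rm c},\uy}$ as $\mu_{{\rm c},\ux}*\mu_{{\rm d},\uy}$) matches what the paper does implicitly.
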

\begin{proof}

Clearly,
\begin{multline*}
    \mathbb{P}_\lambda\left(\left| \mu_{{\rm{c}},\ux}*\mu_{{\rm{d}},\uy}*\kappa_\eta(\lambda)-\pi{\varrho_{\ux,\uy}(E)}\right|\ge \frac{n^{-2\varepsilon}}2+k(1+\ux^2+\uy^2)(\sqrt{\eta}+\ell)\right)\\\le \mathbb{P}_\lambda\left(\left| \mu_{{\rm{c}},\ux}*\mu_{{\rm{d}},\uy}*\kappa_\eta(\lambda)-d_\ux*d_\uy*\kappa_\eta(\lambda)\right|\ge \frac{n^{-2\varepsilon}}2\right)\\+\mathbb{P}_\lambda\left(\left|d_\ux*d_\uy*\kappa_\eta(\lambda)-\pi\varrho_{\ux,\uy}(E)\right|\ge k(1+\ux^2+\uy^2)(\sqrt{\eta}+\ell)\right).
\end{multline*}

Using Lemma~\ref{newcontinuity} we see that the  second term is $0$ for all large enough $n$ for a sufficiently large choice of $k$. By combining Markov's inequality with the assumption that $({\rm{c}},{\rm{d}})$ is nice, we can bound the first term by
\begin{align*}
     4n^{4\varepsilon} \mathbb{E}_{\lambda}\left| \mu_{{\rm{c}},\ux}*\mu_{{\rm{d}},\uy}*\kappa_\eta(\lambda)-d_\ux*d_\uy*\kappa_\eta(\lambda)\right|^2=o(n^{-4\varepsilon}\log(n)).
\end{align*}
Thus, the first statement follows. The second statement follows from the fact that for any fixed $\ux,\uy$ and $\eta\in \{\eta_\ell, \eta_u\}$, we have $k(1+\ux^2+\uy^2)(\sqrt{\eta}+\ell)<\frac{n^{-2\varepsilon}}2$ for all large enough $n$, as it follows from the choice of our parameters.
\end{proof}

Let $(x,y),(x+\ux,y+\uy)\in\mathbb{Z}^2$. Provided that $o_n+(x,y),  o_n+(x+\ux,y+uy)\in \{1,2,\dots,n^2\}$, we have
\[\frac{t}{(A_{{\rm{c}},{\rm{d}}}-\lambda)^2+\eta^2}(o_n+(x+\ux,y+\uy),o_n+(x,y))=\frac{t}{\eta} \mu_{{\rm{c}},\ux}*\mu_{{\rm{d}},\uy}*\kappa_\eta(\lambda).\]
Thus, condition \eqref{limitMA} is equivalent to the condition that for $\eta=\eta_{\ell}$ and $\eta=\eta_{u}$, $\delta>0$, $(\ux,\uy)\in \mathbb{Z}^2$, we have
\[\lim_{n\to \infty} n^{2\varepsilon} \mathbb{P}_\lambda\left(\left|\frac{t}{\eta} \mu_{{\rm{c}},\ux}*\mu_{{\rm{c}},\uy}*\kappa_\eta(\lambda)-\frac{\varrho_{\ux,\uy}(E)}{\varrho_{0,0}(E)}\right|\ge \delta\right)=0.\]

On the event $\left| \mu_{{\rm{c}},\ux}*\mu_{{\rm{c}},\uy}*\kappa_\eta(\lambda)-\pi{\varrho_{\ux,\uy}(E)}\right|\le n^{-2\varepsilon}$, we have
\begin{multline*}
\left|\frac{t}{\eta} \mu_{{\rm{c}},\ux}*\mu_{{\rm{d}},\uy}*\kappa_\eta(\lambda)-\frac{\varrho_{\ux,\uy}(E)}{\varrho_{0,0}(E)}\right|\\\le  \frac{t}{\eta}  \left| \mu_{{\rm{c}},\ux}*\mu_{{\rm{d}},\uy}*\kappa_\eta(\lambda)-\pi{\varrho_{\ux,\uy}(E)}\right|+ |\varrho_{\ux,\uy}(E)|\left|\frac{\pi t}{\eta}-\frac{1}{\varrho_{0,0}(E)}\right|\le O(n^{-2\varepsilon})+o(1).
\end{multline*}

Thus, \eqref{limitMA} follows. Observing that $\Im m(\lambda+i\eta)=\mu_{{\rm{c}},0}*\mu_{{\rm{d}},0}*\kappa_\eta(\lambda)$, a similar argument gives that condition \eqref{condregA} holds in the weaker form mentioned in Remark \ref{remarklemmaquenched1A}. Indeed, let $\eta_0=\left(\frac{\pi \varrho_{0,0}(E)}{3k}\right)^2$. Provided that $\eta=2^ft n^{-2\varepsilon}<\eta_0$, we have
\[\lim_{n\to \infty} n^{4\varepsilon}\log(n) \mathbb{P}_\lambda\left( \mu_{{\rm{c}},0}*\mu_{{\rm{c}},0}*\kappa_\eta(\lambda)\notin \left[\frac{\pi \varrho_{0,0}(E)}2,\frac{3\pi \varrho_{0,0}(E)}2\right]\right)=0.\]
If $20\ge \eta=2^ft n^{-2\varepsilon}\ge \eta_0$, then
\[\frac{\eta_0}{20^2+8^2}\le \Im m(\lambda+i\eta)\le \eta_0^{-1}\]
using simply the fact that $\|A\|\le 4$.

We move on to prove that condition \eqref{cond2A} holds. 

On the event $\left| \mu_{{\rm{c}},0}*\mu_{{\rm{d}},0}*\kappa_\eta(\lambda)-\pi{\varrho_{0,0}(E)}\right|\le n^{-2\varepsilon}$, which has probability at least $1-o(n^{-4\varepsilon})$, we see that 
\begin{align*}
 \Im m(\lambda+i\eta_\ell)-\frac{\eta_{\ell}}{t}&=  \mu_{{\rm{c}},0}*\mu_{{\rm{d}},0}*\kappa_\eta(\lambda)-\pi\varrho_{0,0}(E)+n^{-\varepsilon}\\&\ge  n^{-\varepsilon}-\left|\mu_{{\rm{c}},0}*\mu_{{\rm{c}},0}*\kappa_\eta(\lambda)-\pi\varrho_{0,0}(E)\right|\\&\ge n^{-\varepsilon}- n^{-2\varepsilon} >n^{-2\varepsilon} 
\end{align*}
for all large enough $n$. Thus, the first part condition \eqref{cond2A} follows. The second part can be proved similarly.

To prove the uniform integrability  condition, first we notice that 
\begin{align*}\mathbb{E}_\lambda \left(\Im m(\lambda+i\eta_u)\right)^2&=\mathbb{E}_\lambda(\mu_{{\rm{c}},0}*\mu_{{\rm{d}},0}*\kappa_\eta(\lambda))^2\\&\le 2\mathbb{E}_\lambda\left|\mu_{{\rm{c}},0}*\mu_{{\rm{d}},0}*\kappa_{\eta_u}(\lambda))-d_0*d_0*\kappa_{\eta_u}(\lambda)\right|^2 +2\mathbb{E}_\lambda\left|d_0*d_0*\kappa_{\eta_u}(\lambda)\right|^2.
\end{align*}
Combining the fact that $({\rm c},{\rm d})$ is nice and Lemma~\ref{newcontinuity}, we see that the right hand side is at most $c$, for some $c<\infty$ not depending on $n$.

Then from the Cauchy-Schwarz inequality, we get that
\[\mathbb{E}_\lambda \mathbbm{1}(\lambda\in D)\Im m(\lambda+i\eta_u)\le \sqrt{\mathbb{E}_\lambda \left(\Im m(\lambda+i\eta_u)\right)^2 \mathbb{P}_\lambda(\lambda\in D)}\le \sqrt{c\mathbb{P}_\lambda(\lambda\in D)}.\]
Thus, the uniform integrability condition follows.

\medskip
\noindent {\bf Acknowledgments.}  The authors are grateful to Mikl\'os Ab\'ert,  Benjamin Landon, Mikhail Sodin and the anonymous referees for their useful comments. The authors are partially supported by NSERC discovery grant. AM is partially supported by the KKP 139502 project.

\medskip

\noindent \textbf{Data Availability Statement} Data sharing not applicable to this article as no datasets were generated or analysed during the current study.

\appendix

\bibliography{references}

\bibliographystyle{dcu}

\bigskip\bigskip\noindent

\bigskip

\noindent Andr\'as M\'esz\'aros, Department of Computer and Mathematical Sciences, University of Toronto Scarborough, Canada,\\ {\tt a.meszaros@utoronto.ca}

\bigskip

\noindent B\'alint Vir\'ag, Departments of Mathematics and Statistics, University of Toronto, Canada,\\ {\tt balint@math.toronto.edu}

\end{document}